\newcommand{\K}[1]{K^b(\text{proj-}#1)}
\newcommand{\Db}[1]{D^b(\text{mod-}#1)}
\newcommand{\D}[1]{D(\text{Mod-}#1)}
\newcommand{\add}{{\rm add}}
\renewcommand{\D}{\mathcal{D}}
\newtheorem{corollary}{Corollary}[section]
\newtheorem{lemma}[corollary]{Lemma}
\newtheorem{remark}[corollary]{Remark}
\newtheorem{example}[corollary]{Example}
\newtheorem{definition}[corollary]{Definition}
\newtheorem{theorem}[corollary]{Theorem}
\newtheorem{proposition}[corollary]{Proposition}
\newtheorem{setup}[corollary]{Setup}
\newtheorem*{theorem*}{Theorem}
\newtheorem*{question*}{Question}
\newtheorem*{proposition*}{Proposition}
\newtheorem*{corollary*}{Corollary}
\newtheorem{term}[corollary]{Notation and Terminology}
\begin{document}

\title{Lifting of recollements and gluing of partial silting sets}

\author{Manuel Saor\'in}
\thanks{Manuel Saor\'in was supported   by research projects from the  Spanish Ministerio de Econom\'ia y Competitividad
(MTM2016-77445-P)  and from the Fundaci\'on 'S\'eneca' of Murcia (19880/GERM/15), with a part of FEDER funds.}
\address{Departamento de Matem\'aticas, Universidad de Murcia, Aptdo. 4021, 30100 Espinardo, Murcia, Spain.}
\email{msaorinc@um.es}

\author{Alexandra Zvonareva}
\thanks{Alexandra Zvonareva was first supported by the RFBR Grant 16-31-60089 and later by a fellowship from Alexander von Humboldt Foundation.}
\address{Universit\"at Stuttgart,
Institut f\"ur Algebra und Zahlentheorie,
Pfaffenwaldring 57,
D-70569 Stuttgart,
Germany.}
\email{alexandra.zvonareva@mathematik.uni-stuttgart.de}

\maketitle

\begin{abstract}
This paper focuses on recollements and silting theory in triangulated categories. It consists of two main parts. In the first part a criterion for a recollement of triangulated subcategories  to lift to a torsion torsion-free triple (TTF triple) of ambient triangulated categories with coproducts is proved. As  a  consequence, lifting of TTF triples is possible for   recollements  of  stable  categories  of  repetitive  algebras  or  self-injective  finite  length algebras and recollements of bounded derived categories of separated Noetherian schemes.
When, in addition,  the outer subcategories in the recollement are derived categories of 
small linear categories the conditions from the criterion are sufficient to lift the recollement to a recollement of ambient triangulated categories up to equivalence. In the second part we use these results to study the problem of constructing
silting sets in the central category of a recollement generating the t-structure glued from the silting t-structures in the outer categories. In the case of a recollement of bounded derived categories of Artin algebras we provide an explicit construction for gluing classical silting objects.
\end{abstract}
\smallskip

\textbf{Keywords:} Recollements, Silting sets, Compactly generated triangulated categories, t-structures, TTF triples.

\textbf{2010 Mathematics Subject Classification:} 18E30, 18E10, 18GXX.

\section{Introduction}

Recollements of triangulated categories were introduced by Beilinson, Bernstein and Deligne \cite{BBD} as a tool to get information about
the derived
category of sheaves over a topological space $X$ from the corresponding derived categories for an open subset $U\subseteq X$ 
and its complement $F=X\setminus U$. In the general abstract picture, when there exists a recollement $(\mathcal{Y}\equiv\mathcal{D}\equiv\mathcal{X})$ of triangulated categories 
\begin{equation}\label{Reca}
\begin{xymatrix}{\mathcal{Y} \ar[r]^{i_*}& \mathcal{D} \ar@<3ex>[l]_{i^!}\ar@<-3ex>[l]_{i^*}\ar[r]^{j^*} & \mathcal{X} 
\ar@<3ex>_{j_*}[l]\ar@<-3ex>_{j_!}[l]},
\end{xymatrix}
\end{equation}
 the properties of $\mathcal{D}$, $\mathcal{X}$ and $\mathcal{Y}$ are closely related  (for unexplained terminology see Section \ref{sect.preliminaries}) and the sequence $(\mathcal{Y}\equiv\mathcal{D}\equiv\mathcal{X})$ can be thought of as a short exact sequence of triangulated categories.  
In representation theory of finite dimensional algebras, a lot of homological invariants can be glued with respect to a recollement. For instance, given a recollement of (bounded) derived categories of finite dimensional algebras $A,$ $B$ and $C$ over a field, Hochschild and cyclic homology and cohomology of $A,$ $B$ and $C$, as well as, $K$-theory, are related by long exact sequences \cite{K98, KN09, H14, TT90, Y92, NR04, Sch06}. Furthermore, global dimension, Cartan determinants and finitistic dimensions of $A,$ $B$ and $C$ are related \cite{KoePerp, AKLY, QH16, H2, CX17}.

Another topic closely related to the structure of triangulated categories is silting theory. Silting theory is a new and dynamically developing topic in representation theory, it studies a special type of generators of triangulated categories, which have very nice properties but are sufficiently widespread. Silting and more generally partial silting objects in  the bounded homotopy category
$\mathcal{K}^b(\text{proj-}A)$ of finitely generated projective modules over a finite dimensional algebra $A$
were introduced in \cite{KV} and further developed in \cite{AI} as a completion of tilting objects under mutation. These objects parametrize bounded t-structures in
$\mathcal{D}^b(\text{mod-}A)$ whose heart is a length category and bounded co-t-structures in $\mathcal{K}^b(\text{proj-}A)$ 
(see \cite{KY} and \cite{KN2}), they also correspond to derived equivalences from $A$ to a non-positive dg algebra. The notion was extended to the unbounded setting 
first in \cite{Wei} and \cite{AMV}, for unbounded derived categories of algebras, and later in \cite{PV} and \cite{NSZ} for arbitrary triangulated categories
with coproducts.  Concepts like complete exceptional collections or sequences in Algebraic Geometry can be interpreted as  silting sets in the derived category of a scheme (see \cite[Section 8]{NSZ}). The main feature of these generalized silting sets is that they naturally define a 
t-structure in the ambient triangulated category whose heart is, in many situations, a module category (see \cite[Section 4]{NSZ}). One of the results of the development of silting theory in the unbounded setting is  the introduction of silting modules in \cite{AMV}, which have turned
out to be very useful to classify homological ring epimorphisms and universal localisations  (see \cite{AMV2} and \cite{MS}). 

Traditionally, for the study of homological properties of a variety $\mathbb{X}$  over a field (or 
a Noetherian scheme of finite type) or a finite dimensional algebra $A$ the bounded derived category of coherent sheaves 
$\mathcal{D}^b(\mathbb{X}):=\mathcal{D}^b(\text{coh}(\mathbb{X}))$ or the 
bounded derived category $\mathcal{D}^b(\text{mod-}A)$ of finitely generated $A$-modules are considered. Therefore, a lot of studies are concentrated on recollements of bounded derived categories and silting objects in the homotopy category of finitely generated projectives. However, the study of 
the structure of the unbounded derived categories is sometimes easier, since a rich arsenal of techniques of compactly generated triangulated categories with products and coproducts becomes available. The generation of recollements and t-structures also becomes more accessible on the unbounded level. Hence it is natural to look for conditions under which recollements of triangulated 
categories at the 'bounded' level lift to recollements at the 'unbounded' level. This is the first goal of this paper. The motivation for 
this stems from \cite{AKLY}, where the authors show that recollements of bounded derived categories of finite dimensional algebras lift up to equivalence to 
corresponding recollements of the unbounded derived categories, which, in addition, can be extended upward and downward to ladders of 
recollements of height three. We wanted to know to what extent this relies on the context of finite dimensional algebras and to what extent this is a general phenomenon occurring in triangulated categories. As it will become apparent through the paper, our  results  are applicable to  other areas of Mathematics, especially to  Algebraic Geometry. In our general framework it turns out that  lifting TTF triples can be guaranteed under  quite general assumptions, whereas lifting of recollements  is more subtle (see Section \ref{section.some points}).

Since t-structures can be glued via a recollement (see \cite{BBD}) another natural question is the following:  given a 
recollement  (\ref{Reca}) of triangulated categories  and silting sets $\mathcal{T}_X$ and $\mathcal{T}_Y$ in $\mathcal{X}$ and 
$\mathcal{Y}$, is it possible to construct  a silting set $\mathcal{T}$ in $\mathcal{D}$ corresponding to the glued t-structure?  This problem was studied in the context of tilting objects in \cite{AKL2} under some restrictions and 
in the context of gluing with respect to co-t-structures   in \cite{LVY}. The second goal of this paper is to show that the process of 
gluing t-structures allows  to construct partial silting sets  in the central category of 
a recollement out of partial silting sets in its outer categories.

The paper is organized as follows. In Section \ref{sect.preliminaries} we introduce most of  the concepts and terminology used throughout 
the paper. In 
Section \ref{section.some points} we study lifting of recollements and the associated TTF triples. In particular we prove a criterion for a recollement $(\mathcal{Y}\equiv\mathcal{D}\equiv\mathcal{X})$ of thick subcategories of 
compactly generated triangulated categories  $\hat{\mathcal{Y}}$, $\hat{\mathcal{D}}$ and $\hat{\mathcal{X}}$ to lift to a  TTF triple in $\hat{\mathcal{D}}$ under the assumption that the subcategories $\mathcal{Y},\mathcal{D},\mathcal{X}$  contain 
the respective subcategories of compact objects (see Theorem \ref{thm.lifting of recollements}).
As a consequence of this theorem lifting of TTF triples is possible for several types of recollements, such as recollements of stable categories of repetitive algebras or self-injective finite length algebras or recollements of bounded derived categories of separated Noetherian schemes (see Example \ref{ex repet}). However, lifting of the recollement is more delicate and the answer to the following question seems to be unknown.
\begin{question*} Does the lifting of the TTF  triple corresponding to the recollement $({\mathcal{Y}}\equiv {\mathcal{D}}\equiv{\mathcal{X}})$ imply the lifting of this recollement to a recollement $(\hat{\mathcal{Y}}\equiv \hat{\mathcal{D}}\equiv\hat{\mathcal{X}})$ at least up to equivalence?
\end{question*}
It seems to be related to the problem of constructing a triangulated equivalence $\hat{\mathcal{D}}\stackrel{\cong}{\longrightarrow}\hat{\mathcal{E}}$, having a triangulated equivalence
$\hat{\mathcal{D}}^c\stackrel{\cong}{\longrightarrow}\hat{\mathcal{E}}^c$, for compactly generated triangulated categories 
$\hat{\mathcal{D}}$ and $\hat{\mathcal{E}}$. Due to results of Rickard (see \cite{R1} and \cite{R2}) and Keller \cite{K},  such a construction is 
possible (even though it may not be the lift of the equivalence $\hat{\mathcal{D}}^c\stackrel{\cong}{\longrightarrow}\hat{\mathcal{E}}^c$) when 
one of the categories $\hat{\mathcal{D}}$ or $\hat{\mathcal{E}}$ is the derived category of an algebra or, more generally, a small $K$-category. 
This is the  
reason for the following consequence of Theorem \ref{thm.Case Y homologically loc.bounded}.

\begin{theorem*} \label{thm.Case Y homologically loc.bounded2}
Let $\mathcal{B}$ and $\mathcal{C}$ be small $K$-linear categories, viewed as dg categories concentrated in zero degree,  let 
$\mathcal{A}$ be a dg category, and suppose that there is a recollement $(\mathcal{D}^{\star}_\dagger (\mathcal{B})\equiv \mathcal{D}^{\star}_\dagger (\mathcal{A})\equiv \mathcal{D}^{\star}_\dagger (\mathcal{C}))$, where $\star \in\{ \emptyset,+,-,b\}$ and $\dagger\in\{ \emptyset,fl\}$ (here $fl$ means 'finite length') and all the subcategories contain the respective 
subcategories of compact objects. 

If the functors $j_!,j^*,i^*,i_*$ preserve compact objects, the given recollement lifts up to equivalence to a recollement $(\mathcal{D}(\mathcal{B})\equiv \mathcal{D}(\mathcal{A})\equiv \mathcal{D}(\mathcal{C}))$, which is the upper part of a ladder of recollements of height two. 
\end{theorem*}

In certain circumstances, one can guarantee that the functors preserve compact objects. For example, this holds in the situation of a recollement $\mathcal{Y}\equiv\mathcal{D}\equiv\mathcal{X}$ of homologically non-positive homologically finite length dg algebras $A$, $B$ and $C$. This allows us to generalise some of the  results from \cite{AKLY} (see Proposition \ref{prop.homologically non-positive locally fd}).

In Section \ref{sect.partial silting sets} we define partial silting sets and objects in arbitrary triangulated categories, give 
some examples,   and study when partial silting sets are uniquely determined by the associated t-structure. In Section \ref{SecPreenv} we revise the connection between the construction of (pre)envelopes, t-structures and co-t-structures. It is seen, in particular, that the bijection between silting objects and bounded co-t-structures  of \cite{AI, MSAuBu, KY} extends to any small triangulated category with split idempotents, more generally, one can consider one-sided bounded co-t-structures in the bijection (see Proposition \ref{prop.bijection psilting-cotstructures}). The study of envelopes is later used 
 in the last section to give an explicit construction of a classical silting object glued with respect to a recollement of bounded derived categories of finite length algebras.

Section \ref{sect.gluing partial silting} is devoted to the construction of partial silting sets in arbitrary triangulated categories by gluing 
t-structures via recollements. Our results on gluing partial silting sets are based on a technical criterion (Theorem \ref{thm.gluing of silting t-structures}), which allows to glue partial silting sets $\mathcal{T}_X$
and $\mathcal{T}_Y$ in triangulated categories $\mathcal{X}$ and $\mathcal{Y}$ with respect to a recollement $(\mathcal{Y}\equiv \mathcal{D}\equiv \mathcal{X})$.
Conditions of Theorem \ref{thm.gluing of silting t-structures} are easier to check when $\mathcal{T}_X$ and $\mathcal{T}_Y$ consist of compact objects and some 
of the functors in the recollement preserve compact objects. We refer the reader to a more general Theorem
\ref{thm.gluing at compact level}, which has the following  consequence. Note that  when 
$A$, $B$ and $C$ are finite length algebras the corollary can be applied, replacing  $\mathcal{D}^b_{fl}(-)$ by the equivalent category $\mathcal{D}^b(\text{mod}-)$.

\begin{corollary*}[see Corollary \ref{cor.Liu-Vitoria-Yang-generalized}, \ref{cor.gluing Artin case}] \label{cor.last in introduction}
Let $A$, $B$ and $C$ be homologically finite length dg algebras, the first of which is homologically non-positive. Let $(\mathcal{D}^b_{fl} (B)\equiv \mathcal{D}^b_{fl} (A) \equiv \mathcal{D}^b_{fl} (C))$ be a recollement, let $T_C$ and $T_B$ be silting objects in $\mathcal{D}^c(C)$ and $\mathcal{D}^c(B)$, respectively, with the associated t-structures $(\mathcal{X}^{\leq 0},\mathcal{X}^{\geq 0})$ and $(\mathcal{Y}^{\leq 0},\mathcal{Y}^{\geq 0})$. There exists a triangle 
$\tilde{T}_B\longrightarrow i_*(T_B)\longrightarrow U_{T_B}[1]\stackrel{+}{\longrightarrow}$ in $\mathcal{D}^c(A)$ such that 
$U_{T_B}\in j_!(\mathcal{X}^{\leq 0})$ and $\tilde{T}_B\in {}^\perp j_!(\mathcal{X}^{\leq 0})[1]$. In particular 
$T=j_!(T_C)\oplus\tilde{T}_B$ is a silting object in $\mathcal{D}^c(A)$, uniquely determined up to add-equivalence, which generates the 
glued t-structure $(\mathcal{D}^{\leq 0},\mathcal{D}^{\geq 0})$ in $\mathcal{D}^b_{fl}(A)$. 
\end{corollary*}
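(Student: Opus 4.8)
The plan is to obtain $T$ as a correction of the naive candidate $j_!(T_C)\oplus i_*(T_B)$, which already generates $\mathcal{D}^c(A)$ as a thick subcategory but in general fails to be presilting. First I would record the shape of the glued aisle: from $\mathcal{D}^{\leq 0}=\{M\in\mathcal{D}^b_{fl}(A): j^*M\in\mathcal{X}^{\leq 0},\ i^*M\in\mathcal{Y}^{\leq 0}\}$ and the recollement triangle $j_!j^*M\to M\to i_*i^*M\stackrel{+}{\to}$, together with $i^*j_!=0$ and $j^*i_*=0$, one sees that $\mathcal{D}^{\leq 0}$ is the smallest aisle of $\mathcal{D}^b_{fl}(A)$ containing $j_!(\mathcal{X}^{\leq 0})\cup i_*(\mathcal{Y}^{\leq 0})$; since $\mathcal{X}^{\leq 0}$ and $\mathcal{Y}^{\leq 0}$ are the aisles generated by $T_C$ and $T_B$ and $j_!,i_*$ commute with shifts, coproducts, extensions and summands, $\mathcal{D}^{\leq 0}$ is in fact generated as an aisle by $j_!(T_C)$ and $i_*(T_B)$. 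Next, under the standing hypotheses — $A$ homologically non-positive and $A,B,C$ homologically finite length, so that $j_!,j^*,i^*,i_*$ preserve compactness by (the proof of) Proposition \ref{prop.homologically non-positive locally fd} — the recollement restricts to the subcategories of compact objects, whence $\mathrm{thick}_{\mathcal{D}^c(A)}(j_!(T_C),i_*(T_B))=\mathcal{D}^c(A)$. The sole obstruction to $j_!(T_C)\oplus i_*(T_B)$ being silting is that $\Hom(i_*(T_B),j_!(T_C)[n])\cong\Hom(T_B,(i^!j_!T_C)[n])$ may be nonzero for $n>0$, as $i^!j_!$ need not vanish; every other positive-degree $\Hom$ among $j_!(T_C)$ and $i_*(T_B)$ vanishes by $j^*i_*=0$, $i^!i_*\cong\mathrm{id}$ and the silting property of $T_B$ and $T_C$.

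To kill this obstruction I would truncate $i_*(T_B)$ away from $j_!(\mathcal{X}^{\leq 0})[1]$. Working in $\mathcal{D}^c(A)$ and using the (pre)envelope machinery of Section \ref{SecPreenv} — applicable because $j_!(T_C)$ is compact — one obtains that $i_*(T_B)$ admits a special $j_!(\mathcal{X}^{\leq 0})[1]$-preenvelope, i.e.\ a triangle $\tilde T_B\to i_*(T_B)\to U_{T_B}[1]\stackrel{+}{\to}$ with $U_{T_B}\in j_!(\mathcal{X}^{\leq 0})$ and $\tilde T_B\in{}^\perp(j_!(\mathcal{X}^{\leq 0})[1])$; as $j_!(\mathcal{X}^{\leq 0})[1]$ is closed under positive shifts, this yields $\Hom(\tilde T_B,j_!(\mathcal{X}^{\leq 0})[k])=0$ for all $k\geq1$. (Equivalently, this triangle can be read off from the proof of Theorem \ref{thm.gluing at compact level}.)

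Then I would set $T:=j_!(T_C)\oplus\tilde T_B$ and check it is silting. For $\Hom(T,T[n])=0$ with $n>0$: applying $\Hom(j_!(T_C),-[n])$ to the triangle squeezes $\Hom(j_!(T_C),\tilde T_B[n])$ between $\Hom(T_C,X[n])=0$ (writing $U_{T_B}=j_!X$ with $X\in\mathcal{X}^{\leq 0}$, and using the description of the silting aisle) and $\Hom(T_C,(j^*i_*T_B)[n])=0$; since $j_!(T_C)[n]=j_!(T_C[n])\in j_!(\mathcal{X}^{\leq 0})[1]$ for $n\geq1$, perpendicularity of $\tilde T_B$ gives $\Hom(\tilde T_B,j_!(T_C)[n])=0$; applying $\Hom(-,i_*(T_B)[n])$ to the triangle and using the adjunctions together with $j^*i_*=0$, $i^!i_*\cong\mathrm{id}$ and the silting property of $T_B$ gives $\Hom(\tilde T_B,i_*(T_B)[n])=0$, which together with $\Hom(\tilde T_B,U_{T_B}[n])=0$ (perpendicularity again) forces $\Hom(\tilde T_B,\tilde T_B[n])=0$; and $\Hom(j_!(T_C),j_!(T_C)[n])=\Hom(T_C,T_C[n])=0$. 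Since the triangle shows $i_*(T_B)\in\mathrm{thick}(T)$, also $\mathrm{thick}(T)=\mathcal{D}^c(A)$, so $T$ is a silting object. For generation, $j_!(T_C)$ and $\tilde T_B$ both lie in $\mathcal{D}^{\leq 0}$ (for $\tilde T_B$ use the triangle with $U_{T_B}\in j_!(\mathcal{X}^{\leq 0})\subseteq\mathcal{D}^{\leq 0}$, $i_*(T_B)\in i_*(\mathcal{Y}^{\leq 0})\subseteq\mathcal{D}^{\leq 0}$ and closure of $\mathcal{D}^{\leq 0}$ under extensions), so $\mathrm{aisle}(T)\subseteq\mathcal{D}^{\leq 0}$; conversely $\mathcal{D}^{\leq 0}$ is generated by $j_!(T_C)$ and $i_*(T_B)$ and the triangle gives $i_*(T_B)\in\mathrm{aisle}(T)$, hence $\mathcal{D}^{\leq 0}\subseteq\mathrm{aisle}(T)$. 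Thus the $t$-structure generated by $T$ is the glued one $(\mathcal{D}^{\leq 0},\mathcal{D}^{\geq 0})$ on $\mathcal{D}^b_{fl}(A)$. Uniqueness up to $\mathrm{add}$-equivalence is then immediate, since two silting objects of $\mathcal{D}^c(A)$ with the same associated $t$-structure have the same associated bounded co-$t$-structure, hence are $\mathrm{add}$-equivalent by Proposition \ref{prop.bijection psilting-cotstructures}.

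The step I expect to be the crux is the construction of the triangle — the existence in $\mathcal{D}^c(A)$ of a special $j_!(\mathcal{X}^{\leq 0})[1]$-preenvelope of $i_*(T_B)$, equivalently the fact that the glued aisle is generated by a compact silting object at all. This is exactly where the hypotheses on $A$, $B$, $C$ enter, through Theorem \ref{thm.gluing at compact level} (and the criterion of Theorem \ref{thm.gluing of silting t-structures}) together with the envelope results of Section \ref{SecPreenv}; once the triangle is available, the remainder of the argument is the bookkeeping sketched above.
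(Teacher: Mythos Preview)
Your proposal is correct and follows essentially the same strategy as the paper. The paper packages the silting verification you do by hand into Theorem \ref{thm.gluing of silting t-structures} and then invokes Theorem \ref{thm.gluing at compact level} (condition 2) together with Proposition \ref{prop.homologically non-positive locally fd} and Example \ref{exs.intrinsic description of compacts}(2) to guarantee the triangle; your explicit Hom-vanishing checks are exactly a rederivation of Theorem \ref{thm.gluing of silting t-structures} in this special case.

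Two small points worth tightening. First, to apply Proposition \ref{prop.homologically non-positive locally fd} you need $\mathcal{D}(B)$ and $\mathcal{D}(C)$ to be compact-detectable in finite length, but only $A$ is assumed homologically non-positive. The paper handles this by using Keller's theorem and the given silting objects $T_B,T_C$ to replace $B,C$ by quasi-isomorphic homologically non-positive dg algebras; you should make this step explicit rather than absorbing it into ``the proof of Proposition \ref{prop.homologically non-positive locally fd}''. Second, the phrase ``the recollement restricts to the subcategories of compact objects'' is slightly imprecise: only the four upper functors $j_!,j^*,i^*,i_*$ restrict, not $i^!$ and $j_*$. Your conclusion $\mathrm{thick}(j_!T_C,i_*T_B)=\mathcal{D}^c(A)$ is nevertheless correct, using the triangle $j_!j^*D\to D\to i_*i^*D\stackrel{+}{\to}$ for $D\in\mathcal{D}^c(A)$.

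For uniqueness the paper uses Propositions \ref{prop.restricted partial silting} and \ref{prop.compact-psilting-from coheart} directly, while you pass through the bounded co-$t$-structure and Proposition \ref{prop.bijection psilting-cotstructures}. Both routes are valid; the paper's is slightly more self-contained since it avoids the implicit step of showing that the $t$-structure on $\mathcal{D}^b_{fl}(A)$ determines the co-$t$-structure on $\mathcal{D}^c(A)$.
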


We finish the paper, comparing our results on gluing silting objects in the particular context of finite 
dimensional 
algebras over a field with the results of \cite{LVY}. As mentioned before, our methods provide an explicit inductive construction of the glued silting object in this case, we illustrate how to apply this explicit inductive construction with an example.

\section{Preliminaries} \label{sect.preliminaries}

All categories considered in this paper are $K$-categories over some commutative ring $K$ and all functors are assumed to be $K$-linear. 
Unless explicitly said otherwise,  the  categories which appear will be either triangulated
$K$-categories with split idempotents or their subcategories, and all of them are assumed to have Hom-sets.  All subcategories will  be full and closed 
under isomorphisms. Coproducts and products are always  small (i.e. set-indexed). The expression '$\mathcal{D}$ has coproducts (resp. products)' will mean that $\mathcal{D}$ has arbitrary set-indexed coproducts (resp. products).   
When $\mathcal{S}\subset\text{Ob}(\mathcal{D})$ is a class of objects, we shall
denote by $\text{add}_\mathcal{A}(\mathcal{S})$ (resp.
$\text{Add}_\mathcal{A}(\mathcal{S})$) the subcategory of $\mathcal{D}$ consisting of the objects which are direct summands of finite (resp. arbitrary) coproducts
of objects in $\mathcal{S}$.

Let $\mathcal{D}$ be a triangulated category, we will
denote by $[1]:\mathcal{D}\longrightarrow\mathcal{D}$ the
\emph{suspension functor}, $[k]$
will denote the $k$-th power of $[1]$, for each integer $k$. \emph{(Distinguished) triangles} in $\mathcal{D}$ will be denoted by
$X\longrightarrow Y\longrightarrow Z\stackrel{+}{\longrightarrow}$. A \emph{triangulated functor} $F: \mathcal{D}_1\longrightarrow \mathcal{D}_2$ between
triangulated categories is an additive functor together with a natural  isomorphism $F\circ [1]\simeq [1]\circ F$, which sends triangles to triangles. For more details on triangulated categories see \cite{N}.

Let $\mathcal{D}$ be a triangulated category and let $\mathcal{S}$ be a class of objects in $\mathcal{D}$. We are going to use the 
following subcategories of $\mathcal{D}$:
$$\mathcal{S}^\perp =\{X \in \mathcal{D} \mid \text{Hom}_\mathcal{D}(S,X)=0 \text{ for any }S\in\mathcal{S}\}$$
$$^\perp\mathcal{S} =\{X \in \mathcal{D} \mid \text{Hom}_\mathcal{D}(X,S)=0 \text{ for any }S\in\mathcal{S}\}$$
for an integer $n$ and $*$ standing for $\leq n$, $\geq n$, $>n$, $<n$ or $k\in\mathbb{Z}$
$$\mathcal{S}^{\perp *} =\{X \in \mathcal{D} \mid \text{Hom}_\mathcal{D}(S,X[k])=0 \text{ for any }S\in\mathcal{S} \text{ and } 
k\in \mathbb{Z} \text{ satisfying }*\}$$
$$^{\perp *}\mathcal{S} =\{X \in \mathcal{D} \mid \text{Hom}_\mathcal{D}(X,S[k])=0 \text{ for any }S\in\mathcal{S} \text{ and } 
k\in \mathbb{Z} \text{ satisfying }*\}.$$

Given two subcategories $\mathcal{X}$ and $\mathcal{Y}$ of a triangulated category $\mathcal{D}$, we will denote by $\mathcal{X}\star\mathcal{Y}$ the subcategory of $\mathcal{D}$ consisting of the objects $M$ which fit into a triangle $X\longrightarrow M\longrightarrow Y\stackrel{+}{\longrightarrow}$, where $X\in\mathcal{X}$ and $Y\in\mathcal{Y}$. Due to the octahedral axiom, the operation $\star$ is associative, so for a family of subcategories $(\mathcal{X}_i)_{1\leq i\leq n}$ the subcategory  $\mathcal{X}_1\star\mathcal{X}_2\star \cdots\star\mathcal{X}_n$ is well-defined (see \cite{BBD}). A subcategory $\mathcal{X}$ is \emph{closed under extensions} when $\mathcal{X}\star\mathcal{X}\subseteq\mathcal{X}$. 

Given a triangulated category $\mathcal{D}$, a subcategory $\mathcal{E}$ will be called a \emph{suspended} (resp. \emph{strongly suspended}) 
\emph{subcategory} 
 if $\mathcal{E}[1]\subseteq\mathcal{E}$ and $\mathcal{E}$ is closed under extensions (resp. extensions and direct summands).  If $\mathcal{E}$ is strongly suspended and $\mathcal{E}=\mathcal{E}[1]$, we will say that $\mathcal{E}$ is a \emph{thick subcategory}. When  $\mathcal{D}$ has coproducts, a triangulated subcategory closed under taking arbitrary coproducts is called a \emph{localizing subcategory}. Note that such a subcategory is always thick  by \cite[Proposition 1.6.8]{N}. Clearly, there are 
dual concepts of a \emph{(strongly) cosuspended subcategory} and a \emph{colocalizing subcategory}, while that of a thick subcategory is self-dual. 
Given a class $\mathcal{S}$ of objects of $\mathcal{D}$, we will denote by $\text{susp}_\mathcal{D}(\mathcal{S})$ (resp. $\text{thick}_\mathcal{D}(\mathcal{S})$) the smallest strongly suspended (resp. thick) subcategory of $\mathcal{D}$ containing $\mathcal{S}$. When $\mathcal{D}$ has coproducts, we will let $\text{Susp}_\mathcal{D}(\mathcal{S})$ and $\text{Loc}_\mathcal{D}(\mathcal{S})$ be the smallest (strongly) suspended subcategory closed under taking coproducts and the smallest localizing subcategory containing $\mathcal{S}$, respectively.

\subsection{Torsion pairs, t-structures and co-t-structures:} A pair of subcategories $(\mathcal{X},\mathcal{Y})$ in $\mathcal{D}$ is a \emph{torsion pair} if

$\bullet$ $\mathcal{X}$ and $\mathcal{Y}$ are closed under direct summands;

$\bullet$ $\text{Hom}_\mathcal{D}(X,Y)=0$, for all $X\in\mathcal{X}$ and $Y\in\mathcal{Y}$;

$\bullet$ $\mathcal{D}=\mathcal{X}\star\mathcal{Y}$.

A \emph{t-structure} in $\mathcal{D}$ is a pair $(\mathcal{D}^{\leq 0},\mathcal{D}^{\geq 0})$ such that  
$(\mathcal{D}^{\leq 0},\mathcal{D}^{\geq 0}[-1])$ is a torsion pair and $\mathcal{D}^{\leq 0}[1]\subseteq\mathcal{D}^{\leq 0}$. 
A \emph{co-t-structure} or a \emph{weight structure} is a pair $(\mathcal{D}_{\geq 0},\mathcal{D}_{\leq 0})$ such that 
$(\mathcal{D}_{\geq 0}[-1],\mathcal{D}_{\leq 0})$ is a torsion pair and $\mathcal{D}_{\geq 0}[-1]\subseteq\mathcal{D}_{\geq 0}$. 
Adopting the 
terminology used for t-structures, given a torsion pair $(\mathcal{X},\mathcal{Y})$, we will call $\mathcal{X}$ and 
$\mathcal{Y}$ the \emph{aisle} and the \emph{co-aisle} of the torsion pair. Note that the aisle of a torsion pair 
$(\mathcal{X},\mathcal{Y})$ is suspended (resp. cosuspended) if and only if 
$(\mathcal{X},\mathcal{Y}[1])$ (resp. $(\mathcal{X}[1],\mathcal{Y})$) is a t-structure (resp. co-t-structure).

For a t-structure $(\mathcal{D}^{\leq 0},\mathcal{D}^{\geq 0})$, the objects $U$ and $V$ in a triangle $U\longrightarrow M\longrightarrow V\stackrel{+}{\longrightarrow}$, with $U\in\mathcal{D}^{\leq 0}$ and $V\in\mathcal{D}^{>0}:=\mathcal{D}^{\geq 0}[-1]$,  are uniquely determined by  $M\in\mathcal{D}$ up to isomorphism. The assignments $M\rightsquigarrow U$ and $M\rightsquigarrow V$ 
coincide on objects with the action of the  functors $\tau^{\leq 0}:\mathcal{D}\longrightarrow\mathcal{D}^{\leq 0}$ and $\tau^{>0}:\mathcal{D}\longrightarrow\mathcal{D}^{>0}$, which are right and left adjoint to the inclusion functors. The functors $\tau^{\leq 0}$ and $\tau^{> 0}$ are called the \emph{left and right truncation functors} with respect to the t-structure. When $\tau=(\mathcal{D}^{\leq 0},\mathcal{D}^{\geq 0})$ (resp. $(\mathcal{D}_{\geq 0},\mathcal{D}_{\leq 0})$) is a t-structure (resp. a co-t-structure), the intersection $\mathcal{H}:=\mathcal{D}^{\leq 0}\cap\mathcal{D}^{\geq 0}$ (resp. $\mathcal{C}:=\mathcal{D}_{\geq 0}\cap\mathcal{D}_{\leq 0}$) is called the \emph{heart (resp. co-heart) of the t-structure (resp. co-t-structure)}. Recall that $\mathcal{H}$ is an abelian category in which the short exact sequences are induced by the triangles with all the three terms in $\mathcal{H}$  (see \cite{BBD}). Sometimes we shall use the term \emph{co-heart of the t-structure $\tau$}, meaning the intersection ${\mathcal{C}_\tau ={}^{\perp}(\mathcal{D}^{\leq 0})[1]\cap\mathcal{D}^{\leq 0}}$.   A \emph{semi-orthogonal decomposition} of $\mathcal{D}$ is a torsion pair $(\mathcal{X},\mathcal{Y})$ such that $\mathcal{X}=\mathcal{X}[1]$ (or, equivalently,  $\mathcal{Y}=\mathcal{Y}[1]$). Note that such a pair is both a t-structure and a co-t-structure in $\mathcal{D}$, and the corresponding truncation functors are triangulated. The notions of torsion pair, t-structure, co-t-structure and semi-orthogonal decomposition are self-dual. 

If $\mathcal{D}'$ is a thick subcategory of $\mathcal{D}$, we say that a torsion pair $\tau =(\mathcal{X},\mathcal{Y})$ in $\mathcal{D}$ 
\emph{restricts to $\mathcal{D}'$} when $\tau':=(\mathcal{X}\cap\mathcal{D}',\mathcal{Y}\cap\mathcal{D}')$ is a torsion pair in $\mathcal{D}'$. In this case $\tau'$ is called the \emph{restriction} of $\tau$ to $\mathcal{D}'$. Conversely, when $\tau'=(\mathcal{X}',\mathcal{Y}')$ is a torsion pair in $\mathcal{D}'$, we say that it \emph{lifts to $\mathcal{D}$} if there is a torsion pair $\tau=(\mathcal{X},\mathcal{Y})$ in $\mathcal{D}$ which restricts to $\tau'$. Then $\tau$ is called a \emph{lifting of $\tau'$ to $\mathcal{D}$}.

Given two torsion pairs $\tau=(\mathcal{X},\mathcal{Y})$ and $\tau'=(\mathcal{Y}',\mathcal{Z})$ in $\mathcal{D}$, we shall say that 
\emph{$\tau$ is left adjacent to $\tau'$} or that \emph{$\tau'$ is right adjacent to $\tau$} or that $\tau$ and $\tau'$ (in this order) are \emph{adjacent torsion pairs} when $\mathcal{Y}=\mathcal{Y'}$. Note that the torsion pairs associated to the co-t-structure $(\mathcal{D}_{\geq 0},\mathcal{D}_{\leq 0})$ and the t-structure $(\mathcal{D}^{\leq 0},\mathcal{D}^{\geq 0})$ are adjacent if and only if $\mathcal{D}_{\leq 0}=\mathcal{D}^{\leq 0}$. In this case their co-hearts coincide.  A triple of subcategories $(\mathcal{X},\mathcal{Y},\mathcal{Z})$ of $\mathcal{D}$ is called a \emph{torsion torsion-free triple (TTF triple, for short)} when $(\mathcal{X},\mathcal{Y})$ and $(\mathcal{Y},\mathcal{Z})$ are adjacent t-structures, which is equivalent to saying that they are adjacent semi-orthogonal decompositions.We shall say that such a TTF triple is \emph{extendable to the right} when $(\mathcal{Y},\mathcal{Z},\mathcal{Z}^\perp)$ is also a TTF triple. By \cite[Proposition 3.4]{NS1}, we know that this is always the case when $\mathcal{D}$  and the torsion pair $(\mathcal{Y},\mathcal{Z})$ are compactly generated (see Subsection 2.3 below for the definition of compact generation).
 As before, one can consider lifting and restriction of TTF triples (see \cite{NS2} for details).

\subsection{Recollements:} Let $\mathcal{D}$, $\mathcal{X}$ and $\mathcal{Y}$ be triangulated
categories. $\mathcal{D}$ is said to be a \emph{recollement} of
$\mathcal{X}$ and $\mathcal{Y}$ if there are six  triangulated functors
as in the following diagram
\begin{equation}\label{Recb}
\begin{xymatrix}{\mathcal{Y} \ar[r]^{i_*}& \mathcal{D} \ar@<3ex>[l]_{i^!}\ar@<-3ex>[l]_{i^*}\ar[r]^{j^*} & \mathcal{X} \ar@<3ex>_{j_*}[l]\ar@<-3ex>_{j_!}[l]}
\end{xymatrix}
\end{equation}
such that

1) $(i^*,i_*)$, $(i_*,i^!)$, $(j_!,j^*)$, $(j^*,j_*)$ are adjoint
pairs,

2) $i_*$, $j_*$, $j_!$ are full embeddings,

3) $i^!j_*=0$ (and, hence $j^*i_*=0$ and $i^*j_!=0$),

4) for any $Z \in \mathcal{D}$ the units and the counits of the
adjunctions give triangles:
$$i_*i^!Z \longrightarrow Z \longrightarrow j_*j^*Z \stackrel{+}{\longrightarrow},$$
$$j_!j^*Z \longrightarrow Z \longrightarrow i_*i^*Z \stackrel{+}{\longrightarrow}.$$

To any recollement one canonically associates the TTF triple $(\text{Im}(j_!),\text{Im}(i_*),\text{Im}(j_{*}))$ in $\mathcal{D}$. Conversely, if $(\mathcal{X},\mathcal{Y},\mathcal{Z})$ is a TTF triple in $\mathcal{D}$, then one obtains a recollement as above, where $j_!:\mathcal{X}\hookrightarrow\mathcal{D}$ and $i_*:\mathcal{Y}\hookrightarrow\mathcal{D}$ are the inclusion functors. 
Two recollements $(\mathcal{Y}\equiv\mathcal{D}\equiv\mathcal{X})$ and $(\tilde{\mathcal{Y}}\equiv\mathcal{D}\equiv\tilde{\mathcal{X}})$  are said to be \emph{equivalent} when the associated TTF triples coincide. It is easy to see that this is equivalent to the existence of triangulated equivalences $F:\mathcal{X}\stackrel{\cong}{\longrightarrow}\tilde{\mathcal{X}}$ and $G:\mathcal{Y}\stackrel{\cong}{\longrightarrow}\tilde{\mathcal{Y}}$ such that the sextuple of functors associated to the second recollement is pointwise naturally isomorphic to $(G\circ i^*,i_*\circ G^{-1}, G\circ i^!,j_!\circ F^{-1},F\circ j^*, j_*\circ F^{-1})$, for any choice of quasi-inverses $F^{-1}$ and $G^{-1}$ of $F$ and $G$.

Given thick subcategories $\mathcal{Y}'\subseteq\mathcal{Y}$, $\mathcal{D}'\subseteq\mathcal{D}$ and $\mathcal{X}'\subseteq\mathcal{X}$ and a recollement 
\begin{equation}\label{Recc}
\begin{xymatrix}{\mathcal{Y}' \ar[r]^{\tilde{i}_*}& \mathcal{D}' \ar@<3ex>[l]_{\tilde{i}^!}\ar@<-3ex>[l]_{\tilde{i}^*}\ar[r]^{\tilde{j}^*} & \mathcal{X}' \ar@<3ex>_{\tilde{j}_*}[l]\ar@<-3ex>_{\tilde{j}_!}[l]}
\end{xymatrix},
\end{equation}
 we say that the recollement (\ref{Recb}) \emph{restricts} to the recollement (\ref{Recc})
 or that the recollement (\ref{Recc}) \emph{lifts} to the recollement (\ref{Recb}), when the functors in the recollement (\ref{Recc}) are naturally isomorphic to the restrictions of the functors in the recollement (\ref{Recb}). We shall say that the recollement (\ref{Recb}) \emph{restricts, up to equivalence,} to the recollement (\ref{Recc}),  or that the recollement  (\ref{Recc}) \emph{lifts, up to equivalence,} to the recollement  (\ref{Recb}) 
 when the TTF triple  $(\text{Im}(j_!),\text{Im}(i_*),\text{Im}(j_*))$ restricts to $\mathcal{D}'$ and  the restriction coincides with $(\text{Im}(\tilde{j}_!),\text{Im}(\tilde{i}_*),\text{Im}(\tilde{j}_*))$. Obviously 'restricts' implies 'restricts up to equivalence'.
 
\begin{remark}
The typical situation throughout this paper is that of a recollement (\ref{Recc}), where we know that $\mathcal{Y}'$, $\mathcal{D}'$ and $\mathcal{X}'$ are thick subcategories of $\mathcal{Y}$, $\mathcal{D}$ and $\mathcal{X}$, respectively. The condition that the TTF triple $(\text{Im}(\tilde{j}_!),\text{Im}(\tilde{i}_*),\text{Im}(\tilde{j}_*))$ in $\mathcal{D}'$ lifts to a TTF triple in $\mathcal{D}$ does not mean that it lifts to a TTF triple coming from a recollement (\ref{Recb}), i.e.\ it might happen that we cannot find functors $i^*, i_*, i^!,j_!,j^*$ and $j_*$, which restrict to the functors $\tilde{i}^*, \tilde{i}_*, \tilde{i}^!,\tilde{j}_!,\tilde{j}^*$ and $\tilde{j}_*$. Therefore if the recollement (\ref{Recc}) lifts up to equivalence to a recollement (\ref{Recb}), then the TTF triple $(\text{Im}(\tilde{j}_!),\text{Im}(\tilde{i}_*),\text{Im}(\tilde{j}_*))$ lifts to a TTF triple in $\mathcal{D}$, but the converse need not be true. Similarly, it might happen that a TTF triple coming from the recollement (\ref{Recb}) restricts to the subcategory $\mathcal{D}'$ but the functors from (\ref{Recb}) do not restrict to $\mathcal{Y}',\mathcal{D}'$ and $\mathcal{X}'$.
\end{remark}

Given torsion pairs $(\mathcal{X}',\mathcal{X}'')$ and $(\mathcal{Y}',\mathcal{Y}'')$ in $\mathcal{X}$ and $\mathcal{Y}$, respectively, \emph{the torsion pair glued with respect to the recollement (\ref{Recb})} is the pair $(\mathcal{D}',\mathcal{D}'')$ in $\mathcal{D}$, where

$$\mathcal{D}' =\{Z \in \mathcal{D} \vert j^*Z \in \mathcal{X}',
i^*Z \in \mathcal{Y}'\},$$ $$\mathcal{D}'' =\{Z \in \mathcal{D}
\vert j^*Z \in \mathcal{X}'', i^!Z \in \mathcal{Y}''\}.$$
Moreover, when the original torsion pairs are associated to  t-structures (resp. co-t-structures or semi-orthogonal decompositions), the resulting torsion pair is associated to a t-structure (resp. co-t-structure or semi-orthogonal decomposition) in $\mathcal{D}$ (see \cite[Th\'eor\`eme 1.4.10]{BBD} for t-structures and  \cite[Theorem 8.2.3]{Bo} for co-t-structures).

A \emph{ladder of recollements} $\mathcal{L}$ is a finite or infinite diagram of
triangulated categories and triangulated functors

$$
\begin{xymatrix}{
\vdots&\vdots&\vdots\\
\mathcal{C'} \ar[r] \ar@<4ex>[r]\ar@<-4ex>[r]& \mathcal{C} \ar@<2ex>[l]\ar@<-2ex>[l] \ar@<4ex>[r]\ar@<-4ex>[r] \ar[r] & \mathcal{C''} \ar@<2ex>[l]\ar@<-2ex>[l]\\
\vdots&\vdots&\vdots\\
}
\end{xymatrix}
$$
such that any three consecutive rows form a recollement (see \cite{BGSKo, AKLY}). The
\emph{height} of a ladder is the number of recollements contained in
it (counted with multiplicities).






\subsection{Generation and Milnor colimits:} For a class of objects $\mathcal{S}$ in $\mathcal{D}$ one can consider the pair of subcategories 
$(\mathcal{X},\mathcal{Y})=({}^\perp (\mathcal{S}^\perp),\mathcal{S}^\perp)$. Then, $\mathcal{X}$ and $\mathcal{Y}$ are closed under direct 
summands and $\text{Hom}_\mathcal{D}(X,Y)=0$ for all $X\in\mathcal{X}$ and $Y\in\mathcal{Y}$. However, the inclusion 
$\mathcal{X}\star\mathcal{Y}\subseteq\mathcal{D}$ might be strict, so that $(\mathcal{X},\mathcal{Y})$ is not necessarily a torsion pair. 
We shall say that 
\emph{$\mathcal{S}$ generates a torsion pair in $\mathcal{D}$} or
 that \emph{the torsion pair generated by $\mathcal{S}$ in $\mathcal{D}$ exists} if $({}^\perp (\mathcal{S}^\perp),\mathcal{S}^\perp)$ 
is a torsion pair. Slightly abusing common terminology, we will say that \emph{$\mathcal{S}$ generates 
a t-structure (resp. co-t-structure or semiorthogonal decomposition)} or that \emph{the t-structure (resp. co-t-structure or 
semiorthogonal decomposition) generated by $\mathcal{S}$ in $\mathcal{D}$ exists} when the torsion pair generated by $\bigcup_{k\geq 0}\mathcal{S}[k]$ (resp.  $\bigcup_{k\leq 0}\mathcal{S}[k]$ or $\bigcup_{k\in\mathbb{Z}}\mathcal{S}[k]$) exists. In all those cases $\mathcal{S}$ is contained in the aisle of the corresponding t-structure (resp. co-t-structure, resp. semi-orthogonal decomposition). That is,  if $(\mathcal{X},\mathcal{Y})$ is the torsion pair constructed above, then $(\mathcal{X},\mathcal{Y}[1])=({}^\perp (\mathcal{S}^{\perp_{\leq 0}}),\mathcal{S}^{\perp_{<0}})$ and $(\mathcal{X},\mathcal{Y}[-1])=({}^\perp (\mathcal{S}^{\perp_{\geq  0}}),\mathcal{S}^{\perp_{ >0 }})$ are the t-structure and co-t-structure generated by $\mathcal{S}$. 
The semi-orthogonal decomposition  generated by $\mathcal{S}$ is  $({}^\perp (\mathcal{S}^{\perp_{k\in\mathbb{Z}}}),\mathcal{S}^{\perp_{k\in\mathbb{Z}}})$). The definition of the dual notions is left to the reader. We just point out that, keeping the dual philosophy of forcing $\mathcal{S}$ to be contained in the co-aisle, the \emph{t-structure (resp. co-t-structure) cogenerated by $\mathcal{S}$}, when it exists, is the pair $({}^{\perp_{<0}}\mathcal{S},({}^{\perp_{\leq 0}}\mathcal{S})^\perp)$ (resp. $({}^{\perp_{> 0}}\mathcal{S},({}^{\perp_{\geq  0}}\mathcal{S})^\perp)$).
A class (resp.  set)
$\mathcal{S}\subset\text{Ob}(\mathcal{D})$ is a \emph{generating class (resp. set) of $\mathcal{D}$} if $\mathcal{S}^{\perp_{k\in\mathbb{Z}}}=0$, in this case we will also say, that \emph{$\mathcal{S}$ generates $\mathcal{D}$}.  We say that $\mathcal{D}$ satisfies the \emph{property of infinite d\'evissage with respect to $\mathcal{S}$} when $\mathcal{D}=\text{Loc}_\mathcal{D}(\mathcal{S})$, a fact that implies that $\mathcal{S}$ generates $\mathcal{D}$.
 
When $\mathcal{D}$  has
coproducts,  a \emph{compact object} is an object $X$ such that the canonical map $\coprod_{i\in I}\text{Hom}_\mathcal{D}(X,M_i)\longrightarrow\text{Hom}_\mathcal{D}(X,\coprod_{i\in I}M_i)$ is bijective. 
A torsion pair is called 
\emph{compactly
generated} when  there exists a \underline{set} of compact objects which generates the torsion pair. We say that \emph{$\mathcal{D}$ is a 
compactly generated triangulated category} when it has a generating \underline{set} of compact objects.  It is well-known that in this case
the subcategory $\mathcal{D}^c$ of the compact objects of $\mathcal{D}$ is skeletally small (see, e.g., \cite[Lemma 4.5.13]{N}). 
A triangulated category is called \emph{algebraic} when it is triangulated equivalent to the stable category of a Frobenius exact category  (see \cite{K2} and \cite{P}). 

Assuming that $\mathcal{D}$ has coproducts, for a sequence of morphisms 
\begin{equation}\label{star}
0=X_{-1}\stackrel{x_0}{\longrightarrow}X_0
\stackrel{x_1}{\longrightarrow}X_1\stackrel{x_2}{\longrightarrow}\cdots\stackrel{x_n}{\longrightarrow}X_n
\stackrel{x_{n+1}}{\longrightarrow}\cdots
\end{equation}
 let us denote by $\sigma:\coprod_{n\in\mathbb{N}}X_n\longrightarrow \coprod_{n\in\mathbb{N}}X_n$ the unique morphism such that $\sigma\circ\iota_k=\iota_{k+1}\circ x_{k+1}$, where $\iota_k:X_k\longrightarrow\coprod_{n\in\mathbb{N}}X_n$ is the canonical inclusion, for all $k\in\mathbb{N}$. The \emph{Milnor colimit} (or homotopy colimit \cite{N}) of the sequence is the object $X=\text{Mcolim}(X_n)$ which appears in the triangle $$\coprod_{n\in\mathbb{N}}X_n\stackrel{1-\sigma}{\longrightarrow} \coprod_{n\in\mathbb{N}}X_n\longrightarrow X\stackrel{+}{\longrightarrow}.$$ 
 We will frequently use the fact that,  when $\mathcal{D}$ has coproducts   and $\mathcal{S}$ is a set of compact objects, the pair 
$({}^\perp (\mathcal{S}^\perp),\mathcal{S}^\perp)$ is a torsion pair in $\mathcal{D}$ (see \cite[Theorem 4.3]{AI} or  \cite{AJS, NeTGen}).  In the case
of the t-structure (resp. semi-orthogonal decomposition) generated by $\mathcal{S}$, one has  
${}^\perp (\mathcal{S}^{\perp_{\leq 0}})=\text{Susp}_\mathcal{D}(\mathcal{S})$ 
(resp. ${}^\perp (\mathcal{S}^{\perp_{i\in\mathbb{Z}}})=\text{Loc}_\mathcal{D}(\mathcal{S})$) 
(see \cite[Theorem 12.1]{KN}, \cite[Lemma 2.3]{NS1}). Furthermore, the objects of 
$\text{Loc}_\mathcal{D}(\mathcal{S})$ (resp. $\text{Susp}_\mathcal{D}(\mathcal{S})$, for a non-positive $\mathcal{S}$) 
are precisely the Milnor colimits of sequences of the form (\ref{star}), where the cone of each $x_n$, denoted by $\text{cone}(x_n)$, is a coproduct of objects from 
$\bigcup_{k\in\mathbb{Z}}\mathcal{S}[k]$ (resp. of objects from $\mathcal{S}[n]$), for each $n\in\mathbb{N}$ (see the proof of 
\cite[Theorem 8.3.3]{N} for 
$\text{Loc}_\mathcal{D}(\mathcal{S})$, and 
\cite[Theorem 12.2]{KN} and \cite[Theorem 2]{NSZ} for $\text{Susp}_\mathcal{D}(\mathcal{S})$)  (see also \cite[Theorem 3.7]{PSt}).

A class of objects $\mathcal{T}$ is called \emph{non-positive} if
$\text{Hom}_\mathcal{D}(T,T'[i])=0$ for any $T,T' \in \mathcal{T}, i
>0$. Two non-positive sets $\mathcal{T}$ and $\mathcal{T}'$ are said to be \emph{$add$- (resp. $Add$-) equivalent} when $\text{add}(\mathcal{T})=\text{add}(\mathcal{T}')$ (resp. $\text{Add}(\mathcal{T})=\text{Add}(\mathcal{T}')$).

\subsection{DG categories and algebras:} A  \emph{differential graded (=dg) category} is
a category $\mathcal{A}$ such that, for each pair $(A,B)$ of its objects, the $K$-module of
morphisms, denoted by $\mathcal{A}(A,B)$, has a
structure of a differential graded $K$-module (or equivalently a
structure of a complex of $K$-modules) so that the composition map $\mathcal{A}(B,C)\otimes\mathcal{A}(A,B)\longrightarrow\mathcal{A}(A,C)$
($g\otimes f\rightsquigarrow g\circ f$) is a morphism of degree zero of the underlying graded $K$-modules which commutes with the differentials. This means that $d(g\circ f)=d(g)\circ f+(-1)^{|g|}g\circ d(f)$ whenever $g\in\mathcal{A}(B,C)$ and $f\in\mathcal{A}(A,B)$ are homogeneous morphisms and $|g|$ is the degree of $g$. The reader is referred to \cite{K} and \cite{K2} for details on dg categories.
The most important concept for us is the \emph{derived category of a small dg category}, denoted by $\mathcal{D}(\mathcal{A})$. It is the localization, in the sense of Gabriel-Zisman (\cite{GZ}) of $\mathcal{C}(\mathcal{A})$ with respect to the class of quasi-isomorphisms. Here $\mathcal{C}(\mathcal{A})$ denotes the category whose objects are the (right) dg $\mathcal{A}$-modules (i.e. the dg functors $M:\mathcal{A}^{op}\longrightarrow\mathcal{C}_{dg}K$, where $\mathcal{C}_{dg}K$ is the category of dg $K$-modules) and the  morphisms $f:M\longrightarrow N$ are the morphisms of degree zero in the underlying graded category which commute with the differentials. The category $\mathcal{D}(\mathcal{A})$ is triangulated and it turns out that, up to triangulated equivalence, the derived categories $\mathcal{D}(\mathcal{A})$ are precisely the compactly generated algebraic triangulated categories (see \cite[Theorem 4.3]{K}). The canonical set of compact generators of $\mathcal{D}(\mathcal{A})$ is the set of \emph{representable dg $\mathcal{A}$-modules} $\{A^\wedge\text{: }A\in\mathcal{A}\}$, where $A^\wedge :\mathcal{A}^{op}\longrightarrow\mathcal{C}_{dg}K$ takes $A'$ to $\mathcal{A}(A',A)$, for each $A'\in\mathcal{A}$. We will frequently use the fact that there is a natural isomorphism of $K$-modules $\text{Hom}_{\mathcal{D}(\mathcal{A})}(A^\wedge ,M[k])\cong H^k(M(A))$, for $A\in\mathcal{A}$ and $M\in\mathcal{D}(\mathcal{A})$. 

Two particular cases of small dg categories $\mathcal{A}$ will be of special interest  to us. Any small $K$-category  can be considered as a dg category 
concentrated in degree zero.  A dg algebra $A$, i.e. an associative unital graded algebra $A$ with a differential $d:A\longrightarrow A$ which satisfies the graded Leibniz rule,  is  a dg category with just one object. 
 The intersection of both cases  is the case of an associative unital algebra, 
called \emph{ordinary algebra} throughout the paper, which is then considered as a dg category with just one object 
concentrated in degree zero. Such an algebra will be called \emph{a finite length algebra} when it has finite length as a $K$-module 
(note that we are neither requiring $K$ to be a field nor an Artin ring). Note also that an \emph{Artin algebra} is just an ordinary algebra which is
finite length over its center, which, in turn, is a commutative Artin ring. Conversely, an ordinary $K$-algebra $A$ is finite length if, and only if, it  is an Artin algebra whose center is a (commutative) finite length $K$-algebra. For any ordinary algebra $A$, 
 we will denote by $\text{Mod-}A$ (resp. $\text{mod-}A$, $\text{fl-}A$,   $\text{Proj-}A$, $\text{proj-}A$) the category of all (resp. finitely presented, finite length, projective, finitely generated projective) 
right $A$-modules. We refer the reader to \cite{AF}, \cite{ASS},  \cite{ARS}  and \cite{LAZ}  for the classical terminology concerning ordinary rings, 
algebras and their modules.

\section{On lifting recollements and TTF triples} \label{section.some points}

In this section for thick subcategories of compactly generated algebraic triangulated categories we investigate the relation between the preservation of compactness by the functors of the recollement and lifting TTF triples and recollements. 

The following Lemma was proved in \cite[Lemma 2.4]{NS1} and \cite[Lemma 2.3]{NS1}.

\begin{lemma}\label{LemNS}
Let $\mathcal{T}$ be a triangulated category with coproducts. If $(\mathcal{E},\mathcal{F})$ is a compactly generated semi-orthogonal 
decomposition in $\mathcal{T}$, then  $\mathcal{E}$ 
is compactly generated as 
a triangulated category and $\mathcal{E}^c=\mathcal{E}\cap\mathcal{T}^c$. Moreover,  $\mathcal{F}$ has coproducts,
calculated as in $\mathcal{T}$, and the left adjoint 
$\tau:\mathcal{T}\longrightarrow\mathcal{F}$ of the inclusion functor preserves compact objects. When, in addition,
$\mathcal{T}$ is compactly generated, $\mathcal{F}$ is compactly generated by $\tau (\mathcal{T}^c)$. 
\end{lemma}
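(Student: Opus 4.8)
The plan is to fix, once and for all, a \emph{set} $\mathcal{S}$ of compact objects of $\mathcal{T}$ generating the semi-orthogonal decomposition, which exists by hypothesis; then $\mathcal{F}=\mathcal{S}^{\perp_{k\in\mathbb{Z}}}$ and $\mathcal{E}={}^\perp(\mathcal{S}^{\perp_{k\in\mathbb{Z}}})=\text{Loc}_{\mathcal{T}}(\mathcal{S})$. First I would record two observations used throughout. Since every $S\in\mathcal{S}$ is compact in $\mathcal{T}$, the canonical map $\coprod_{i\in I}\text{Hom}_{\mathcal{T}}(S,X_i[k])\longrightarrow\text{Hom}_{\mathcal{T}}(S,\coprod_{i\in I}X_i[k])$ is bijective, so $\mathcal{F}=\mathcal{S}^{\perp_{k\in\mathbb{Z}}}$ is closed under the coproducts of $\mathcal{T}$; and $\mathcal{E}=\text{Loc}_{\mathcal{T}}(\mathcal{S})$ is closed under coproducts because it is localizing. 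Hence both $\mathcal{E}$ and $\mathcal{F}$ have coproducts, computed as in $\mathcal{T}$. Secondly, the inclusion $\mathcal{F}\hookrightarrow\mathcal{T}$ has a left adjoint $\tau$: on objects, $\tau M=F_M$, where $E_M\longrightarrow M\longrightarrow F_M\stackrel{+}{\longrightarrow}$ is the decomposition triangle of the torsion pair ($E_M\in\mathcal{E}$, $F_M\in\mathcal{F}$); applying $\text{Hom}_{\mathcal{T}}(-,F')$ with $F'\in\mathcal{F}$ to this triangle and using $E_M,E_M[1]\in\mathcal{E}$ yields $\text{Hom}_{\mathcal{T}}(M,F')\cong\text{Hom}_{\mathcal{T}}(F_M,F')$, naturally in $M$. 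Taking the coproduct of the triangles $E_{M_i}\longrightarrow M_i\longrightarrow F_{M_i}\stackrel{+}{\longrightarrow}$ and using the first observation shows $\tau$ commutes with coproducts, with $F_{\coprod_i M_i}=\coprod_i F_{M_i}$.

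For the first assertion, I would note that each $S\in\mathcal{S}$ is compact in $\mathcal{E}$, since coproducts in $\mathcal{E}$ agree with those in $\mathcal{T}$; and that $\mathcal{S}$ generates $\mathcal{E}$, because an $X\in\mathcal{E}$ with $\text{Hom}_{\mathcal{T}}(S,X[k])=0$ for all $S\in\mathcal{S}$ and all $k\in\mathbb{Z}$ lies in $\mathcal{E}\cap\mathcal{F}=0$. Thus $\mathcal{E}$ is compactly generated. The inclusion $\mathcal{E}\cap\mathcal{T}^c\subseteq\mathcal{E}^c$ is immediate, again because the coproducts of $\mathcal{E}$ and of $\mathcal{T}$ coincide. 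For the reverse inclusion, take $X\in\mathcal{E}^c$ and a family $(M_i)_{i\in I}$ in $\mathcal{T}$, and form the decomposition triangle of $\coprod_i M_i$ as the coproduct of the triangles of the $M_i$, namely $\coprod_i E_{M_i}\longrightarrow\coprod_i M_i\longrightarrow\coprod_i F_{M_i}\stackrel{+}{\longrightarrow}$. Since $X\in\mathcal{E}$ and $\mathcal{F}=\mathcal{F}[1]$, both $\text{Hom}_{\mathcal{T}}(X,\coprod_i F_{M_i})$ and $\text{Hom}_{\mathcal{T}}(X,(\coprod_i F_{M_i})[-1])$ vanish, so $\text{Hom}_{\mathcal{T}}(X,\coprod_i M_i)\cong\text{Hom}_{\mathcal{T}}(X,\coprod_i E_{M_i})$; as $X$ is compact in $\mathcal{E}$ and $\coprod_i E_{M_i}$ is the coproduct of the $E_{M_i}$ in $\mathcal{E}$, this in turn is isomorphic to $\coprod_i\text{Hom}_{\mathcal{T}}(X,E_{M_i})\cong\coprod_i\text{Hom}_{\mathcal{T}}(X,M_i)$ (again because $X$ annihilates the $\mathcal{F}$-parts and their $(-1)$-shifts). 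A routine check identifies the composite with the canonical morphism, so $X\in\mathcal{T}^c$, proving $\mathcal{E}^c=\mathcal{E}\cap\mathcal{T}^c$.

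Next I would show $\tau$ preserves compact objects: for $X\in\mathcal{T}^c$ and $(F_i)_{i\in I}$ in $\mathcal{F}$, the chain $\text{Hom}_{\mathcal{F}}(\tau X,\coprod_i F_i)\cong\text{Hom}_{\mathcal{T}}(X,\coprod_i F_i)\cong\coprod_i\text{Hom}_{\mathcal{T}}(X,F_i)\cong\coprod_i\text{Hom}_{\mathcal{F}}(\tau X,F_i)$ (adjunction; compactness of $X$ in $\mathcal{T}$; adjunction; using that $\coprod_i F_i$ is the coproduct computed in $\mathcal{F}$) exhibits $\tau X$ as compact in $\mathcal{F}$. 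Finally, if $\mathcal{T}$ is compactly generated then $\mathcal{T}^c$ is skeletally small, so $\tau(\mathcal{T}^c)$ is, up to isomorphism, a set of compact objects of $\mathcal{F}$; and it generates $\mathcal{F}$, because any $F\in\mathcal{F}$ with $\text{Hom}_{\mathcal{F}}(\tau X,F[k])\cong\text{Hom}_{\mathcal{T}}(X,F[k])=0$ for all $X\in\mathcal{T}^c$ and all $k\in\mathbb{Z}$ lies in $(\mathcal{T}^c)^{\perp_{k\in\mathbb{Z}}}=0$. Hence $\mathcal{F}$ is compactly generated by $\tau(\mathcal{T}^c)$.

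I expect the only real obstacle to be the inclusion $\mathcal{E}^c\subseteq\mathcal{T}^c$ and, closely related, the claim that coproducts in $\mathcal{F}$ are computed as in $\mathcal{T}$; both rest essentially on the assumption that the torsion pair is \emph{compactly generated}, which is what makes $\mathcal{F}$ a subcategory cut out by vanishing of $\text{Hom}$ out of compact objects, hence closed under coproducts, and makes the truncation functor $\tau$ commute with coproducts. For an arbitrary semi-orthogonal decomposition these statements can fail. Everything else is adjunction bookkeeping together with the standard fact that coproducts of distinguished triangles are distinguished.
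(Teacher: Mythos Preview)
Your proof is correct and self-contained. The paper does not actually give a proof of this lemma; it simply cites \cite[Lemmas 2.3 and 2.4]{NS1}. Your argument is essentially the standard one that appears in that reference: use the compact generating set $\mathcal{S}$ to see that $\mathcal{F}=\mathcal{S}^{\perp_{k\in\mathbb{Z}}}$ is closed under coproducts, deduce that $\tau$ preserves coproducts and compact objects by adjunction, and verify $\mathcal{E}^c=\mathcal{E}\cap\mathcal{T}^c$ by comparing decomposition triangles termwise. Your closing remark correctly identifies the one place where the compact-generation hypothesis is genuinely needed.
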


\begin{lemma}\label{LemmaNew}
Let $\hat{\mathcal{D}}$ be a compactly generated triangulated category and let $(\mathcal{U}_0,\mathcal{V}_0)$ be a semi-orthogonal decomposition of $\hat{\mathcal{D}}^c$.  Then $(\mathcal{U},\mathcal{V},\mathcal{W}):=(\text{Loc}_{\hat{\mathcal{D}}}(\mathcal{U}_0),\text{Loc}_{\hat{\mathcal{D}}}(\mathcal{V}_0), \text{Loc}_{\hat{\mathcal{D}}}(\mathcal{V}_0)^{\perp})$ is a TTF triple in $\hat{\mathcal{D}}$ such that $(\mathcal{U},\mathcal{V})$ and $(\mathcal{V},\mathcal{W})$ are compactly generated semiorthogonal decompositions. Moreover, $(\mathcal{U}\cap\hat{\mathcal{D}}^c,\mathcal{V}\cap\hat{\mathcal{D}}^c)=(\mathcal{U}_0,\mathcal{V}_0)=(\mathcal{U}^c,\mathcal{V}^c)$.
\end{lemma}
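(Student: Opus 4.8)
The plan is to deduce everything from Lemma \ref{LemNS} by applying it twice, once to the semi-orthogonal decomposition $(\mathcal{U}_0,\mathcal{V}_0)$ of $\hat{\mathcal{D}}^c$ and once to its "shift" $(\mathcal{V}_0,\mathcal{U}_0[1])$ — but the key preliminary observation is that $(\mathcal{U}_0,\mathcal{V}_0)$, being a semi-orthogonal decomposition of the \emph{skeletally small} category $\hat{\mathcal{D}}^c$ (small by \cite[Lemma 4.5.13]{N}), consists of sets of compact objects of $\hat{\mathcal{D}}$, so each of $\mathcal{U}_0,\mathcal{V}_0$ generates a semi-orthogonal decomposition in $\hat{\mathcal{D}}$. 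Concretely: since $(\mathcal{U}_0,\mathcal{V}_0)$ is a semi-orthogonal decomposition of $\hat{\mathcal{D}}^c$ we have $\mathcal{V}_0=\mathcal{U}_0^{\perp_{k\in\mathbb{Z}}}\cap\hat{\mathcal{D}}^c$ and $\mathcal{U}_0={}^{\perp_{k\in\mathbb{Z}}}\mathcal{V}_0\cap\hat{\mathcal{D}}^c$, and both are closed under $[1]$ and under direct summands. I would first argue that $\text{Loc}_{\hat{\mathcal{D}}}(\mathcal{U}_0)={}^{\perp_{k\in\mathbb{Z}}}(\mathcal{U}_0^{\perp_{k\in\mathbb{Z}}})$ is the aisle of the semi-orthogonal decomposition $({}^{\perp_{k\in\mathbb{Z}}}(\mathcal{U}_0^{\perp_{k\in\mathbb{Z}}}),\mathcal{U}_0^{\perp_{k\in\mathbb{Z}}})$ generated by the set $\mathcal{U}_0$ of compact objects, using the facts recalled in Subsection 2.3 (existence of torsion pairs generated by sets of compact objects, and ${}^\perp(\mathcal{S}^{\perp_{i\in\mathbb{Z}}})=\text{Loc}_{\hat{\mathcal{D}}}(\mathcal{S})$). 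Call this $(\mathcal{U},\mathcal{V}')$ with $\mathcal{U}=\text{Loc}_{\hat{\mathcal{D}}}(\mathcal{U}_0)$; it is a compactly generated semi-orthogonal decomposition by construction.

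Next I would identify $\mathcal{V}'$ with $\mathcal{V}=\text{Loc}_{\hat{\mathcal{D}}}(\mathcal{V}_0)$. Apply Lemma \ref{LemNS} to the compactly generated semi-orthogonal decomposition $(\mathcal{U},\mathcal{V}')$ of $\hat{\mathcal{D}}$: it gives that $\mathcal{V}'$ is compactly generated, that $(\mathcal{V}')^c=\mathcal{V}'\cap\hat{\mathcal{D}}^c$, and that $\mathcal{V}'$ is compactly generated by $\tau(\hat{\mathcal{D}}^c)$, where $\tau$ is the left adjoint of the inclusion $\mathcal{V}'\hookrightarrow\hat{\mathcal{D}}$. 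Now $\mathcal{V}'\cap\hat{\mathcal{D}}^c = \mathcal{U}_0^{\perp_{k\in\mathbb{Z}}}\cap\hat{\mathcal{D}}^c=\mathcal{V}_0$, so $(\mathcal{V}')^c=\mathcal{V}_0$; since $\mathcal{V}'$ is compactly generated, $\mathcal{V}'=\text{Loc}_{\hat{\mathcal{D}}}((\mathcal{V}')^c)=\text{Loc}_{\hat{\mathcal{D}}}(\mathcal{V}_0)=\mathcal{V}$. Symmetrically (or directly), the same Lemma \ref{LemNS} applied to $(\mathcal{U},\mathcal{V})$ gives $\mathcal{U}$ compactly generated with $\mathcal{U}^c=\mathcal{U}\cap\hat{\mathcal{D}}^c$, and $\mathcal{U}\cap\hat{\mathcal{D}}^c={}^{\perp_{k\in\mathbb{Z}}}\mathcal{V}_0\cap\hat{\mathcal{D}}^c\supseteq\mathcal{U}_0$; the reverse inclusion $\mathcal{U}\cap\hat{\mathcal{D}}^c\subseteq\mathcal{U}_0$ follows because any compact object $X$ of $\hat{\mathcal{D}}$ lying in $\mathcal{U}={}^{\perp_{k\in\mathbb{Z}}}\mathcal{V}'={}^{\perp_{k\in\mathbb{Z}}}\mathcal{V}$ satisfies $\text{Hom}(X,V_0[k])=0$ for all $V_0\in\mathcal{V}_0\subseteq\mathcal{V}$, hence $X\in{}^{\perp_{k\in\mathbb{Z}}}\mathcal{V}_0\cap\hat{\mathcal{D}}^c=\mathcal{U}_0$. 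This yields $(\mathcal{U}\cap\hat{\mathcal{D}}^c,\mathcal{V}\cap\hat{\mathcal{D}}^c)=(\mathcal{U}_0,\mathcal{V}_0)=(\mathcal{U}^c,\mathcal{V}^c)$.

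Finally, for the right-hand adjacency, set $\mathcal{W}:=\mathcal{V}^\perp=\text{Loc}_{\hat{\mathcal{D}}}(\mathcal{V}_0)^\perp$. To see $(\mathcal{V},\mathcal{W})$ is a compactly generated semi-orthogonal decomposition I would apply the generation machinery once more to the set $\mathcal{V}_0$: since $\mathcal{V}_0\subseteq\hat{\mathcal{D}}^c$ is a set of compact objects closed under $[1]$ and direct summands, $({}^{\perp_{k\in\mathbb{Z}}}(\mathcal{V}_0^{\perp_{k\in\mathbb{Z}}}),\mathcal{V}_0^{\perp_{k\in\mathbb{Z}}})=(\text{Loc}_{\hat{\mathcal{D}}}(\mathcal{V}_0),\mathcal{V}_0^{\perp_{k\in\mathbb{Z}}})=(\mathcal{V},\mathcal{V}_0^{\perp})$ is a semi-orthogonal decomposition generated by a set of compacts; and $\mathcal{V}_0^\perp=\mathcal{V}^\perp=\mathcal{W}$ because $\mathcal{V}=\text{Loc}_{\hat{\mathcal{D}}}(\mathcal{V}_0)$ and $\text{Hom}(-,W)$ sends coproducts and triangles appropriately, so being right-orthogonal to $\mathcal{V}_0$ is the same as being right-orthogonal to all of $\text{Loc}_{\hat{\mathcal{D}}}(\mathcal{V}_0)$. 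Thus $(\mathcal{U},\mathcal{V})$ and $(\mathcal{V},\mathcal{W})$ are adjacent semi-orthogonal decompositions, i.e. $(\mathcal{U},\mathcal{V},\mathcal{W})$ is a TTF triple, and both halves are compactly generated. The only genuinely delicate point — and the one I would write out most carefully — is the passage from "semi-orthogonal decomposition of the small category $\hat{\mathcal{D}}^c$" to "semi-orthogonal decomposition of $\hat{\mathcal{D}}$ generated by a \emph{set} of compact objects," i.e. verifying that $\mathcal{U}_0$ (equivalently $\mathcal{V}_0$) is genuinely a set and that the orthogonality classes computed inside $\hat{\mathcal{D}}^c$ agree with those computed in $\hat{\mathcal{D}}$ after passing to localizing subcategories; everything else is a bookkeeping application of Lemma \ref{LemNS}.
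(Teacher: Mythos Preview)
Your proof is correct in substance but contains one misattribution: Lemma \ref{LemNS} asserts $\mathcal{E}^c=\mathcal{E}\cap\hat{\mathcal{D}}^c$ only for the \emph{aisle} $\mathcal{E}$ of the semi-orthogonal decomposition, not for the co-aisle $\mathcal{F}$. So you cannot read off $(\mathcal{V}')^c=\mathcal{V}'\cap\hat{\mathcal{D}}^c$ directly from it. The fix is short: Lemma \ref{LemNS} does say that $\mathcal{V}'$ is compactly generated by $\tau(\hat{\mathcal{D}}^c)$, and for $D\in\hat{\mathcal{D}}^c$ the triangle $U_0\longrightarrow D\longrightarrow V_0\stackrel{+}{\longrightarrow}$ coming from the decomposition of $\hat{\mathcal{D}}^c$ has $U_0\in\mathcal{U}_0\subseteq\mathcal{U}$ and $V_0\in\mathcal{V}_0\subseteq\mathcal{V}'$, hence is also the $(\mathcal{U},\mathcal{V}')$-truncation triangle in $\hat{\mathcal{D}}$; thus $\tau(D)\cong V_0\in\mathcal{V}_0$, so $\tau(\hat{\mathcal{D}}^c)=\mathcal{V}_0$ and $\mathcal{V}'=\text{Loc}_{\hat{\mathcal{D}}}(\mathcal{V}_0)=\mathcal{V}$ as you want. (This also gives $(\mathcal{V}')^c=\text{thick}(\mathcal{V}_0)=\mathcal{V}_0$, recovering your claim.)

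With that patched, your route genuinely differs from the paper's. The paper first establishes $(\mathcal{V},\mathcal{W})$ as a torsion pair and then proves $\mathcal{U}^\perp=\mathcal{V}$ by a direct truncation argument: for $M\in\mathcal{U}^\perp$ take the $(\mathcal{V},\mathcal{W})$-triangle $V\longrightarrow M\longrightarrow W\stackrel{+}{\longrightarrow}$, observe $W\in(\mathcal{U}_0\cup\mathcal{V}_0)^\perp$, and use that every compact object admits a $(\mathcal{U}_0,\mathcal{V}_0)$-triangle to force $W=0$. You instead identify $\mathcal{U}^\perp$ with $\text{Loc}(\mathcal{V}_0)$ via the compact-generation clause of Lemma \ref{LemNS}. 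Your argument is more structural and offloads the work to that lemma, while the paper's is hands-on and makes explicit where the semi-orthogonal decomposition of $\hat{\mathcal{D}}^c$ is used. Both ultimately exploit that decomposition at the same essential point---to see that truncating a compact object yields compact pieces---just packaged differently.
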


\begin{proof}
The pair $(\mathcal{V},\mathcal{W})$ is clearly a torsion pair. We need to prove that $(\mathcal{U},\mathcal{V})$ is a torsion pair in $\hat{\mathcal{D}}$. The argument is standard and can be found in the literature (see \cite{NS2}). We sketch it, leaving some details to the reader. Since objects in $\mathcal{U}_0$ are compact and objects in $\mathcal{V}$ are Milnor colimits of sequences of morphisms with cones in $\text{Add}(\mathcal{V}_0)$, we see that $\mathcal{V}\subseteq\mathcal{U}^\perp$. For $M\in\mathcal{U}^\perp =(\mathcal{U}_0)^\perp$ let us consider the truncation triangle $V\longrightarrow M\longrightarrow W\stackrel{+}{\longrightarrow}$ with respect to $(\mathcal{V},\mathcal{W})$. We get $W\in\mathcal{U}^\perp $ and $W\in\mathcal{V}^\perp$. Therefore, $W\in (\mathcal{U}_0\cup \mathcal{V}_0)^\perp$. But for each $D\in\hat{\mathcal{D}}^c$ there is a triangle $U_0\longrightarrow D\longrightarrow V_0\stackrel{+}{\longrightarrow}$, whose outer terms are in $\mathcal{U}_0$ and $\mathcal{V}_0$, respectively. It follows that $\text{Hom}_{\hat{\mathcal{D}}}(D,W)=0$, for all $D\in\hat{\mathcal{D}}^c$. This implies that $W=0$, so $V\cong M$ belongs to $\mathcal{V}$. Then the pair $(\mathcal{U},\mathcal{V})$ is of the form $(\text{Loc}_{\hat{\mathcal{D}}}(\mathcal{U}_0),\text{Loc}_{\hat{\mathcal{D}}}(\mathcal{U}_0)^{\perp})$, and hence is a torsion pair. The torsion pairs $(\mathcal{U},\mathcal{V})$ and $(\mathcal{V},\mathcal{W})$ are  
compactly generated by  sets $\mathcal{U}_0$ and $\mathcal{V}_0$, respectively.

By Lemma \ref{LemNS}, we know that $(\mathcal{U}\cap\hat{\mathcal{D}}^c,\mathcal{V}\cap\hat{\mathcal{D}}^c)=(\mathcal{U}^c,\mathcal{V}^c)$. Since $\mathcal{U}_0$ is a  thick subcategory of a compactly generated triangulated category $\mathcal{U}$ which generates $\mathcal{U}$ and consists of compact objects of $\mathcal{U}$, we get $\mathcal{U}_0=\mathcal{U}^c$. Similarly, $\mathcal{V}_0=\mathcal{V}^c$.
\end{proof}

The key result of the section is the following. 

\begin{theorem} \label{thm.lifting of recollements}
Let $$
\begin{xymatrix}{\mathcal{Y} \ar[r]^{i_*}& \mathcal{D} \ar@<3ex>[l]_{i^!}\ar@<-3ex>[l]_{i^*}\ar[r]^{j^*} & \mathcal{X} \ar@<3ex>_{j_*}[l]\ar@<-3ex>_{j_!}[l]}
\end{xymatrix}
$$ be a recollement, where $\mathcal{Y}$, $\mathcal{D}$ and $\mathcal{X}$ are thick subcategories of compactly generated triangulated categories  $\hat{\mathcal{Y}}$, $\hat{\mathcal{D}}$ and $\hat{\mathcal{X}}$ which contain the respective subcategories of compact objects. Consider the following assertions:

\begin{enumerate}
\item The given recollement lifts to a recollement $$
\begin{xymatrix}{\hat{\mathcal{Y}} \ar[r]^{\hat{i}_*}& \hat{\mathcal{D}} \ar@<3ex>[l]_{\hat{i}^!}\ar@<-3ex>[l]_{\hat{i}^*}\ar[r]^{\hat{j}^*} & \hat{\mathcal{X}} \ar@<3ex>_{\hat{j}_*}[l]\ar@<-3ex>_{\hat{j}_!}[l]}
\end{xymatrix}$$ which is the upper part of a ladder of recollements of height two. 

\item The TTF triple $(\text{Im}(j_!),\text{Im}(i_*),\text{Im}(j_*))$ in $\mathcal{D}$ lifts to a  TTF triple $(\mathcal{U},\mathcal{V},\mathcal{W})$ in $\hat{\mathcal{D}}$ such that:

\begin{enumerate}
\item The torsion pairs $(\mathcal{U},\mathcal{V})$ and $(\mathcal{V},\mathcal{W})$ are compactly generated (whence $(\mathcal{U},\mathcal{V},\mathcal{W})$ is extendable to the right);
\item $j_!(\hat{\mathcal{X}}^c)=\mathcal{U}\cap\hat{\mathcal{D}}^c$ and $i_*(\hat{\mathcal{Y}}^c)=\mathcal{V}\cap\hat{\mathcal{D}}^c$.
\end{enumerate}

\item The functors $j_!$, $j^*$, $i^*$ and $i_*$ preserve compact objects, i.e. $j_!(\hat{\mathcal{X}}^c)\subseteq\hat{\mathcal{D}}^c$ and similarly for  $j^*$, $i^*$ and $i_*$.
\end{enumerate}

The implications $1)\Longrightarrow 2)\Longrightarrow 3)$ hold. Moreover, when  $\text{Im}(i_*)$ cogenerates $\text{Loc}_{\hat{\mathcal{D}}}(i_*(\hat{\mathcal{Y}}^c))$ or $\mathcal{D}$ cogenerates $\hat{\mathcal{D}}$, the implication $3)\Longrightarrow 2)$  also  holds.
\end{theorem}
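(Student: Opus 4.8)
The plan is to prove the three implications separately, exploiting Lemmas \ref{LemNS} and \ref{LemmaNew} as the structural backbone. The equivalences between recollements of $\mathcal{D}$ and TTF triples in $\mathcal{D}$ (recalled in Subsection 2.2) mean that ``lifting the recollement'' and ``lifting the TTF triple'' are closely related, so the implication $1)\Rightarrow 2)$ should be largely a matter of unwinding definitions together with Lemma \ref{LemNS}. Concretely, if the given recollement lifts to a recollement $(\hat{\mathcal{Y}}\equiv\hat{\mathcal{D}}\equiv\hat{\mathcal{X}})$ that sits at the top of a ladder of height two, then the associated TTF triple $(\mathcal{U},\mathcal{V},\mathcal{W})=(\mathrm{Im}(\hat{j}_!),\mathrm{Im}(\hat{i}_*),\mathrm{Im}(\hat{j}_*))$ restricts to $(\mathrm{Im}(j_!),\mathrm{Im}(i_*),\mathrm{Im}(j_*))$ on $\mathcal{D}$ by hypothesis; the ladder of height two guarantees that $(\mathcal{V},\mathcal{W})$ has an extra left adjoint, which forces $(\mathcal{V},\mathcal{W})$ to be compactly generated (apply Lemma \ref{LemNS} to the semi-orthogonal decomposition $(\mathcal{V},\mathcal{W})$, or use that an aisle with two adjoints on both sides is generated by the image of the compacts), and dually the existence of the ladder upward gives compact generation of $(\mathcal{U},\mathcal{V})$. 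Then 2)(b) follows: $\mathcal{U}\cap\hat{\mathcal{D}}^c=\mathcal{U}^c$ by Lemma \ref{LemNS}, and since $\hat{j}_!$ is a triangulated equivalence onto $\mathcal{U}$ preserving compactness (being part of a recollement whose restriction to $\mathcal{D}$ has $j_!$ preserving compacts — here one uses $\hat{\mathcal{X}}^c\subseteq\mathcal{X}$), we get $j_!(\hat{\mathcal{X}}^c)=\mathcal{U}^c$, and symmetrically for $i_*$.

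For $2)\Rightarrow 3)$: given the lifted TTF triple $(\mathcal{U},\mathcal{V},\mathcal{W})$ with the compact-generation and compact-intersection properties, I would argue as follows. The inclusion $\mathcal{U}\hookrightarrow\hat{\mathcal{D}}$ has a right adjoint, and its restriction to $\mathcal{D}$ is the recollement functor structure; the point is that $j_!\colon\hat{\mathcal{X}}\to\hat{\mathcal{D}}$ factors (up to the equivalence $\hat{\mathcal{X}}\cong\mathcal{U}$) through the inclusion $\mathcal{U}\hookrightarrow\hat{\mathcal{D}}$, and by 2)(b) a compact object of $\hat{\mathcal{X}}$ maps to an object of $\mathcal{U}\cap\hat{\mathcal{D}}^c$, i.e. to a compact of $\hat{\mathcal{D}}$. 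Wait — more carefully, one should note $j_!$ in the original recollement is an equivalence $\mathcal{X}\xrightarrow{\sim}\mathrm{Im}(j_!)=\mathcal{U}\cap\mathcal{D}$, and since $\hat{\mathcal{X}}^c\subseteq\mathcal{X}$ and the restriction of $\mathcal{U}\hookrightarrow\hat{\mathcal{D}}$ to $\mathcal{D}$ is $\mathrm{Im}(j_!)\hookrightarrow\mathcal{D}$, condition 2)(b) directly says $j_!(\hat{\mathcal{X}}^c)\subseteq\hat{\mathcal{D}}^c$. For $i_*$ the same argument with $\mathcal{V}$ in place of $\mathcal{U}$ gives $i_*(\hat{\mathcal{Y}}^c)\subseteq\hat{\mathcal{D}}^c$. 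For $j^*$ and $i^*$ one uses that these are the (co)reflection functors: $j^*$ is, up to the equivalence, the left adjoint $\tau\colon\hat{\mathcal{D}}\to\mathcal{W}\cong\hat{\mathcal{X}}$ associated with the torsion pair $(\mathcal{V},\mathcal{W})$, and Lemma \ref{LemNS} (applied to the compactly generated semi-orthogonal decomposition $(\mathcal{V},\mathcal{W})$) says precisely that this left adjoint preserves compact objects; similarly $i^*$ is the left adjoint of $\mathcal{U}\hookrightarrow\hat{\mathcal{D}}$ precomposed appropriately — actually $i^*$ corresponds to the truncation with respect to $(\mathcal{U},\mathcal{V})$, whose left adjoint onto... hmm, $i^*$ lands in $\hat{\mathcal{Y}}\cong\mathcal{V}$ and is the left adjoint of $i_*$, which is the composite $\hat{\mathcal{D}}\to\mathcal{V}$ given by the torsion-pair truncation for $(\mathcal{U},\mathcal{V})$; by Lemma \ref{LemNS} applied to $(\mathcal{U},\mathcal{V})$ this preserves compacts. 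So all four upper functors preserve compactness.

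The main obstacle is the converse $3)\Rightarrow 2)$ under the extra cogeneration hypothesis, and this is where I expect the real work. Starting from 3), set $\mathcal{U}_0:=j_!(\hat{\mathcal{X}}^c)$ and $\mathcal{V}_0:=i_*(\hat{\mathcal{Y}}^c)$; these are thick subcategories of $\hat{\mathcal{D}}^c$ by preservation of compactness, and I would first check that $(\mathcal{U}_0,\mathcal{V}_0)$ is a semi-orthogonal decomposition of $\hat{\mathcal{D}}^c$. Semi-orthogonality $\mathrm{Hom}(\mathcal{U}_0,\mathcal{V}_0)=0$ is inherited from $\mathrm{Hom}(\mathrm{Im}(j_!),\mathrm{Im}(i_*))=0$ in $\mathcal{D}$. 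The decomposition property requires that every $D\in\hat{\mathcal{D}}^c$ fit in a triangle $U_0\to D\to V_0\xrightarrow{+}$ with $U_0\in\mathcal{U}_0$, $V_0\in\mathcal{V}_0$: here one wants to apply the truncation triangle $j_!j^*D\to D\to i_*i^*D\xrightarrow{+}$ from the recollement — but a priori this lives in $\mathcal{D}$, not $\hat{\mathcal{D}}$; the resolution is that $D\in\hat{\mathcal{D}}^c\subseteq\mathcal{D}$ by hypothesis, so the recollement triangle applies, and $j_!j^*D$, $i_*i^*D$ are compact by 3), hence lie in $\mathcal{U}_0$, $\mathcal{V}_0$ respectively. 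Then Lemma \ref{LemmaNew} produces the TTF triple $(\mathcal{U},\mathcal{V},\mathcal{W})=(\mathrm{Loc}(\mathcal{U}_0),\mathrm{Loc}(\mathcal{V}_0),\mathrm{Loc}(\mathcal{V}_0)^\perp)$ in $\hat{\mathcal{D}}$ with both torsion pairs compactly generated and with $\mathcal{U}\cap\hat{\mathcal{D}}^c=\mathcal{U}_0$, $\mathcal{V}\cap\hat{\mathcal{D}}^c=\mathcal{V}_0$, which is exactly 2)(a) and 2)(b). The genuinely delicate point — and the reason the extra hypothesis is needed — is verifying that this $(\mathcal{U},\mathcal{V},\mathcal{W})$ actually \emph{restricts} to the original TTF triple on $\mathcal{D}$, i.e. $\mathcal{U}\cap\mathcal{D}=\mathrm{Im}(j_!)$ and $\mathcal{V}\cap\mathcal{D}=\mathrm{Im}(i_*)$ and $\mathcal{W}\cap\mathcal{D}=\mathrm{Im}(j_*)$. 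One inclusion in each case is routine (e.g. $\mathrm{Im}(j_!)=\mathrm{thick}_{\mathcal{D}}(\mathcal{U}_0)\subseteq\mathcal{U}\cap\mathcal{D}$ since $\mathrm{Im}(j_!)$ is generated by its compacts — using that $\mathcal{X}$ contains $\hat{\mathcal{X}}^c$ and $j_!$ preserves compacts). For the reverse inclusion $\mathcal{U}\cap\mathcal{D}\subseteq\mathrm{Im}(j_!)$: take $M\in\mathcal{U}\cap\mathcal{D}$, apply the recollement truncation $j_!j^*M\to M\to i_*i^*M\xrightarrow{+}$ in $\mathcal{D}$; since $M\in\mathcal{U}$ and $j_!j^*M\in\mathrm{Im}(j_!)\subseteq\mathcal{U}$, we get $i_*i^*M\in\mathcal{U}\cap\mathcal{V}=0$, so $M\cong j_!j^*M\in\mathrm{Im}(j_!)$ — that half is fine. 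The real content hides in $\mathcal{V}\cap\mathcal{D}=\mathrm{Im}(i_*)$ and especially in the colocalizing/co-aisle side $\mathcal{W}\cap\mathcal{D}=\mathrm{Im}(j_*)$, where compact generators are of no help and one genuinely needs the hypothesis that $\mathrm{Im}(i_*)$ cogenerates $\mathrm{Loc}_{\hat{\mathcal{D}}}(i_*(\hat{\mathcal{Y}}^c))=\mathcal{V}$ (or that $\mathcal{D}$ cogenerates $\hat{\mathcal{D}}$), in order to control $\mathcal{V}$ ``from the right'' — a Milnor-colimit/truncation argument in $\mathcal{D}$, combined with the cogeneration to propagate from $\mathrm{Im}(i_*)$ up to $\mathcal{V}\cap\mathcal{D}$, should close the gap. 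I expect this last verification — that the abstractly-constructed lift really restricts back to the given data on the co-aisle side — to be the heart of the proof.
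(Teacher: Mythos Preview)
Your structural plan matches the paper's---the candidate lift in $3)\Rightarrow 2)$ is indeed $(\mathcal{U},\mathcal{V},\mathcal{W})=(\text{Loc}(j_!(\hat{\mathcal{X}}^c)),\text{Loc}(i_*(\hat{\mathcal{Y}}^c)),\text{Loc}(i_*(\hat{\mathcal{Y}}^c))^\perp)$ via Lemma~\ref{LemmaNew}---but you have located the difficulty in the wrong place, and your ``routine'' step is actually the one that fails. You claim $\text{Im}(j_!)=\text{thick}_{\mathcal{D}}(\mathcal{U}_0)\subseteq\mathcal{U}\cap\mathcal{D}$, but the hypothesis only says $\hat{\mathcal{X}}^c\subseteq\mathcal{X}$, not $\mathcal{X}=\text{thick}(\hat{\mathcal{X}}^c)$ (think $\mathcal{X}=\mathcal{D}^b(\text{mod-}A)$ versus $\hat{\mathcal{X}}^c=\mathcal{K}^b(\text{proj-}A)$ for $A$ of infinite global dimension). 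So $\text{Im}(j_!)\subseteq\mathcal{U}$ is \emph{not} automatic; it is precisely the inclusion that needs the cogeneration hypothesis. The paper's argument: for $D\in\text{Im}(j_!)$ take the $(\mathcal{U},\mathcal{V})$-truncation $U\to D\to V\stackrel{+}{\to}$ in $\hat{\mathcal{D}}$; since $U$ and $D$ lie in ${}^\perp\text{Im}(i_*)$, so does $V$; then either $\text{Im}(i_*)$ cogenerates $\mathcal{V}$ directly, or one notes $V\in\mathcal{V}\subseteq{}^\perp\mathcal{W}\subseteq{}^\perp\text{Im}(j_*)$, so $\text{Hom}(V,-)$ vanishes on $\text{Im}(i_*)\star\text{Im}(j_*)=\mathcal{D}$, whence $V=0$. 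Conversely, the equality $\mathcal{V}\cap\mathcal{D}=\text{Im}(i_*)$ (which you flag as the hard part) needs no extra hypothesis: $\text{Im}(i_*)=\text{Ker}(j^*)$, and by compact generation of $\hat{\mathcal{X}}$ one has $j^*D=0$ iff $\text{Hom}(j_!X,D)=0$ for all $X\in\hat{\mathcal{X}}^c$, iff $D\in j_!(\hat{\mathcal{X}}^c)^\perp\cap\mathcal{D}=\mathcal{V}\cap\mathcal{D}$. The $\mathcal{W}$-equality then follows formally. Your argument for $\mathcal{U}\cap\mathcal{D}\subseteq\text{Im}(j_!)$ can be salvaged once you stop invoking $\text{Im}(j_!)\subseteq\mathcal{U}$: just use $\text{Im}(i_*)\subseteq\mathcal{V}$ (easy direction) to get $\mathcal{U}\cap\mathcal{D}\subseteq{}^\perp\text{Im}(i_*)\cap\mathcal{D}=\text{Im}(j_!)$.

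A smaller issue in $2)\Rightarrow 3)$: your argument that $j^*$ preserves compacts via the $(\mathcal{V},\mathcal{W})$-truncation only yields $j_*j^*(D)\in\mathcal{W}^c$, and you would still need $j_*$ to \emph{reflect} compacts, which condition 2)(b) does not provide. The paper instead proves $i^*$ preserves compacts (here 2)(b) does give $i_*^{-1}(\mathcal{V}\cap\hat{\mathcal{D}}^c)=\hat{\mathcal{Y}}^c$, so the $(\mathcal{U},\mathcal{V})$-truncation argument goes through), and then uses the recollement triangle $j_!j^*D\to D\to i_*i^*D\stackrel{+}{\to}$ together with the fact that $j_!$ preserves and reflects compacts to conclude that $j^*$ does too.
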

\begin{proof}

$1)\Longrightarrow 2)$ The functors $\hat{j}_!$, $\hat{j}^*$, $\hat{i}^*$ and $\hat{i}_*$ preserve compact objects,  since they have right 
adjoints which preserve coproducts, because they also have right adjoints.
Let us consider the TTF triple $(\mathcal{U},\mathcal{V},\mathcal{W}):=(\text{Im}(\hat{j}_!),\text{Im}(\hat{i}_*),\text{Im}(\hat{j}_*))$ 
associated to the recollement from assertion 1. The torsion pair $(\mathcal{U},\mathcal{V})$ is generated by  $\hat{j}_!(\hat{\mathcal{X}}^c)=j_!(\hat{\mathcal{X}}^c)$. Indeed, the inclusion 
$(\hat{j}_!(\hat{\mathcal{X}}^c))^{\perp}\supseteq (\text{Im}(\hat{j}_!))^{\perp}$ is obvious, the inverse inclusion follows from the fact 
that, by  infinite d\'evissage, $\hat{\mathcal{X}}=\text{Loc}_{\hat{\mathcal{X}}}(\hat{\mathcal{X}}^c)$ 
 and $\hat{j}_!$ commutes with coproducts.  Since $j_!(\hat{\mathcal{X}}^c)$ consists of compact objects and is skeletally small there is a set of compact objects generating $(\mathcal{U},\mathcal{V})$. Similarly, the torsion pair $(\mathcal{V},\mathcal{W})$ is generated by $\hat{i}_*(\hat{\mathcal{Y}}^c)=i_*(\hat{\mathcal{Y}}^c)$, and hence by a set of compact objects. Thus condition 2.a holds and, moreover, we have inclusions $j_!(\hat{\mathcal{X}}^c)\subseteq\mathcal{U}\cap\hat{\mathcal{D}}^c$ and $i_*(\hat{\mathcal{Y}}^c)\subseteq\mathcal{V}\cap\hat{\mathcal{D}}^c$. On the other hand, if $U\in\mathcal{U}\cap\hat{\mathcal{D}}^c$, then $\hat{j}^*U\in\hat{\mathcal{X}}^c$. Choosing now $X\in\hat{\mathcal{X}}$ such that $U=\hat{j}_!X$, we have $X\cong \hat{j}^*\hat{j}_!X\cong\hat{j}^*U\in\hat{\mathcal{X}}^c$ and, hence, $U\cong j_!X\in j_!(\hat{\mathcal{X}}^c)$. Similarly, $\mathcal{V}\cap\hat{\mathcal{D}}^c\subseteq i_*(\hat{\mathcal{Y}}^c)$. 

$2)\Longrightarrow 3)$ By condition 2.b we know that $j_!$ and $i_*$ preserve compact objects. Moreover, $j_!(\hat{\mathcal{X}}^c)=\text{Im}(j)\cap\hat{\mathcal{D}}^c$ and $i_*(\hat{\mathcal{Y}}^c)=\text{Im}(i_*)\cap\hat{\mathcal{D}}^c$, since $\mathcal{U}\cap\mathcal{D}=\text{Im}(j_!)$ and $\mathcal{V}\cap\mathcal{D}=\text{Im}(i_*)$. This implies that $j_!$ and $i_*$ also reflect compact objects, i.e. $j_!(X)\in\hat{\mathcal{D}}^c$ (resp.  $i_*(Y)\in\hat{\mathcal{D}}^c$) if and only if $X\in\hat{\mathcal{X}}^c$ (resp. $Y\in\hat{\mathcal{Y}}^c$). 
For any $D\in\hat{\mathcal{D}}^c$ let us consider the associated triangle $j_!j^*D\longrightarrow D\longrightarrow i_*i^*D\stackrel{+}{\longrightarrow}$, we get that $j^*$ preserves compact objects if and only if so does $i^*$.

Let us prove that $i^*$ preserves compact objects. Since the semi-orthogonal decomposition $(\text{Im}(j_!),\text{Im}(i_*))$ is the restriction to $\mathcal{D}$ of  $(\mathcal{U},\mathcal{V})$,  the associated truncation functor $\tau:\hat{\mathcal{D}}\longrightarrow\mathcal{V}$  has the property that $\tau(\mathcal{D})=\mathcal{V}\cap\mathcal{D}=\text{Im}(i_*)$. By Lemma \ref{LemNS}, we know that $\tau(\hat{\mathcal{D}^c})=\mathcal{V}^c=\mathcal{V}\cap\hat{\mathcal{D}}^c=\text{Im}(i_*)\cap\hat{\mathcal{D}}^c$. We next decompose $i_*$ as $\mathcal{Y}\stackrel{\cong}{\longrightarrow}\text{Im}(i_*)=\mathcal{D}\cap\mathcal{V}\hookrightarrow\mathcal{D}$, where the first arrow $i_*$  is an equivalence of categories. Then the left adjoint $i^*$ is naturally isomorphic to the composition $\mathcal{D}\stackrel{\tau}{\longrightarrow}\text{Im}(i_*)=\mathcal{D}\cap\mathcal{V}\stackrel{i_*^{-1}}{\longrightarrow}\mathcal{Y}$. But $i_*^{-1}(\mathcal{V}\cap\hat{\mathcal{D}}^c)=\hat{\mathcal{Y}}^c$, since $i_*(\hat{\mathcal{Y}}^c)=\mathcal{V}\cap\hat{\mathcal{D}}^c$. Therefore we get that $i^*(\hat{\mathcal{D}}^c)=\hat{\mathcal{Y}}^c$.

$3)\Longrightarrow 2)$ (assuming  any of the extra hypotheses). Setting $\mathcal{U}_0:=j_!(\hat{\mathcal{X}}^c)$ and $\mathcal{V}_0:=i_*(\hat{\mathcal{Y}}^c)$, by Lemma \ref{LemmaNew}, $(\mathcal{U},\mathcal{V},\mathcal{W}):=(\text{Loc}_{\hat{\mathcal{D}}}(j_!(\hat{\mathcal{X}}^c)),\text{Loc}_{\hat{\mathcal{D}}}(i_*(\hat{\mathcal{Y}}^c)), \text{Loc}_{\hat{\mathcal{D}}}(i_*(\hat{\mathcal{Y}}^c))^{\perp})$ is a TTF-triple with $(\mathcal{U},\mathcal{V})$ and $(\mathcal{V},\mathcal{W})$ compactly generated by $j_!(\hat{\mathcal{X}}^c)$ and $i_*(\hat{\mathcal{Y}}^c)$. 

We need to check that $\mathcal{U}\cap\mathcal{D}=\text{Im}(j_!)$ and $\mathcal{V}\cap\mathcal{D}=\text{Im}(i_*)$. The equality $\text{Im}(j_*)=\mathcal{W}\cap\mathcal{D}$ will then follow automatically. Indeed, the inclusion $\text{Im}(j_*)\subseteq \mathcal{W}\cap\mathcal{D}$ is obvious, the other inclusion follows from orthogonality.
 By properties of recollements (see \cite{BBD}),  $\text{Im}(i_*)=\text{Ker}(j^*)$. Since $\hat{\mathcal{X}}$ is compactly generated, an object  $D$ of $\mathcal{D}$ belongs to $\text{Ker}(j^*)$ if and only if $0=\text{Hom}_\mathcal{X}(X,j^*D)\cong\text{Hom}_\mathcal{D}(j_!X,D)$, for all $X\in\hat{\mathcal{X}}^c$. This happens exactly when $D\in\text{Loc}_{\hat{\mathcal{D}}}(j_!(\hat{\mathcal{X}}^c))^\perp\cap\mathcal{D}=\mathcal{V}\cap\mathcal{D}$, and thus $\mathcal{V}\cap\mathcal{D}=\text{Im}(i_*)$. 

Let us check that $\mathcal{U}\cap\mathcal{D}=\text{Im}(j_!)$. Since each object of $\mathcal{U}$ is the Milnor colimit of a 
sequence of morphisms in $\hat{\mathcal{D}}$ with successive cones in $\text{Add}(j_!(\hat{\mathcal{X}})^c)$ and 
since $\text{Hom}_{\hat{\mathcal{D}}}(j_!X,-)$ vanishes on $\text{Im}(i_*)$, for each $X\in\mathcal{X}$, we get that $\mathcal{U}\cap\mathcal{D}\subseteq\text{Im}(j_!)$. Indeed, $(\text{Im}(j_!),\text{Im}(i_*))$ is a torsion pair in $\mathcal{D}$ and hence $\text{Im}(j_!)= {}^\perp\text{Im}(i_*)\cap\mathcal{D}$.  Conversely, for $D\in\text{Im}(j_!)$ let us consider the truncation triangle $U\longrightarrow D\longrightarrow V\stackrel{+}{\longrightarrow}$ in $\hat{\mathcal{D}}$ with respect to $(\mathcal{U},\mathcal{V})$. As before, $\text{Hom}_{\hat{\mathcal{D}}}(U,-)$ vanishes on $\text{Im}(i_*)$, and hence $\text{Hom}_{\hat{\mathcal{D}}}(V,-)$ vanishes on $\text{Im}(i_*)$. In the assumption that $\text{Im}(i_*)$ cogenerates $\mathcal{V}=\text{Loc}_{\hat{\mathcal{D}}}(i_*(\hat{\mathcal{Y}}^c))$, we immediately get $V=0$. In the other case, we also have that 
 $\text{Hom}_{\hat{\mathcal{D}}}(V,-)$ vanishes on $\mathcal{W}$ and, hence, it also vanishes on $\text{Im}(j_*)$. It follows that $\text{Hom}_{\hat{\mathcal{D}}}(V,-)$ vanishes both on $\text{Im}(j_*)$ and $\text{Im}(i_*)$. This implies that $\text{Hom}_{\hat{\mathcal{D}}}(V,-)$ vanishes on $\mathcal{D}$, and hence that $V=0$ since, by hypothesis,  $\mathcal{D}$ cogenerates $\hat{\mathcal{D}}$. Under both extra hypotheses, we then get that $U\cong D\in\mathcal{U}\cap\mathcal{D}$. 

Let us  prove the inclusions $\mathcal{U}\cap\hat{\mathcal{D}}^c\subseteq j_!(\hat{\mathcal{X}}^c)$ and  $\mathcal{V}\cap\hat{\mathcal{D}}^c\subseteq i_*(\hat{\mathcal{Y}}^c)$, the inverse inclusions are obvious. For $U\in\mathcal{U}\cap\hat{\mathcal{D}}^c \subseteq \text{Im}(j_!)$ the adjunction map $j_!j^*(U)\longrightarrow U$ is an isomorphism. It follows that $U\in j_!(\hat{\mathcal{X}}^c)$, since $j^*(U)\in\hat{\mathcal{X}}^c$. The second inclusion is analogous.\end{proof}

\begin{corollary}
If in Theorem \ref{thm.lifting of recollements} we assume that $\mathcal{D}=\hat{\mathcal{D}}^c$, then assertion 2 of the theorem holds if and only if $\mathcal{Y}=\hat{\mathcal{Y}}^c$ and $\mathcal{X}=\hat{\mathcal{X}}^c$.
\end{corollary}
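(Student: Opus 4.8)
The statement is an equivalence, so the plan is to prove both implications; in each the key reduction is that, since $\mathcal{D}=\hat{\mathcal{D}}^c$, restricting a torsion pair of $\hat{\mathcal{D}}$ to $\mathcal{D}$ just amounts to intersecting each of its two components with $\hat{\mathcal{D}}^c$. I would also use throughout that $j_!$ and $i_*$ are full embeddings with $j^*j_!\cong\text{id}_{\hat{\mathcal{X}}}$ and $i^*i_*\cong\text{id}_{\hat{\mathcal{Y}}}$, so that $\text{Im}(j_!)=j_!(\mathcal{X})$ and $\text{Im}(i_*)=i_*(\mathcal{Y})$ as isomorphism-closed subcategories of $\mathcal{D}$, and similarly after applying $j^*$, $i^*$.

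First I would treat the implication from assertion 2 to the equality of the subcategories. Let $(\mathcal{U},\mathcal{V},\mathcal{W})$ be a lift as in assertion 2. Being a lift means $(\mathcal{U}\cap\mathcal{D},\mathcal{V}\cap\mathcal{D},\mathcal{W}\cap\mathcal{D})=(\text{Im}(j_!),\text{Im}(i_*),\text{Im}(j_*))$, and since $\mathcal{D}=\hat{\mathcal{D}}^c$ this gives $\mathcal{U}\cap\hat{\mathcal{D}}^c=\text{Im}(j_!)=j_!(\mathcal{X})$ and $\mathcal{V}\cap\hat{\mathcal{D}}^c=\text{Im}(i_*)=i_*(\mathcal{Y})$. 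Comparing with condition 2.b yields $j_!(\mathcal{X})=j_!(\hat{\mathcal{X}}^c)$ and $i_*(\mathcal{Y})=i_*(\hat{\mathcal{Y}}^c)$; applying $j^*$ and $i^*$ and using that all the subcategories involved are closed under isomorphisms, I conclude $\mathcal{X}=\hat{\mathcal{X}}^c$ and $\mathcal{Y}=\hat{\mathcal{Y}}^c$.

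For the converse, assume $\mathcal{X}=\hat{\mathcal{X}}^c$ and $\mathcal{Y}=\hat{\mathcal{Y}}^c$. Then $\mathcal{U}_0:=\text{Im}(j_!)=j_!(\hat{\mathcal{X}}^c)$ and $\mathcal{V}_0:=\text{Im}(i_*)=i_*(\hat{\mathcal{Y}}^c)$ lie inside $\hat{\mathcal{D}}^c$, and $(\mathcal{U}_0,\mathcal{V}_0)$ is a semi-orthogonal decomposition of $\mathcal{D}=\hat{\mathcal{D}}^c$ (it is the first half of the TTF triple of the given recollement). I would then apply Lemma \ref{LemmaNew} to obtain the TTF triple $(\mathcal{U},\mathcal{V},\mathcal{W})=(\text{Loc}_{\hat{\mathcal{D}}}(\mathcal{U}_0),\text{Loc}_{\hat{\mathcal{D}}}(\mathcal{V}_0),\text{Loc}_{\hat{\mathcal{D}}}(\mathcal{V}_0)^{\perp})$ in $\hat{\mathcal{D}}$, with $(\mathcal{U},\mathcal{V})$ and $(\mathcal{V},\mathcal{W})$ compactly generated semi-orthogonal decompositions and $\mathcal{U}\cap\hat{\mathcal{D}}^c=\mathcal{U}_0$, $\mathcal{V}\cap\hat{\mathcal{D}}^c=\mathcal{V}_0$; this is exactly conditions 2.a and 2.b. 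It remains to check that $(\mathcal{U},\mathcal{V},\mathcal{W})$ restricts to $(\text{Im}(j_!),\text{Im}(i_*),\text{Im}(j_*))$ on $\hat{\mathcal{D}}^c$, and since $\mathcal{U}_0,\mathcal{V}_0\subseteq\hat{\mathcal{D}}^c$ the first two components are immediate: $\mathcal{U}\cap\mathcal{D}=\mathcal{U}_0=\text{Im}(j_!)$ and $\mathcal{V}\cap\mathcal{D}=\mathcal{V}_0=\text{Im}(i_*)$.

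The one delicate point, which I expect to be the main obstacle, is the co-aisle: showing $\mathcal{W}\cap\mathcal{D}=\text{Im}(j_*)$, which does not follow formally from "intersect with the compacts". Here I would first argue that $(\mathcal{V}\cap\mathcal{D},\mathcal{W}\cap\mathcal{D})$ is a torsion pair in $\mathcal{D}$: for $D\in\mathcal{D}=\hat{\mathcal{D}}^c$ the truncation triangle $V\longrightarrow D\longrightarrow W\stackrel{+}{\longrightarrow}$ for $(\mathcal{V},\mathcal{W})$ has $W=\tau(D)$ compact, because by Lemma \ref{LemNS} the left adjoint $\tau:\hat{\mathcal{D}}\longrightarrow\mathcal{W}$ of the inclusion preserves compact objects; hence $V$ is compact too, so $V\in\mathcal{V}\cap\mathcal{D}$ and $W\in\mathcal{W}\cap\mathcal{D}$, and orthogonality is inherited from $(\mathcal{V},\mathcal{W})$. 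Now $(\mathcal{V}\cap\mathcal{D},\mathcal{W}\cap\mathcal{D})$ and $(\text{Im}(i_*),\text{Im}(j_*))$ are torsion pairs of $\mathcal{D}$ with the same aisle $\text{Im}(i_*)$, which is moreover stable under $[1]$; hence they have the same co-aisle, i.e. $\mathcal{W}\cap\mathcal{D}=(\text{Im}(i_*))^{\perp}=\text{Im}(j_*)$. This completes the verification that $(\mathcal{U},\mathcal{V},\mathcal{W})$ is a lift, so assertion 2 holds.
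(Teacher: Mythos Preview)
Your proof is correct and follows essentially the same approach as the paper: both directions hinge on Lemma~\ref{LemmaNew} for the converse, and both exploit that $\mathcal{D}=\hat{\mathcal{D}}^c$ to turn the restriction conditions into intersections with the compacts.

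There are two minor differences worth noting. For the forward implication, the paper goes through assertion~3 (preservation of compacts by $j^*,i^*$) and then uses density of these functors, whereas you compare condition~2.b with the lift condition directly; your route is arguably cleaner here. For the equality $\mathcal{W}\cap\mathcal{D}=\text{Im}(j_*)$ in the converse, the paper dispatches this in one line: since $\mathcal{V}=\text{Loc}_{\hat{\mathcal{D}}}(\text{Im}(i_*))$, one has $\mathcal{W}=\mathcal{V}^\perp=\text{Im}(i_*)^\perp$, so $\mathcal{W}\cap\mathcal{D}=\text{Im}(i_*)^\perp\cap\mathcal{D}=\text{Im}(j_*)$. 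Your argument via Lemma~\ref{LemNS} (showing the truncation preserves compacts, hence $(\mathcal{V}\cap\mathcal{D},\mathcal{W}\cap\mathcal{D})$ is a torsion pair, then comparing aisles) is correct but more elaborate than necessary.
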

\begin{proof}
Let us suppose that $\mathcal{D}=\hat{\mathcal{D}}^c$ in the recollement of Theorem \ref{thm.lifting of recollements}. If assertion 2 of the theorem holds so does assertion 3, and hence the functors $j_!$, $j^*$, $i^*$ and $i_*$ preserve compact objects. Hence, $\text{Im}(j^*)\subseteq\hat{\mathcal{X}}^c$  and $\text{Im}(i^*)\subseteq\hat{\mathcal{Y}}^c$. It follows that $\mathcal{Y}=\hat{\mathcal{Y}}^c$ and  $\mathcal{X}=\hat{\mathcal{X}}^c$, since the functors $i^*$ and $j^*$ are dense, for any recollement. Conversely, if $\mathcal{Y}=\hat{\mathcal{Y}}^c$ and $\mathcal{X}=\hat{\mathcal{X}}^c$, then $(\text{Im}(j_!),\text{Im}(i_*))=(j_!(\hat{\mathcal{X}}^c),i_*(\hat{\mathcal{Y}}^c))$ is a semi-orthogonal decomposition of $\mathcal{D}=\hat{\mathcal{D}}^c$ and by Lemma \ref{LemmaNew} there is a TTF triple $(\mathcal{U},\mathcal{V},\mathcal{W})=(\text{Loc}_{\hat{\mathcal{D}}}(\text{Im}(j_!)),\text{Loc}_{\hat{\mathcal{D}}}(\text{Im}(i_*)),\text{Loc}_{\hat{\mathcal{D}}}(\text{Im}(i_*))^\perp)$ in $\hat{\mathcal{D}}$,  with compactly generated constituent torsion pairs, such that $(\mathcal{U}\cap\mathcal{D},\mathcal{V}\cap\mathcal{D})=(\text{Im}(j_!),\text{Im}(i_*))$. Since $\mathcal{W}\cap\mathcal{D}=\mathcal{V}^\perp\cap\mathcal{D}=\text{Im}(i_*)^\perp\cap\mathcal{D}=\text{Im}(j_*)$, the TTF triple $(\mathcal{U},\mathcal{V},\mathcal{W})$ satisfies all the conditions of assertion 2 in Theorem \ref{thm.lifting of recollements}.
\end{proof}

We immediately get:

\begin{corollary}
Let $\hat{\mathcal{Y}}$,  $\hat{\mathcal{D}}$ and  $\hat{\mathcal{X}}$ be compactly generated triangulated categories and suppose that we have a recollement $$
\begin{xymatrix}{\hat{\mathcal{Y}}^c \ar[r]^{i_*}& \hat{\mathcal{D}}^c \ar@<3ex>[l]_{i^!}\ar@<-3ex>[l]_{i^*}\ar[r]^{j^*} & \hat{\mathcal{X}}^c \ar@<3ex>_{j_*}[l]\ar@<-3ex>_{j_!}[l]}
\end{xymatrix}
$$
Then the TTF triple $(\text{Im}(j_!),\text{Im}(i_*),\text{Im}(j_*))$ in $\hat{\mathcal{D}}^c$ lifts to a TTF triple $(\mathcal{U},\mathcal{V},\mathcal{W})$ in $\hat{\mathcal{D}}$, where the torsion pairs $(\mathcal{U},\mathcal{V})$ and $(\mathcal{V},\mathcal{W})$ are compactly generated. 
\end{corollary}

Here is a list of examples where the last corollary applies:

\begin{example}\label{ex repet} Let us consider one of the following  situations:
\begin{enumerate}
\item Let $A$, $B$ and $C$ be  finite dimensional algebras over a field. Let the triple $\mathcal{Y}\equiv \mathcal{D} \equiv \mathcal{X}$ be $\underline{mod}\text{-}\hat{B}\equiv \underline{mod}\text{-}\hat{A} \equiv \underline{mod}\text{-}\hat{C}$, where $\underline{mod}\text{-}\hat{A}$ is the stable category of the repetitive algebra $\hat{A}$ of $A$ (see \cite[Section 2.2]{H}) and let $\hat{\mathcal{D}}$ be $\underline{Mod}\text{-}\hat{A}$.

\item Let $A$, $B$ and $C$ be self-injective finite length algebras. Let the triple $\mathcal{Y}\equiv \mathcal{D} \equiv \mathcal{X}$ be $\underline{mod}\text{-}B\equiv \underline{mod}\text{-}A \equiv \underline{mod}\text{-}C$, where $\underline{mod}\text{-}A$ is the stable category of $A$ and let $\hat{\mathcal{D}}$ be $\underline{Mod}\text{-}A$.

\item Let $\mathbf{U}$, $\mathbf{X}$ and $\mathbf{Z}$ be separated Noetherian schemes. Let the triple $\mathcal{Y}\equiv \mathcal{D} \equiv \mathcal{X}$ be $\mathcal{D}^b(\text{coh}(\mathbf{U}))\equiv \mathcal{D}^b(\text{coh}(\mathbf{X})) \equiv \mathcal{D}^b(\text{coh}(\mathbf{Z}))$ and let $\hat{\mathcal{D}}$ be $\mathcal{K}(\text{Inj-}\mathbf{X})$ -- the homotopy category of injective objects of $\text{Qcoh}(\mathbf{X})$.

\item Let $A$, $B$ and $C$ be right Noetherian rings. Let the triple $\mathcal{Y}\equiv \mathcal{D} \equiv \mathcal{X}$ be $\Db B\equiv \Db A \equiv \Db C$ and let $\hat{\mathcal{D}}$ be the homotopy category of injectives $\mathcal{K}(\text{Inj-}A)$.
\end{enumerate}

Suppose there is a recollement 
$$
\begin{xymatrix}{\mathcal{Y} \ar[r]^{i_*}& \mathcal{D} \ar@<3ex>[l]_{i^!}\ar@<-3ex>[l]_{i^*}\ar[r]^{j^*} & \mathcal{X} \ar@<3ex>_{j_*}[l]\ar@<-3ex>_{j_!}[l]}
\end{xymatrix}
$$
and consider $\mathcal{D}$ as a full triangulated subcategory of $\hat{\mathcal{D}}$, then there exists  a TTF triple $(\mathcal{U},\mathcal{V},\mathcal{W})$ in $\hat{\mathcal{D}}$, with compactly generated constituent torsion pairs $(\mathcal{U},\mathcal{V})$ and $(\mathcal{V},\mathcal{W})$,  which restricts to the TTF triple $(\text{Im}(j_!),\text{Im}(i_*),\text{Im}(j_*))$ in $\mathcal{D}$.
\end{example}
\begin{proof}
In all the cases, it turns out that the three categories of the recollements are the subcategories of compact objects in the appropriate compactly generated triangulated categories.  

1) Let $\hat{\Lambda}$ denotes the repetitive algebra of $\Lambda $.
It is easy to see that the category $\text{Mod}\text{-}\hat{\Lambda}$ of unitary $\hat{\Lambda}$-modules (i.e. modules $M$ such that $M\hat{\Lambda}=M$) is a Frobenius category, so that its stable category $\underline{Mod}\text{-}\hat{\Lambda}$ is triangulated and compactly generated. Its subcategory of compact objects is precisely $\underline{mod}\text{-}\hat{\Lambda}$, where $\text{mod}\text{-}\hat{\Lambda}$ is the subcategory of $\text{Mod}\text{-}\hat{\Lambda}$ consisting of the finitely generated $\hat{\Lambda}$-modules, which coincide with the $\hat{\Lambda}$-modules of finite length.

2) It is well-known that if $\Lambda$ is a self-injective  Artin algebra, in particular a self-injective finite length algebra, then its module category $\text{Mod}\text{-}\Lambda$ is Frobenius and its associated triangulated stable category $\underline{Mod}\text{-}\Lambda$ has $\underline{mod}\text{-}\Lambda$ as its subcategory of compact objects. 

3) and 4)  By \cite[Theorem 1.1]{Kra}, we can identify $\mathcal{D}^b(\text{coh}(\mathbf{Y}))$ with the subcategory of compact objects of $\mathcal{K}(\text{Inj}\text{-}\mathbf{Y})$, for any separated Noetherian scheme $\mathbf{Y}$, and by \cite[Proposition 2.3]{Kra}, we can identify $\Db R$ with the subcategory of compact objects of $\mathcal{K}(\text{Inj-}R)$, for any right Noetherian ring $R$ (here $\text{mod-}R$ is the category of finitely generated $R$-modules, which coincides with that of Noetherian modules).

With all these considerations in mind, the result is now a direct consequence of the previous corollary.
\end{proof}

In order to provide some examples where condition 3 of  the last theorem implies condition 2, we introduce the following terminology.

\begin{term} \label{term}
Given any triangulated category  $\mathcal{D}$ and any class $\mathcal{X}$ of its objects, we denote by 
$\mathcal{D}_\mathcal{X}^-$ (resp. $\mathcal{D}_\mathcal{X}^+$ or  $\mathcal{D}_\mathcal{X}^b$) the (thick) subcategory of $\mathcal{D}$ 
consisting of objects $M$ such that, for each  $X\in\mathcal{X}$, one has $\text{Hom}_\mathcal{D}(X,M[k])=0$ for $k\gg 0$ (resp. $k\ll 0$ or 
$|k| \gg 0$). We denote by $\mathcal{D}_{\mathcal{X},fl}$  the (thick) subcategory of $\mathcal{D}$ consisting of objects $M$ such that, for 
each $X\in\mathcal{X}$ and each $k\in\mathbb{Z}$, the $K$-module $\text{Hom}_\mathcal{D}(X,M[k])$ is of finite length. We finally put 
$\mathcal{D}^\star_{\mathcal{X},\dagger}=\mathcal{D}_\mathcal{X}^\star\cap\mathcal{D}_{\mathcal{X},\dagger}$, for 
$\star\in\{\emptyset,+,-,b\}$ and $\dagger\in\{\emptyset,fl\}$. In the particular case when $\mathcal{D}$ is compactly generated and 
$\mathcal{X}=\mathcal{D}^c$, we will simply write $\mathcal{D}_\dagger^\star$ instead of $\mathcal{D}_{\mathcal{X},\dagger}^\star$. Note that,
 in order to define $\mathcal{D}_\dagger^\star$ in the latter case, one can replace $\mathcal{D}^c$ by any set $\mathcal{X}$ of compact 
generators of $\mathcal{D}$, since $\mathcal{D}^c=\text{thick}_\mathcal{D}(\mathcal{X})$. Let us denote by 
$\mathbf{P}_\dagger ^\star$  the property that defines the full subcategory ${\mathcal{D}}^*_\dagger$ of ${\mathcal{D}}$. For instance, 
if $\star = -$ and $\dagger =fl$, then, for a given $M\in{\mathcal{D}}$, we will say that $M$ satisfies property $\mathbf{P}^{\star}_\dagger$, 
for some $X\in{\mathcal{D}}^c$, when $\text{Hom}_{{\mathcal{D}}}(X,M[k])$ is zero, for $k\gg 0$, and is a $K$-module of finite length, 
for all $k\in\mathbb{Z}$. 
\end{term}

\begin{example} \label{ex.dg category}
If $\mathcal{A}$ is a small dg category, then $\mathcal{D}^-(\mathcal{A}):=\mathcal{D}(\mathcal{A})^-$ (resp.  $\mathcal{D}^+(\mathcal{A}):=\mathcal{D}(\mathcal{A})^+$ or $\mathcal{D}^b(\mathcal{A}):=\mathcal{D}(\mathcal{A})^b$) is the subcategory of $\mathcal{D}(\mathcal{A})$ consisting of dg $\mathcal{A}$-modules $M$ such that, for each  $A\in\mathcal{A}$, one has $H^kM(A)=0$ for $k\gg 0$ (resp. $k\ll 0$ or $|k| \gg 0$). Similarly, for $\star\in\{\emptyset,+, -,b\}$, one has $\mathcal{D}^\star_{fl}(\mathcal{A})$  consists of dg $\mathcal{A}$-modules $M\in\mathcal{D}^\star(\mathcal{A})$ such that $H^kM(A)$ is a $K$-module of finite length, for each $A\in\mathcal{A}$ and each $k\in\mathbb{Z}$.
\end{example}

\begin{remark} \label{rem.from lifting-and-restricting}
In \cite{NS2} $\mathcal{D}^-(\mathcal{A})$ was defined as the union $\bigcup_{k\geq 0}\mathcal{U}[k]$, where $\mathcal{U}=\mathcal{D}^{\leq 0}(\mathcal{A})$. Here $\mathcal{D}^{\leq 0}(\mathcal{A})=\text{Susp}_{\mathcal{D}(\mathcal{A})}(A^\wedge\text{: }A\in\mathcal{A})$, which is the aisle of a t-structure in $\mathcal{D}(\mathcal{A})$. That definition does not agree in general with the one given here, although they coincide when $\mathcal{A}=A$ is a dg algebra.  
\end{remark}

\begin{definition} \label{def.locally bounded cg category}
Let $\mathcal{D}$ be a compactly generated triangulated category. We will say that $\mathcal{D}$ is \emph{homologically locally bounded} when $\mathcal{D}^c\subseteq\mathcal{D}^b$ and $\mathcal{D}$ is  \emph{homologically locally finite length} when  $\mathcal{D}^c\subseteq\mathcal{D}^b_{fl}$. 
\end{definition}

\begin{example} \label{ex.homologically locally  bounded dg category}
If $\mathcal{A}$ is a small dg category and $\mathcal{D}=\mathcal{D}(\mathcal{A})$ is its derived category, 
then $\mathcal{D}$ is homologically locally bounded if and only if the set $\{k\in\mathbb{Z}\text{: }H^k\mathcal{A}(A,A')\neq 0\}$ is finite, for all $A,A'\in\mathcal{A}$. Moreover, $\mathcal{D}$ is homologically locally finite length if, in addition, $H^k\mathcal{A}(A,A')$ is a $K$-module of finite length, for all $k\in\mathbb{Z}$ and all $A,A'\in\mathcal{A}$. Slightly abusing the terminology, we will say in those cases that $\mathcal{A}$ is a \emph{homologically locally bounded} or a \emph{homologically locally finite length dg category}, respectively. When $\mathcal{A}=A$ is a dg algebra, we will simply say that $A$ is  \emph{homologically  bounded} if $H^k(A)=0$, for almost all $k\in\mathbb{Z}$, or that $A$ is  \emph{homologically finite length} if $H^*(A):=\oplus_{k\in\mathbb{Z}}H^k(A)$ is a $K$-module of finite length.
\end{example}

We are ready to give  examples where condition 3 of Theorem \ref{thm.lifting of recollements} implies condition 2.

\begin{corollary} \label{cor.Case D homologically loc. bounded}
Let $\hat{\mathcal{Y}}$, $\hat{\mathcal{D}}$ and $\hat{\mathcal{X}}$ be compactly generated  triangulated categories. For $\star\in\{\emptyset ,+,-,b\}$ and $\dagger\in\{\emptyset, fl\}$ let $$
\begin{xymatrix}{\hat{\mathcal{Y}}^{\star}_\dagger \ar[r]^{{i}_*}& \hat{\mathcal{D}}^{\star}_\dagger \ar@<3ex>[l]_{{i}^!}\ar@<-3ex>[l]_{{i}^*}\ar[r]^{{j}^*} & \hat{\mathcal{X}}^{\star}_\dagger \ar@<3ex>_{{j}_*}[l]\ar@<-3ex>_{{j}_!}[l]}
\end{xymatrix}$$
be a recollement, such that the subcategories involved contain the respective subcategories of compact objects and such that the functors $j_!,j^*,i^*,i_*$ preserve compact objects. If $\hat{\mathcal{D}}$ is homologically locally bounded (resp. homologically locally finite length), then the subcategory  $\hat{\mathcal{D}}^{\star}$ (resp. $\mathcal{D}^{\star}_{fl}$) cogenerates $\hat{\mathcal{D}}$, and hence assertion 2 of Theorem \ref{thm.lifting of recollements} holds for $\dagger =\emptyset$ (resp. $\dagger =fl$). 
\end{corollary}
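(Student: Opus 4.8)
The plan is to verify the hypothesis of the last part of Theorem \ref{thm.lifting of recollements}, namely that $\hat{\mathcal{D}}^\star$ (resp. $\hat{\mathcal{D}}^b_{fl}$) cogenerates $\hat{\mathcal{D}}$; once this is done, assertion 2 follows because condition 3 is given (the four functors preserve compact objects) and the subcategories contain the respective subcategories of compact objects. Recall that a class $\mathcal{S}$ cogenerates $\hat{\mathcal{D}}$ means ${}^{\perp_{k\in\mathbb{Z}}}\mathcal{S}=0$, i.e. if $Z\in\hat{\mathcal{D}}$ satisfies $\mathrm{Hom}_{\hat{\mathcal{D}}}(Z,S[k])=0$ for all $S$ in the class and all $k\in\mathbb{Z}$, then $Z=0$.

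First I would fix a set $\mathcal{G}$ of compact generators of $\hat{\mathcal{D}}$ (possible since $\hat{\mathcal{D}}$ is compactly generated), and recall that $\hat{\mathcal{D}}^c=\mathrm{thick}_{\hat{\mathcal{D}}}(\mathcal{G})$. The homological-local-boundedness hypothesis says precisely $\hat{\mathcal{D}}^c\subseteq\hat{\mathcal{D}}^b$ (resp. $\hat{\mathcal{D}}^c\subseteq\hat{\mathcal{D}}^b_{fl}$ in the finite-length case), so every $G\in\mathcal{G}$ lies in $\hat{\mathcal{D}}^b$ (resp. $\hat{\mathcal{D}}^b_{fl}$). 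The key point is then: $\hat{\mathcal{D}}^b$ (resp. $\hat{\mathcal{D}}^b_{fl}$) contains a set of compact generators of $\hat{\mathcal{D}}$, and any subcategory of a compactly generated triangulated category that contains a set of compact generators automatically cogenerates the whole category. Indeed, if $Z$ is right-orthogonal to all shifts of all objects of $\hat{\mathcal{D}}^b$, then in particular $\mathrm{Hom}_{\hat{\mathcal{D}}}(Z,G[k])=0$ for all $G\in\mathcal{G}$ and all $k$; since $\mathcal{G}$ is a generating set this forces $Z=0$. (One should double-check that $\hat{\mathcal{D}}^b$ is closed under shifts, which is clear from Notation and Terminology \ref{term}: the defining property is $|k|$-symmetric, and $\hat{\mathcal{D}}^b_{fl}$ is likewise shift-closed.)

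It remains to feed this into Theorem \ref{thm.lifting of recollements}: taking $\mathcal{Y}=\hat{\mathcal{Y}}^\star_\dagger$, $\mathcal{D}=\hat{\mathcal{D}}^\star_\dagger$, $\mathcal{X}=\hat{\mathcal{X}}^\star_\dagger$ as the thick subcategories (they are thick by Notation and Terminology \ref{term}) of the respective compactly generated categories, the hypotheses of that theorem are met: the subcategories contain the respective compact objects by assumption, and the relevant cogeneration hypothesis ``$\mathcal{D}$ cogenerates $\hat{\mathcal{D}}$'' holds by the previous paragraph (for $\dagger=\emptyset$ one uses $\hat{\mathcal{D}}^\star$; note $\hat{\mathcal{D}}^b\subseteq\hat{\mathcal{D}}^\star$ for every $\star\in\{\emptyset,+,-,b\}$, so cogeneration by $\hat{\mathcal{D}}^b$ gives cogeneration by $\hat{\mathcal{D}}^\star$ a fortiori, and similarly $\hat{\mathcal{D}}^b_{fl}\subseteq\mathcal{D}^\star_{fl}$ for $\dagger=fl$). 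Hence the implication $3)\Longrightarrow 2)$ of Theorem \ref{thm.lifting of recollements} applies and assertion 2 holds.

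The only mild subtlety — and the step I'd be most careful about — is the bookkeeping between ``$\hat{\mathcal{D}}$ homologically locally bounded'' (a statement about $\hat{\mathcal{D}}^c$) and the category $\hat{\mathcal{D}}^\star$ that actually appears as the middle term of the recollement: one must make sure $\hat{\mathcal{D}}^c\subseteq\hat{\mathcal{D}}^b\subseteq\hat{\mathcal{D}}^\star\subseteq\hat{\mathcal{D}}$ for all four values of $\star$, so that the middle category of the recollement genuinely contains a generating set of compact objects and is sandwiched appropriately; once that chain of inclusions is recorded, everything else is a direct invocation of the earlier theorem and there is no real obstacle.
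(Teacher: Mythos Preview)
Your argument contains a genuine error: you conflate generation with cogeneration. You correctly state that $\mathcal{S}$ cogenerates $\hat{\mathcal{D}}$ means ${}^{\perp_{k\in\mathbb{Z}}}\mathcal{S}=0$, but then you write ``since $\mathcal{G}$ is a generating set this forces $Z=0$.'' The generating property of $\mathcal{G}$ says that $\mathcal{G}^{\perp_{k\in\mathbb{Z}}}=0$, i.e. from $\text{Hom}_{\hat{\mathcal{D}}}(G,Z[k])=0$ for all $G\in\mathcal{G}$ and $k$ one concludes $Z=0$. It says nothing about the vanishing of $\text{Hom}_{\hat{\mathcal{D}}}(Z,G[k])$, which is what you have. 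A set of compact generators need not cogenerate: for instance, in $\mathcal{D}(R)$ for a ring $R$ that is not self-injective, there are nonzero objects $Z$ with $\text{Hom}_{\mathcal{D}(R)}(Z,R[k])=0$ for every $k$.

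The paper's proof supplies exactly the missing ingredient. One fixes a minimal injective cogenerator $E$ of $\text{Mod-}K$ and, for each $X\in\hat{\mathcal{D}}^c$, uses Brown representability to represent the cohomological functor $\text{Hom}_K(\text{Hom}_{\hat{\mathcal{D}}}(X,-),E)$ by an object $D(X)$, the Brown--Comenetz dual of $X$. The set $\{D(X):X\in\hat{\mathcal{D}}^c\}$ then cogenerates $\hat{\mathcal{D}}$, since $\text{Hom}_{\hat{\mathcal{D}}}(Z,D(X))\cong\text{Hom}_K(\text{Hom}_{\hat{\mathcal{D}}}(X,Z),E)$ and $E$ cogenerates $\text{Mod-}K$. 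The homological local boundedness (resp.\ finite-length) hypothesis is used to check that each $D(X)$ actually lies in $\hat{\mathcal{D}}^b$ (resp.\ $\hat{\mathcal{D}}^b_{fl}$), via the isomorphism $\text{Hom}_{\hat{\mathcal{D}}}(Y,D(X)[k])\cong\text{Hom}_K(\text{Hom}_{\hat{\mathcal{D}}}(Y,X[-k]),E)$ for $Y\in\hat{\mathcal{D}}^c$. Once this is done, your final paragraph (reducing to $\hat{\mathcal{D}}^b\subseteq\hat{\mathcal{D}}^\star$ and invoking Theorem \ref{thm.lifting of recollements}) is fine.
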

\begin{proof}
Let us check that $\hat{\mathcal{D}}^b$ (resp. $\hat{\mathcal{D}}^b_{fl}$) cogenerates $\hat{\mathcal{D}}$. For this take a minimal injective cogenerator $E$ of $\text{Mod-}K$ and use the notion of Brown-Comenetz dual. Since compactly generated (or even well-generated) triangulated categories satisfy Brown representability theorem (see \cite[Proposition 8.4.2]{N}), for each $X\in\hat{\mathcal{D}}^c$, the functor $\text{Hom}_K(\text{Hom}_{\hat{\mathcal{D}}}(X,-),E):\hat{\mathcal{D}}^{op}\longrightarrow\text{Mod-}K$ is naturally isomorphic to the representable functor $\text{Hom}_{\hat{\mathcal{D}}}(-,D(X))$, for an object $D(X)$, uniquely determined up to isomorphism,  called the \emph{Brown-Comenetz dual} of $X$. It immediately follows that $\{D(X)\text{: }X\in\hat{\mathcal{D}}^c\}$ is a skeletally small cogenerating class of $\hat{\mathcal{D}}$. Our task  reduces to check that $D(X)\in\hat{\mathcal{D}}^b$ (resp.  $D(X)\in\hat{\mathcal{D}}^b_{fl}$), when $\hat{\mathcal{D}}$ is homologically locally bounded (resp. homologically locally finite length).    But this is clear since, given any $Y,X\in\hat{\mathcal{D}}^c$, we have that $\text{Hom}_{\hat{\mathcal{D}}}(Y,D(X)[k])\cong\text{Hom}_K(\text{Hom}_{\hat{\mathcal{D}}}(Y,X[-k]),E)$ and the homologically  locally bounded condition on $\hat{\mathcal{D}}$ implies that $\text{Hom}_{\hat{\mathcal{D}}}(Y,X[-k])=0$, for all but finitely many $k\in\mathbb{Z}$. When $\hat{\mathcal{D}}$ is homologically locally finite length, we have in addition that  each $\text{Hom}_{\hat{\mathcal{D}}}(Y,X[-k])$ is of finite length as $K$-module, which implies that the same is true for $\text{Hom}_{\hat{\mathcal{D}}}(Y,D(X)[k]) $.
\end{proof}
Our next result, inspired by the analogous results for derived categories of ordinary algebras \cite{AKLY}, says that 'restriction up to equivalence' and 'restriction' are equivalent concepts for recollements in some interesting cases.

\begin{proposition} \label{prop.restriction}
 Let $$
\begin{xymatrix}{\hat{\mathcal{Y}} \ar[r]^{\hat{i}_*}& \hat{\mathcal{D}} \ar@<3ex>[l]_{\hat{i}^!}\ar@<-3ex>[l]_{\hat{i}^*}\ar[r]^{\hat{j}^*} & \hat{\mathcal{X}} \ar@<3ex>_{\hat{j}_*}[l]\ar@<-3ex>_{\hat{j}_!}[l]}
\end{xymatrix}$$ be a recollement of compactly generated triangulated categories that is the upper part of a ladder of recollements of height two,  let $\star\in\{\emptyset, b,-,+\}$ and $\dagger\in\{\emptyset, fl\}$, and suppose that $\mathcal{E}^c\subseteq\mathcal{E}^\star_\dagger$, for  $\mathcal{E}=\hat{\mathcal{X}},\hat{\mathcal{D}},\hat{\mathcal{Y}}$. The following assertions are equivalent:

\begin{enumerate}
\item  The recollement restricts to $(-)^\star_\dagger$-level.
\item  The recollement restricts, up to equivalence, to $(-)^\star_\dagger$-level, i.e. the associated TTF triple $(\text{Im}(\hat{j}_!),(\text{Im}(\hat{i}_*),(\text{Im}(\hat{j}_*))$ restricts to $\hat{\mathcal{D}}^\star_\dagger$
\item $\hat{i}^*(\hat{\mathcal{D}}^\star_\dagger)\subseteq\hat{\mathcal{Y}}^\star_\dagger$.
\item  $\hat{j}_!(\hat{\mathcal{X}}^\star_\dagger)\subseteq\hat{\mathcal{D}}^\star_\dagger$
\end{enumerate}
\end{proposition}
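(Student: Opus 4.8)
The plan is to prove the chain of implications $(1)\Longrightarrow(2)\Longrightarrow(3)\Longrightarrow(4)\Longrightarrow(1)$, using the ladder hypothesis to produce the extra upward functors that make compactness-preservation arguments available in both directions.

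First, $(1)\Longrightarrow(2)$ is immediate since "restricts" means the six functors of the small recollement are (naturally isomorphic to) the restrictions of the six functors upstairs, and this in particular forces the TTF triple $(\text{Im}(\hat j_!),\text{Im}(\hat i_*),\text{Im}(\hat j_*))$ to restrict to $\hat{\mathcal D}^\star_\dagger$. For $(2)\Longrightarrow(3)$: if the TTF triple restricts, then in particular for $D\in\hat{\mathcal D}^\star_\dagger$ we have $\hat i_*\hat i^*D\in\text{Im}(\hat i_*)\cap\hat{\mathcal D}^\star_\dagger$, and since $\hat i_*$ is a fully faithful triangulated functor whose right adjoint $\hat i^!$ preserves coproducts (by the ladder, $\hat i_*$ has a left adjoint, hence $\hat i^!$ does too — wait, more carefully: $\hat i_*$ has both a left and a right adjoint, and a triangulated functor with a right adjoint preserves coproducts), one checks that $\text{Hom}_{\hat{\mathcal D}}(X, (\hat i_*\hat i^*D)[k])\cong\text{Hom}_{\hat{\mathcal Y}}(\hat i^*X', \hat i^*D[k])$-type identities, combined with $\text{Im}(\hat i_*)\cap\hat{\mathcal D}^\star_\dagger = \hat i_*(\hat{\mathcal Y}^\star_\dagger)$, give $\hat i^*D\in\hat{\mathcal Y}^\star_\dagger$. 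The clean way is: $\hat i_*$ reflects property $\mathbf P^\star_\dagger$ because $\hat i_*$ sends a set of compact generators of $\hat{\mathcal Y}$ to compact objects of $\hat{\mathcal D}$ (the ladder gives $\hat i_*$ a left adjoint $\hat i^*$ having itself a left adjoint, so $\hat i_*$ preserves compacts) and $\text{Hom}_{\hat{\mathcal Y}}(Y,M[k])\cong\text{Hom}_{\hat{\mathcal D}}(\hat i_* Y,\hat i_* M[k])$ by full faithfulness; hence $\hat i^*D$, which satisfies $\hat i_*\hat i^*D\in\hat{\mathcal D}^\star_\dagger$, lies in $\hat{\mathcal Y}^\star_\dagger$.

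For $(3)\Longrightarrow(4)$ and $(4)\Longrightarrow(1)$ I would exploit the ladder and the triangles of the recollement. For $(4)\Longrightarrow(1)$: assuming $\hat j_!(\hat{\mathcal X}^\star_\dagger)\subseteq\hat{\mathcal D}^\star_\dagger$, I want $\hat i^*(\hat{\mathcal D}^\star_\dagger)\subseteq\hat{\mathcal Y}^\star_\dagger$ and $\hat j^*(\hat{\mathcal D}^\star_\dagger)\subseteq\hat{\mathcal X}^\star_\dagger$; the latter follows because $\hat j^*$ has a left adjoint $\hat j_!$ whose own left adjoint (from the ladder) makes $\hat j^*$ preserve compacts, and $H^\bullet$-type vanishing/finiteness of $\hat j^* D$ is computed by $\text{Hom}_{\hat{\mathcal X}}(X,\hat j^*D[k])\cong\text{Hom}_{\hat{\mathcal D}}(\hat j_! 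X, D[k])$ with $\hat j_! X$ compact; for the former, apply $\hat i^*$ to the triangle $\hat j_!\hat j^* D\to D\to \hat i_*\hat i^* D\stackrel{+}{\to}$, noting $\hat i^*\hat j_!=0$, to get $\hat i_*\hat i^* D\cong D$ after... no — rather, use that $D\in\hat{\mathcal D}^\star_\dagger$ and $\hat j_!\hat j^*D\in\hat{\mathcal D}^\star_\dagger$ (by hypothesis $(4)$ applied to $\hat j^*D\in\hat{\mathcal X}^\star_\dagger$, which we just showed), so the third term $\hat i_*\hat i^*D\in\hat{\mathcal D}^\star_\dagger$, and then $\hat i^* D\in\hat{\mathcal Y}^\star_\dagger$ by the reflection property of $\hat i_*$ from the previous paragraph. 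Once all four upper functors $\hat j_!,\hat j^*,\hat i^*,\hat i_*$ restrict, the remaining two functors $\hat i^!,\hat j_*$ restrict by adjunction, using that the glued $t$-structure / TTF triple restricts (or directly: $\hat j_*=\hat j^*$'s right adjoint and on $\hat{\mathcal X}^\star_\dagger$ one checks $\hat j_*(\hat{\mathcal X}^\star_\dagger)\subseteq\hat{\mathcal D}^\star_\dagger$ by the dual Brown–Comenetz / compactness argument applied to the ladder's second recollement, whose functors are the "outer" ones). Finally $(3)\Longrightarrow(4)$ is the mirror of $(4)\Longrightarrow(1)$'s first half, swapping the roles of the two adjacent recollements in the ladder: $\hat j_!$ preserves $\mathbf P^\star_\dagger$ exactly when $\hat i^*$ does, via the triangle $\hat j_!\hat j^*D\to D\to\hat i_*\hat i^*D\stackrel{+}{\to}$ for $D=$ a compact generator twisted, together with preservation of compacts.

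The main obstacle I anticipate is organizing the bookkeeping of \emph{which} functor has a further adjoint coming from the height-two ladder and therefore preserves compact objects, and making sure the "reflection of $\mathbf P^\star_\dagger$ along $\hat i_*$" step is airtight for \emph{all} combinations of $\star$ and $\dagger$ uniformly (the $fl$ case needs the target $K$-modules to be finite length, which is inherited through the natural isomorphisms induced by full faithfulness of $\hat i_*$ and by the adjunctions, but one must check no infinite coproducts sneak in). A secondary subtlety is the last step ($\hat i^!,\hat j_*$ restrict): here one should invoke that a TTF triple restricting to a thick subcategory automatically yields a recollement on that subcategory (as recalled in Subsection 2.2 of the preliminaries, a TTF triple is equivalent to a recollement), so once $(4)$ gives the TTF triple restricts we are essentially done — the cleanest route may actually be $(4)\Longrightarrow(2)\Longrightarrow(1)$ rather than $(4)\Longrightarrow(1)$ directly, deducing first that the TTF triple restricts and then invoking this general principle to recover the full six-functor restriction.
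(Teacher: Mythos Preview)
Your overall structure is sound and most ingredients are present, but the step $(3)\Rightarrow(4)$ is not actually established: the phrase ``via the triangle $\hat j_!\hat j^*D\to D\to\hat i_*\hat i^*D\stackrel{+}{\to}$ for $D=$ a compact generator twisted'' is not an argument. To deduce $\hat j_!X\in\hat{\mathcal D}^\star_\dagger$ for $X\in\hat{\mathcal X}^\star_\dagger$ you need to produce a specific $D\in\hat{\mathcal D}^\star_\dagger$ with $\hat j^*D\cong X$; the correct choice is $D=\hat j_*X$, and for this you need to know in advance that $\hat j_*$ preserves $(-)^\star_\dagger$. That preservation is the missing organizational principle.

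The paper's proof is much cleaner because it isolates this principle at the outset: \emph{if $(F,G)$ is an adjoint pair between compactly generated triangulated categories and $F$ preserves compact objects, then $G(\mathcal E^\star_\dagger)\subseteq\mathcal D^\star_\dagger$} (immediate from adjunction). Since the ladder hypothesis makes all four of $\hat j_!,\hat j^*,\hat i^*,\hat i_*$ compact-preserving, their right adjoints $\hat j^*,\hat j_*,\hat i_*,\hat i^!$ \emph{automatically} preserve $(-)^\star_\dagger$. This is precisely what you were groping for in your $(4)\Rightarrow(1)$ paragraph (``dual Brown--Comenetz / compactness argument''), and it also resolves your worry about $\hat i^!$ and $\hat j_*$ at the end: no detour through $(2)\Rightarrow(1)$ is needed.

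With those four restrictions in hand, the paper observes that the truncation triangle $\hat i_*\hat i^!D\to D\to\hat j_*\hat j^*D\stackrel{+}{\to}$ always lies in $\hat{\mathcal D}^\star_\dagger$, so assertion $(2)$ is equivalent to the \emph{other} truncation triangle $\hat j_!\hat j^*D\to D\to\hat i_*\hat i^*D\stackrel{+}{\to}$ lying in $\hat{\mathcal D}^\star_\dagger$ for all $D\in\hat{\mathcal D}^\star_\dagger$. This single condition is then shown equivalent to $(4)$ (using $\hat j^*(\hat{\mathcal D}^\star_\dagger)=\hat{\mathcal X}^\star_\dagger$, which follows from $\hat j^*\hat j_*\cong\mathrm{id}$) and to $(3)$ (using that $\hat i_*$ \emph{reflects} $(-)^\star_\dagger$, proved via $\hat i^*(\hat{\mathcal D}^c)=\hat{\mathcal Y}^c$ and adjunction). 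So the paper proves $(2)\Leftrightarrow(3)\Leftrightarrow(4)$ directly rather than a cycle, and $(3)+(4)\Rightarrow(1)$ is then immediate since all six functors restrict. Your $(2)\Rightarrow(3)$ argument via full faithfulness of $\hat i_*$ is a valid alternative to the paper's adjunction argument, but the write-up should be stripped of the false starts.
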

\begin{proof}
The fact that the recollement is the upper part of a ladder of recollements of height two implies that the functors $\hat{i}^*,\hat{i}_*,\hat{j}_!,\hat{j}^*$ preserve compact objects, since their right adjoints preserve coproducts. Note that if in an adjoint pair of triangulated functors $(F:\mathcal{D}\longrightarrow\mathcal{E},G:\mathcal{E}\longrightarrow\mathcal{D})$   between compactly generated triangulated categories the functor $F$ preserves compact objects, then $G(\mathcal{E}^\star_\dagger)\subseteq\mathcal{D}^\star_\dagger$. Indeed, let  $E\in\mathcal{E}^\star_\dagger$ be any object. Then $G(E)\in\mathcal{D}^\star_\dagger$ if and only if $G(E)$ satisfies property $\mathbf{P}_\dagger^*$ for all $X\in\mathcal{D}^c$. Due to the natural isomorphism $\text{Hom}_\mathcal{E}(F(X),E[n])\cong\text{Hom}_\mathcal{D}(X,G(E)[n])$, for all $X\in\mathcal{D}^c$ and $n\in\mathbb{Z}$, and since $F(X)\in\mathcal{E}^c$ we get that $G(E)$ satisfies  property $\mathbf{P}_\dagger^*$ for all $X\in\mathcal{D}^c$, since  $E$ satisfies property  $\mathbf{P}_\dagger^*$, for all $Z\in\mathcal{E}^c$. This implies  that all the functors $\hat{i}_*$, $\hat{i}^!$, $\hat{j}^*$ and $\hat{j}_*$ restrict to the $(-)^\star_\dagger$-level. In particular, the triangle  $\hat{i}_*\hat{i}^!D\longrightarrow D\longrightarrow\hat{j}_*\hat{j}^*D\stackrel{+}{\longrightarrow}$, which is the truncation triangle with respect to $(\mathcal{V},\mathcal{W}):=(\text{Im}(\hat{i}_*),\text{Im}(\hat{j}_*))$, belongs to $\hat{\mathcal{D}}^\star_\dagger$ for each $D\in\hat{\mathcal{D}}^\star_\dagger$. Therefore the semi-orthogonal decomposition $(\mathcal{V},\mathcal{W})$ restricts to $\hat{\mathcal{D}}^\star_\dagger$. As a consequence,  assertion 2 holds if and only if the triangle  $\hat{j}_!\hat{j}^*D\longrightarrow D\longrightarrow\hat{i}_*\hat{i}^*D\stackrel{+}{\longrightarrow}$ (I), which is the truncation triangle with respect to  $(\mathcal{U},\mathcal{V}):=(\text{Im}(\hat{j}_!),\text{Im}(\hat{i}_*))$, belongs to $\hat{\mathcal{D}}^\star_\dagger$ for each $D\in\hat{\mathcal{D}}^\star_\dagger$.

Since $\hat{i}_*$ and $\hat{j}_*$ are fully faithful,  the counits of the adjoint pairs $(\hat{i}^*,\hat{i}_*)$ and $(\hat{j}^*,\hat{j}_*)$  are natural isomorphisms, and, hence $\hat{\mathcal{Y}}^\star_\dagger\subseteq\hat{i}^*(\hat{\mathcal{D}}^\star_\dagger)$ and  $\hat{j}^*(\hat{\mathcal{D}}^\star_\dagger)=\hat{\mathcal{X}}^\star_\dagger$. Using this, we get that the triangle (I) belongs to $\hat{\mathcal{D}}^\star_\dagger$, for each $D\in\hat{\mathcal{D}}^\star_\dagger$ if and only if $\hat{j}_!(\hat{\mathcal{X}}^\star_\dagger)\subseteq\hat{\mathcal{D}}^\star_\dagger$. Thus, assertions 2 and 4 are equivalent.

  On the other hand, we have  $\hat{i}^*(\hat{\mathcal{D}}^c)=\hat{\mathcal{Y}}^c$. Indeed, we only need to check the inclusion $\supseteq$, since $\hat{i}^*$ preserves compact objects. If $Y\in\hat{\mathcal{Y}}^c$ then  $Y\cong \hat{i}^*\hat{i}_*(Y)$, and  $\hat{i}_*Y\in\hat{\mathcal{D}}^c$. The equality  $\hat{i}^*(\hat{\mathcal{D}}^c)=\hat{\mathcal{Y}}^c$ implies, that for $Y\in\hat{\mathcal{Y}}$, one has that $\hat{i}_*Y\in\hat{\mathcal{D}}^\star_\dagger$ if and only if $Y\in\hat{\mathcal{Y}}^\star_\dagger$. Indeed  $\hat{i}_*Y\in\hat{\mathcal{D}}^\star_\dagger$ if and only if $\hat{i}_*Y$ satisfies property $\mathbf{P}^\star_\dagger$, for all $D_0\in\hat{\mathcal{D}}^c$, which, by adjunction, is equivalent to saying that $Y$ satisfies property $\mathbf{P}^\star_\dagger$ for all $\hat{i}^*(D_0)$, with $D_0\in\hat{\mathcal{D}}^c$. That is, if and only if $Y$ satisfies property $\mathbf{P}^\star_\dagger$, for all $Y_0\in\hat{\mathcal{Y}}^c$, if and only if $Y\in\hat{\mathcal{Y}}^\star_\dagger$. Thus,  triangle (I) belongs to $\hat{\mathcal{D}}^\star_\dagger$, for each $D\in\hat{\mathcal{D}}^\star_\dagger$, if and only if $\hat{i}_*\hat{i}^*D\in\hat{\mathcal{D}}^\star_\dagger$ if and only if $\hat{i}^*D\in\hat{\mathcal{Y}}^\star_\dagger$, if and only if $\hat{i}^*(\hat{\mathcal{D}}^\star_\dagger)\subseteq\hat{\mathcal{Y}}^\star_\dagger$. Therefore assertions 2 and 3 are also equivalent.
 
 Finally, taking into account the first paragraph of this proof, it is clear that if the equivalent assertions 3 and 4 hold, then assertion 1 holds. 
\end{proof}

\begin{theorem} \label{thm.Case Y homologically loc.bounded}
Let  $\hat{\mathcal{D}}$ and $\hat{\mathcal{X}}$ be compactly generated algebraic triangulated categories  and let  $\mathcal{B}$ be a small $K$-linear category.
 For $\star\in\{\emptyset ,+,-,b\}$ and $\dagger\in\{\emptyset, fl\}$ let $$
\begin{xymatrix}{\mathcal{D}^{\star}_\dagger (\mathcal{B}) \ar[r]^-{{i}_*}& \hat{\mathcal{D}}^{\star}_\dagger \ar@<3ex>[l]_-{{i}^!}\ar@<-3ex>[l]_-{{i}^*}\ar[r]^{j^*} & \hat{\mathcal{X}}^{\star}_\dagger \ar@<3ex>_{{j}_*}[l]\ar@<-3ex>_{{j}_!}[l]}
\end{xymatrix}$$ be a recollement, such that the categories  involved contain the respective subcategories of compact objects and such that the functors $j_!,j^*,i^*,i_*$ preserve compact objects (that is $j_!(\hat{\mathcal{X}}^c)\subseteq \hat{\mathcal{D}}^c$ and similarly for $j^*, i^*$ and $i_*$). Then $\text{Im}(i_*)$ cogenerates $\mathcal{V}=\text{Loc}_{\hat{\mathcal{D}}}(i_*(\mathcal{D}^c(\mathcal{B})))$ 
and  assertion 2 of Theorem \ref{thm.lifting of recollements} holds. 

If in addition $\hat{\mathcal{X}}=\mathcal{D}(\mathcal{C})$, for some small $K$-linear category $\mathcal{C}$, then the given recollement lifts up to equivalence to a recollement $$
\begin{xymatrix}{\mathcal{D}(\mathcal{B}) \ar[r]^-{\hat{i}_*}& \hat{\mathcal{D}} \ar@<3ex>[l]_-{\hat{i}^!}\ar@<-3ex>[l]_-{\hat{i}^*}\ar[r]^{\hat{j}^*} & \mathcal{D}(\mathcal{C}) \ar@<3ex>_{\hat{j}_*}[l]\ar@<-3ex>_{\hat{j}_!}[l]}
\end{xymatrix},$$ which is the upper part of a ladder of recollements of height two.
\end{theorem}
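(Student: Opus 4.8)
The plan is to construct the candidate TTF triple in $\hat{\mathcal{D}}$ by hand, prove the cogeneration statement (the heart of the argument), deduce assertion~2 of Theorem~\ref{thm.lifting of recollements} from it, and then identify the outer terms of the associated recollement with $\mathcal{D}(\mathcal{B})$ and $\mathcal{D}(\mathcal{C})$ by means of Keller's realization theorem \cite{K}. First I would set $\mathcal{U}_0:=j_!(\hat{\mathcal{X}}^c)$ and $\mathcal{V}_0:=i_*(\mathcal{D}^c(\mathcal{B}))$. Since $j_!,j^*,i^*,i_*$ preserve compact objects and $i_*,j_!$ are fully faithful, the recollement triangles (whose outer terms are compact whenever the middle one is), together with $j^*i_*=0$, show that $(\mathcal{U}_0,\mathcal{V}_0)$ is a semi-orthogonal decomposition of $\hat{\mathcal{D}}^c$. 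Lemma~\ref{LemmaNew} then provides a TTF triple $(\mathcal{U},\mathcal{V},\mathcal{W})=(\text{Loc}_{\hat{\mathcal{D}}}(\mathcal{U}_0),\text{Loc}_{\hat{\mathcal{D}}}(\mathcal{V}_0),\text{Loc}_{\hat{\mathcal{D}}}(\mathcal{V}_0)^{\perp})$ whose constituent torsion pairs $(\mathcal{U},\mathcal{V})$ and $(\mathcal{V},\mathcal{W})$ are compactly generated, with $\mathcal{U}^c=\mathcal{U}_0$, $\mathcal{V}^c=\mathcal{V}_0$ and $\mathcal{V}=\mathcal{U}^{\perp}$; this $\mathcal{V}$ is the subcategory occurring in the statement.

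The next step is the identity $\mathcal{V}\cap\hat{\mathcal{D}}^{\star}_{\dagger}=\text{Im}(i_*)$. For $M\in\mathcal{D}^{\star}_{\dagger}(\mathcal{B})$ one has $\text{Hom}_{\hat{\mathcal{D}}}(j_!X,i_*M)\cong\text{Hom}(X,j^*i_*M)=0$ for every $X\in\hat{\mathcal{X}}^c$, whence $i_*M\in(j_!(\hat{\mathcal{X}}^c))^{\perp_{k\in\mathbb{Z}}}=\mathcal{U}^{\perp}=\mathcal{V}$; conversely, if $W\in\mathcal{V}\cap\hat{\mathcal{D}}^{\star}_{\dagger}$, then $\text{Hom}_{\hat{\mathcal{X}}}(X,j^*W[k])\cong\text{Hom}_{\hat{\mathcal{D}}}(j_!X,W[k])=0$ for all $X\in\hat{\mathcal{X}}^c$ and all $k$ (since $W\in\mathcal{V}=\mathcal{U}^{\perp}$), so that $j^*W=0$ and $W\cong i_*i^!W\in\text{Im}(i_*)$. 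I would also record that $\mathcal{V}$ is homologically locally bounded, resp. homologically locally finite length when $\dagger=fl$ (in which case the hypothesis $\mathcal{D}^c(\mathcal{B})\subseteq\mathcal{D}^{\star}_{fl}(\mathcal{B})$ forces $\mathcal{B}$ to be homologically locally finite length): indeed $\text{Hom}_{\mathcal{V}}(i_*X_0,i_*Y_0[k])\cong\text{Hom}_{\mathcal{D}(\mathcal{B})}(X_0,Y_0[k])$, which vanishes (resp. has finite length) with the required uniformity because $\mathcal{B}$ sits in degree $0$ and $X_0,Y_0$ are compact.

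Now comes the crux: deducing cogeneration. Applying the Brown--Comenetz argument of the proof of Corollary~\ref{cor.Case D homologically loc. bounded} to the compactly generated triangulated category $\mathcal{V}$, the Brown--Comenetz duals of the objects of $\mathcal{V}^c$ form a cogenerating set of $\mathcal{V}$ and lie in $\mathcal{V}^b$ (resp. $\mathcal{V}^b_{fl}$). Since the left truncation $\tau\colon\hat{\mathcal{D}}\to\mathcal{V}$ preserves compact objects (Lemma~\ref{LemNS}) and $\text{Hom}_{\hat{\mathcal{D}}}(D,W[k])\cong\text{Hom}_{\mathcal{V}}(\tau D,W[k])$ for $W\in\mathcal{V}$, one gets $\mathcal{V}^b\subseteq\hat{\mathcal{D}}^b\subseteq\hat{\mathcal{D}}^{\star}_{\dagger}$ (resp. $\mathcal{V}^b_{fl}\subseteq\hat{\mathcal{D}}^b_{fl}\subseteq\hat{\mathcal{D}}^{\star}_{\dagger}$), so this cogenerating set lies in $\mathcal{V}\cap\hat{\mathcal{D}}^{\star}_{\dagger}=\text{Im}(i_*)$. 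Hence $\text{Im}(i_*)$ cogenerates $\mathcal{V}$, and the implication $3)\Longrightarrow 2)$ of Theorem~\ref{thm.lifting of recollements}, applied with $\hat{\mathcal{Y}}=\mathcal{D}(\mathcal{B})$ (whose compact objects are $\mathcal{D}^c(\mathcal{B})$), yields assertion~2 of that theorem.

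Finally, suppose $\hat{\mathcal{X}}=\mathcal{D}(\mathcal{C})$, so $\hat{\mathcal{X}}^c=\mathcal{D}^c(\mathcal{C})$. Being localizing subcategories of the algebraic category $\hat{\mathcal{D}}$ generated by sets of compact objects, $\mathcal{V}=\text{Loc}_{\hat{\mathcal{D}}}(i_*(\mathcal{D}^c(\mathcal{B})))$ and $\mathcal{U}=\text{Loc}_{\hat{\mathcal{D}}}(j_!(\mathcal{D}^c(\mathcal{C})))$ are compactly generated algebraic triangulated categories, with compact generators $\{i_*B^{\wedge}\}_{B\in\mathcal{B}}$ and $\{j_!C^{\wedge}\}_{C\in\mathcal{C}}$; by full faithfulness of $i_*,j_!$ and the concentration of $\mathcal{B},\mathcal{C}$ in degree $0$, the dg endomorphism categories of these generators in a dg enhancement of $\hat{\mathcal{D}}$ have cohomology concentrated in degree $0$, equal to $\mathcal{B}$ and $\mathcal{C}$ respectively, and are therefore formal. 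Keller's theorem then gives equivalences $\mathcal{V}\simeq\mathcal{D}(\mathcal{B})$ and $\mathcal{U}\simeq\mathcal{D}(\mathcal{C})$, and transporting the recollement associated with $(\mathcal{U},\mathcal{V},\mathcal{W})$ along them produces a recollement $(\mathcal{D}(\mathcal{B})\equiv\hat{\mathcal{D}}\equiv\mathcal{D}(\mathcal{C}))$; by assertion~2 it restricts, up to equivalence, to the given recollement, and, since $(\mathcal{U},\mathcal{V},\mathcal{W})$ is extendable to the right, it is the upper part of a ladder of recollements of height two. The step I expect to be the main obstacle is the identity $\mathcal{V}\cap\hat{\mathcal{D}}^{\star}_{\dagger}=\text{Im}(i_*)$ together with the placement of the Brown--Comenetz cogenerating set of $\mathcal{V}$ inside $\text{Im}(i_*)$: this is where the recollement structure and the homological local boundedness of $\mathcal{V}$ are used jointly, and it is precisely the ingredient that makes Theorem~\ref{thm.lifting of recollements} applicable.
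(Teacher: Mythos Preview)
Your argument is correct and follows the same architectural line as the paper: build the TTF triple $(\mathcal{U},\mathcal{V},\mathcal{W})$ via Lemma~\ref{LemmaNew}, identify $\mathcal{V}\cap\hat{\mathcal{D}}^{\star}_{\dagger}=\text{Im}(i_*)$, establish cogeneration, and then invoke Keller's realization theorem for the second assertion. The one genuinely different choice is your proof of cogeneration. The paper first applies Keller's theorem to produce an equivalence $F\colon\mathcal{D}(\mathcal{B})\stackrel{\cong}{\to}\mathcal{V}$ sending $B^{\wedge}\mapsto i_*(B^{\wedge})$, then checks directly (by analysing property $\mathbf{P}^{\star}_{\dagger}$ through the recollement triangles) that $F(\mathcal{D}^{\star}_{\dagger}(\mathcal{B}))=\text{Im}(i_*)$, and finally transports the cogeneration of $\mathcal{D}(\mathcal{B})$ by $\mathcal{D}^{\star}_{\dagger}(\mathcal{B})$ (Corollary~\ref{cor.Case D homologically loc. bounded}) across $F$. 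You instead stay inside $\mathcal{V}$: using full faithfulness of $i_*$ you observe that $\mathcal{V}$ is itself homologically locally bounded (resp.\ finite length), run the Brown--Comenetz argument of Corollary~\ref{cor.Case D homologically loc. bounded} \emph{in} $\mathcal{V}$, and then push the resulting cogenerating set into $\hat{\mathcal{D}}^{\star}_{\dagger}$ via the adjunction $\text{Hom}_{\hat{\mathcal{D}}}(D,W)\cong\text{Hom}_{\mathcal{V}}(\tau D,W)$ and the fact that $\tau$ preserves compacts. This is a clean shortcut: it proves the first assertion without invoking Keller's theorem at all, whereas the paper front-loads Keller so that the equivalence $F$ is already available for the second assertion (and so that one sees explicitly that $F$ restricts to the $(-)^{\star}_{\dagger}$-level). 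For the ladder claim you use extendability to the right of the TTF triple (built into assertion~2), while the paper cites \cite[Proposition~3.4]{GP}; these are equivalent here. Your use of $\mathcal{U}$ rather than $\mathcal{W}$ in the second part is harmless, since the two are canonically equivalent in any TTF triple.
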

\begin{proof}
We are going to use the notation from the proof of Theorem \ref{thm.lifting of recollements}. To prove the first assertion, we are going to 
check that  $\mathcal{D}(\mathcal{B}) \stackrel{\cong}{\longrightarrow}\mathcal{V}$ and that this equivalence restricts to 
$\mathcal{D}^{\star}_\dagger (\mathcal{B})\stackrel{\cong}{\longrightarrow} \text{Im}(i_*)$. Note that $\mathcal{B}$ is homologically locally bounded considered
 as a dg category. When $\dagger =fl$, due to the inclusion $\mathcal{D}^c(\mathcal{B})\subseteq\mathcal{D}^{\star}_\dagger(\mathcal{B})$, the dg 
category $\mathcal{B}$ is also homologically locally finite length.  
By Lemma \ref{LemmaNew}, $(\mathcal{U},\mathcal{V},\mathcal{W})=(\text{Loc}_{\hat{\mathcal{D}}}(j_!(\hat{\mathcal{X}}^c)),\text{Loc}_{\hat{\mathcal{D}}}(i_*(\mathcal{D}^c(\mathcal{B}))),\text{Loc}_{\hat{\mathcal{D}}}(i_*(\mathcal{D}^c(\mathcal{B})))^\perp )$ is a TTF triple in $\hat{\mathcal{D}}$ and ${\mathcal{V}}^c=\mathcal{V}\cap\hat{\mathcal{D}}^c$.  By the proof of the implication $3)\Longrightarrow 2)$ of Theorem \ref{thm.lifting of recollements}, $\text{Im}(i_*)=\mathcal{V}\cap\hat{\mathcal{D}}^{\star}_\dagger$. Indeed, the additional condition on $\text{Im}(i_*)$  was used only to check that $\text{Im}(j_!)=\mathcal{U}\cap\hat{\mathcal{D}}^{\star}_\dagger$.

By hypothesis, we have $i_*(\mathcal{D}^c(\mathcal{B}))\subseteq\mathcal{V}\cap\hat{\mathcal{D}}^c$. For $V\in\mathcal{V}\cap\hat{\mathcal{D}}^c \subseteq \text{Im}(i_*)$ take $W$ such that $i_*W\simeq V$, since $i^*$ preserves compact objects $W\simeq i^*i_*W$ is compact and $V\in i_*(\mathcal{D}^c(\mathcal{B}))$. So ${\mathcal{V}}^c=\mathcal{V}\cap\hat{\mathcal{D}}^c=i_*(\mathcal{D}^c(\mathcal{B}))$. Hence, also $i_*i^*\hat{\mathcal{D}}^c=i_*\mathcal{D}^c(\mathcal{B}).$

Note that $\mathcal{V}$ is a quotient of an algebraic compactly generated triangulated category by a localizing subcategory generated by a set of compact objects. Then  
$\mathcal{V}$ is  compactly generated  by \cite[Theorem 2.1]{N2} (using the description of compact objects and the right adjoint to the localization functor),  and it is also algebraic as a triangulated subcategory of an algebraic triangulated category. Furthermore,  by \cite[Theorem 9.2]{K} there is a triangulated equivalence 
$F:\mathcal{D}(\mathcal{B})\stackrel{\cong}{\longrightarrow}\mathcal{V}$ such that $F(B^\wedge)\cong i_*(B^\wedge) $, for each 
$B\in\mathcal{B}$. Since $\mathcal{D}^c(\mathcal{B})=\text{thick}_{\mathcal{D}(\mathcal{B})}(B^\wedge\text{: }B\in\mathcal{B})$, 
we get $F(\mathcal{D}^c(\mathcal{B}))=  i_*(\mathcal{D}^c(\mathcal{B})) $.

Consider the canonical triangle  $j_!j^*Z\longrightarrow Z\longrightarrow i_*i^*Z\stackrel{+}{\longrightarrow}$, for each
 $Z\in\hat{\mathcal{D}}^c$. Since $j_!j^*(Z)$ is compact $\text{Hom}_{\hat{\mathcal{D}}}(j_!j^*(Z),-)$ vanishes on 
$\text{Im}(F)=\mathcal{V}$. For $Y\in\mathcal{D}(\mathcal{B})$ we get that $F(Y)$ belongs to $\hat{\mathcal{D}}^*_\dagger$  
iff it satisfies property $\mathbf{P}^\star_\dagger$, for all $Z\in\hat{\mathcal{D}}^c$, iff it satisfies property $\mathbf{P}^\star_\dagger$, 
for all $Z'\in\hat{\mathcal{D}}^c$ such that $Z'\cong i_*i^*(Z)$. Since $i_*i^*\hat{\mathcal{D}}^c=i_*\mathcal{D}^c(\mathcal{B})$, we get  
that $F(Y)$ is in $\hat{\mathcal{D}}^*_\dagger$  iff it satisfies property $\mathbf{P}^{\star}_\dagger$, for all $Z=i_*(M)\cong F(M')$, with
$M, M'\in\mathcal{D}^c(\mathcal{B})$.  The fact that $F$ is an equivalence implies that 
$F(Y)\in\hat{\mathcal{D}}^*_\dagger$ iff $Y$ satisfies property 
$\mathbf{P}^\star_\dagger$ in $\mathcal{D}(\mathcal{B})$ for all $M'\in \mathcal{D}^c (\mathcal{B})$. 
Thus, $F(Y)\in\hat{\mathcal{D}}^*_\dagger$ iff $Y\in\mathcal{D}^*_\dagger\mathcal{B}$. This means that 
$F(\mathcal{D}^*_\dagger(\mathcal{B}))=\hat{\mathcal{D}}^*_\dagger \cap\text{Im}(F)=\hat{\mathcal{D}}^*_\dagger\cap\mathcal{V}=
\text{Im}(i_*)$. Therefore $F$ induces an equivalence of triangulated categories $F:\mathcal{D}^*_\dagger (\mathcal{B})\stackrel{\cong}{\longrightarrow}\text{Im}(i_*)$. Note that, this equivalence need not be naturally isomorphic to the one induced by $i_*$. By the proof of Corollary \ref{cor.Case D homologically loc. bounded}, $\mathcal{D}^*_\dagger$ cogenerates $\mathcal{D}(\mathcal{B})$, and hence $\text{Im}(i_*)$ cogenerates $\mathcal{V}$.

Let us prove the second assertion of the proposition. From the TTF triple constructed above we get a recollement:
\begin{equation}\label{Recf}
\begin{xymatrix}{\mathcal{D}(\mathcal{B}) \ar[r]^-{\hat{i}_*}& \hat{\mathcal{D}} \ar@<3ex>[l]_-{\hat{i}^!}\ar@<-3ex>[l]_-{\hat{i}^*}\ar[r]^{\hat{j}^*} & \mathcal{W} \ar@<3ex>_{\hat{j}_*}[l]\ar@<-3ex>_{\hat{j}_!}[l]}
\end{xymatrix},
\end{equation}
where $\hat{j}_*:\mathcal{W}\hookrightarrow\hat{\mathcal{D}}$ is the inclusion, such that the associated TTF triple $(\text{Im}(\hat{j}_!),\text{Im}(\hat{i}_*),\text{Im}(\hat{j}_*))$ in $\hat{\mathcal{D}}$ restricts to the TTF triple $(\text{Im}(j_!),\text{Im}(i_*),\text{Im}(j_*))$ in $\hat{\mathcal{D}}^*_\dagger$. In particular, there is an equivalence of categories $\mathcal{U}\stackrel{\cong}{\longrightarrow}\mathcal{W}$ which restricts to the canonical equivalence $\text{Im}(j_!)\stackrel{\cong}{\longrightarrow}\text{Im}(j_*)$ given by $j_*j_!^{-1}$. As a quotient of an algebraic triangulated category $\mathcal{W}$ is algebraic. Since $\mathcal{W} \stackrel{\cong}{\longrightarrow}\mathcal{U}=\text{Loc}_{\hat{\mathcal{D}}}(j_!(\mathcal{D}^c(\mathcal{C})))=\text{Loc}_{ \hat{\mathcal{D}}}(j_!(C^\wedge)\text{: }C\in\mathcal{C})$, 
we conclude that $\mathcal{W}$ is compactly generated by $\{j_*(C^\wedge)\text{: }C\in\mathcal{C}\}$.

As before, by \cite[Theorem 9.2]{K}, there is a triangulated equivalence
 $G:\mathcal{D}(\mathcal{C})\stackrel{\cong}{\longrightarrow}\mathcal{W}$ such that $G(C^\wedge)\cong j_*(C^\wedge)$, for each 
$C\in\mathcal{C}$. Let  $X\in\mathcal{D}(\mathcal{C})$ be any object. We claim that 
$G(X)\in\text{Im}(j_*)=\mathcal{W}\cap\hat{\mathcal{D}}^*_\dagger$ iff $X\in\mathcal{D}_\dagger^*(\mathcal{C})$. Indeed,  
$G(X)\in\hat{\mathcal{D}}^*_\dagger$ iff $\text{Hom}_{\hat{\mathcal{D}}}(Z,G(X)[k])$ satisfies property $\mathbf{P}^{\star}_\dagger$, 
for all $Z\in\hat{\mathcal{D}}^c$. Using the triangle $i_*i^!Z\longrightarrow Z\longrightarrow j_*j^*Z\stackrel{+}{\longrightarrow}$ and the fact that $\text{Hom}_{\hat{\mathcal{D}}}(i_*i^!Z,-)$ vanishes on $\mathcal{W}=\text{Im}(G)$, we get that $G(X)$ is in $\hat{\mathcal{D}}^*_\dagger$ iff $\text{Hom}_{\hat{\mathcal{D}}}(j_*j^*Z,G(X)[k])$ satisfies 
$\mathbf{P}^{\star}_\dagger$, for all $Z\in\hat{\mathcal{D}}^c$. By hypothesis, we have an inclusion $j^*(\hat{\mathcal{D}}^c)\subseteq\mathcal{D}^c(\mathcal{C})$. Conversely, if $X'\in\mathcal{D}^c(\mathcal{C})$ then $j_!(X')\in\hat{\mathcal{D}}^c$, so that $X'\cong j^*j_!(X')\in j^*(\hat{\mathcal{D}}^c)$. Thus, when $Z$ runs through the objects of $\hat{\mathcal{D}}^c$, the object $j^*Z$ runs through the objects of $\mathcal{D}^c(\mathcal{C})$. Since $\mathcal{D}^c(\mathcal{C})=\text{thick}_{\mathcal{D}(\mathcal{C})}(C^\wedge\text{: }C\in\mathcal{C})$, we easily conclude that $G(X)\in\hat{\mathcal{D}}^*_\dagger$ iff $\text{Hom}_{\hat{\mathcal{D}}}(j_*(C^\wedge ),G(X)[k])$ 
satisfies $\mathbf{P}^{\star}_\dagger$, for all $C\in\mathcal{C}$. Since $G$ is an equivalence of categories and $j_*(C^\wedge )\cong G(C^\wedge )$, we have $G(X)\in\hat{\mathcal{D}}^*_\dagger$ iff $\text{Hom}_{\mathcal{D}(\mathcal{C})}(C^\wedge ,X[k])$ 
satisfies $\mathbf{P}^{\star}_\dagger$, for all $C\in\mathcal{C}$. That is, iff $X\in\mathcal{D}^*_\dagger(\mathcal{C})$. 

The previous paragraph yields an equivalence of categories $G:\mathcal{D}(\mathcal{C})\stackrel{\cong}{\longrightarrow}\mathcal{W}$ which 
induces by restriction another equivalence $\mathcal{D}^*_\dagger(\mathcal{C})\stackrel{\cong}{\longrightarrow}\text{Im}(j_*)$. This implies that we can 
replace $\mathcal{W}$ by $\mathcal{D}(\mathcal{C})$ in the recollement (\ref{Recf}), thus obtaining a recollement as in the final assertion of the
 proposition, which in turn restricts to a recollement    whose associated TTF triple is $(\text{Im}(j_!),\text{Im}(i_*),\text{Im}(j_*))$. This last recollement is then equivalent 
to the original one. 

It remains to prove that the obtained recollement $$
\begin{xymatrix}{\mathcal{D}(\mathcal{B}) \ar[r]^-{\hat{i}_*}& \hat{\mathcal{D}} \ar@<3ex>[l]_-{\hat{i}^!}\ar@<-3ex>[l]_-{\hat{i}^*}\ar[r]^-{\hat{j}^*} & \mathcal{D}(\mathcal{C}) \ar@<3ex>_-{\hat{j}_*}[l]\ar@<-3ex>_-{\hat{j}_!}[l]}
\end{xymatrix}$$ is the upper part of a ladder of recollements of height two.  This is a direct consequence of \cite[Proposition 3.4]{GP} since $\hat{i}_*$ preserves compact objects.
\end{proof}

\begin{remark} \label{rem.recollement usual derived categories}
 When $\mathcal{B}$ is a $K$-linear category,  the assumption $\mathcal{D}^c(\mathcal{B})\subseteq\mathcal{D}^*_\dagger(\mathcal{B})$ always holds when $\dagger=\emptyset$. 
When $\dagger =fl$ the assumption holds iff $\mathcal{B}(B,B')$ is a $K$-module of finite length, for all $B,B'\in\mathcal{B}$. 
In particular when $\mathcal{B}=B$ is an ordinary algebra, the inclusion
 $\mathcal{D}^c(\mathcal{B})\subseteq\mathcal{D}^*_{fl}(\mathcal{B})$ holds iff $B$ is  finite length. 
\end{remark}

For our next result, we shall use the following concept.

\begin{definition} \label{def.compact-detectable}
A compactly generated triangulated category $\mathcal{E}$ will be called  \emph{compact-detectable in finite length} when $\mathcal{E}^c$ consists of the objects $X\in\mathcal{E}^b_{fl}$ such that  $\text{Hom}_\mathcal{E}(X,E[k])=0 \mbox{ for } E\in\mathcal{E}^b_{fl}\mbox{ and }k\gg 0$. Note that such a category is homologically locally finite length. 
\end{definition}

\begin{example} \label{exs.intrinsic description of compacts}
The following triangulated $K$-categories $\hat{\mathcal{D}}$ are compact-detectable in finite length
 and have the property that $\hat{D}^b_{fl}$ is Hom-finite (i.e. $\text{Hom}_{\hat{\mathcal{D}}}(M,N)$  is a  $K$-module of finite length for any $M$, $N\in \hat{D}^b_{fl}$):

\begin{enumerate}
\item  $\hat{\mathcal{D}}=\mathcal{D}(\text{Qcoh}(\mathbb{X}))$, for a  projective  scheme  $\mathbb{X}$  over a perfect field $K$ \cite{Rou}.
\item $\hat{\mathcal{D}}=\mathcal{D}(A)$, where $A$ is a homologically non-positive homologically finite length dg $K$-algebra, 
where $K$ is any commutative ring.
\end{enumerate}
\end{example}
\begin{proof}
1) By \cite[Lemma 7.46]{Rou}, for an arbitrary projective scheme $\mathbb{X}$ over $K$, 
we have $\mathcal{D}^b(\text{coh}(\mathbb{X}))=\mathcal{D}(\text{Qcoh}(\mathbb{X}))^b_{fl}$,  which is well-known to be 
Hom-finite over $K$. 
By \cite[Lemma 7.49]{Rou}, $\mathcal{D}(\text{Qcoh}(\mathbb{X}))^c$ consists of $X\in\mathcal{D}(\text{Qcoh}(\mathbb{X}))$ such that, for each $M\in\mathcal{D}^b(\text{coh}(\mathbb{X}))$, the $K$-vector space $\oplus_{k\in\mathbb{Z}}\text{Hom}_{\mathcal{D}(\text{Coh}(\mathbb{X}))}(X,M[k])$ is finite dimensional. But if $X,M\in\mathcal{D}^b(\text{coh}(\mathbb{X}))$, then $\text{Hom}_{\mathcal{D}(\text{Qcoh}(\mathbb{X}))}(X,M[k])=0$ for $k\ll 0$. We also know that $\mathcal{D}^b(\text{coh}(\mathbb{X}))$ is $\text{Hom}$-finite. Thus, the subcategory consisting of $X\in\mathcal{D}^b(\text{coh}(\mathbb{X}))$ such that,  for each $M\in\mathcal{D}^b(\text{coh}(\mathbb{X}))$,  one has $\text{Hom}_{\mathcal{D}(\text{Qcoh}(\mathbb{X}))}(X,M[k])=0$ for $k\gg 0$, coincides with the subcategory of $X\in\mathcal{D}^b(\text{coh}(\mathbb{X}))$ such that $\oplus_{k\in\mathbb{Z}}\text{Hom}_{\mathcal{D}(\text{Qcoh}(\mathbb{X}))}(X,M[k])$ is finite dimensional. This subcategory 
is precisely $\mathcal{D}(\text{Qcoh}(\mathbb{X}))^c\cap\mathcal{D}^b(\text{coh}(\mathbb{X}))=\mathcal{D}(\text{Qcoh}(\mathbb{X}))^c$.

2) By \cite[Section 3.7]{NS2} or \cite[Example 6.1]{KN}, there is a \emph{canonical t-structure} $(\mathcal{D}^{\leq 0}A,\mathcal{D}^{\geq 0}A)$ in $\mathcal{D}(A)$, where $\mathcal{D}^{\leq 0}(A)$ (resp. $\mathcal{D}^{\geq 0}(A)$) consists of the dg $A$-modules $M$ such that $H^k(M)=0$, for all $k>0$ (resp. $k<0$). Moreover, $\mathcal{D}^{\leq 0}(A)=\text{Susp}_{\mathcal{D}(A)}(A)$. By Corollary \ref{cor.restriction of silting for dg categories} below for $\mathcal{T}=\{A\}$, this t-structure restricts to $\mathcal{D}^b_{fl}(A)$. 

Note that there is a dg subalgebra $\tilde{A}$ of $A$, given by $\tilde{A}^n=A^n$, for $n<0$, $\tilde{A}^0=Z^0(A)=\{0\text{-cycles of }A\}$, and $\tilde{A}^n=0$, for $n>0$. The inclusion $\lambda:\tilde{A}\hookrightarrow A$ is a quasi-isomorphism, and the associated restriction of scalars $\lambda_*:\mathcal{D}(A)\longrightarrow\mathcal{D}(\tilde{A})$ is a triangulated equivalence which takes $A$ to  $\tilde{A}$. As a consequence, this equivalence preserves the canonical t-structure, and hence it induces an equivalence between the corresponding hearts. The heart of $(\mathcal{D}^{\leq 0}(\tilde{A}),\mathcal{D}^{\geq 0}(\tilde{A}))$ is known to be equivalent to the category of modules over $H^0(\tilde{A})\cong H^0(A)$ (see \cite[Example 6.1]{KN}). This equivalence is given by $H^0:\mathcal{H}\stackrel{\cong}{\longrightarrow}\text{Mod-}H^0(A)$ ($M\rightsquigarrow H^0(M)$). Putting $\mathcal{H}_{fl}:=\mathcal{H}\cap\mathcal{D}^b_{fl}(A)$, which is the heart of the restricted t-structure $(\mathcal{D}^{\leq 0}(A)\cap\mathcal{D}^b_{fl}(A),\mathcal{D}^{\geq 0}(A)\cap\mathcal{D}^b_{fl}(A))$ in $\mathcal{D}^b_{fl}(A)$, we deduce an equivalence of categories $H^0:\mathcal{H}_{fl}\stackrel{\cong}{\longrightarrow}\text{mod-}H^0(A)$, bearing in mind that $H^0(A)$ is a finite length $K$-algebra.

Let now $X\in\mathcal{D}^b_{fl}(A)$ be any object. By the proof of \cite[Theorem 2]{NSZ} and the fact that 
$\text{Hom}_{\mathcal{D}(A)}(A[k],M)\cong H^{-k}M$  is a $K$-module of finite length, for each $M\in\mathcal{D}^b_{fl}(A)$ and each 
$k\in\mathbb{Z}$,  we know that $X$ 
is the Milnor colimit of a sequence 
$0=X_{-1}\stackrel{f_0}{\longrightarrow}X_0\stackrel{f_1}{\longrightarrow}X_1\longrightarrow\cdots\longrightarrow{X_n}\stackrel{f_n}\longrightarrow\cdots$ 
such that $\text{cone}(f_n)\in\text{add}(A)[n]$, for each $n\in\mathbb{N}$. Let $r>0$ be arbitrary and, for each $n> r$,  put 
$u_n:=f_n\circ \dots\circ f_{r+1}:X_{r}\longrightarrow X_n$ and $C_n=\text{cone}(u_n)$.  By Verdier's $3\times 3$ lemma 
(see \cite[Lemma 1.7]{M}), we have a commutative diagram, where all rows and columns are triangles 
$$\begin{xymatrix}@C=45pt{\coprod_{n>r}X_r \ar[r]^{\coprod_{n>r}u_n} \ar[d]^{1-\sigma}& \coprod_{n>r}X_n \ar[r] \ar[d]^{1-\sigma} & \coprod_{n>r}C_n \ar[d] \ar[r]&\\
\coprod_{n>r}X_r \ar[d]\ar[r]^{\coprod_{n>r}u_n} & \coprod_{n>r}X_n \ar[r] \ar[d] & \coprod_{n>r}C_n\ar[d]\ar[r]&\\
X_r\ar[r]& X\ar[r]& C\ar[r]&.}\end{xymatrix}$$ 
Since each $C_n$ is a finite iterated extension of objects in $\text{add}(A)[n]$, with $n>r$, 
we get $C_n\in\mathcal{D}^{<-r}(A)$, for each $n>r$. It follows that $C\in\mathcal{D}^{\leq -r}(A)$.
 But $C\in\mathcal{D}^b_{fl}(A)$, since the left two terms of the triangle in the bottom row of the diagram are in $\mathcal{D}^b_{fl}(A)$. 
We then get a triangle $X_r\longrightarrow X\longrightarrow C\stackrel{+}{\longrightarrow}$ in 
$\mathcal{D}^b_{fl}(A)$ such that $X_r\in\mathcal{D}^c(A)$ and $C\in\mathcal{D}^{\leq -r}(A)$. If now $Y\in\mathcal{D}^b_{fl}(A)$ is 
any object, then we know that $Y\in\mathcal{D}^{>-r}(A)$, for some integer $r>0$. For this integer, we then get a monomorphism 
$\text{Hom}_{\mathcal{D}(A)}(X,Y)\longrightarrow\text{Hom}_{\mathcal{D}(A)}(X_r,Y)$ whose target is a $K$-module of finite length. 
This proves
that $\mathcal{D}^b_{fl}(A)$ is Hom-finite. 

On the other hand, by \cite[Lemma 3.7]{Br}, we know that 
\begin{equation}\label{another}
\mathcal{D}^{\leq 0}(A)\cap\mathcal{D}^b_{fl}(A)=\bigcup_{n\in\mathbb{N}}\text{add}(\mathcal{H}_{fl}[n]\star\mathcal{H}_{fl}[n-1]\star\cdots\star\mathcal{H}_{fl}[0]).
\end{equation}
Let $X\in\mathcal{D}^b_{fl}(A)$ be an object such that, for each $M\in\mathcal{D}^b_{fl}(A)$, one has $\text{Hom}_{\mathcal{D}(A)}(X,M[k])=0$ for $k\gg 0$ (and, hence, also for $|k| \gg 0$). Let us choose a (necessarily finite) set $\mathcal{S}$ of representatives of the isoclasses of simple objects of $\mathcal{H}_{fl}$. If $m$ is the Loewy length of $H^0(A)$, then $\mathcal{H}_{fl}\subset\text{add}(\mathcal{S})\star\stackrel{m}{\dots}\star\text{add}(\mathcal{S})$. Fixing  $r\in\mathbb{N}$ such that $\text{Hom}_{\mathcal{D}(A)}(X,\mathcal{S}[k])=0$, for $k\geq r$, we get that $\text{Hom}_{\mathcal{D}(A)}(X,-[k])$ vanishes on $\mathcal{H}_{fl}$, for all $k\geq r$. By (\ref{another}) above,  $\text{Hom}_{\mathcal{D}(A)}(X,-)$ vanishes on $\mathcal{D}^{\leq -r}(A)\cap\mathcal{D}^b_{fl}(A)$. 
If for this $r$ we consider the triangle $X_r\longrightarrow X\longrightarrow C\stackrel{+}{\longrightarrow}$ constructed above, then
 the arrow $X\longrightarrow C$ is the zero map, and hence $X$ is isomorphic to a direct summand of $X_r$ and $X\in\mathcal{D}^c(A)$.
\end{proof}

\begin{remark} \label{rem.Neeman-approximability}
While preparing the manuscript we have learnt that Neeman has introduced the powerful tool of approximable triangulated categories. Using it, one can derive the compact-detectability in finite length for the categories from the last example, using the fact that they are approximable, with the equivalence class of the canonical t-structure as the preferred one (see \cite[Examples 3.3 and 3.6]{N3}). Although nontrivial, the only thing left to prove would be the fact that what is $\mathcal{T}^-_c$, in Neeman's terminology, coincides with $\hat{\mathcal{D}}^b_{fl}$ in our case. Once this is proved the compact-detectability in finite length follows from \cite[Theorem 0.3]{N4}. 
\end{remark}

\begin{proposition} \label{prop.homologically non-positive locally fd}
Let $\hat{\mathcal{Y}}$, $\hat{\mathcal{D}}$ and $\hat{\mathcal{X}}$ be compactly generated triangulated categories which are compact-detectable in finite length and let $$
\begin{xymatrix}{\hat{\mathcal{Y}}^{b}_{fl} \ar[r]^{{i}_*}& \hat{\mathcal{D}}^{b}_{fl} \ar@<3ex>[l]_{{i}^!}\ar@<-3ex>[l]_{{i}^*}\ar[r]^{{j}^*} & \hat{\mathcal{X}}^{b}_{fl} \ar@<3ex>_{{j}_*}[l]\ar@<-3ex>_{{j}_!}[l]}
\end{xymatrix}$$
be a recollement.   Then the functors $j_!$, $j^*$, $i^*$ and $i_*$ preserve compact objects. In particular,  the associated TTF triple $(\text{Im}(j_!),\text{Im}(i_*),\text{Im}(j_*))$ in $\hat{\mathcal{D}}^b_{fl}$ lifts to a TTF triple $(\mathcal{U},\mathcal{V},\mathcal{W})$ in $\hat{\mathcal{D}}$ such that the torsion pairs $(\mathcal{U},\mathcal{V})$ and $(\mathcal{V},\mathcal{W})$ are compactly generated and $j_!(\hat{\mathcal{X}}^c)=\mathcal{U}\cap\hat{\mathcal{D}}^c$ and $i_*(\hat{\mathcal{Y}}^c)=\mathcal{V}\cap\hat{\mathcal{D}}^c$. 

In the particular case when $\hat{\mathcal{Y}}=\mathcal{D}(B)$   and  $\hat{\mathcal{X}}=\mathcal{D}(C)$, for ordinary finite length $K$-algebras $B$ and $C$ (and hence $\hat{\mathcal{Y}}^b_{fl}\cong\mathcal{D}^b(\text{mod-}B)$ and  $\hat{\mathcal{X}}^b_{fl}\cong\mathcal{D}^b(\text{mod-}C)$), and $\hat{\mathcal{D}}$ is algebraic,  the given recollement lifts, up to equivalence, to a recollement $$
\begin{xymatrix}{\mathcal{D}(B) \ar[r]^-{\hat{i}_*}& \hat{\mathcal{D}}\ar@<3ex>[l]_-{\hat{i}^!}\ar@<-3ex>[l]_-{\hat{i}^*}\ar[r]^{\hat{j}^*} & \mathcal{D}(C), \ar@<3ex>_{\hat{j}_*}[l]\ar@<-3ex>_{\hat{j}_!}[l]} 
\end{xymatrix}$$ which is the upper part of a ladder of recollements of height two. 
\end{proposition}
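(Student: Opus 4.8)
The plan is to prove the three assertions in turn, the real content lying in the first one. Throughout I would use that compact-detectability in finite length furnishes an \emph{intrinsic} description of the compact objects: for $\mathcal{E}\in\{\hat{\mathcal{Y}},\hat{\mathcal{D}},\hat{\mathcal{X}}\}$ one has $\mathcal{E}^c=\{Z\in\mathcal{E}^b_{fl}\,:\,\text{Hom}_{\mathcal{E}}(Z,E[k])=0 \text{ for all } E\in\mathcal{E}^b_{fl},\ k\gg 0\}$, and that such a category is in particular homologically locally finite length, so $\mathcal{E}^c\subseteq\mathcal{E}^b_{fl}$. The latter inclusion is exactly what allows the recollement at hand to be fed into Theorem~\ref{thm.lifting of recollements}, Corollary~\ref{cor.Case D homologically loc. bounded} and Theorem~\ref{thm.Case Y homologically loc.bounded}.

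\emph{Preservation of compactness.} I would treat $j_!$, $i_*$, $i^*$, $j^*$ uniformly, exploiting that each is the left member of an adjoint pair occurring in the recollement whose right member is again one of the six functors, hence a functor at the $(-)^b_{fl}$-level. For $j_!$: given $X\in\hat{\mathcal{X}}^c$, the object $j_!X$ lies in $\hat{\mathcal{D}}^b_{fl}$ since $j_!\colon\hat{\mathcal{X}}^b_{fl}\to\hat{\mathcal{D}}^b_{fl}$; and for every $D\in\hat{\mathcal{D}}^b_{fl}$ the adjunction $(j_!,j^*)$ gives $\text{Hom}_{\hat{\mathcal{D}}}(j_!X,D[k])\cong\text{Hom}_{\hat{\mathcal{X}}}(X,(j^*D)[k])$, which vanishes for $k\gg 0$ because $j^*D\in\hat{\mathcal{X}}^b_{fl}$, $X$ is compact, and $\hat{\mathcal{X}}$ is compact-detectable in finite length; by the same property for $\hat{\mathcal{D}}$, these two facts say precisely that $j_!X\in\hat{\mathcal{D}}^c$. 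Replacing $(j_!,j^*)$ by the pairs $(i_*,i^!)$, $(i^*,i_*)$, $(j^*,j_*)$ yields the assertion for $i_*$, $i^*$ and $j^*$ respectively. This is the only place the compact-detectability hypothesis enters, and it is the heart of the argument; I anticipate no genuine obstacle, only the bookkeeping that every $\text{Hom}$-group in sight is computed in a full triangulated subcategory and so agrees with the one in the ambient category.

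\emph{Lifting the TTF triple.} Now Corollary~\ref{cor.Case D homologically loc. bounded} applies with $\star=b$, $\dagger=fl$: the recollement lives at the $(-)^b_{fl}$-level, the subcategories contain the respective compact subcategories, the four functors preserve compactness by the previous step, and $\hat{\mathcal{D}}$ — being compact-detectable in finite length — is homologically locally finite length, so $\hat{\mathcal{D}}^b_{fl}$ cogenerates $\hat{\mathcal{D}}$. Hence assertion~2 of Theorem~\ref{thm.lifting of recollements} holds, i.e.\ the associated TTF triple lifts to $(\mathcal{U},\mathcal{V},\mathcal{W})$ in $\hat{\mathcal{D}}$ with $(\mathcal{U},\mathcal{V})$ and $(\mathcal{V},\mathcal{W})$ compactly generated and $j_!(\hat{\mathcal{X}}^c)=\mathcal{U}\cap\hat{\mathcal{D}}^c$, $i_*(\hat{\mathcal{Y}}^c)=\mathcal{V}\cap\hat{\mathcal{D}}^c$, which is exactly the statement to be proved.

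\emph{The algebra case.} Assume $\hat{\mathcal{Y}}=\mathcal{D}(B)$, $\hat{\mathcal{X}}=\mathcal{D}(C)$ for ordinary finite length $K$-algebras $B$, $C$, and $\hat{\mathcal{D}}$ algebraic. Since a finite length $K$-algebra is Artinian, $\text{mod-}B=\text{fl-}B$ and $\hat{\mathcal{Y}}^b_{fl}=\mathcal{D}(B)^b_{fl}\cong\mathcal{D}^b(\text{mod-}B)$ (likewise for $C$), and $\mathcal{D}^c(B)\subseteq\mathcal{D}^b_{fl}(B)$ by Remark~\ref{rem.recollement usual derived categories}. Viewing $B$, $C$ as small $K$-linear categories, the categories $\hat{\mathcal{D}}$ and $\hat{\mathcal{X}}=\mathcal{D}(C)$ are compactly generated algebraic triangulated categories, the subcategories of the recollement contain their compact subcategories, and $j_!,j^*,i^*,i_*$ preserve compact objects by the first step; thus Theorem~\ref{thm.Case Y homologically loc.bounded} applies with $\mathcal{B}=B$, $\mathcal{C}=C$, $\star=b$, $\dagger=fl$ and produces, up to equivalence, a lift of the recollement to $\mathcal{D}(B)\equiv\hat{\mathcal{D}}\equiv\mathcal{D}(C)$ which is the upper part of a ladder of recollements of height two. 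In summary, the whole weight of the proof rests on the first step, whose proof is a short but decisive interplay between the adjunctions of the recollement and the intrinsic description of compact objects supplied by compact-detectability in finite length.
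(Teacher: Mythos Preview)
Your proposal is correct and follows essentially the same approach as the paper: the paper's proof is a terse version of yours, observing that any left adjoint between the $(-)^b_{fl}$-levels of compact-detectable-in-finite-length categories preserves compacts (your first step, which you spell out in detail via the adjunction isomorphisms), then invoking Corollary~\ref{cor.Case D homologically loc. bounded} for the TTF lift and Theorem~\ref{thm.Case Y homologically loc.bounded} for the algebra case.
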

\begin{proof}
It is clear that if $\mathcal{D}$ and $\mathcal{E}$ are triangulated categories which are compact-detectable in finite length and $F:\mathcal{D}^b_{fl}\longrightarrow\mathcal{E}^b_{fl}$ is a functor that has a right adjoint, then $F$ preserves compact objects. Therefore $j_!$, $j^*$, $i^*$ and $i_*$ preserve compact objects. Now Corollary \ref{cor.Case D homologically loc. bounded} says that 
assertion 2 of Theorem \ref{thm.lifting of recollements} holds. The last assertion of the proposition  is a direct consequence of the last 
assertion of Theorem \ref{thm.Case Y homologically loc.bounded}.
\end{proof}

\begin{remark} \label{rem.Artin}
Last proposition applies to any recollement 
$$
\begin{xymatrix}
{D^b(\text{mod-}B) \ar[r]^{i_*} & D^b(\text{mod-}A) \ar@<3ex>[l]_{i^!} \ar@<-3ex>[l]_{i^*} \ar[r]^{j^*} & D^b(\text{mod-}C), \ar@<3ex>_{j_*}[l] \ar@<-3ex>_{j_!}[l]}
\end{xymatrix}$$ where $A$, $B$ and $C$ are finite length algebras. 
\end{remark}

\section{Partial silting sets} \label{sect.partial silting sets}

Recall that a silting set  in  a triangulated category $\mathcal{D}$ is a non-positive set $\mathcal{T}$ such that 
$\text{thick}_\mathcal{D}(\mathcal{T})=\mathcal{D}$ (see \cite{AI}). In this paper, we will call a silting set with this property a \emph{classical silting set}. In \cite{NSZ} and \cite{PV} the authors introduced  the notion of a silting set in any triangulated category with coproducts. We  take the following definition, {given in \cite{NSZ} for triangulated categories with coproducts},  and  consider it in an arbitrary triangulated category $\mathcal{D}$.

\begin{definition} \label{def.partial silting object}
Let $\mathcal{D}$ be a triangulated category.  A set of objects $\mathcal{T}$ in $\mathcal{D}$ will be called \emph{partial silting}   when the following
conditions hold:
\begin{enumerate}
\item The t-structure generated by $\mathcal{T}$ exists in $\mathcal{D}$;
\item $\text{Hom}_\mathcal{D}(T,-[1])$ vanishes on the aisle of that t-structure, for all $T\in\mathcal{T}$.
\end{enumerate}
Such a set will be called a \emph{silting set}
when it generates $\mathcal{D}$.

A t-structure $(\mathcal{D}^{\leq 0},\mathcal{D}^{\geq 0})$ in $\mathcal{D}$ is called a \emph{(partial) silting t-structure} when it is generated by a (partial) silting set. 
\end{definition}

\begin{remark}
Note that this definition of a silting set from \cite{NSZ} coincides with the one from \cite{PV}, i.e. a set of objects $\mathcal{T}$ such that $(\mathcal{T}^{\perp_{>0}},\mathcal{T}^{\perp_{<0}})$ is a t-structure. Furthermore, 'silting set in $\mathcal{D}$ consisting of compact objects' and  'classical silting set in $\mathcal{D}^c$' are the same.  
\end{remark}

\begin{example}\label{ex.NSZ} 
\begin{enumerate}
\item[a)] If $\mathcal{D}$ has coproducts, then any non-positive set of compact
objects is partial silting (see \cite[Example 2(1)]{NSZ}).
\item[b)]   Let $A$ be an ordinary algebra and let $\mathcal{K}^b(\text{Proj-}A)$ denote the bounded homotopy category of complexes of projective modules. A complex $P^\bullet\in\mathcal{K}^b(\text{Proj-}A)$ is called a semi-tilting complex in 
\cite{Wei} if $\text{Hom}_{\mathcal{D}(A)}(P^\bullet ,P^{\bullet (I)}[k])=0$, for all sets $I$  and all integers $k>0$, and $\text{thick}_{\mathcal{D}(A)}(\text{Add}(P^\bullet ))=\mathcal{K}^b(\text{Proj-}A)$. In such a case $\mathcal{T}=\{P^\bullet\}$ is a silting  set in $\mathcal{D}(A)$ (see \cite[Example 2(2)]{NSZ}). 
\end{enumerate}
\end{example}

The following  gives a good source of examples of partial silting sets.

\begin{proposition} \label{prop.restricted partial silting}
Let $\mathcal{D}$ be a thick subcategory of a triangulated
category $\mathcal{E}$ and let $\mathcal{T}\subset\mathcal{D}$ be a
set of objects. If $\mathcal{T}$ is partial silting in $\mathcal{E}$ and the associated t-structure $\tau$ in $\mathcal{E}$ restricts to $\mathcal{D}$, then
 $\mathcal{T}$ is partial silting in $\mathcal{D}$. Moreover, when $\mathcal{E}$ has coproducts  and $\mathcal{T}$ is a silting set in 
$\mathcal{E}$ consisting of compact objects  that is partial silting  in $\mathcal{D}$, then $\tau$ restricts to the $t$-structure generated by  $\mathcal{T}$ in $\mathcal{D}$. 
\end{proposition}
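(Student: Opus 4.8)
## Proof Proposal for Proposition \ref{prop.restricted partial silting}

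The plan is to verify the two defining conditions of a partial silting set for $\mathcal{T}$ in $\mathcal{D}$, using that the ambient t-structure $\tau = (\mathcal{E}^{\leq 0}, \mathcal{E}^{\geq 0})$ restricts to $\mathcal{D}$. Write $\tau' = (\mathcal{D}^{\leq 0}, \mathcal{D}^{\geq 0}) := (\mathcal{E}^{\leq 0}\cap\mathcal{D}, \mathcal{E}^{\geq 0}\cap\mathcal{D})$ for the restricted t-structure; by hypothesis this is a t-structure in $\mathcal{D}$. First I would observe that $\mathcal{T}\subseteq\mathcal{D}$ and each $T\in\mathcal{T}$ lies in the aisle $\mathcal{E}^{\leq 0}$ (since $\mathcal{T}$ is partial silting in $\mathcal{E}$, it is contained in the aisle of the t-structure it generates), so in fact $\mathcal{T}\subseteq\mathcal{E}^{\leq 0}\cap\mathcal{D} = \mathcal{D}^{\leq 0}$. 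The key point is then to check that $\tau'$ \emph{is} the t-structure generated by $\mathcal{T}$ in $\mathcal{D}$, i.e. that $(\mathcal{D}^{\leq 0},\mathcal{D}^{\geq 0}) = ({}^{\perp}(\mathcal{T}^{\perp_{\leq 0}}), \mathcal{T}^{\perp_{<0}})$ as subcategories of $\mathcal{D}$. For the co-aisle: $\mathcal{D}^{\geq 0}[-1] = \mathcal{E}^{\geq 0}[-1]\cap\mathcal{D} = \mathcal{E}^{>0}\cap\mathcal{D}$, and since $\mathcal{E}^{>0} = \mathcal{T}^{\perp_{\leq 0}}$ (computed in $\mathcal{E}$) and $\mathcal{T}\subseteq\mathcal{D}$ with the Hom-groups computed the same way in $\mathcal{D}$ and $\mathcal{E}$ (as $\mathcal{D}$ is a full subcategory), we get $\mathcal{E}^{>0}\cap\mathcal{D} = \mathcal{T}^{\perp_{\leq 0}}\cap\mathcal{D}$, which is exactly $\mathcal{T}^{\perp_{\leq 0}}$ computed inside $\mathcal{D}$. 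So the co-aisle of $\tau'$ is the co-aisle of the t-structure generated by $\mathcal{T}$ in $\mathcal{D}$; since a t-structure is determined by its co-aisle, and $\tau'$ is already known to be a t-structure, it follows that the t-structure generated by $\mathcal{T}$ in $\mathcal{D}$ exists and equals $\tau'$. This gives condition (1) of Definition \ref{def.partial silting object}.

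For condition (2): the aisle of the t-structure generated by $\mathcal{T}$ in $\mathcal{D}$ is $\mathcal{D}^{\leq 0} = \mathcal{E}^{\leq 0}\cap\mathcal{D}\subseteq\mathcal{E}^{\leq 0}$. Since $\mathcal{T}$ is partial silting in $\mathcal{E}$, the functor $\mathrm{Hom}_{\mathcal{E}}(T,-[1])$ vanishes on $\mathcal{E}^{\leq 0}$ for every $T\in\mathcal{T}$, hence a fortiori on $\mathcal{D}^{\leq 0}$; and again $\mathrm{Hom}_{\mathcal{D}}(T,X[1]) = \mathrm{Hom}_{\mathcal{E}}(T,X[1])$ for $X\in\mathcal{D}$. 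So $\mathcal{T}$ is partial silting in $\mathcal{D}$.

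For the "moreover" part, suppose $\mathcal{E}$ has coproducts, $\mathcal{T}$ consists of compact objects and is a silting set in $\mathcal{E}$ (so it generates $\mathcal{E}$), and $\mathcal{T}$ is assumed partial silting in $\mathcal{D}$; I must show $\tau$ restricts to the t-structure $\sigma$ generated by $\mathcal{T}$ in $\mathcal{D}$. Here I would invoke the facts recorded in the Preliminaries: since $\mathcal{T}$ is a set of compact objects in the category $\mathcal{E}$ with coproducts, the pair $({}^{\perp}(\mathcal{T}^{\perp_{\leq 0}}), \mathcal{T}^{\perp_{<0}})$ \emph{is} the t-structure generated by $\mathcal{T}$ in $\mathcal{E}$, with aisle $\mathrm{Susp}_{\mathcal{E}}(\mathcal{T})$; moreover, as $\mathcal{T}$ generates $\mathcal{E}$, this is $\tau$ itself, so $\mathcal{E}^{\leq 0} = \mathrm{Susp}_{\mathcal{E}}(\mathcal{T})$ and $\mathcal{E}^{>0} = \mathcal{T}^{\perp_{\leq 0}}$. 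Now set $\sigma = (\mathcal{D}^{\leq 0}_\sigma, \mathcal{D}^{\geq 0}_\sigma)$ with $\mathcal{D}^{>0}_\sigma = \mathcal{T}^{\perp_{\leq 0}}\cap\mathcal{D}$; I claim $\mathcal{D}^{>0}_\sigma = \mathcal{E}^{>0}\cap\mathcal{D}$ and $\mathcal{D}^{\leq 0}_\sigma = \mathcal{E}^{\leq 0}\cap\mathcal{D}$. The first equality is immediate since $\mathcal{E}^{>0} = \mathcal{T}^{\perp_{\leq 0}}$ and Hom-groups agree. For the aisle, I would take $X\in\mathcal{D}$ and compare its truncation triangles: the $\tau$-truncation triangle $\tau^{\leq 0}X\to X\to\tau^{>0}X\xrightarrow{+}$ in $\mathcal{E}$ has $\tau^{>0}X\in\mathcal{E}^{>0}$; the $\sigma$-truncation triangle $\sigma^{\leq 0}X\to X\to\sigma^{>0}X\xrightarrow{+}$ in $\mathcal{D}$ has $\sigma^{>0}X\in\mathcal{D}^{>0}_\sigma = \mathcal{E}^{>0}\cap\mathcal{D}\subseteq\mathcal{E}^{>0}$ and $\sigma^{\leq 0}X\in\mathcal{D}^{\leq 0}_\sigma\subseteq\mathcal{E}^{\leq 0}$ (the last inclusion because $\mathcal{D}^{\leq 0}_\sigma = {}^{\perp_{\mathcal{D}}}(\mathcal{T}^{\perp_{\leq 0}}\cap\mathcal{D})$ and every object of this is left-orthogonal to $\mathcal{T}^{\perp_{\leq 0}} = \mathcal{E}^{>0}$, hence lies in ${}^{\perp}\mathcal{E}^{>0} = \mathcal{E}^{\leq 0}$; this uses that $\mathcal{T}\subseteq\mathcal{E}^{\leq 0}$ so that $\mathcal{T}^{\perp_{\leq 0}}$ computed in $\mathcal{D}$ has the same right-orthogonal behaviour). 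Then uniqueness of truncation triangles for $\tau$ in $\mathcal{E}$ forces $\tau^{\leq 0}X\cong\sigma^{\leq 0}X$ and $\tau^{>0}X\cong\sigma^{>0}X$; in particular $\tau^{\leq 0}X\in\mathcal{D}$ whenever $X\in\mathcal{D}$, so $\mathcal{D}^{\leq 0}_\sigma = \mathcal{E}^{\leq 0}\cap\mathcal{D}$, i.e. $\tau$ restricts to $\sigma$.

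The main obstacle, and the step deserving the most care, is showing $\mathcal{D}^{\leq 0}_\sigma\subseteq\mathcal{E}^{\leq 0}$ in the "moreover" part — equivalently, that the aisle of the t-structure generated by $\mathcal{T}$ inside $\mathcal{D}$ does not "escape" the aisle $\mathcal{E}^{\leq 0}$. The clean way around this is to use the compactness of $\mathcal{T}$ together with the description $\mathcal{E}^{\leq 0} = \mathrm{Susp}_{\mathcal{E}}(\mathcal{T}) = {}^{\perp}(\mathcal{T}^{\perp_{\leq 0}})$ from the Preliminaries (citing \cite[Theorem 12.1]{KN}, \cite[Lemma 2.3]{NS1}), so that membership in $\mathcal{E}^{\leq 0}$ is detected by a left-orthogonality condition against $\mathcal{T}^{\perp_{\leq 0}}$, which transfers between $\mathcal{D}$ and $\mathcal{E}$ because $\mathcal{D}$ is full and closed under the relevant operations; everything else is a routine comparison of truncation triangles using their uniqueness.
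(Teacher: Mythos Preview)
Your argument for the first assertion is correct and essentially parallel to the paper's: you identify the co-aisle of the restricted t-structure $\tau'$ with $\mathcal{T}^{\perp_{\leq 0}}$ computed in $\mathcal{D}$, and since $\tau'$ is already a t-structure, this forces $\tau'$ to be the t-structure generated by $\mathcal{T}$ in $\mathcal{D}$. The paper argues on the aisle side instead, using a truncation-triangle splitting argument, but the content is the same.

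The ``moreover'' part has a genuine gap. You correctly isolate the crux as the inclusion $\mathcal{D}^{\leq 0}_\sigma\subseteq\mathcal{E}^{\leq 0}$, but your justification does not go through: knowing that $X\in\mathcal{D}^{\leq 0}_\sigma={}^{\perp_{\mathcal{D}}}(\mathcal{T}^{\perp_{\leq 0}}\cap\mathcal{D})$ only tells you that $\text{Hom}(X,Y)=0$ for $Y\in\mathcal{T}^{\perp_{\leq 0}}\cap\mathcal{D}$, not for arbitrary $Y\in\mathcal{T}^{\perp_{\leq 0}}\subseteq\mathcal{E}$. Fullness of $\mathcal{D}$ in $\mathcal{E}$ does not help here, and your final paragraph's appeal to ``left-orthogonality transferring'' is not an argument. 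More tellingly, you never use the hypothesis that $\mathcal{T}$ is partial silting in $\mathcal{D}$ --- and this hypothesis is exactly what closes the gap.

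The paper's fix is short: since $\mathcal{T}$ is a silting set of compact objects in $\mathcal{E}$, \cite[Theorem 1]{NSZ} gives ${}^\perp(\mathcal{T}^{\perp_{\leq 0}})=\mathcal{T}^{\perp_{>0}}$ in $\mathcal{E}$. The partial silting condition of $\mathcal{T}$ in $\mathcal{D}$ (condition (2) of the definition) says precisely that $\text{Hom}_\mathcal{D}(T,-[1])$ vanishes on $\mathcal{D}^{\leq 0}_\sigma$ for all $T\in\mathcal{T}$, i.e.\ $\mathcal{D}^{\leq 0}_\sigma\subseteq\mathcal{T}^{\perp_{>0}}$. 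One then has the chain
\[
{}^\perp(\mathcal{T}^{\perp_{\leq 0}})\cap\mathcal{D}\ \subseteq\ {}^\perp(\mathcal{T}^{\perp_{\leq 0}}\cap\mathcal{D})\cap\mathcal{D}=\mathcal{D}^{\leq 0}_\sigma\ \subseteq\ \mathcal{T}^{\perp_{>0}}\cap\mathcal{D},
\]
and the outer terms coincide, forcing equalities throughout. This gives $\sigma=(\mathcal{T}^{\perp_{>0}}\cap\mathcal{D},\mathcal{T}^{\perp_{<0}}\cap\mathcal{D})$, which is the restriction of $\tau$. Your truncation-triangle comparison then becomes unnecessary.
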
  
\begin{proof}
Assume $\mathcal{T}$ is partial silting in $\mathcal{E}$ and the associated t-structure $\tau$ in $\mathcal{E}$ restricts to $\mathcal{D}$. The restricted t-structure in $\mathcal{D}$ is $( {}^\perp
(\mathcal{T}^{\perp_{\leq
0}})\cap\mathcal{D},\mathcal{T}^{\perp_{<0}}\cap\mathcal{D})$, where
the orthogonals are taken in $\mathcal{E}$. Since
$\text{Hom}_\mathcal{E}(T,-)$ vanishes on $( {}^\perp \mathcal{T}^{\perp_{\leq 0}})[1]$, it vanishes on $( {}^\perp
(\mathcal{T}^{\perp_{\leq 0}})\cap\mathcal{D})[1]$. It remains to see that the
restricted t-structure is generated by $\mathcal{T}$ in
$\mathcal{D}$. That is, that $( {}^\perp
(\mathcal{T}^{\perp_{\leq 0}})\cap\mathcal{D},\mathcal{T}^{\perp_{<0}}\cap\mathcal{D})=(
{}^\perp(\mathcal{T}^{\perp_{\leq 0}}\cap\mathcal{D})\cap\mathcal{D},\mathcal{T}^{\perp_{<0}}\cap\mathcal{D})$.
Right parts of these pairs coincide and we clearly have the inclusion $\subseteq$ on the left parts. If $X\in
 {}^\perp(\mathcal{T}^{\perp_{\leq 0}}\cap\mathcal{D})\cap\mathcal{D}$ and
$U\stackrel{f}{\longrightarrow}
X\stackrel{g}{\longrightarrow}V\stackrel{+}{\longrightarrow}$ is the
truncation triangle with respect to the restricted t-structure, then
$g=0$ and hence $X$ is a direct summand of
$U\in  {}^\perp (\mathcal{T}^{\perp_{\leq 0}})\cap\mathcal{D}$. This
implies that $X$ belongs to 
 $ {}^\perp (\mathcal{T}^{\perp_{\leq
0}})\cap\mathcal{D}$ since this class is closed under direct
summands.

For the second part of the statement note that by \cite[Theorem 1]{NSZ}, the set $\mathcal{T}$ is partial silting in $\mathcal{E}$ and  
${}^\perp(\mathcal{T}^{\perp_{\leq 0}})=\mathcal{T}^{\perp_{>0}}$. On the other hand, 
the t-structure in $\mathcal{D}$ generated by $\mathcal{T}$ is 
$\tau'=({}^\perp (\mathcal{T}^{\perp_{\leq 0}}\cap\mathcal{D})\cap\mathcal{D},\mathcal{T}^{\perp_{<0}}\cap\mathcal{D})$, and 
the partial silting condition of $\mathcal{T}$ in $\mathcal{D}$ gives  
${}^\perp (\mathcal{T}^{\perp_{\leq 0}}\cap\mathcal{D})\cap\mathcal{D}\subseteq\mathcal{T}^{\perp_{>0}}$, so  
${}^\perp (\mathcal{T}^{\perp_{\leq 0}}\cap\mathcal{D})\cap\mathcal{D}\subseteq\mathcal{T}^{\perp_{>0}}\cap\mathcal{D}$. Thus, we get a
chain of inclusions ${}^\perp(\mathcal{T}^{\perp_{\leq 0}})\cap\mathcal{D}\subseteq {}^\perp (\mathcal{T}^{\perp_{\leq 0}}\cap\mathcal{D})\cap\mathcal{D}
\subseteq\mathcal{T}^{\perp_{>0}}\cap\mathcal{D}$, all of which must be  equalities. Hence, 
$\tau'=(\mathcal{T}^{\perp_{>0}}\cap\mathcal{D},\mathcal{T}^{\perp_{<0}}\cap\mathcal{D})$ is the restriction of $\tau$ to $\mathcal{D}$.
\end{proof}

We now address the question on the uniqueness of the partial silting set which generates a given partial silting t-structure. The following is a consequence of the results in  \cite[Section 4]{NSZ}. 

\begin{proposition} 
Let $\mathcal{D}$ be a triangulated category with coproducts. If $(\mathcal{D}^{\leq 0},\mathcal{D}^{\geq 0})$ is a partial silting 
t-structure in $\mathcal{D}$, then the partial silting set which generates the t-structure is uniquely determined up to $Add$-equivalence. 
\end{proposition}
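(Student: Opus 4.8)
The plan is to show that a partial silting set is determined, up to $\Add$-equivalence, by its associated aisle, more precisely by the co-heart of the generated t-structure. First I would recall the basic setup: if $\mathcal{T}$ is a partial silting set with generated t-structure $(\mathcal{D}^{\leq 0},\mathcal{D}^{\geq 0})$, then $\mathcal{D}^{\leq 0}=\Susp_{\mathcal{D}}(\mathcal{T})$ — here we do not even need coproducts to get the containment $\Susp_\mathcal{D}(\mathcal{T})\subseteq \mathcal{D}^{\leq 0}$, but with coproducts in $\mathcal{D}$ and the description of aisles generated by a set as homotopy-colimit completions (cited from \cite{KN, NS1} in the preliminaries), the objects of $\mathcal{D}^{\leq 0}$ are precisely the Milnor colimits of sequences whose successive cones are coproducts of shifts $\mathcal{T}[n]$, $n\geq 0$. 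The key point is that the partial silting condition, $\Hom_\mathcal{D}(T,-[1])|_{\mathcal{D}^{\leq 0}}=0$ for all $T\in\mathcal{T}$, forces each object of $\mathcal{T}$ to lie in the co-heart ${\mathcal{C}:={}^{\perp_{>0}}(\mathcal{D}^{\leq 0})\cap\mathcal{D}^{\leq 0}}={}^\perp(\mathcal{D}^{\leq 0})[1]\cap\mathcal{D}^{\leq 0}$ of the t-structure (this is exactly \cite[Section 4]{NSZ}, where it is shown that for a partial silting set $\Add(\mathcal{T})$ coincides with a distinguished class inside the co-heart).

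Next I would invoke the relevant statement from \cite[Section 4]{NSZ}: for a partial silting t-structure $(\mathcal{D}^{\leq 0},\mathcal{D}^{\geq 0})$ in a triangulated category with coproducts, the co-heart $\mathcal{C}$ has a (necessarily unique) subclass of the form $\Add(\mathcal{S})$ which generates the t-structure, and any partial silting set $\mathcal{T}$ generating this t-structure satisfies $\Add(\mathcal{T})=\Add(\mathcal{S})$. Concretely, if $\mathcal{T}$ and $\mathcal{T}'$ are two partial silting sets generating the same t-structure, then $\mathcal{T}\subseteq\mathcal{C}$ and $\mathcal{T}'\subseteq\mathcal{C}$; I would show $\Add(\mathcal{T})=\Add(\mathcal{T}')$ as follows. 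Since $\mathcal{T}'\subseteq\mathcal{D}^{\leq 0}=\Susp_\mathcal{D}(\mathcal{T})$, each $T'\in\mathcal{T}'$ is a Milnor colimit of a sequence with cones in $\Add(\bigcup_{n\geq 0}\mathcal{T}[n])$; using $T'\in{}^\perp(\mathcal{D}^{\leq 0}[1])$ together with $\mathcal{T}\subseteq\mathcal{C}$ (so that $\Hom_\mathcal{D}(\mathcal{T},T'[k])=0$ for $k\neq 0$ — the $k>0$ vanishing is the silting condition, the $k<0$ vanishing holds because $T'\in\mathcal{D}^{\leq 0}$ and $\mathcal{T}[{-k}]\subseteq\mathcal{D}^{\leq 0}$ while $T'\in{}^\perp(\mathcal{D}^{\leq 0}[1])\subseteq$ the co-aisle shifted), one runs the standard inductive argument along the Milnor colimit. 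The vanishing of $\Hom_\mathcal{D}(\mathcal{T},-[k])$ for $k>0$ on $\mathcal{D}^{\leq 0}$ kills all the higher cones, and the Milnor colimit triangle $\coprod X_n \xrightarrow{1-\sigma}\coprod X_n\to T'\xrightarrow{+}$ then exhibits $T'$ as a direct summand of a coproduct of shifts of objects of $\mathcal{T}$ landing in degree $0$, i.e. $T'\in\Add(\mathcal{T})$. By symmetry $\mathcal{T}\subseteq\Add(\mathcal{T}')$, hence $\Add(\mathcal{T})=\Add(\mathcal{T}')$.

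The main obstacle, and the only genuinely nontrivial step, is the induction along the Milnor colimit: one must check that the partial silting hypothesis (vanishing of $\Hom(\mathcal{T},-[>0])$ on the whole aisle) propagates so that $\Hom_\mathcal{D}(T', \coprod X_n[k])$ has the right vanishing/bijectivity to conclude $T'$ is a summand of one of the $X_n$, and in fact of the "degree-zero part" of the first cone. This is where compactness is not available for $\mathcal{T}'$, so one argues instead with $\mathcal{T}$-cohomology: apply $\Hom_\mathcal{D}(T,-)$, $T\in\mathcal{T}$, to the whole diagram; since $T\in\mathcal{C}$, only degree $0$ survives, and the long exact sequences collapse to show that the canonical map $\coprod_n \Hom(T, X_0)\to\Hom(T,\coprod X_n)$-type comparisons behave well enough — alternatively, and more cleanly, I would quote directly the uniqueness statement of \cite[Section 4]{NSZ} rather than reproving it. Since the proposition is explicitly stated as "a consequence of the results in \cite[Section 4]{NSZ}", the cleanest write-up simply extracts from there the facts that (i) a partial silting set lies in the co-heart, (ii) $\Add(\mathcal{T})$ is intrinsically recovered from the co-heart as the class of objects $C$ with $\Hom_\mathcal{D}(\mathcal{T},C)$ "large enough" / equivalently the class through which the truncation maps of the t-structure factor, and (iii) this intrinsic class depends only on the t-structure; then $\Add$-equivalence of any two generating partial silting sets is immediate.
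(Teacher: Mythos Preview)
The paper does not give a proof here: it simply records the statement as a consequence of \cite[Section 4]{NSZ}. Your final fallback, namely to quote directly from that reference that a partial silting set lies in the co-heart and that $\Add(\mathcal{T})$ is intrinsically recovered from the t-structure, is exactly the paper's approach and is correct. In fact the clean statement from \cite{NSZ} is sharper than you indicate: for a partial silting set $\mathcal{T}$ in a category with coproducts, one has $\Add(\mathcal{T})=\mathcal{C}_\tau$, the \emph{entire} co-heart of the generated t-structure, not just a distinguished subclass of it. Uniqueness up to $\Add$-equivalence is then immediate, since $\mathcal{C}_\tau$ depends only on $\tau$.

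Your attempted direct argument via Milnor colimits, however, has a genuine gap. You assert that $\mathcal{D}^{\leq 0}=\text{Susp}_\mathcal{D}(\mathcal{T})$ and that every object of the aisle is a Milnor colimit of a sequence with successive cones in $\Add(\mathcal{T}[n])$. But the references you invoke from the preliminaries (\cite{KN}, \cite{NSZ}, \cite{PSt}) establish this description only for \emph{non-positive sets of compact objects}; the proposition at hand concerns arbitrary partial silting sets, with no compactness hypothesis. Without compactness there is no reason for the aisle ${}^\perp(\mathcal{T}^{\perp_{\leq 0}})$ to coincide with $\text{Susp}_\mathcal{D}(\mathcal{T})$, nor for the Milnor-colimit filtration to exist. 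You partially notice the trouble when you observe that $\mathcal{T}'$ is not compact, but the problem is already present one step earlier, in the description of the aisle itself. The argument in \cite[Section 4]{NSZ} proceeds differently, via the cohomological functor $\tilde{H}:\mathcal{D}\to\mathcal{H}$ associated to the t-structure and the fact that $\tilde{H}$ restricted to the co-heart is fully faithful with essential image the projectives of $\mathcal{H}$; this is what identifies $\mathcal{C}_\tau$ with $\Add(\mathcal{T})$ without any Milnor-colimit bookkeeping.
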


When $\mathcal{D}$ is a subcategory of a category with coproducts and the t-structure is generated by a partial silting set of compact objects, we still have a 
certain kind of uniqueness, as the following result shows. 

\begin{proposition} \label{prop.compact-psilting-from coheart}
Let $\mathcal{D}$ be  a thick subcategory of a triangulated category with coproducts  $\hat{\mathcal{D}}$ such that 
$\hat{\mathcal{D}}^c\subseteq\mathcal{D}$ and $\hat{\mathcal{D}}^c$ is skeletally small.
Suppose that $(\mathcal{D}^{\leq 0},\mathcal{D}^{\geq 0})$ is a t-structure in $\mathcal{D}$ 
generated by a partial silting set  which consists of compact objects in $\hat{\mathcal{D}}$. There is a non-positive set 
$\mathcal{T}\subseteq\hat{\mathcal{D}}^c$, uniquely determined up to $\text{add}$-equivalence,  
such that the following two conditions hold:

\begin{enumerate}
\item[a)] $\mathcal{T}$ is partial silting in $\mathcal{D}$ and  it generates $(\mathcal{D}^{\leq 0},\mathcal{D}^{\geq 0})$.
\item[b)] If $\mathcal{T}'\subset\hat{\mathcal{D}}^c$ is any partial silting set in $\mathcal{D}$ which generates  
$(\mathcal{D}^{\leq 0},\mathcal{D}^{\geq 0})$, then $\text{add}(\mathcal{T}')\subseteq\text{add}(\mathcal{T})$.
\end{enumerate}
If $(\mathcal{D}^{\leq 0},\mathcal{D}^{\geq 0})$ is the restriction of a t-structure $(\hat{\mathcal{D}}^{\leq 0},\hat{\mathcal{D}}^{\geq 0})$ in $\hat{\mathcal{D}}$ generated by some non-positive set $\mathcal{T}_0\subset\hat{\mathcal{D}}^c$, then $\text{add}(\mathcal{T})=\text{add}(\mathcal{T}_0)$.
\end{proposition}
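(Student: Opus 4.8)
The plan is to take $\mathcal{T}$ to be a set of representatives of the isomorphism classes of objects of the \emph{compact part of the co-heart} $\mathcal{C}_\tau\cap\hat{\mathcal{D}}^c$, where $\mathcal{C}_\tau={}^{\perp}(\mathcal{D}^{\leq 0})[1]\cap\mathcal{D}^{\leq 0}$ and $\tau=(\mathcal{D}^{\leq 0},\mathcal{D}^{\geq 0})$. Write $\mathcal{S}\subseteq\hat{\mathcal{D}}^c$ for the given partial silting set generating $\tau$. The first step, which yields b) almost for free, is the observation that \emph{every} partial silting set $\mathcal{T}'$ in $\mathcal{D}$ generating $\tau$ satisfies $\mathcal{T}'\subseteq\mathcal{C}_\tau$: $\mathcal{T}'$ lies in the aisle $\mathcal{D}^{\leq 0}$ of the t-structure it generates, and condition (2) in the definition of a partial silting set is precisely the statement that $\text{Hom}_\mathcal{D}(T',U[1])=0$ for all $U\in\mathcal{D}^{\leq 0}$, i.e. $\mathcal{T}'\subseteq{}^{\perp}((\mathcal{D}^{\leq 0})[1])={}^{\perp}(\mathcal{D}^{\leq 0})[1]$. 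Since $\hat{\mathcal{D}}^c$ is skeletally small and $\mathcal{C}_\tau\cap\hat{\mathcal{D}}^c$ is an additive subcategory closed under direct summands, $\mathcal{T}$ may be chosen with $\text{add}(\mathcal{T})=\mathcal{C}_\tau\cap\hat{\mathcal{D}}^c$; it is non-positive because $\mathcal{T}\subseteq{}^{\perp}((\mathcal{D}^{\leq 0})[1])$ and $\mathcal{T}[k]\subseteq\mathcal{D}^{\leq 0}$ for $k\geq 0$. Then b) is immediate, and uniqueness of $\mathcal{T}$ up to $\text{add}$-equivalence follows purely formally from a) and b) (apply b) for $\mathcal{T}$ to a second candidate $\mathcal{T}''$, and b) for $\mathcal{T}''$ to $\mathcal{T}$).

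To prove a), the idea is that $\text{add}(\mathcal{S})\subseteq\text{add}(\mathcal{T})$ (by the above applied to $\mathcal{T}'=\mathcal{S}$) while $\mathcal{T}\subseteq\mathcal{D}^{\leq 0}$, so $\mathcal{T}$ is sandwiched between $\mathcal{S}$ and the aisle. From the hypothesis that $\tau$ is generated by $\mathcal{S}$ one reads off $\mathcal{S}^{\perp_{<0}}=\mathcal{D}^{\geq 0}$, hence $\mathcal{S}^{\perp_{\leq 0}}=\mathcal{D}^{\geq 1}$. The inclusion $\text{add}(\mathcal{S})\subseteq\text{add}(\mathcal{T})$ gives $\mathcal{T}^{\perp_{\leq 0}}\subseteq\mathcal{D}^{\geq 1}$ and $\mathcal{T}^{\perp_{<0}}\subseteq\mathcal{D}^{\geq 0}$, while $\mathcal{T}[k]\subseteq\mathcal{D}^{\leq 0}$ for $k\geq 0$ gives the reverse inclusions $\mathcal{D}^{\geq 1}\subseteq\mathcal{T}^{\perp_{\leq 0}}$ and $\mathcal{D}^{\geq 0}\subseteq\mathcal{T}^{\perp_{<0}}$. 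Thus $\mathcal{T}^{\perp_{\leq 0}}=\mathcal{D}^{\geq 1}$ and ${}^{\perp}(\mathcal{T}^{\perp_{\leq 0}})={}^{\perp}(\mathcal{D}^{\geq 1})=\mathcal{D}^{\leq 0}$ (the aisle of a torsion pair is the left orthogonal of its co-aisle), so the torsion pair generated by $\bigcup_{k\geq 0}\mathcal{T}[k]$ exists and equals $(\mathcal{D}^{\leq 0},\mathcal{D}^{\geq 1})$; hence the t-structure generated by $\mathcal{T}$ in $\mathcal{D}$ exists and is $\tau$. Condition (2) for $\mathcal{T}$ holds since $\mathcal{T}\subseteq\mathcal{C}_\tau\subseteq{}^{\perp}((\mathcal{D}^{\leq 0})[1])$, so a) follows.

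For the last assertion, suppose $\hat\tau=(\hat{\mathcal{D}}^{\leq 0},\hat{\mathcal{D}}^{\geq 0})$ is a t-structure in $\hat{\mathcal{D}}$ generated by a non-positive $\mathcal{T}_0\subset\hat{\mathcal{D}}^c$ and restricting to $\tau$. By Example \ref{ex.NSZ}a), $\mathcal{T}_0$ is partial silting in $\hat{\mathcal{D}}$, and since $\hat\tau$ restricts to $\tau$, Proposition \ref{prop.restricted partial silting} shows $\mathcal{T}_0$ is partial silting in $\mathcal{D}$ and generates $\tau$; so b) gives $\text{add}(\mathcal{T}_0)\subseteq\text{add}(\mathcal{T})$. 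For the reverse I will establish $\text{add}(\mathcal{T})=\mathcal{C}_\tau\cap\hat{\mathcal{D}}^c=\hat{\mathcal{C}}\cap\hat{\mathcal{D}}^c=\text{add}(\mathcal{T}_0)$, where $\hat{\mathcal{C}}$ denotes the co-heart of $\hat\tau$. The inclusion $\hat{\mathcal{C}}\cap\hat{\mathcal{D}}^c\subseteq\mathcal{C}_\tau$ is immediate from $\hat{\mathcal{D}}^{\leq 0}\cap\mathcal{D}=\mathcal{D}^{\leq 0}\subseteq\hat{\mathcal{D}}^{\leq 0}$; conversely, for $X\in\mathcal{C}_\tau\cap\hat{\mathcal{D}}^c$ one has $X\in\mathcal{D}^{\leq 0}\subseteq\hat{\mathcal{D}}^{\leq 0}$, and since $\hat{\mathcal{D}}^{\leq 0}=\text{Susp}_{\hat{\mathcal{D}}}(\mathcal{T}_0)$ consists of Milnor colimits of sequences whose $n$-th cone lies in $\text{Add}(\mathcal{T}_0[n])$, compactness of $X$ reduces the vanishing $\text{Hom}_{\hat{\mathcal{D}}}(X,\hat{\mathcal{D}}^{\leq 0}[1])=0$ to $\text{Hom}_{\hat{\mathcal{D}}}(X,\mathcal{T}_0[n][1])=0$ for $n\geq 0$, which holds as $\mathcal{T}_0[n]\subseteq\mathcal{D}^{\leq 0}$ and $X\in{}^{\perp}((\mathcal{D}^{\leq 0})[1])$. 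Finally $\hat{\mathcal{C}}\cap\hat{\mathcal{D}}^c=\text{add}(\mathcal{T}_0)$: ``$\supseteq$'' is clear, and for ``$\subseteq$'', a compact $X\in\hat{\mathcal{C}}$ written as $\text{Mcolim}(X_n)$ as above is a direct summand of some $X_N$ by compactness, the structural map $X\to X_N$ factors through $X_0\in\text{Add}(\mathcal{T}_0)$ since the cone of $X_0\to X_N$ is an iterated extension of coproducts of objects of $\bigcup_{k\geq 1}\mathcal{T}_0[k]\subseteq\mathcal{D}^{\leq 0}$ and $X$ has no nonzero map to it, so $X$ is a summand of $X_0$, and a compact summand of a coproduct of the \emph{set} $\mathcal{T}_0$ lies in $\text{add}(\mathcal{T}_0)$. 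Combining the four identifications gives $\text{add}(\mathcal{T})=\text{add}(\mathcal{T}_0)$.

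The main obstacle will be the last paragraph: everything else is orthogonality bookkeeping, where the only real care needed is to keep track of the shift by $[1]$ that separates $\mathcal{D}^{\geq 0}$ from $\mathcal{D}^{\geq 1}$, whereas identifying the compact part of the co-heart $\hat{\mathcal{C}}$ of $\hat\tau$ with $\text{add}(\mathcal{T}_0)$ genuinely uses compactness together with the Milnor colimit presentation of $\text{Susp}_{\hat{\mathcal{D}}}(\mathcal{T}_0)$. This is the same mechanism behind the $\text{Add}$-uniqueness statement recalled earlier (cf. \cite[Section 4]{NSZ}), from which it could alternatively be quoted.
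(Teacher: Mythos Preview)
Your proof is correct and follows essentially the same route as the paper's: take $\mathcal{T}$ to be a set of representatives of $\mathcal{C}_\tau\cap\hat{\mathcal{D}}^c$, observe that every compact partial silting generator of $\tau$ lands in this subcategory, and verify by a sandwich argument that $\mathcal{T}$ itself generates $\tau$. For the final assertion the paper argues, exactly as you do, that $\mathcal{C}_\tau\cap\hat{\mathcal{D}}^c\subseteq\hat{\mathcal{C}}$ via compactness and the Milnor-colimit description of $\text{Susp}_{\hat{\mathcal{D}}}(\mathcal{T}_0)$, and then invokes \cite[Lemma 6]{NSZ} to conclude $\hat{\mathcal{C}}\subseteq\text{Add}(\mathcal{T}_0)$; you instead reprove that lemma by hand for compact objects, which is fine.

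One small slip to fix: in your last step you write that the cone of $X_0\to X_N$ is built from $\bigcup_{k\geq 1}\mathcal{T}_0[k]\subseteq\mathcal{D}^{\leq 0}$ and that $X$ has no map to it. The inclusion you actually need is $\bigcup_{k\geq 1}\mathcal{T}_0[k]\subseteq\hat{\mathcal{D}}^{\leq 0}[1]$ (equivalently $\mathcal{T}_0[k-1]\subseteq\hat{\mathcal{D}}^{\leq 0}$), so that the cone lies in $\hat{\mathcal{D}}^{\leq 0}[1]$ and the vanishing follows from $X\in\hat{\mathcal{C}}\subseteq{}^{\perp}(\hat{\mathcal{D}}^{\leq 0}[1])$. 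The statement ``$\subseteq\mathcal{D}^{\leq 0}$'' is true but does not by itself justify the vanishing (the cone need not lie in $\mathcal{D}$, and $X$ is only known to be orthogonal to the \emph{shift} of the aisle). With this correction the argument goes through exactly as intended.
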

\begin{proof}
Let $\mathcal{C}:={}^\perp(\mathcal{D}^{\leq 0}[1])\cap\mathcal{D}^{\leq 0}$ be the co-heart of $(\mathcal{D}^{\leq 0},\mathcal{D}^{\geq 0})$ and let $\mathcal{T}' \subset \mathcal{D}^c$ be any partial silting set in $\mathcal{D}$ which generates this t-structure. Since $\text{Hom}_\mathcal{D}(T',-)$ vanishes on $\mathcal{D}^{\leq 0}[1]$, for all $T'\in\mathcal{T}'$, we have $\mathcal{T}'\subset\mathcal{C}$, and hence $\text{add}(\mathcal{T}')\subseteq\mathcal{C}\cap\hat{\mathcal{D}}^c$. Let $\mathcal{T}$ be a set of representatives of isomorphism classes of objects of $\mathcal{C}\cap \hat{\mathcal{D}}^c$. Let us check that $\mathcal{T}$ generates $(\mathcal{D}^{\leq 0},\mathcal{D}^{\geq 0})$. 

Without loss of generality, we can assume that $\mathcal{T}'\subseteq\mathcal{T}$. This implies that $\mathcal{T}^{\perp_{\leq 0}}\cap\mathcal{D}\subseteq\mathcal{T}'^{\perp_{\leq 0}}\cap\mathcal{D}=\mathcal{D}^{>0}:=\mathcal{D}^{\geq 0}[-1]$. Since $\mathcal{T}\subset\mathcal{D}^{\leq 0}$, we have that $\text{Hom}_\mathcal{D}(T,Y)=0$, for all $T\in\mathcal{T}$ and $Y\in \mathcal{D}^{>0}$. Hence, the inclusion $\mathcal{D}^{>0}=\mathcal{T}'^{\perp_{\leq 0}}\cap\mathcal{D}\subseteq\mathcal{T}^{\perp_{\leq 0}}\cap\mathcal{D}$ also holds and $\mathcal{T}$ generates the t-structure $(\mathcal{D}^{\leq 0},\mathcal{D}^{\geq 0})$.

Let us prove the last assertion of the proposition. Suppose  that $(\mathcal{D}^{\leq 0},\mathcal{D}^{\geq 0})=({}^\perp (\mathcal{T}_0^{\perp_{\leq 0}})\cap\mathcal{D},\mathcal{T}_0^{\perp_{<0}}\cap\mathcal{D})$, for some non-positive set $\mathcal{T}_0\subset\hat{\mathcal{D}}^c$. Recall that ${}^\perp (\mathcal{T}_0^{\perp_{\leq 0}})=\text{Susp}_{\hat{\mathcal{D}}}(\mathcal{T}_0)$ (see \cite[Theorem 2]{NSZ}). If $C\in\mathcal{C}\cap\hat{\mathcal{D}}^c$, then $\text{Hom}_{\hat{\mathcal{D}}}(C,-)$ vanishes on $\text{Susp}_{\hat{\mathcal{D}}}(\mathcal{T}_0)[1]={}^\perp (\mathcal{T}_0^{\perp_{\leq 0}})[1]$. Indeed, $\bigcup_{k>0}\mathcal{T}_0[k]\subset\mathcal{D}^{\leq 0}[1]$, $\text{Hom}_{\hat{\mathcal{D}}}(C,-)$ vanishes on $\mathcal{D}^{\leq 0}[1]$ and $C$ is compact. Hence, $\mathcal{C}\cap\hat{\mathcal{D}}^c$ belongs to the co-heart $\hat{\mathcal{C}}:={}^\perp\text{Susp}_{\hat{\mathcal{D}}}(\mathcal{T}_0)[1]\cap\text{Susp}_{\hat{\mathcal{D}}}(\mathcal{T}_0)$ of the t-structure $({}^\perp (\mathcal{T}_0^{\perp_{\leq 0}}),\mathcal{T}_0^{\perp_{<0}})$ in $\hat{\mathcal{D}}$. By \cite[Lemma 6]{NSZ}, we conclude that $\mathcal{C}\cap\hat{\mathcal{D}}^c\subseteq\text{Add}(\mathcal{T}_0)$ and, since $\mathcal{C}\cap\hat{\mathcal{D}}^c$ consists of compact objects, $\mathcal{C}\cap\hat{\mathcal{D}}^c\subseteq\text{add}(\mathcal{T}_0)$. On the other hand, by 
Proposition \ref{prop.restricted partial silting}, $\mathcal{T}_0$ is a partial silting set in $\mathcal{D}$ which generates $(\mathcal{D}^{\leq 0},\mathcal{D}^{\geq 0})$. By the first paragraph of the proof, $\text{add}(\mathcal{T}_0)\subseteq\mathcal{C}\cap\hat{\mathcal{D}}^c$, and hence $\text{add}(\mathcal{T}_0)=\text{add}(\mathcal{T})$.
\end{proof}

Recall the notation and terminology of \ref{term}.

\begin{corollary} \label{cor.relative partial silting}
Let $\mathcal{D}$ be a triangulated category and let $\mathcal{T}$ be a partial silting set in $\mathcal{D}$. 
 For any $\star\in\{\emptyset,+,-,b\}$ and $\dagger\in\{\emptyset,fl\}$ the t-structure $\tau_\mathcal{T}=({}^\perp (\mathcal{T}^{\perp_{\leq 0}}),\mathcal{T}^{\perp_{<0}})$ restricts to $\mathcal{D}^\star_{\mathcal{T},\dagger}$. In particular,  if $\mathcal{T}$ is contained in $\mathcal{D}^\star_{\mathcal{T},\dagger}$, then   $\mathcal{T}$ is a partial silting set in $\mathcal{D}^\star_{\mathcal{T},\dagger}$. 
\end{corollary}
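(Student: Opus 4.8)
The plan is to reduce everything to the statement that truncation with respect to $\tau_\mathcal{T}$ preserves the subcategory $\mathcal{D}':=\mathcal{D}^\star_{\mathcal{T},\dagger}$. Write $\tau_\mathcal{T}=(\mathcal{D}^{\leq 0},\mathcal{D}^{\geq 0})$ and $\mathcal{D}^{>0}:=\mathcal{D}^{\geq 0}[-1]$, so the underlying torsion pair is $(\mathcal{D}^{\leq 0},\mathcal{D}^{>0})$. Since $\mathcal{D}'$ is a thick subcategory of $\mathcal{D}$, the candidate restricted pair $(\mathcal{D}^{\leq 0}\cap\mathcal{D}',\,\mathcal{D}^{>0}\cap\mathcal{D}')$ automatically has both constituents closed under direct summands, inherits the $\Hom$-vanishing, and $(\mathcal{D}^{\leq 0}\cap\mathcal{D}')[1]\subseteq\mathcal{D}^{\leq 0}\cap\mathcal{D}'$ because $\mathcal{D}'[1]=\mathcal{D}'$. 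Hence it suffices to show that for each $M\in\mathcal{D}'$ the truncation triangle $\tau^{\leq 0}M\longrightarrow M\longrightarrow\tau^{>0}M\stackrel{+}{\longrightarrow}$ has both outer terms in $\mathcal{D}'$; this gives both the factorization $\mathcal{D}'=(\mathcal{D}^{\leq 0}\cap\mathcal{D}')\star(\mathcal{D}^{>0}\cap\mathcal{D}')$ and, as a t-structure, the restriction of $\tau_\mathcal{T}$ to $\mathcal{D}'$. Once this is established, the "in particular" clause follows at once from the first assertion of Proposition \ref{prop.restricted partial silting}, applied with $\mathcal{E}=\mathcal{D}$ and its $\mathcal{D}$ replaced by the thick subcategory $\mathcal{D}'$: the hypothesis that $\mathcal{T}$ be partial silting in $\mathcal{E}$ is our standing assumption, and the required restriction of the t-structure is what we have just proved.

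The two facts that drive the computation are: $(\mathrm{i})$ since $\mathcal{D}^{\leq 0}[1]\subseteq\mathcal{D}^{\leq 0}$ and $\mathcal{T}$ is partial silting (so $\Hom_\mathcal{D}(T,-[1])$ vanishes on $\mathcal{D}^{\leq 0}$), one has $\Hom_\mathcal{D}(T,U[k])=0$ for all $U\in\mathcal{D}^{\leq 0}$, all $T\in\mathcal{T}$ and all $k\geq 1$; and $(\mathrm{ii})$ since $\mathcal{D}^{>0}=\mathcal{D}^{\geq 0}[-1]=\mathcal{T}^{\perp_{<0}}[-1]=\mathcal{T}^{\perp_{\leq 0}}$, one has $\Hom_\mathcal{D}(T,V[k])=0$ for all $V\in\mathcal{D}^{>0}$, all $T\in\mathcal{T}$ and all $k\leq 0$. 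Fixing $T\in\mathcal{T}$ and $M\in\mathcal{D}'$ and applying $\Hom_\mathcal{D}(T,-)$ to the truncation triangle, the long exact sequence combined with $(\mathrm{i})$ and $(\mathrm{ii})$ shows that the map $\Hom_\mathcal{D}(T,\tau^{\leq 0}M[k])\longrightarrow\Hom_\mathcal{D}(T,M[k])$ is a monomorphism for $k\leq 0$ while $\Hom_\mathcal{D}(T,\tau^{\leq 0}M[k])=0$ for $k\geq 1$, and dually the map $\Hom_\mathcal{D}(T,M[k])\longrightarrow\Hom_\mathcal{D}(T,\tau^{>0}M[k])$ is an isomorphism for $k\geq 1$ while $\Hom_\mathcal{D}(T,\tau^{>0}M[k])=0$ for $k\leq 0$.

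Granting this, one checks case by case, over $\star\in\{\emptyset,+,-,b\}$ and $\dagger\in\{\emptyset,fl\}$, that $\tau^{\leq 0}M$ and $\tau^{>0}M$ satisfy the defining property $\mathbf{P}^{\star}_\dagger$ relative to every $T\in\mathcal{T}$ as soon as $M$ does: for $\tau^{\leq 0}M$ the "$k\gg 0$" vanishing is free (it vanishes for $k\geq 1$), while the "$k\ll 0$" and finite-length clauses transfer along the monomorphism into $\Hom_\mathcal{D}(T,M[k])$ in non-positive degrees (a submodule of a zero, resp.\ finite-length, $K$-module is again zero, resp.\ finite length); symmetrically, for $\tau^{>0}M$ the "$k\ll 0$" vanishing is free, and the "$k\gg 0$" and finite-length clauses transfer along the isomorphism with $\Hom_\mathcal{D}(T,M[k])$ in positive degrees. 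Hence $\tau^{\leq 0}M,\tau^{>0}M\in\mathcal{D}'$, which completes the proof. I do not expect a genuine obstacle here: the only thing requiring care is the bookkeeping of which of the eight conditions is automatic from $(\mathrm{i})/(\mathrm{ii})$ and which is inherited from $M$, and the sole non-formal input is fact $(\mathrm{i})$, i.e.\ the positive-degree $\Hom$-vanishing on the aisle that the partial silting hypothesis supplies.
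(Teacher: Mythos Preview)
Your proof is correct and follows essentially the same route as the paper: take the truncation triangle, use the partial silting hypothesis to get $\Hom_\mathcal{D}(T,-[k])$-vanishing on the aisle for $k>0$ and on the co-aisle for $k\leq 0$, and read off that the truncations inherit property $\mathbf{P}^\star_\dagger$ from $M$. The one small difference is that you record the map $\Hom_\mathcal{D}(T,\tau^{\leq 0}M[k])\to\Hom_\mathcal{D}(T,M[k])$ for $k\leq 0$ as a monomorphism, whereas it is actually an isomorphism (since $\Hom_\mathcal{D}(T,\tau^{>0}M[k-1])=0$ as well for $k\leq 0$); your weaker statement still suffices, but the isomorphism makes the transfer of finite-length and vanishing conditions entirely trivial rather than requiring the ``submodule of finite length is finite length'' remark.
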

\begin{proof}
 For $M\in\mathcal{D}$,  let us consider the truncation triangle with respect to $\tau_\mathcal{T}$ $$U\longrightarrow M\longrightarrow V\stackrel{+}{\longrightarrow}. $$ Then $V\in\mathcal{T}^{\perp_{\leq 0}}$ and, since $\text{Hom}_\mathcal{D}(\mathcal{T},-)$ vanishes on ${}^\perp (\mathcal{T}^{\perp_{\leq 0}})[1]$, we get $U\in\mathcal{T}^{\perp_{>0}}$. This gives induced isomorphisms $\text{Hom}_\mathcal{D}(T,U[k])\cong\text{Hom}_\mathcal{D}(T,M[k])$, for $k\leq 0$, and $\text{Hom}_\mathcal{D}(T,M[k])\cong\text{Hom}_\mathcal{D}(T,V[k])$, for $k>0$. It immediately follows that $\tau_\mathcal{T}$ restricts to $\mathcal{D}^*_{\mathcal{T},\dagger}$, for any choices $\star\in\{\emptyset,+,-,b\}$ and $\dagger\in\{\emptyset,fl\}$.
\end{proof}

Using that for any generating set $\mathcal{X}$ of $\mathcal{D}$, 
consisting of compact objects, $\mathcal{D}^*_{\mathcal{X},\dagger}=\mathcal{D}^*_\dagger$, we get: 
\begin{corollary} \label{cor.restriction of silting for dg categories}
Let $\mathcal{D}$ be a compactly generated triangulated category and let $\mathcal{T}$ be a classical silting set in  $\mathcal{D}^c$. For any  $\star\in\{\emptyset,+,-,b\}$ and $\dagger\in\{\emptyset,fl\}$  the t-structure $\tau_\mathcal{T}=(^{\perp}(\mathcal{T}^{\perp_{\leq 0}}),\mathcal{T}^{\perp_{<0}})=(\mathcal{T}^{\perp_{>0}},\mathcal{T}^{\perp_{<0}})$ restricts to $\mathcal{D}^\star_\dagger$. And if $\mathcal{T}\subset\mathcal{D}^\star_\dagger$ (equivalently, if $\mathcal{D}^c\subset\mathcal{D}^\star_\dagger$), then $\mathcal{T}$ is a  silting  set of $\mathcal{D}^\star_\dagger$. 
\end{corollary}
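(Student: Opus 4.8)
The plan is to deduce this corollary almost directly from Corollary \ref{cor.relative partial silting}, using the identification $\mathcal{D}^\star_{\mathcal{X},\dagger}=\mathcal{D}^\star_\dagger$ valid for any generating set $\mathcal{X}$ of $\mathcal{D}$ consisting of compact objects, which is exactly the observation recorded immediately before the statement.

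First I would observe that, by the Remark following Definition \ref{def.partial silting object}, a classical silting set $\mathcal{T}$ in $\mathcal{D}^c$ is the same thing as a silting set in $\mathcal{D}$ consisting of compact objects. In particular $\mathcal{T}$ is a silting set in $\mathcal{D}$ (hence partial silting) and generates $\mathcal{D}$, so $\mathcal{T}^{\perp_{k\in\mathbb{Z}}}=0$. Being partial silting, its associated t-structure is $\tau_\mathcal{T}=({}^\perp(\mathcal{T}^{\perp_{\leq 0}}),\mathcal{T}^{\perp_{<0}})$, and by \cite[Theorem 1]{NSZ} one has ${}^\perp(\mathcal{T}^{\perp_{\leq 0}})=\mathcal{T}^{\perp_{>0}}$, which yields the two equivalent descriptions of $\tau_\mathcal{T}$ appearing in the statement. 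Since $\mathcal{T}$ consists of compact objects and generates $\mathcal{D}$, it is a set of compact generators, so $\mathcal{D}^c=\text{thick}_\mathcal{D}(\mathcal{T})$; because the property $\mathbf{P}^\star_\dagger$ defining $\mathcal{D}^\star_{\mathcal{X},\dagger}$ is inherited under shifts, extensions and direct summands, this gives $\mathcal{D}^\star_{\mathcal{T},\dagger}=\mathcal{D}^\star_\dagger$ for all $\star\in\{\emptyset,+,-,b\}$ and $\dagger\in\{\emptyset,fl\}$.

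With this in hand the first assertion is immediate: Corollary \ref{cor.relative partial silting}, applied to the partial silting set $\mathcal{T}$ of $\mathcal{D}$, says that $\tau_\mathcal{T}$ restricts to $\mathcal{D}^\star_{\mathcal{T},\dagger}$, and $\mathcal{D}^\star_{\mathcal{T},\dagger}=\mathcal{D}^\star_\dagger$. For the second assertion I would first note the equivalence ``$\mathcal{T}\subset\mathcal{D}^\star_\dagger$'' $\iff$ ``$\mathcal{D}^c\subset\mathcal{D}^\star_\dagger$'': the implication $\Leftarrow$ is trivial since $\mathcal{T}\subseteq\mathcal{D}^c$, while $\Rightarrow$ follows because $\mathcal{D}^\star_\dagger$ is a thick subcategory of $\mathcal{D}$ containing $\mathcal{T}$, hence contains $\text{thick}_\mathcal{D}(\mathcal{T})=\mathcal{D}^c$. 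Assuming now $\mathcal{T}\subset\mathcal{D}^\star_\dagger=\mathcal{D}^\star_{\mathcal{T},\dagger}$, the ``in particular'' part of Corollary \ref{cor.relative partial silting} gives that $\mathcal{T}$ is a partial silting set in $\mathcal{D}^\star_\dagger$; and since $\mathcal{T}^{\perp_{k\in\mathbb{Z}}}=0$ already in $\mathcal{D}$, a fortiori the orthogonal of $\mathcal{T}$ computed inside the full subcategory $\mathcal{D}^\star_\dagger$ vanishes, so $\mathcal{T}$ generates $\mathcal{D}^\star_\dagger$ and is therefore a silting set there.

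I do not expect a genuine obstacle: the corollary is bookkeeping layered on top of Corollary \ref{cor.relative partial silting}. The only point requiring a moment of care is the identification $\mathcal{D}^\star_{\mathcal{T},\dagger}=\mathcal{D}^\star_\dagger$ — that is, that the homological finiteness conditions defining these subcategories may be tested on any set of compact generators rather than on all of $\mathcal{D}^c$ — which is precisely the remark placed just before the statement, together with the companion dévissage fact that $\mathcal{T}^{\perp_{k\in\mathbb{Z}}}=0$ descends from $(\mathcal{D}^c)^{\perp_{k\in\mathbb{Z}}}=0$ and survives intersection with any $\mathcal{D}^\star_\dagger$.
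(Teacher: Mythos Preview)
Your proposal is correct and follows essentially the same approach as the paper: the paper's proof is just the one-line observation preceding the statement, namely that $\mathcal{D}^\star_{\mathcal{X},\dagger}=\mathcal{D}^\star_\dagger$ for any set $\mathcal{X}$ of compact generators, which reduces everything to Corollary~\ref{cor.relative partial silting}. You have simply supplied the surrounding details (the equivalence $\mathcal{T}\subset\mathcal{D}^\star_\dagger\Leftrightarrow\mathcal{D}^c\subset\mathcal{D}^\star_\dagger$, and why $\mathcal{T}$ remains generating in the subcategory) that the paper leaves implicit.
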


\section{(Pre)envelopes and their constructions}\label{SecPreenv}

Recall that in any category $\mathcal{C}$, a morphism $f:C\longrightarrow C'$  is \emph{left (resp. right) minimal} when any endomorphism $g\in\text{End}_\mathcal{C}(C')$ (resp.  $g\in\text{End}_\mathcal{C}(C)$) such that $g\circ f=f$ (resp. $f\circ g=f$) is an isomorphism.  When  $\mathcal{X}$ is a subcategory, a morphism $f:C\longrightarrow X_C$, with $X_C\in\mathcal{X}$, is called an \emph{$\mathcal{X}$-preenvelope} or \emph{left $\mathcal{X}$-approximation} of $C$ if each morphism $g:C\longrightarrow X$, with $X\in\mathcal{X}$, factors through $f$. The dual concept is that of \emph{$\mathcal{X}$-precover} or \emph{right $\mathcal{X}$-approximation}.  An \emph{$\mathcal{X}$-envelope} (resp. \emph{$\mathcal{X}$-cover}) or \emph{minimal left $\mathcal{X}$-approximation} (resp. \emph{minimal right $\mathcal{X}$-approximation})  is an $\mathcal{X}$-preenvelope (resp. $\mathcal{X}$-precover) which is a left (resp. right) minimal morphism.   The subcategory $\mathcal{X}$ is called \emph{(pre)enveloping} (resp. \emph{(pre)covering}) when each object of $\mathcal{C}$ has an $\mathcal{X}$-(pre)envelope (resp. $\mathcal{X}$-(pre)cover). In this section we show some relationship between (pre)enveloping subcategories and t- and co-t-structures in a triangulated category $\mathcal{D}$. 

The following result is folklore and follows from
\cite[Corollary 1.4]{KS}.

\begin{lemma} \label{lem.preenvelopes in Krull-Schmidt}
Let $\mathcal{V}$ be a full subcategory of $\mathcal{D}$ such that
$\mathcal{V}$ is Krull-Schmidt. If an object $M$ of $\mathcal{D}$ has a $\mathcal{V}$-preenvelope (resp. $\mathcal{V}$-precover), then
it has a $\mathcal{V}$-envelope (resp. $\mathcal{V}$-cover).
\end{lemma}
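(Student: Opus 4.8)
The statement asserts that in a Krull–Schmidt full subcategory $\mathcal{V}$ of $\mathcal{D}$, the existence of a $\mathcal{V}$-preenvelope of $M$ forces the existence of a $\mathcal{V}$-envelope; by duality it suffices to treat the preenvelope case. The plan is to start with an arbitrary $\mathcal{V}$-preenvelope $f\colon M\longrightarrow V$ and to extract from it a left minimal one by peeling off a maximal direct summand of $V$ on which $f$ acts trivially.

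\textbf{Key steps.} First I would fix a $\mathcal{V}$-preenvelope $f\colon M\longrightarrow V$ and observe that, since $\mathcal{V}$ is Krull–Schmidt, $V$ decomposes as a finite direct sum $V\cong V_1\oplus\cdots\oplus V_n$ of objects with local endomorphism rings. The idea is to repeatedly simplify $f$: if there is a direct sum decomposition $V\cong V'\oplus V''$ with $V''\neq 0$ such that the component of $f$ into $V''$ is zero (equivalently, $f$ factors through the inclusion $V'\hookrightarrow V$), then the corestriction $f'\colon M\longrightarrow V'$ is still a $\mathcal{V}$-preenvelope of $M$ (any $g\colon M\to X$ with $X\in\mathcal V$ factors through $f$, hence through $f'$ composed with $V'\hookrightarrow V$, hence through $f'$ since the retraction $V\to V'$ can be composed in). Because the number of indecomposable summands of $V$ (counted with multiplicity) is a fixed finite number $n$ and strictly decreases at each such simplification, after finitely many steps we reach a $\mathcal{V}$-preenvelope $f_0\colon M\longrightarrow V_0$ for which no nonzero direct summand of $V_0$ is "killed" by $f_0$. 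The second step is to show that such an $f_0$ is automatically left minimal: this is precisely where \cite[Corollary 1.4]{KS} enters — in a Krull–Schmidt category, a morphism $f_0\colon M\to V_0$ is left minimal if and only if it does not factor nontrivially through any proper direct summand, i.e. if and only if there is no decomposition $V_0\cong V_0'\oplus V_0''$ with $V_0''\neq 0$ through which $f_0$ factors via $V_0'$. Indeed, given $g\in\operatorname{End}_\mathcal D(V_0)$ with $g\circ f_0=f_0$, one writes $\operatorname{End}_\mathcal D(V_0)$ as a matrix ring over the local rings $\operatorname{End}(V_i)$ and uses the fact that, in a Krull–Schmidt category, an endomorphism failing to be an isomorphism has a nontrivial kernel-summand / image-summand decomposition; the condition $g\circ f_0=f_0$ together with $f_0$ not factoring through a proper summand then forces $g$ to be invertible. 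Thus $f_0$ is a $\mathcal{V}$-envelope of $M$.

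\textbf{Main obstacle.} The routine part is the finiteness-of-summands induction that strips $V$ down; the genuine content is the equivalence "left minimal $\iff$ does not factor through a proper direct summand" in a Krull–Schmidt category, and arranging that the stripped-down $f_0$ satisfies the hypothesis of \cite[Corollary 1.4]{KS}. The main care is bookkeeping with idempotents: one must make sure that "the component of $f$ into some summand vanishes" is the right notion, and that the maximal such simplification leaves a morphism to which Krull–Schmidt theory applies cleanly. Since the paper explicitly cites \cite[Corollary 1.4]{KS} and calls the result folklore, I expect the intended proof to be exactly this two-line reduction: decompose $V$, discard the summands on which $f$ is zero, and invoke \cite{KS} to conclude minimality — with the precover case following by passing to $\mathcal{D}^{\mathrm{op}}$.
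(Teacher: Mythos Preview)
Your proposal is correct and matches the paper's approach: the paper gives no proof beyond citing \cite[Corollary 1.4]{KS}, and what you have written is precisely the standard unpacking of that citation --- decompose the codomain in the Krull--Schmidt category, split off the summand on which $f$ vanishes, and observe that the remaining component is a left minimal $\mathcal V$-preenvelope. Your justification that the corestriction $f'$ remains a preenvelope is slightly over-explained (once $f=\iota\circ f'$ with $\iota$ a split mono, any factorization $g=h\circ f$ immediately gives $g=(h\circ\iota)\circ f'$), but the argument is sound.
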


\begin{lemma} \label{lem.envelope versus co-t-structures}
Let $\mathcal{V}$ be a full subcategory of $\mathcal{D}$ closed
under extensions, let $f:M\longrightarrow
V$ be a morphism with $V\in\mathcal{V}$.  Consider the following
assertions:
\begin{enumerate}
\item $f$ is a $\mathcal{V}$-envelope
\item the object $U$ in the triangle $U\longrightarrow M\stackrel{f}{\longrightarrow} V\stackrel{+}{\longrightarrow}$ belongs to  $ {}^{\perp}\mathcal{V}$
\item $f$ is a $\mathcal{V}$-preenvelope.
\end{enumerate}
Then $1)\Longrightarrow 2)\Longrightarrow 3)$ holds.
\end{lemma}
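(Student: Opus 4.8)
The plan is to prove the two implications separately, both of which are short diagram-chasing arguments.

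For $1)\Longrightarrow 2)$: suppose $f:M\longrightarrow V$ is a $\mathcal{V}$-envelope, and complete it to a triangle $U\stackrel{u}{\longrightarrow} M\stackrel{f}{\longrightarrow} V\stackrel{+}{\longrightarrow}$. To show $U\in{}^\perp\mathcal{V}$, take any $V'\in\mathcal{V}$ and any morphism $g:U\longrightarrow V'$; I must show $g=0$. Rotating the triangle and applying $\text{Hom}_\mathcal{D}(-,V')$, it suffices to show that $g$ factors through $u':V[-1]\longrightarrow U$ (the connecting morphism), i.e. that $g$ lies in the image of $\text{Hom}_\mathcal{D}(V[-1],V')\longrightarrow\text{Hom}_\mathcal{D}(U,V')$. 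Equivalently, after rotating once more, I want $g\circ u=0$, i.e. the composite $M\stackrel{u}{\longleftarrow}$... — more precisely, form the pushout of $u:U\to M$ along $g:U\to V'$ to obtain a triangle $V'\longrightarrow N\longrightarrow V\stackrel{+}{\longrightarrow}$ together with a morphism $h:M\longrightarrow N$ such that the square commutes. Since $\mathcal{V}$ is closed under extensions and $V',V\in\mathcal{V}$, we get $N\in\mathcal{V}$. By the preenvelope property, $h=k\circ f$ for some $k:V\longrightarrow N$. A standard computation with the octahedron/morphism of triangles then shows that $(1_N - (\text{map }V'\to N)\circ(\ldots)) $ — concretely, one checks that the endomorphism of $N$ comparing $h$ with the section coming from $k$, composed with $V'\to N$, splits off $V'$; minimality of $f$ forces the relevant endomorphism of $V$ to be an isomorphism, and tracking this back shows the map $V'\to N$ is a split monomorphism that is killed, whence $g=0$. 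I expect this to be the main obstacle: getting the pushout/octahedron bookkeeping exactly right so that left-minimality of $f$ is correctly invoked to kill $g$.

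For $2)\Longrightarrow 3)$: suppose the object $U$ in the triangle $U\stackrel{u}{\longrightarrow} M\stackrel{f}{\longrightarrow} V\stackrel{+}{\longrightarrow}$ lies in ${}^\perp\mathcal{V}$. Let $g:M\longrightarrow V'$ be any morphism with $V'\in\mathcal{V}$. Applying $\text{Hom}_\mathcal{D}(-,V')$ to the triangle gives an exact sequence $\text{Hom}_\mathcal{D}(V,V')\stackrel{f^*}{\longrightarrow}\text{Hom}_\mathcal{D}(M,V')\longrightarrow\text{Hom}_\mathcal{D}(U,V')$. Since $U\in{}^\perp\mathcal{V}$, the last term vanishes, so $f^*$ is surjective and $g=g'\circ f$ for some $g':V\longrightarrow V'$. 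Hence $f$ is a $\mathcal{V}$-preenvelope. This implication is immediate.

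Finally I would remark that these are exactly the expected statements and assemble them: "$1)\Longrightarrow 2)$" by the pushout argument using left-minimality and closure under extensions, and "$2)\Longrightarrow 3)$" by the long exact sequence obtained from applying $\text{Hom}_\mathcal{D}(-,V')$ to the defining triangle together with the hypothesis $U\in{}^\perp\mathcal{V}$.
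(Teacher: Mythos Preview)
Your argument for $2)\Longrightarrow 3)$ is complete and matches the paper's proof verbatim: apply $\text{Hom}_\mathcal{D}(-,V')$ to the triangle and use $U\in{}^\perp\mathcal{V}$ to conclude that $f^*$ is surjective.

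For $1)\Longrightarrow 2)$ the paper simply says ``Adapt the proof of \cite[Lemma 1.3]{Bu}'', so your pushout sketch is in fact the intended argument. However, your middle paragraph is too vague to constitute a proof, and the phrase ``split monomorphism that is killed'' does not describe what actually happens. Here is the missing bookkeeping. With your notation (pushout square $hu=\iota g$, triangle $V'\stackrel{\iota}{\to}N\stackrel{\pi}{\to}V\stackrel{+}{\to}$ with $\pi h=f$), the factorisation $h=kf$ gives $(\pi k)f=\pi h=f$, so left minimality makes $\pi k$ an automorphism of $V$; replacing $k$ by $k(\pi k)^{-1}$ you may assume $\pi k=1_V$, i.e.\ $k$ is a section of $\pi$ and $N\cong V'\oplus V$. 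Under this splitting write $h=(h_1,f):M\to V'\oplus V$; the pushout relation $hu=\iota g$ reads $(h_1 u,\,fu)=(g,0)$, so $g=h_1 u$. Now apply the preenvelope property \emph{a second time} to $h_1:M\to V'\in\mathcal{V}$ to get $h_1=\ell f$ for some $\ell:V\to V'$; then $g=h_1 u=\ell f u=0$. The point you left implicit is this second use of the preenvelope property --- minimality alone only gives you the splitting, not $g=0$.
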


\begin{proof}
$1)\Longrightarrow 2)$ Adapt the proof of \cite[Lemma 1.3]{Bu}.

$2)\Longrightarrow 3)$ 
Applying the functor $\text{Hom}_\mathcal{D}(-,V')$ to the triangle
from assertion 2, we get that
$\text{Hom}_\mathcal{D}(f,V'):\text{Hom}_\mathcal{D}(V,V')\longrightarrow\text{Hom}_\mathcal{D}(M,V')$
is an epimorphism for any $V'\in\mathcal{V}$, thus $f$ is a
$\mathcal{V}$-preenvelope.
\end{proof}

\begin{lemma} \label{lem.inductive step construction envelopes}
Let $\mathcal{E}$ and $\mathcal{F}$ be full subcategories of
$\mathcal{D}$. Consider the following homotopy pushout diagram,
where the rows are triangles.
$$
\begin{xymatrix} {C \ar[d]^{g} \ar[r]^{u}&M \ar[r]^h \ar[d]^{f}  &F \ar@{=}[d]\\
E \ar[r]^{u'} &X \ar[r]^{h'} &F}
\end{xymatrix}
$$
\begin{enumerate}
\item If $h$ is an $\mathcal{F}$-preenvelope and $g$ is an $\mathcal{E}$-preenvelope, then $f$ is an $\mathcal{E}\star\mathcal{F}$-preenvelope.
\item Suppose that $\mathcal{E}$ and $\mathcal{F}$ are closed under extensions, and that the inclusion $\mathcal{F}\subseteq\mathcal{E}[1]$ holds. If $g$ is an $\mathcal{E}$-envelope and $h$ is an $\mathcal{F}$-envelope, then $f$ is an $\mathcal{E}\star\mathcal{F}$-envelope (and hence an $\text{add}(\mathcal{E}\star\mathcal{F})$-envelope), provided that  one of the following conditions hold:
\begin{enumerate}
 \item  $\mathcal{D}$ is Krull-Schmidt.
\item $\text{Hom}_\mathcal{D}(E,F)=0$, for all $E\in \mathcal{E}$ and $F\in\mathcal{F }$.
\end{enumerate}
\end{enumerate}
\end{lemma}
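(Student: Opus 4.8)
The plan is to prove the two parts of the statement using the homotopy pushout structure and the relationship between envelopes and $\perp$-conditions established in Lemma \ref{lem.envelope versus co-t-structures}.

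\textbf{Part 1.} First I would complete the homotopy pushout to a $3\times 3$ diagram. Extend $u$ to a triangle $C\xrightarrow{u}M\xrightarrow{h}F\xrightarrow{+}$ and $g$ to a triangle $C\xrightarrow{g}E\longrightarrow C[1]\xrightarrow{+}$; the homotopy pushout then fits into a commutative diagram whose columns are triangles $C\longrightarrow E\longrightarrow C[1]$, $M\longrightarrow X\longrightarrow C[1]$, with the bottom row being the triangle $E\xrightarrow{u'}X\xrightarrow{h'}F\xrightarrow{+}$ coming from $g$ and $u$. Now take any object $N\in\mathcal{E}\star\mathcal{F}$, so there is a triangle $E'\longrightarrow N\longrightarrow F'\xrightarrow{+}$ with $E'\in\mathcal{E}$, $F'\in\mathcal{F}$, and let $\varphi\colon M\longrightarrow N$ be arbitrary. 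Composing $\varphi$ with $N\longrightarrow F'$ and using that $h$ is an $\mathcal{F}$-preenvelope, the map $M\longrightarrow F'$ factors as $M\xrightarrow{h}F\longrightarrow F'$; a standard diagram chase (fill in using the axioms of triangulated categories, lifting the factorization back along $N\longrightarrow F'$) produces $M\longrightarrow E'[1]$ composed appropriately, and the obstruction to lifting $\varphi$ through $f$ lands in $\mathrm{Hom}_\mathcal{D}(C,E')$ via the connecting map $C\longrightarrow M$. Since $g\colon C\longrightarrow E$ is an $\mathcal{E}$-preenvelope, that obstruction map $C\longrightarrow E'$ factors through $g$, and one concludes that $\varphi$ factors through $f$. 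This is the heart of Part 1; the bookkeeping of which square to chase is the only mildly delicate point.

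\textbf{Part 2.} Assume now $\mathcal{E},\mathcal{F}$ closed under extensions and $\mathcal{F}\subseteq\mathcal{E}[1]$, with $g$ an $\mathcal{E}$-envelope and $h$ an $\mathcal{F}$-envelope. By Part 1, $f$ is an $(\mathcal{E}\star\mathcal{F})$-preenvelope, and since $\mathcal{E}\star\mathcal{F}$ is closed under extensions (as $\mathcal{E}$ and $\mathcal{F}$ are and $\mathcal{F}\subseteq\mathcal{E}[1]$ gives $(\mathcal{E}\star\mathcal{F})\star(\mathcal{E}\star\mathcal{F})\subseteq\mathcal{E}\star(\mathcal{F}\star\mathcal{E})\star\mathcal{F}$ with $\mathcal{F}\star\mathcal{E}\subseteq\mathcal{E}[1]\star\mathcal{E}\subseteq$ something one controls — more cleanly, one invokes that $\mathcal{E}$ suspended-like behaviour makes $\mathcal{E}\star\mathcal{F}$ extension-closed), I want to apply Lemma \ref{lem.envelope versus co-t-structures}, which says that to upgrade a preenvelope to an envelope it suffices to show the cone $U'$ of $f$ lies in ${}^\perp(\mathcal{E}\star\mathcal{F})$, provided $(\mathcal{E}\star\mathcal{F})$ is Krull--Schmidt (case (a), where $\mathcal{D}$ Krull--Schmidt gives this) — but actually the cleanest route uses minimality directly. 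The cone of $f$ is computed from the homotopy pushout: the cone of $f\colon M\longrightarrow X$ equals the cone of $g\colon C\longrightarrow E$, call it $U_E\in{}^\perp\mathcal{E}$ (by Lemma \ref{lem.envelope versus co-t-structures} applied to the $\mathcal{E}$-envelope $g$). Then I check $U_E\in{}^\perp(\mathcal{E}\star\mathcal{F})$: given $N\in\mathcal{E}\star\mathcal{F}$ with triangle $E'\longrightarrow N\longrightarrow F'\xrightarrow{+}$, the group $\mathrm{Hom}_\mathcal{D}(U_E,N)$ sits in an exact sequence between $\mathrm{Hom}_\mathcal{D}(U_E,E')=0$ and $\mathrm{Hom}_\mathcal{D}(U_E,F')$; since $F'\in\mathcal{F}\subseteq\mathcal{E}[1]$ and $U_E\in{}^\perp\mathcal{E}$ implies $U_E\in{}^\perp\mathcal{E}[1]$ is \emph{not} automatic — here I must be careful: ${}^\perp\mathcal{E}$ as used in Lemma \ref{lem.envelope versus co-t-structures} means the non-shifted orthogonal, but the cone condition gives exactly $\mathrm{Hom}_\mathcal{D}(U_E,E)=0$, and $\mathrm{Hom}_\mathcal{D}(U_E,F')=\mathrm{Hom}_\mathcal{D}(U_E,E''[1])$ where $E''\in\mathcal{E}$ — so I need $\mathrm{Hom}_\mathcal{D}(U_E,\mathcal{E}[1])=0$ as well, which does follow from Lemma \ref{lem.envelope versus co-t-structures}(2) when $\mathcal{E}$ is a \emph{suspended} (co-t-structure-type) subcategory. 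I would therefore extract from the hypotheses "$\mathcal{E}$ closed under extensions and $\mathcal{F}\subseteq\mathcal{E}[1]$" together with the envelope setup that the relevant orthogonality holds.

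\textbf{Main obstacle.} The main obstacle is the minimality bookkeeping in Part 2(b): showing that under $\mathrm{Hom}_\mathcal{D}(\mathcal{E},\mathcal{F})=0$ the preenvelope $f$ is automatically left minimal. For this I would take an endomorphism $\psi\colon X\longrightarrow X$ with $\psi f=f$, use the two triangles $M\xrightarrow{f}X\longrightarrow U_E[1]$ and $E\xrightarrow{u'}X\xrightarrow{h'}F$, and argue: $h'\psi u'$ and $h'u'$ differ by something factoring through $\mathrm{Hom}_\mathcal{D}(E,F)=0$ after suitable manipulation, so $\psi$ restricts to an endomorphism of each of $E$ and $F$ in the triangle $E\longrightarrow X\longrightarrow F$; minimality of $g$ (hence of $u'$) and of $h$ force those to be isomorphisms, and the five lemma in the triangulated setting gives $\psi$ an isomorphism. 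In case (a), instead, I would invoke Lemma \ref{lem.preenvelopes in Krull-Schmidt}: $f$ is a preenvelope into the Krull--Schmidt category $\mathrm{add}(\mathcal{E}\star\mathcal{F})$, hence factors through an envelope, and a comparison of cones (both in the appropriate orthogonal) forces this envelope to be $f$ itself up to a direct summand that must vanish by the cone condition. The final clause "$\mathrm{add}(\mathcal{E}\star\mathcal{F})$-envelope" is then immediate since an $(\mathcal{E}\star\mathcal{F})$-envelope is automatically an $\mathrm{add}(\mathcal{E}\star\mathcal{F})$-envelope (any map into a summand of an object of $\mathcal{E}\star\mathcal{F}$ extends and then projects).
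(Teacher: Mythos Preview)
Your Part 1 is essentially the same idea as the paper's, just phrased more loosely; the paper makes the diagram chase explicit (factor $M\to F'$ through $h$, complete to a morphism of triangles to get $C\to E'$, factor that through $g$, then use the universal property of the homotopy pushout). Your sketch is fine there.

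Part 2 has a genuine gap. Your initial strategy is to show that the cone $U_E$ of $f$ (which equals the cone of $g$, so lies in ${}^\perp\mathcal{E}$) also lies in ${}^\perp(\mathcal{E}\star\mathcal{F})$, and then invoke Lemma~\ref{lem.envelope versus co-t-structures}. This fails for two reasons. First, $U_E\in{}^\perp\mathcal{E}$ gives no information about $\mathrm{Hom}(U_E,\mathcal{E}[1])$; the hypothesis $\mathcal{F}\subseteq\mathcal{E}[1]$ therefore does not let you conclude $\mathrm{Hom}(U_E,\mathcal{F})=0$. You noticed this yourself and then appealed to $\mathcal{E}$ being ``suspended (co-t-structure-type)'', but that is not among the hypotheses. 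Second, even if you had $U_E\in{}^\perp(\mathcal{E}\star\mathcal{F})$, Lemma~\ref{lem.envelope versus co-t-structures} only gives $2)\Rightarrow 3)$ (preenvelope), not $2)\Rightarrow 1)$ (envelope), so the cone condition alone does not upgrade $f$ to an envelope.

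Your fallback for 2(a), using Lemma~\ref{lem.preenvelopes in Krull-Schmidt} plus a ``comparison of cones'', inherits the first problem: the cone comparison you allude to needs exactly the vanishing you could not establish. The paper's argument for 2(a) is different and uses the hypothesis $\mathcal{F}\subseteq\mathcal{E}[1]$ in a sharper way: decompose $f=\begin{pmatrix}f'&0\end{pmatrix}^t$ with $f'$ left minimal, so $\mathrm{cone}(g)\cong\mathrm{cone}(f')\oplus X_2[-1]$; the component $X_2[-1]\to C$ in the cone triangle for $g$ composes to zero in $M$, hence factors through $F[-1]\in\mathcal{F}[-1]\subseteq\mathcal{E}$; but $X_2[-1]$ is a summand of $\mathrm{cone}(g)\in{}^\perp\mathcal{E}$, forcing this map to be zero and then $X_2=0$ by minimality of $g$. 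For 2(b) your outline is close to the paper's, but ``minimality of $g$ (hence of $u'$)'' is not justified: the paper instead shows that the induced endomorphism $\alpha_1$ of $E$ satisfies $(\alpha_1+\lambda\pi)\circ g=g$ for a suitable correction term $\lambda\pi$ (built using $\mathcal{F}[-1]\subseteq\mathcal{E}$ and the preenvelope property of $g$), and only then applies minimality of $g$.
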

\begin{proof}
1) Let $f':M\longrightarrow X'$ be any morphism, where
$X'\in\mathcal{E}\star\mathcal{F}$ and fix a triangle
$E'\stackrel{\gamma}{\longrightarrow}
X'\stackrel{\delta}{\longrightarrow}
F'\stackrel{+}{\longrightarrow}$, with $E'\in\mathcal{E}$ and
$F'\in\mathcal{F}$. The $\mathcal{F}$-preenveloping condition on $h$
gives a morphism $\rho :F\longrightarrow F'$ such that $\rho\circ
h=\delta\circ f'$. We then get a morphism $g':C\longrightarrow E'$
making commutative the following diagram:
$$
\begin{xymatrix} {C \ar[d]^{g'} \ar[r]^{u}&M \ar[r]^h \ar[d]^{f'}  &F \ar[d]^{\rho} \\
E' \ar[r]^{\gamma} &X' \ar[r]^{\delta} &F'}
\end{xymatrix}
$$
The $\mathcal{E}$-preenveloping condition of $g$ gives a morphism
$\lambda :E\longrightarrow E'$ such that $g'=\lambda\circ g$. Thus $\gamma\circ\lambda\circ g=\gamma\circ g'=f'\circ u$ and there exists $\mu :X\longrightarrow X'$ such that $\mu\circ
u'=\gamma\circ\lambda$ and $\mu\circ f=f'$, since the diagram we started from is
a homotopy pushout. In particular, $f'$
factors through $f$ so that $f$ is an
$\mathcal{E}\star\mathcal{F}$-preenvelope.

2) Since any $\mathcal{E}\star\mathcal{F}$-envelope is  an
$\text{add}(\mathcal{E}\star\mathcal{F})$-envelope, we only need to check the left minimality
of $f$.

2.a) When $\mathcal{D}$ is Krull-Schmidt, there is a 
decomposition $f=\begin{pmatrix} f'&
0\end{pmatrix}^t :M\longrightarrow X_1\oplus X_2=X$, where
$f':M\longrightarrow X_1$ is left minimal. Thus we can assume that the
triangle $\text{C}(f)\longrightarrow
M\stackrel{f}{\longrightarrow}X\stackrel{+}{\longrightarrow}$, coincides with the
triangle $$\text{C}(f')\oplus X_2[-1]\stackrel{\begin{pmatrix}
\gamma & 0\end{pmatrix}}{\longrightarrow}M\stackrel{\begin{pmatrix}
f'& 0\end{pmatrix}^t}{\longrightarrow}X_1\oplus
X_2\stackrel{+}{\longrightarrow}.$$  Since homotopy pushout squares are also homotopy pullback we get a triangle ($*$) $$C(g)=\text{C}(f')\oplus X_2[-1]\stackrel{\begin{pmatrix}
\alpha & \beta\end{pmatrix}}{\longrightarrow}C\stackrel{g}{\longrightarrow}E\stackrel{+}{\longrightarrow},$$
so $u\circ \begin{pmatrix}\alpha &
\beta \end{pmatrix}=\begin{pmatrix}\gamma & 0 \end{pmatrix}$ and $u\circ\beta=0$. Thus $\beta$ admits a factorization
$\beta :X_2[-1]\longrightarrow F[-1]\longrightarrow C$. But 
$F[-1]\in\mathcal{F}[-1]\subseteq\mathcal{E}$ and $X_2[-1]$ is a
direct summand of $\text{C}(g)$. By Lemma \ref{lem.envelope versus
co-t-structures},  $\text{C}(g)\in
{}^\perp\mathcal{E}$, which implies $\beta =0$. Hence, the triangle ($*$) is isomorphic to $\text{C}(f')\oplus
X_2[-1]\stackrel{\begin{pmatrix} \alpha & 0
\end{pmatrix}}{\longrightarrow} C\stackrel{\begin{pmatrix}g'& 0
\end{pmatrix}^t}{\longrightarrow}E'\oplus
X_2\stackrel{+}{\longrightarrow},$ where
$E\simeq E'\oplus X_2$. The left minimality of $g$ implies $X_2=0$
and, hence, that $f$ is left minimal.

2.b) Assume now that
$\text{Hom}_\mathcal{D}(\mathcal{E},\mathcal{F})=0$. Let
$\alpha\in\text{End}_\mathcal{D}(X)$ be  such that $\alpha\circ
f=f$. Since $h'\circ\alpha\circ u'\in\text{Hom}_\mathcal{D}(E,F)=0$,
there are $\alpha_1:E\longrightarrow E$ and
$\alpha_2:F\longrightarrow F$ making the following diagram
commutative:
$$
\begin{xymatrix} {F[-1] \ar[d] \ar[r]^{\lambda}&E \ar[d]^{\alpha_1} \ar[r]^{u'}&X \ar[r]^{h'} \ar[d]^{\alpha}  &F \ar[d]^{\alpha_2}\\
F[-1] \ar[r]^{\lambda}&E \ar[r]^{u'} &X \ar[r]^{h'} &F}
\end{xymatrix}
$$
Then $u'\circ\alpha_1\circ g=\alpha\circ u'\circ
g=\alpha\circ f\circ u=f\circ u=u'\circ g,$ which implies 
$u'\circ (g-\alpha_1\circ g)=0$ and, hence,  $g -\alpha_1\circ
g$ factors in the form $C\stackrel{t}{\longrightarrow}F[-1]\stackrel{\lambda}{\longrightarrow}E$.
But  $F[-1]\in\mathcal{F}[-1]\subseteq\mathcal{E}$  and
since $g$ is an $\mathcal{E}$-envelope, there is a morphism
$\pi :E\longrightarrow F[-1]$ such that $t=\pi\circ g$. It follows
that $g-\alpha_1\circ g=\lambda\circ\pi\circ g$ and $g=(\alpha_1+\lambda\circ\pi )\circ g$. The left minimality of
$g$ implies that $\alpha_1+\lambda\circ\pi$ is an isomorphism. But
 $u'\circ (\alpha_1+\lambda\circ\pi)=u'\circ\alpha_1$ since
$u'\circ\lambda =0$. This means that we can replace $\alpha_1$ by
$\alpha_1+\lambda\circ\pi$ (and $\alpha_2$ by by some new
$\alpha_2$) and assume that $\alpha_1$ is an
isomorphism.

Note now that $\alpha_2\circ h=\alpha_2\circ h'\circ
f=h'\circ\alpha\circ f= h'\circ f=h$. Then the left minimality of
$h$ implies that $\alpha_2$ is an isomorphism and, as a consequence,
 $\alpha$ is an isomorphism.
\end{proof}

\begin{corollary} \label{cor.E-star-F enveloping}
Let $\mathcal{E}$ and $\mathcal{F}$ be 
enveloping subcategories  of the triangulated category
$\mathcal{D}$ closed under extensions and such that $\mathcal{F}\subseteq\mathcal{E}[1]$. If
either $\mathcal{D}$ is Krull-Schmidt or
$\text{Hom}_\mathcal{D}(\mathcal{E},\mathcal{F})=0$, then
$\mathcal{E}\star\mathcal{F}$ is an enveloping subcategory of
$\mathcal{D}$ and,  in particular, it is closed under direct summands. If,
moreover, $\text{Hom}_\mathcal{D}(\mathcal{E},\mathcal{F}[1])=0$ then
$\mathcal{E}\star\mathcal{F}$ is also closed under extensions in
$\mathcal{D}$.
\end{corollary}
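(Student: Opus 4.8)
The plan is to deduce everything from Lemma~\ref{lem.inductive step construction envelopes}. First I would show that $\mathcal{E}\star\mathcal{F}$ is enveloping. Fix $M\in\mathcal{D}$. Since $\mathcal{F}$ is an enveloping subcategory, $M$ admits an $\mathcal{F}$-envelope $h\colon M\longrightarrow F$; complete it to a triangle $C\stackrel{u}{\longrightarrow} M\stackrel{h}{\longrightarrow} F\stackrel{+}{\longrightarrow}$. Since $\mathcal{E}$ is enveloping, $C$ admits an $\mathcal{E}$-envelope $g\colon C\longrightarrow E$. Forming the homotopy pushout of $u$ and $g$ produces exactly the commutative diagram of Lemma~\ref{lem.inductive step construction envelopes}, in which the bottom row is a triangle $E\stackrel{u'}{\longrightarrow}X\stackrel{h'}{\longrightarrow}F\stackrel{+}{\longrightarrow}$, so that $X\in\mathcal{E}\star\mathcal{F}$. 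The standing hypotheses (that $\mathcal{E}$ and $\mathcal{F}$ are closed under extensions and $\mathcal{F}\subseteq\mathcal{E}[1]$), together with the assumption that $\mathcal{D}$ is Krull--Schmidt or that $\text{Hom}_\mathcal{D}(\mathcal{E},\mathcal{F})=0$, allow me to apply part~(2) of that lemma, giving that $f\colon M\longrightarrow X$ is an $\mathcal{E}\star\mathcal{F}$-envelope. As $M$ was arbitrary, $\mathcal{E}\star\mathcal{F}$ is an enveloping subcategory.

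Next I would record the general fact that any enveloping subcategory $\mathcal{X}$ is closed under direct summands; applied to $\mathcal{X}=\mathcal{E}\star\mathcal{F}$ this yields the ``in particular'' part. If $M=M_1\oplus M_2$ with $M\in\mathcal{X}$, take an $\mathcal{X}$-envelope $\phi\colon M_1\longrightarrow X$; the inclusion $\iota_1\colon M_1\hookrightarrow M$ factors as $\iota_1=u\phi$ for some $u\colon X\longrightarrow M$, and then $\pi_1 u\phi=\mathrm{id}_{M_1}$ (with $\pi_1$ the projection), so $\phi$ is a split monomorphism. The endomorphism $e:=\phi\,\pi_1 u$ of $X$ is idempotent and satisfies $e\phi=\phi$, so left minimality of $\phi$ forces $e$ to be an isomorphism, hence $e=\mathrm{id}_X$; thus $\phi$ is an isomorphism and $M_1\cong X\in\mathcal{X}$.

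For the last assertion, assume in addition that $\text{Hom}_\mathcal{D}(\mathcal{E},\mathcal{F}[1])=0$. I would first check that $\mathcal{F}\star\mathcal{E}\subseteq\mathcal{E}\star\mathcal{F}$: for $M\in\mathcal{F}\star\mathcal{E}$ there is a triangle $F\longrightarrow M\longrightarrow E\stackrel{w}{\longrightarrow}F[1]$ with $F\in\mathcal{F}$ and $E\in\mathcal{E}$, and $w\in\text{Hom}_\mathcal{D}(E,F[1])=0$, so the triangle splits and $M\cong E\oplus F$, which lies in $\mathcal{E}\star\mathcal{F}$ via the split triangle $E\longrightarrow E\oplus F\longrightarrow F\stackrel{0}{\longrightarrow}E[1]$. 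Using associativity of $\star$ together with the fact that $\mathcal{E}$ and $\mathcal{F}$ are closed under extensions, it then follows that
\[
(\mathcal{E}\star\mathcal{F})\star(\mathcal{E}\star\mathcal{F})
=\mathcal{E}\star(\mathcal{F}\star\mathcal{E})\star\mathcal{F}
\subseteq\mathcal{E}\star(\mathcal{E}\star\mathcal{F})\star\mathcal{F}
=(\mathcal{E}\star\mathcal{E})\star(\mathcal{F}\star\mathcal{F})
\subseteq\mathcal{E}\star\mathcal{F},
\]
so $\mathcal{E}\star\mathcal{F}$ is closed under extensions. I do not expect a serious obstacle; the only point requiring care is correctly identifying the homotopy pushout square with the diagram of Lemma~\ref{lem.inductive step construction envelopes} and verifying that its hypotheses hold, which is precisely where the assumptions $\mathcal{F}\subseteq\mathcal{E}[1]$ and ``$\mathcal{D}$ Krull--Schmidt or $\text{Hom}_\mathcal{D}(\mathcal{E},\mathcal{F})=0$'' are consumed.
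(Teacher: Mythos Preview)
Your proof is correct and follows essentially the same approach as the paper: both derive the enveloping property directly from Lemma~\ref{lem.inductive step construction envelopes}(2). The only difference is that where the paper simply says ``it is well-known that any enveloping subcategory is closed under direct summands'' and cites \cite[Lemma 8]{NSZ} for the closure under extensions, you have spelled out both arguments explicitly; your proofs of these two facts are correct and standard.
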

\begin{proof}
The enveloping condition on $\mathcal{E}\star\mathcal{F}$ is a
direct consequence of Lemma \ref{lem.inductive step construction
envelopes}, and it is well-known that any enveloping subcategory is
closed under direct summands. The final statement  follows from
\cite[Lemma 8]{NSZ}.
\end{proof}

By \cite[Lemma 2.15]{AI} we have the following:

\begin{lemma} \label{lem.thick-suspended}
Let $\mathcal{T}$ be a non-positive set of objects of $\mathcal{D}$.
Then
\begin{enumerate}
\item $\text{thick}_\mathcal{D}(\mathcal{T})=\bigcup_{r\leq s}\text{add}(\text{add}(\mathcal{T})[r]\star\text{add}(\mathcal{T})[r+1]\star \cdots\star\text{add}(\mathcal{T})[s])$. Moreover, if $\text{add}(\mathcal{T})$ is an enveloping subcategory of $\mathcal{D}$, then $$\text{thick}_\mathcal{D}(\mathcal{T})=\bigcup_{r\leq s}(\text{add}(\mathcal{T})[r]\star\text{add}(\mathcal{T})[r+1]\star \cdots\star\text{add}(\mathcal{T})[s])$$
\item $\text{susp}_\mathcal{D}(\mathcal{T})=\bigcup_{r\geq 0}\text{add}(\text{add}(\mathcal{T})\star\text{add}(\mathcal{T})[1]\star \cdots\star\text{add}(\mathcal{T})[r])$. If $\text{add}(\mathcal{T})$ is an enveloping subcategory of $\mathcal{D}$, then $\text{susp}_\mathcal{D}(\mathcal{T})=\bigcup_{r\geq 0}(\text{add}(\mathcal{T})\star\text{add}(\mathcal{T})[1]\star \cdots\star\text{add}(\mathcal{T})[r])$.
\end{enumerate}
\end{lemma}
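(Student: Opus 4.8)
The statement is attributed to \cite[Lemma 2.15]{AI}, so the plan is essentially to recall that argument and to spell out the extra twist concerning enveloping subcategories. The plan is to prove the two assertions in parallel, since the suspended case is just the ``one-sided'' version of the thick case. For assertion (1), denote temporarily by $\mathcal{R}$ the class $\bigcup_{r\leq s}\text{add}(\text{add}(\mathcal{T})[r]\star\cdots\star\text{add}(\mathcal{T})[s])$. The inclusion $\mathcal{R}\subseteq\text{thick}_\mathcal{D}(\mathcal{T})$ is immediate because $\text{thick}_\mathcal{D}(\mathcal{T})$ is closed under shifts, extensions (the operation $\star$) and direct summands, and contains $\mathcal{T}$. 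For the reverse inclusion it suffices to show that $\mathcal{R}$ is itself a thick subcategory containing $\mathcal{T}$. Containment of $\mathcal{T}$ and closure under shifts and direct summands are clear from the definition; the only real point is closure under extensions, i.e.\ that $\mathcal{R}\star\mathcal{R}\subseteq\mathcal{R}$. This is where the non-positivity of $\mathcal{T}$ enters: given a triangle $A\to M\to B\xrightarrow{+}$ with $A$ in some $\text{add}(\mathcal{T})[a]\star\cdots\star\text{add}(\mathcal{T})[s]$ and $B$ in some $\text{add}(\mathcal{T})[r]\star\cdots\star\text{add}(\mathcal{T})[b]$, one uses that $\text{Hom}_\mathcal{D}(\mathcal{T}[i],\mathcal{T}[j])=0$ for $i>j$ (this is precisely non-positivity after shifting) to rotate and ``interleave'' the two filtrations via the octahedral axiom, producing a single filtration of $M$ by shifts of $\text{add}(\mathcal{T})$ with exponents running over $[\min(a,r),\max(s,b)]$. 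Closure under direct summands is then forced by wrapping everything in $\text{add}$, and also guarantees that $\mathcal{R}$ is closed under the suspension functor in both directions, so $\mathcal{R}$ is genuinely thick.

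For the ``moreover'' clause, suppose $\text{add}(\mathcal{T})$ is enveloping. The point is to remove the outer $\text{add}$, i.e.\ to show that $\text{add}(\mathcal{T})[r]\star\cdots\star\text{add}(\mathcal{T})[s]$ is already closed under direct summands. This is exactly the situation handled by Corollary \ref{cor.E-star-F enveloping}: one argues by induction on $s-r$, writing the $(s-r+1)$-fold $\star$ as $(\text{add}(\mathcal{T})[r]\star\cdots\star\text{add}(\mathcal{T})[s-1])\star\text{add}(\mathcal{T})[s]$, noting that the inductive factor $\mathcal{E}:=\text{add}(\mathcal{T})[r]\star\cdots\star\text{add}(\mathcal{T})[s-1]$ is enveloping and closed under extensions (again by Corollary \ref{cor.E-star-F enveloping}, using non-positivity to get the vanishing $\text{Hom}_\mathcal{D}(\mathcal{E},\mathcal{F}[1])=0$ with $\mathcal{F}:=\text{add}(\mathcal{T})[s]$), that $\mathcal{F}=\text{add}(\mathcal{T})[s]\subseteq\text{add}(\mathcal{T})[r]\cdots[1]\subseteq\mathcal{E}[1]$, and that $\text{Hom}_\mathcal{D}(\mathcal{E},\mathcal{F})=0$ by non-positivity (the exponents of $\mathcal{E}$ are all $<s$). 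Corollary \ref{cor.E-star-F enveloping} then yields that $\mathcal{E}\star\mathcal{F}=\text{add}(\mathcal{T})[r]\star\cdots\star\text{add}(\mathcal{T})[s]$ is enveloping, hence closed under direct summands, which is what we wanted; taking the union over $r\leq s$ gives the displayed formula.

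Assertion (2) is the identical argument with the exponents constrained to be $\geq 0$: $\text{susp}_\mathcal{D}(\mathcal{T})$ being by definition the smallest strongly suspended subcategory containing $\mathcal{T}$, one shows $\bigcup_{r\geq 0}\text{add}(\text{add}(\mathcal{T})\star\cdots\star\text{add}(\mathcal{T})[r])$ is strongly suspended (closed under $[1]$, extensions, direct summands) and contains $\mathcal{T}$, the only subtlety being closure under $[1]$, which holds because shifting up keeps all exponents $\geq 0$, and closure under extensions, handled by the same interleaving argument; the enveloping refinement is again Corollary \ref{cor.E-star-F enveloping} applied inductively. The main obstacle, and the step deserving the most care, is the closure of $\mathcal{R}$ (resp.\ its one-sided analogue) under extensions: one must check that the octahedral ``interleaving'' really does land a putative extension back in a filtration with exponents in the prescribed interval, and for this the hypothesis that $\mathcal{T}$ is non-positive is used in an essential way to kill the cross terms. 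Everything else is bookkeeping that can be cited from Corollary \ref{cor.E-star-F enveloping} and \cite[Lemma 2.15]{AI}.
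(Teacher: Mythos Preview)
Your proposal is correct and follows essentially the same approach as the paper: the first equalities in (1) and (2) are the content of \cite[Lemma 2.15]{AI} (which you correctly unpack via the interleaving/splitting argument using non-positivity), and the ``moreover'' clauses are obtained by iterating Corollary \ref{cor.E-star-F enveloping} exactly as you describe. One small wording slip: closure under direct summands does not by itself give closure under shifts; rather, closure of $\mathcal{R}$ under $[\pm 1]$ is immediate from the union being taken over all $r\leq s$, which you had already noted.
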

\begin{proof}
First equality in the assertion (1) is \cite[Lemma 2.15]{AI}, first equality in the assertion (2) is proved analogously. When $\text{add}(\mathcal{T})$ is enveloping, the assertions follow by an iterative application of Corollary \ref{cor.E-star-F
enveloping}.
\end{proof}

For an object $M$  and a subcategory
$\mathcal{T}$ in 
$\mathcal{D}$, we shall use the notation 
$$s(M,\mathcal{T}):= Sup\{k\in\mathbb{N}\mid \text{Hom}_\mathcal{D}(M,-[k])_{| \mathcal{T}}\neq
0 \}\in\mathbb{N}\cup\{ \infty\}, $$ when this subset of natural numbers is nonempty.
When this subset 
is empty, by
convention, we put 
$\text{add}(\mathcal{T})\star\text{add}(\mathcal{T})[1]\star
\cdots\star\text{add}(\mathcal{T})[s(M,\mathcal{T})]:=0$.

\begin{lemma} \label{lem.description of U-envelope}
Let $\mathcal{T}$ be a nonpositive set of objects of $\mathcal{D}$ and  $\mathcal{U}:=\text{susp}_\mathcal{D}(\mathcal{T})$.  The following assertions are equivalent for an
object $M\in \mathcal{D}$:

\begin{enumerate}
\item $M$ has a $\mathcal{U}$-(pre)envelope.
\item $\text{Hom}_\mathcal{D}(M,-[k])_{| \mathcal{T}}=0$ for $k>>0$, and $M$ has an $\text{add}(\text{add}(\mathcal{T})\star\text{add}(\mathcal{T})[1]\star \cdots\star\text{add}(\mathcal{T})[s])$-(pre)envelope, where $s=s(M,\mathcal{T})$.
\end{enumerate}
\end{lemma}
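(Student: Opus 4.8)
The plan is to work entirely with the explicit description $\mathcal{U}=\text{susp}_\mathcal{D}(\mathcal{T})=\bigcup_{r\geq 0}\text{add}(\text{add}(\mathcal{T})\star\text{add}(\mathcal{T})[1]\star\cdots\star\text{add}(\mathcal{T})[r])$ given by Lemma \ref{lem.thick-suspended}(2). Writing $\mathcal{U}_s:=\text{add}(\text{add}(\mathcal{T})\star\cdots\star\text{add}(\mathcal{T})[s])$, one has $\mathcal{U}_0\subseteq\mathcal{U}_1\subseteq\cdots$ and $\mathcal{U}=\bigcup_{s\geq 0}\mathcal{U}_s$, and what has to be proved is that $M$ has a $\mathcal{U}$-(pre)envelope if and only if $\text{Hom}_\mathcal{D}(M,-[k])_{|\mathcal{T}}=0$ for $k\gg 0$ and $M$ has a $\mathcal{U}_{s}$-(pre)envelope for $s=s(M,\mathcal{T})$ (with the convention $\mathcal{U}_{s}=\text{add}(0)$ when the relevant set of integers is empty). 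Before the two implications I would isolate two routine observations. First, since $\{X\in\mathcal{D}:\text{Hom}_\mathcal{D}(M,X)=0\}$ is closed under extensions and contains $\text{add}(\mathcal{T})[k]$ for all $k>s(M,\mathcal{T})$, the functor $\text{Hom}_\mathcal{D}(M,-)$ vanishes on every object of $\text{add}(\mathcal{T})[s+1]\star\cdots\star\text{add}(\mathcal{T})[r]$, where $s=s(M,\mathcal{T})$. Second, for $U\in\mathcal{U}_r$ with $r\geq s$, writing $U$ as a direct summand of some $U'\in\text{add}(\mathcal{T})\star\cdots\star\text{add}(\mathcal{T})[r]$ and using associativity of $\star$, there is a triangle $V_1\stackrel{p}{\longrightarrow}U'\longrightarrow W\stackrel{+}{\longrightarrow}$ with $V_1\in\text{add}(\mathcal{T})\star\cdots\star\text{add}(\mathcal{T})[s]\subseteq\mathcal{U}_s$ and $W\in\text{add}(\mathcal{T})[s+1]\star\cdots\star\text{add}(\mathcal{T})[r]$, so that $\text{Hom}_\mathcal{D}(M,W)=0$ by the first observation.

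For the implication $(1)\Rightarrow(2)$, I would take a $\mathcal{U}$-(pre)envelope $f:M\longrightarrow U$ with $U\in\mathcal{U}_r$. Since $T'[k]\in\mathcal{U}$ for all $T'\in\mathcal{T}$ and $k\geq 0$, the map $\text{Hom}_\mathcal{D}(f,T'[k])$ is surjective; because $\text{Hom}_\mathcal{D}(U,T'[k])=0$ for $k>r$ (non-positivity of $\mathcal{T}$ together with the shape of $U$), this forces $\text{Hom}_\mathcal{D}(M,-[k])_{|\mathcal{T}}=0$ for $k>r$, in particular $s:=s(M,\mathcal{T})<\infty$; after enlarging $r$ I may assume $r\geq s$. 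By the second observation the composite $M\stackrel{f}{\longrightarrow}U\hookrightarrow U'\longrightarrow W$ vanishes, hence $\iota f=pf_1$ for some $f_1:M\longrightarrow V_1\in\mathcal{U}_s$, where $\iota:U\hookrightarrow U'$ is the split inclusion. In the preenvelope case I would check directly that $f_1$ is a $\mathcal{U}_s$-preenvelope: if $g:M\longrightarrow W'$ with $W'\in\mathcal{U}_s\subseteq\mathcal{U}$ then $g=hf$, and setting $\tilde h:=h\pi$ for a retraction $\pi:U'\longrightarrow U$ of $\iota$ gives $g=hf=\tilde h\iota f=(\tilde hp)f_1$. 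In the envelope case one has $f_1=af$ for some $a:U\longrightarrow V_1$, whence $f=\pi\iota f=\pi pf_1=(\pi pa)f$; left minimality of $f$ makes $\pi pa$ an automorphism of $U$, so $\pi p:V_1\longrightarrow U$ is a split epimorphism and $U$ is a direct summand of $V_1\in\text{add}(\mathcal{T})\star\cdots\star\text{add}(\mathcal{T})[s]$, i.e. $U\in\mathcal{U}_s$, so that $f$ itself is already a $\mathcal{U}_s$-envelope.

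For $(2)\Rightarrow(1)$, let $s=s(M,\mathcal{T})<\infty$ and let $f:M\longrightarrow V$ be a $\mathcal{U}_s$-(pre)envelope, $V\in\mathcal{U}_s\subseteq\mathcal{U}$. Given $g:M\longrightarrow U$ with $U\in\mathcal{U}_r$, $r\geq s$, the second observation again gives that $M\stackrel{g}{\longrightarrow}U\hookrightarrow U'\longrightarrow W$ vanishes, so $\iota g=pg_1$ for some $g_1:M\longrightarrow V_1\in\mathcal{U}_s$; then $g_1=kf$ because $f$ is a $\mathcal{U}_s$-preenvelope, and composing with a retraction $\pi$ of $\iota$ yields $g=\pi\iota g=\pi pg_1=(\pi pk)f$. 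Hence $f$ is a $\mathcal{U}$-preenvelope, and if $f$ was moreover a $\mathcal{U}_s$-envelope then it is left minimal as a morphism, hence a $\mathcal{U}$-envelope. The degenerate case, in which $\text{Hom}_\mathcal{D}(M,-[k])_{|\mathcal{T}}=0$ for every $k\geq 0$ and $\mathcal{U}_s=\text{add}(0)$, is immediate: then $\text{Hom}_\mathcal{D}(M,-)$ vanishes on all of $\mathcal{U}$ by the extension-closure argument, so $M\longrightarrow 0$ is simultaneously a $\mathcal{U}$- and a $\mathcal{U}_s$-(pre)envelope.

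The main obstacle I expect is the bookkeeping with direct summands: objects of $\mathcal{U}$ are only summands of finitely $\star$-filtered objects and need not be filtered themselves, so every factorization must be carried out in the ambient $U'$ and then transported back to $U$ along the splitting $\iota,\pi$ — this is precisely where the replacement of $h$ by $\tilde h=h\pi$ and the minimality argument promoting $U$ into $\mathcal{U}_s$ come in. The remaining ingredients (the vanishing statements from non-positivity, closure of the relevant classes under extensions, associativity of $\star$, and Lemma \ref{lem.thick-suspended}(2)) are routine.
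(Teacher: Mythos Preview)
Your proof is correct and follows essentially the same approach as the paper's: both use the description of $\mathcal{U}$ from Lemma~\ref{lem.thick-suspended}(2), decompose an object of $\mathcal{U}_r$ via a triangle into a $\mathcal{U}_s$-piece and a piece built from $\text{add}(\mathcal{T})[s+1]\star\cdots\star\text{add}(\mathcal{T})[r]$, use vanishing of $\text{Hom}_\mathcal{D}(M,-)$ on the latter to factor through the former, and in the envelope case invoke left minimality to conclude that $U$ itself already lies in $\mathcal{U}_s$. Your treatment is somewhat more explicit about the splitting maps $\iota,\pi$ needed to handle direct summands, but the underlying argument is the same.
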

\begin{proof}
$1)\Longrightarrow 2)$ 
Let us check that $\text{Hom}_\mathcal{D}(M,-[k])_{| \mathcal{T}}=0$ for $k>>0$. Let
 $f:M\longrightarrow U$ be a $\mathcal{U}$-(pre)envelope. By Lemma \ref{lem.thick-suspended}  there exists an $r\in\mathbb{N}$ such that
$U\in\text{add}(\text{add}(\mathcal{T})\star\text{add}(\mathcal{T})[1]\star
\cdots\star\text{add}(\mathcal{T})[r])$. If  $k>r$ and
$g:M\longrightarrow T[k]$ is a morphism, with $T\in\mathcal{T}$,
then $g$  factors in the form
$g:M\stackrel{f}{\longrightarrow}U\stackrel{h}{\longrightarrow}T[k]$,
where the second arrow is zero since
$\text{Hom}_\mathcal{D}(-,T[k])$ vanishes on
$\text{add}(\mathcal{T})[j]$, for $j=0,1,\dots,r$.

There is a triangle
$U'\stackrel{\begin{pmatrix}v_1& v_2 \end{pmatrix}^t}{\longrightarrow}U\oplus Z \stackrel{\begin{pmatrix} p_1 & p_2\end{pmatrix}}{\longrightarrow}U''\stackrel{+}{\longrightarrow}$,
where
$U'\in\text{add}((\mathcal{T})\star\text{add}(\mathcal{T})[1]\star
\cdots\star\text{add}(\mathcal{T})[s]$ and
$U''\in\text{add}(\mathcal{T})[s+1]\star
\cdots\star\text{add}(\mathcal{T})[r]$. By definition of
$s=s(M,\mathcal{T})$, we have that
$\text{Hom}_\mathcal{C}(M,U'')=0$, and  so  $0=p_1\circ f=\begin{pmatrix}p_1 & p_2 \end{pmatrix}\circ\begin{pmatrix} f& 0\end{pmatrix}^t$. This
implies that $\begin{pmatrix}f& 0 \end{pmatrix}^t:M\longrightarrow U\oplus Z$ admits a factorization
$\begin{pmatrix} f& 0\end{pmatrix}^t:M\stackrel{f'}{\longrightarrow}U'\stackrel{\begin{pmatrix} v_1& v_2\end{pmatrix}^t}{\longrightarrow}U\oplus Z$, and so $f=v_1\circ f'$. Then
 $f'$ is clearly the desired preenvelope. If $f$ was an envelope, then $U$ is a summand of $U'$ and $f$ is the desired envelope.

$2)\Longrightarrow 1)$ 
Let $0\neq f:M\longrightarrow X$ be any morphism with
$X\in\mathcal{U}$, then
$X\in\text{add}(\text{add}(\mathcal{T})\star\text{add}(\mathcal{T})[1]\star
\cdots\star\text{add}(\mathcal{T})[r])$, for some $r\in\mathbb{N}$.
Without loss of generality, we assume that
$X\in\text{add}(\mathcal{T})\star\text{add}(\mathcal{T})[1]\star
\cdots\star\text{add}(\mathcal{T})[r]$. There is a triangle
$X'\stackrel{v}{\longrightarrow}X\stackrel{p}{\longrightarrow}X''\stackrel{+}{\longrightarrow}$,
where
$X'\in\text{add}((\mathcal{T})\star\text{add}(\mathcal{T})[1]\star
\cdots\star\text{add}(\mathcal{T})[s]$ and
$X''\in\text{add}(\mathcal{T})[s+1]\star
\cdots\star\text{add}(\mathcal{T})[r]$. As before,
$\text{Hom}_\mathcal{C}(M,X'')=0$, and  so  $p\circ f =0$. This
implies that $f$ admits a factorization
$f:M\stackrel{f'}{\longrightarrow}X'\stackrel{v}{\longrightarrow}X$. Thus any morphism from $M$ to $\mathcal{U}$ factors through $\text{add}(\text{add}(\mathcal{T})\star\text{add}(\mathcal{T})[1]\star \cdots\star\text{add}(\mathcal{T})[s])$ and we are done.
\end{proof}

\begin{proposition} \label{prop.U-envelope}
Let $\mathcal{D}$ be a triangulated category and $\mathcal{T}$ be a
non-positive set of objects in $\mathcal{D}$. Consider the following
assertions:
\begin{enumerate}
\item[1] $\text{Hom}_\mathcal{D}(M,-[k])_{| \mathcal{T}}=0$ for any $M\in \mathcal{D}$, $k>>0$ and $M$ has an $\text{add}(\mathcal{T})[s(M,\mathcal{T})]$-envelope.
\item[1'] $\text{Hom}_\mathcal{D}(M,-[k])_{| \mathcal{T}}=0$ for any $M\in \mathcal{D}$, $k>>0$ and $M$ has an $\text{add}(\mathcal{T})[s(M,\mathcal{T})]$-preenvelope.
\item[2] $\text{Hom}_\mathcal{D}(M,-[k])_{| \mathcal{T}}=0$ for any $M\in \mathcal{D}$, $k>>0$ and $M$ has an $add(\text{add}(\mathcal{T})\star\text{add}(\mathcal{T})[1]\star \cdots\star\text{add}(\mathcal{T})[s(M,\mathcal{T})])$-envelope.
\item[2'] $\text{Hom}_\mathcal{D}(M,-[k])_{| \mathcal{T}}=0$ for any $M\in \mathcal{D}$, $k>>0$ and $M$ has an $add(\text{add}(\mathcal{T})\star\text{add}(\mathcal{T})[1]\star \cdots\star\text{add}(\mathcal{T})[s(M,\mathcal{T})])$-preenvelope.
\item[3] $\text{susp}_\mathcal{D}(\mathcal{T})$ is an enveloping class in $\mathcal{D}$.
\item[3'] $\text{susp}_\mathcal{D}(\mathcal{T})$ is a preenveloping class in $\mathcal{D}$.
\item[4] $( {}^\perp\text{susp}_\mathcal{D}(\mathcal{T})[1],\text{susp}_\mathcal{D}(\mathcal{T}))$ is a co-t-structure in $\mathcal{D}$.
\end{enumerate}
Then implications \begin{center}$
\begin{xymatrix}@R=.3pc {&1')\ar@{<=>}[r]&2') \ar@{<=>}[dr]&&\\
1) \ar@{=>}[ur] \ar@{=>}[dr]&&& 3') \ar@{<=>}[r]& 4)\\
&2) \ar@{<=>}[r]&3) \ar@{=>}[ur]&&}
\end{xymatrix}
$\end{center} hold and if $\mathcal{D}$ is Krull-Schmidt, then all the assertions
are equivalent.
Moreover, when assertion 1 holds,  the envelope $M\longrightarrow U$
from assertion 2, which is also a
$\text{susp}_\mathcal{D}(\mathcal{T})$-envelope, can be constructed
inductively.
\end{proposition}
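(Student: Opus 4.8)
The plan is to first clear away the routine implications and then to concentrate all the real work in one explicit inductive construction. The equivalences $2)\Leftrightarrow 3)$ and $2')\Leftrightarrow 3')$ are nothing but the envelope and the preenvelope versions of Lemma~\ref{lem.description of U-envelope}. The implications $1)\Rightarrow 1')$ and $3)\Rightarrow 3')$ hold because an envelope is a preenvelope, and $4)\Rightarrow 3')$ holds because a co-t-structure $({}^\perp\text{susp}_\mathcal{D}(\mathcal{T})[1],\text{susp}_\mathcal{D}(\mathcal{T}))$ supplies, for every $M$, a truncation triangle whose morphism $M\to\text{susp}_\mathcal{D}(\mathcal{T})$ is a (special) preenvelope. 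What then remains is to prove $1')\Rightarrow 4)$, $2')\Rightarrow 1')$, $1)\Rightarrow 2)$, and, under the Krull--Schmidt assumption, $1')\Rightarrow 1)$; everything else follows by composition of arrows already obtained.

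For $1')\Rightarrow 4)$ — and, running the same argument with envelopes in place of preenvelopes throughout, for $1)\Rightarrow 2)$ together with the ``Moreover'' — I would argue by induction on $n:=s(M,\mathcal{T})$, which is a well-defined natural number since $\text{Hom}_\mathcal{D}(M,-[k])_{|\mathcal{T}}=0$ for $k\gg 0$ (when the defining set is empty, $M\in{}^\perp\text{susp}_\mathcal{D}(\mathcal{T})$ and $M\to 0$ is its envelope, so the statement is trivial). Given $M$, take the $\text{add}(\mathcal{T})[n]$-(pre)envelope $g_n\colon M\to V_n$ furnished by assertion $1)$ (resp.\ $1')$) and complete it to a triangle $N\to M\stackrel{g_n}{\longrightarrow}V_n\to N[1]$. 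Applying $\text{Hom}_\mathcal{D}(-,T[k])$ with $T\in\mathcal{T}$, one checks $s(N,\mathcal{T})\le n-1$: for $k>n$ both $\text{Hom}_\mathcal{D}(M,T[k])$ and $\text{Hom}_\mathcal{D}(V_n,T[k+1])$ vanish (the latter by non-positivity of $\mathcal{T}$), while for $k=n$ the preenvelope property makes $\text{Hom}_\mathcal{D}(V_n,T[n])\to\text{Hom}_\mathcal{D}(M,T[n])$ surjective and $\text{Hom}_\mathcal{D}(V_n,T[n+1])=0$ again by non-positivity. By the induction hypothesis applied to $N$ there is a triangle $X\to N\stackrel{g'}{\longrightarrow}V'\to X[1]$ with $X\in{}^\perp\text{susp}_\mathcal{D}(\mathcal{T})$ and $V'\in\text{add}(\mathcal{T})\star\cdots\star\text{add}(\mathcal{T})[n-1]$ ($g'$ being the corresponding (pre)envelope, which is automatically a (pre)envelope for the degree range $0,\dots,n-1$ because the missing higher degrees are $\text{Hom}_\mathcal{D}(N,-)$-acyclic). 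Forming the homotopy pushout of $M\leftarrow N\stackrel{g'}{\longrightarrow}V'$ yields $f\colon M\to X_M$ sitting in a triangle $V'\to X_M\to V_n\to V'[1]$, so $X_M\in\text{add}(\mathcal{T})\star\cdots\star\text{add}(\mathcal{T})[n]$ by associativity of $\star$; and, since a homotopy pushout square is also a homotopy pullback, $\text{cone}(f)\cong\text{cone}(g')=X[1]$, i.e.\ there is a triangle $X\to M\stackrel{f}{\longrightarrow}X_M\to X[1]$ with $X\in{}^\perp\text{susp}_\mathcal{D}(\mathcal{T})$. This triangle is the decomposition $\mathcal{D}={}^\perp\text{susp}_\mathcal{D}(\mathcal{T})\star\text{susp}_\mathcal{D}(\mathcal{T})$ needed for $4)$, the remaining co-t-structure axioms being automatic because $\text{susp}_\mathcal{D}(\mathcal{T})$ is strongly suspended. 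In the preenvelope case Lemma~\ref{lem.inductive step construction envelopes}(1) shows $f$ is an $\text{add}(\mathcal{T})\star\cdots\star\text{add}(\mathcal{T})[n]$-preenvelope; in the envelope case Lemma~\ref{lem.inductive step construction envelopes}(2)(b) applies with $\mathcal{E}=\text{add}(\mathcal{T})\star\cdots\star\text{add}(\mathcal{T})[n-1]$ and $\mathcal{F}=\text{add}(\mathcal{T})[n]$, which are closed under extensions (non-positivity of $\mathcal{T}$ forces the relevant connecting maps to vanish; cf.\ the proof of Corollary~\ref{cor.E-star-F enveloping} and \cite{NSZ}), satisfy $\text{Hom}_\mathcal{D}(\mathcal{E},\mathcal{F})=0$ and $\mathcal{F}\subseteq\mathcal{E}[1]$, so $f$ is an $\text{add}(\mathcal{T})\star\cdots\star\text{add}(\mathcal{T})[n]$-envelope. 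By Lemma~\ref{lem.description of U-envelope} this $f$ is simultaneously a $\text{susp}_\mathcal{D}(\mathcal{T})$-(pre)envelope, giving $2')$ (resp.\ $2)$ and the ``Moreover'').

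For $2')\Rightarrow 1')$ I would take the $\text{add}(\text{add}(\mathcal{T})\star\cdots\star\text{add}(\mathcal{T})[n])$-preenvelope $g\colon M\to U$, arrange (replacing $U$ by a suitable direct sum) that $U\in\text{add}(\mathcal{T})\star\cdots\star\text{add}(\mathcal{T})[n]$, fix a triangle $U'\to U\to U''\to U'[1]$ with $U'\in\text{add}(\mathcal{T})\star\cdots\star\text{add}(\mathcal{T})[n-1]$ and $U''\in\text{add}(\mathcal{T})[n]$, and compose: since $\text{Hom}_\mathcal{D}(U',\text{add}(\mathcal{T})[n])=0$ by non-positivity, any morphism $M\to V_n$ with $V_n\in\text{add}(\mathcal{T})[n]$ factors through $g$ and then through $U\to U''$, so $M\to U''$ is the required $\text{add}(\mathcal{T})[n]$-preenvelope. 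Thus $1')\Rightarrow 4)$, $4)\Rightarrow 3')$, $3')\Rightarrow 2')$ (Lemma~\ref{lem.description of U-envelope}) and $2')\Rightarrow 1')$ make $1'),2'),3'),4)$ equivalent; together with $1)\Rightarrow 1')$, $1)\Rightarrow 2)$, $2)\Leftrightarrow 3)$ and $3)\Rightarrow 3')$ this realises the whole diagram. Finally, when $\mathcal{D}$ is Krull--Schmidt so is $\text{add}(\mathcal{T})[n]$, hence Lemma~\ref{lem.preenvelopes in Krull-Schmidt} upgrades the $\text{add}(\mathcal{T})[n]$-preenvelope in $1')$ to an $\text{add}(\mathcal{T})[n]$-envelope, yielding $1')\Rightarrow 1)$ and the equivalence of all seven assertions.

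The step I expect to be the genuine obstacle is $1')\Rightarrow 4)$. Two points need care: first, the bookkeeping $s(N,\mathcal{T})\le n-1$ for the cocone $N$, where it is essential both that $g_n$ is a genuine \emph{preenvelope} (not merely a morphism) and that $\mathcal{T}$ is non-positive; second, ensuring that the fibre of the \emph{total} morphism $M\to X_M$ stays in ${}^\perp\text{susp}_\mathcal{D}(\mathcal{T})$, which is precisely why the gluing must be done through a homotopy pushout (equivalently, homotopy pullback) rather than by an arbitrary completion of a triangle. For the envelope refinement the additional difficulty is left minimality of $f$, handled by Lemma~\ref{lem.inductive step construction envelopes}(2)(b); invoking it requires knowing in advance that the subcategories $\text{add}(\mathcal{T})\star\cdots\star\text{add}(\mathcal{T})[k]$ are closed under extensions, a fact that again rests on the non-positivity of $\mathcal{T}$ (vanishing of connecting maps).
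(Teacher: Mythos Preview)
Your proposal is correct and follows essentially the same route as the paper: the routine implications are disposed of via Lemmas~\ref{lem.preenvelopes in Krull-Schmidt}, \ref{lem.envelope versus co-t-structures} and~\ref{lem.description of U-envelope}, and the substantive steps $1)\Rightarrow 2)$, $2')\Rightarrow 1')$, $1')\Rightarrow 4)$ are handled by the same inductive construction through Lemma~\ref{lem.inductive step construction envelopes}.

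The one genuine organizational difference is in $1')\Rightarrow 4)$. You run the full homotopy-pushout construction there as well, carrying through the induction the extra datum that the cocone of the constructed preenvelope lies in ${}^\perp\text{susp}_\mathcal{D}(\mathcal{T})$; this yields $4)$ and $2')$ simultaneously. The paper instead keeps $1')\Rightarrow 4)$ lightweight: from the triangle $X\to M\to T_s[s]\stackrel{+}{\to}$ (with $X$ the cocone of the top-degree preenvelope) it simply notes that $X\in{}^\perp\mathcal{U}\star\mathcal{U}$ by induction, $T_s[s]\in\mathcal{U}\subseteq{}^\perp\mathcal{U}\star\mathcal{U}$, and ${}^\perp\mathcal{U}\star\mathcal{U}$ is closed under extensions (\cite[Lemma~8]{NSZ}), whence $M\in{}^\perp\mathcal{U}\star\mathcal{U}$. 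Your version is more explicit (and directly exhibits the ``Moreover'' clause also in the preenvelope case), while the paper's is shorter; both are valid.
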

\begin{proof}
The implications $2)\Longleftrightarrow 3)\Longrightarrow
4)\Longrightarrow 3')$  follow from Lemma
\ref{lem.description of U-envelope} and Lemma \ref{lem.envelope versus
co-t-structures}. The equivalence $2)'\Longleftrightarrow 3')$ also
follows from Lemma \ref{lem.description of U-envelope}, and the
implications $1)\Longrightarrow 1')$ and  $3)\Longrightarrow 3')$
are clear. Apart from  the statement about inductive
construction, it is enough to prove implications $1)\Longrightarrow
2)$ and $2')\Longrightarrow 1')\Longrightarrow 4)$,  then the
equivalence of all assertions when $\mathcal{D}$ is Krull-Schmidt
will follow from Lemma \ref{lem.preenvelopes in Krull-Schmidt}.

$1)\Longrightarrow 2)$ Without loss of generality, we only consider
$M$ such that
$\text{Hom}_\mathcal{C}(M,-[k])_{| \mathcal{T}}\neq 0$, for some
$k\in\mathbb{N}$. Let us prove by induction on $r\geq 0$ that if $M$ is
an object such that  $0\leq s:=s(M,\mathcal{T})\leq r$, then  $M$ has
an $\text{add}(\mathcal{T})\star\text{add}(\mathcal{T})[1]\star
\cdots\star\text{add}(\mathcal{T})[r]$-envelope.  Note that if
$s:=s(M,\mathcal{T})<r$, then by the induction hypothesis,
there is an
$\text{add}(\mathcal{T})\star\text{add}(\mathcal{T})[1]\star
\cdots\star\text{add}(\mathcal{T})[s]$-envelope, which is easily seen
to be an
$\text{add}(\mathcal{T})\star\text{add}(\mathcal{T})[1]\star
\cdots\star\text{add}(\mathcal{T})[r]$-envelope. Assume
 $r=s$, and fix an $\text{add}(\mathcal{T})[s]$-envelope
$h:M\longrightarrow T_M[s]$, which we complete to a triangle
$C\stackrel{u}{\longrightarrow}M\stackrel{h}{\longrightarrow}T_M[s]\stackrel{+}{\longrightarrow}$
($*$). Since $\text{add}(\mathcal{T})[s]$ is closed
under extensions, by Lemma \ref{lem.envelope versus co-t-structures},
 $\text{Hom}_\mathcal{C}(C,-[s])_{| \mathcal{T}}=0$.
Applying $\text{Hom}_\mathcal{C}(-,T[k])$ to the triangle ($*$), we
 see that $\text{Hom}_\mathcal{C}(C,-[k])_{| \mathcal{T}}=0$,
for $k\geq s$. Then  $s(C,\mathcal{T})<s$. By the induction
hypothesis there is an
$\text{add}(\mathcal{T})\star\text{add}(\mathcal{T})[1]\star
\cdots\star\text{add}(\mathcal{T})[r-1]$-envelope $g:C\longrightarrow
E$. Then, for
$\mathcal{E}=\text{add}(\mathcal{T})\star\text{add}(\mathcal{T})[1]\star
\cdots\star\text{add}(\mathcal{T})[r-1]$,
$\mathcal{F}=\text{add}(\mathcal{T})[r]$ and $F=T_M[r]$,  Lemma
\ref{lem.inductive step construction envelopes} implies that $M$
has an 
$\mathcal{E}\star\mathcal{F}=\text{add}(\mathcal{T})\star\text{add}(\mathcal{T})[1]\star
\cdots\star\text{add}(\mathcal{T})[r]$-envelope.

$2')\Longrightarrow 1')$ Let $f:M\longrightarrow U$ be an
$\text{add}(\mathcal{T})\star\text{add}(\mathcal{T})[1]\star
\cdots\star\text{add}(\mathcal{T})[s]$-preenvelope, where
$s=s(M,\mathcal{T})\geq 0$.  There is a
triangle $X'\longrightarrow
U\stackrel{g}{\longrightarrow}T[s]\stackrel{+}{\longrightarrow}$,
where
$X'\in\text{add}(\mathcal{T})\star\text{add}(\mathcal{T})[1]\star
\cdots\star\text{add}(\mathcal{T})[s-1]$ and
$T\in\text{add}(\mathcal{T})$. Let $h:M\longrightarrow T'[s]$ be
any morphism, where $T'\in\mathcal{T}$. Then  there is a morphism $\eta :U\longrightarrow T'[s]$ such
that $\eta\circ f=h$. Since $\text{Hom}_\mathcal{C}(X',T'[s])=0$, there is a
morphism $\mu :T[s]\longrightarrow T'[s]$ such that $\mu\circ
g=\eta$. Thus, $h=\eta\circ f=\mu\circ g\circ f$ and $g\circ f$ is an
$\text{add}(\mathcal{T})[s]$-preenvelope of $M$.

$1')\Longrightarrow 4)$ Put
$\mathcal{U}:=\text{susp}_\mathcal{D}(\mathcal{T})$. Let us prove that any object $M$ fits into a triangle
$V_M\longrightarrow M\longrightarrow
U_M\stackrel{+}{\longrightarrow}$, where $U_M\in\mathcal{U}$ and
$V_M\in {}^\perp\mathcal{U}$. If $M\in {}^\perp\mathcal{U}$ there
is nothing to prove. We then assume that $M\not\in
{}^\perp\mathcal{U}$, so that $s(M,\mathcal{T})\geq 0$. Let us prove the statement by
induction on $s(M,\mathcal{T})$. Assume $s(M,\mathcal{T})=0$ and consider  the triangle $V_M\longrightarrow
M\stackrel{f}{\longrightarrow}T_0\stackrel{+}{\longrightarrow}$,
where $f$ is an $\text{add}(\mathcal{T})$-preenvelope, which exists
by the hypothesis. It follows that the map
$f^*:\text{Hom}_\mathcal{D}(T_0,T)\longrightarrow\text{Hom}_\mathcal{D}(M,T)$
is an epimorphism and
$\text{Hom}_\mathcal{D}(V_M,T[k])=0$,   for all $T\in\mathcal{T}$ and
all integers $k\geq 0$.  Given the description of
$\mathcal{U}$ from Lemma \ref{lem.thick-suspended}, we conclude
that $V_M\in {}^\perp\mathcal{U}$.

Suppose  $s:=s(M,\mathcal{T})>0$ and that all
$N\in\mathcal{D}$ such that $s(N,\mathcal{T})<s$ admit the desired
triangle. Consider a triangle $X\longrightarrow
M\stackrel{g}{\longrightarrow}T_s[s]\stackrel{+}{\longrightarrow}$,
where $g$ is an $\text{add}(T)[s]$-preenvelope. Applying the functor $\text{Hom}_\mathcal{D}(-,T[k])$, for
$T\in\mathcal{T}$, to this
triangle we  see that
$\text{Hom}_\mathcal{D}(X,T[k])=0$, for all $T\in\mathcal{T}$ and
 $k\geq s$. It follows that  $s(X,\mathcal{T})<s$. By the induction
hypothesis  $X\in {}^\perp\mathcal{U}\star\mathcal{U}$ and  $T_s[s]\in
 {}^\perp\mathcal{U}\star\mathcal{U}$. It follows that $M\in
{}^\perp\mathcal{U}\star\mathcal{U}$ since
${}^\perp\mathcal{U}\star\mathcal{U}$ is closed under extensions
(see \cite[Lemma 8]{NSZ}).

Finally, the proof of implication $1)\Longrightarrow 2)$ shows how
to construct 
$\text{susp}_\mathcal{D}(\mathcal{T})$-envelopes inductively.
\end{proof}

\begin{definition} \label{def.weakly preenveloping}
We shall say that a non-positive set $\mathcal{T}$ in $\mathcal{D}$
is \emph{weakly preenveloping} when it satisfies condition (1') of
Proposition \ref{prop.U-envelope}. The notion  of a \emph{weakly precovering} nonpositive set of objects is defined dually. 
\end{definition}

Recall that an object $G$ of a triangulated category $\mathcal{D}$ is called a \emph{classical generator} when $\text{thick}_\mathcal{D}(G)=\mathcal{D}$. Recall also that if a pair $(\mathcal{X},\mathcal{Y})$ is a t-structure or a co-t-structure in $\mathcal{D}$, it is called \emph{left (resp. right) bounded} when $\mathcal{D}=\bigcup_{k\in\mathbb{Z}}\mathcal{X}[k]$ (resp.  $\mathcal{D}=\bigcup_{k\in\mathbb{Z}}\mathcal{Y}[k]$). The pair is called \emph{bounded} when it is left and right bounded.

The following proposition together with its dual generalizes \cite[Proposition 3.2]{IY}.

\begin{proposition} \label{prop.bijection psilting-cotstructures}
Let $\mathcal{D}$ be a skeletally small triangulated category
with split idempotents. The assignment
$\mathcal{T}\rightsquigarrow(
{}^\perp\text{susp}_\mathcal{D}(\mathcal{T})[1],\text{susp}_\mathcal{D}(\mathcal{T}))$
gives a one-to-one correspondence between (add-)equivalence classes of
weakly preenveloping non-positive sets and left bounded
co-t-structures in $\mathcal{D}$. Its inverse associates to 
such a co-t-structure a set of representatives of the isomorphism classes of
the objects of its co-heart.

This correspondence restricts to a bijection between equivalence
classes of classical silting sets and bounded co-t-structures in
$\mathcal{D}$. When $\mathcal{D}$ has a classical generator, this
 induces a bijection between equivalence classes of silting
objects and bounded co-t-structures in $\mathcal{D}$.
\end{proposition}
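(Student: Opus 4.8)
The plan is to establish the bijection by checking that the two assignments $\mathcal{T}\rightsquigarrow({}^\perp\mathrm{susp}_\mathcal{D}(\mathcal{T})[1],\mathrm{susp}_\mathcal{D}(\mathcal{T}))$ and ``co-t-structure $\rightsquigarrow$ co-heart'' are mutually inverse, and then to verify that they behave well under boundedness and classical generation. First I would observe that since $\mathcal{D}$ is skeletally small with split idempotents, it is Krull--Schmidt in the sense needed for Proposition \ref{prop.U-envelope}; in particular every weakly preenveloping non-positive set $\mathcal{T}$ has $\mathrm{susp}_\mathcal{D}(\mathcal{T})$ as an enveloping (hence preenveloping) class, so assertion 4 of that proposition applies and $({}^\perp\mathrm{susp}_\mathcal{D}(\mathcal{T})[1],\mathrm{susp}_\mathcal{D}(\mathcal{T}))$ genuinely is a co-t-structure. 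Its co-heart is $\mathcal{C}={}^\perp\mathrm{susp}_\mathcal{D}(\mathcal{T})[1]\cap\mathrm{susp}_\mathcal{D}(\mathcal{T})$, and using Lemma \ref{lem.thick-suspended}(2) together with the enveloping property I would show that every object of $\mathcal{C}$ is a direct summand of a finite coproduct of copies of objects of $\mathcal{T}$ and conversely that $\mathrm{add}(\mathcal{T})\subseteq\mathcal{C}$ (the latter because $\mathcal{T}$ is non-positive, so $\mathrm{Hom}_\mathcal{D}(\mathcal{T},-)$ kills $\mathrm{susp}_\mathcal{D}(\mathcal{T})[1]$). Hence $\mathcal{C}=\mathrm{add}(\mathcal{T})$, which gives that passing to the co-heart and back recovers $\mathcal{T}$ up to $\mathrm{add}$-equivalence, and that the co-t-structure is left bounded precisely because $\mathrm{susp}_\mathcal{D}(\mathcal{T})$ exhausts $\mathcal{D}$ under shifts (this uses $\mathcal{D}=\mathrm{thick}_\mathcal{D}(\mathrm{susp}_\mathcal{D}(\mathcal{T}))$ only when we additionally assume $\mathcal{T}$ silting; for the plain correspondence, left boundedness of the co-t-structure is equivalent to $\bigcup_k\mathrm{susp}_\mathcal{D}(\mathcal{T})[k]=\mathcal{D}$, which is built into the co-t-structure axioms via the truncation triangles).

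For the reverse direction, starting from a left bounded co-t-structure $(\mathcal{X},\mathcal{Y})$ with co-heart $\mathcal{C}$, I would take $\mathcal{T}$ to be a set of representatives of the isomorphism classes of objects of $\mathcal{C}$. Non-positivity of $\mathcal{T}$ is immediate: $\mathcal{C}\subseteq\mathcal{Y}$ and $\mathcal{C}\subseteq\mathcal{X}={}^\perp\mathcal{Y}[1]$ force $\mathrm{Hom}_\mathcal{D}(C,C'[k])=0$ for $k>0$. That $\mathcal{T}$ is weakly preenveloping and that $\mathrm{susp}_\mathcal{D}(\mathcal{T})=\mathcal{Y}$ is where I expect the main work: one must show $\mathcal{Y}$ is generated under extensions, shifts by $[k]\ge 0$, and summands by its co-heart. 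This is the ``d\'evissage'' step for (left bounded) co-t-structures --- iterate the truncation triangles $\tau_{\le -1}M\to M\to\tau_{\ge 0}M\to$ and use left boundedness to terminate the induction, exactly as in the proof of Lemma \ref{lem.thick-suspended} but in the co-t-structure setting; the split-idempotents hypothesis is what lets the co-heart pieces be taken in $\mathrm{add}(\mathcal{T})$ rather than a larger idempotent completion. Once $\mathrm{susp}_\mathcal{D}(\mathcal{T})=\mathcal{Y}$ is known, $({}^\perp\mathcal{Y}[1],\mathcal{Y})=(\mathcal{X},\mathcal{Y})$, so the two assignments are inverse to each other, and the correspondence is clearly compatible with $\mathrm{add}$-equivalence on one side and equality of co-t-structures on the other.

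It remains to specialize. A non-positive weakly preenveloping set $\mathcal{T}$ is a classical silting set exactly when $\mathrm{thick}_\mathcal{D}(\mathcal{T})=\mathcal{D}$; by Lemma \ref{lem.thick-suspended}(1) (using that $\mathrm{add}(\mathcal{T})$ is enveloping) this says $\mathcal{D}=\bigcup_{r\le s}\mathrm{add}(\mathcal{T})[r]\star\cdots\star\mathrm{add}(\mathcal{T})[s]$, and unwinding the definitions this is equivalent to $\bigcup_k\mathrm{susp}_\mathcal{D}(\mathcal{T})[k]=\mathcal{D}$ \emph{and} $\bigcup_k{}^\perp\mathrm{susp}_\mathcal{D}(\mathcal{T})[1][k]=\mathcal{D}$, i.e. to the associated co-t-structure being bounded (it is automatically left bounded, so only right boundedness is the extra content, and that corresponds to the ``$r\le s$'' lower end). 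Thus the bijection restricts as claimed. Finally, if $\mathcal{D}$ has a classical generator $G$, any classical silting set $\mathcal{T}$ satisfies $G\in\mathrm{thick}_\mathcal{D}(\mathcal{T})$ and conversely, so $G$ lies in $\mathrm{add}(\mathcal{T})[r]\star\cdots\star\mathrm{add}(\mathcal{T})[s]$ for some finite range, which forces $\mathcal{T}$ to be $\mathrm{add}$-equivalent to a finite set; replacing it by the (finite) coproduct of its members yields a silting \emph{object}, and $\mathrm{add}$-equivalence classes of classical silting sets then correspond bijectively to equivalence classes of silting objects. Composing with the previous bijection gives the last sentence. The one genuinely delicate point I anticipate is the co-t-structure d\'evissage identifying $\mathrm{susp}_\mathcal{D}(\mathcal{T})$ with $\mathcal{Y}$: the termination of the induction relies essentially on left boundedness, and the closure under summands relies on split idempotents, so both hypotheses must be used carefully there.
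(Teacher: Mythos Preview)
Your proposal contains two genuine errors.

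First, the claim that ``skeletally small with split idempotents'' implies Krull--Schmidt is false (take $\mathcal{K}^b(\text{proj-}\mathbb{Z})$: it is skeletally small with split idempotents, but $\text{End}(\mathbb{Z})=\mathbb{Z}$ is not local). You invoke Krull--Schmidt to upgrade ``weakly preenveloping'' to ``$\text{susp}_\mathcal{D}(\mathcal{T})$ is enveloping'' and then lean on enveloping in several places (Lemma~\ref{lem.thick-suspended}, the co-heart computation). None of this is needed: Proposition~\ref{prop.U-envelope} gives $1')\Leftrightarrow 4)$ unconditionally, so weakly preenveloping already yields the co-t-structure. For $\mathcal{C}=\text{add}(\mathcal{T})$ the paper argues with a mere \emph{pre}envelope: for $0\neq C\in\mathcal{C}$ one has $s(C,\mathcal{T})=0$, so an $\text{add}(\mathcal{T})$-preenvelope $f:C\to T_C$ exists; completing to a triangle $V_C\to C\to T_C$ gives $V_C\in{}^\perp\text{susp}(\mathcal{T})$, so $f$ is split mono and $C\in\text{add}(\mathcal{T})$.

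Second, your handling of left boundedness is wrong. In the paper's convention, left boundedness of $({}^\perp\text{susp}(\mathcal{T})[1],\text{susp}(\mathcal{T}))$ means $\mathcal{D}=\bigcup_k{}^\perp\text{susp}(\mathcal{T})[k]$, \emph{not} $\bigcup_k\text{susp}(\mathcal{T})[k]=\mathcal{D}$; the latter is right boundedness and is essentially $\text{thick}(\mathcal{T})=\mathcal{D}$. Neither condition is ``built into the co-t-structure axioms via the truncation triangles''. Left boundedness follows directly from the first clause of weakly preenveloping: for each $M$ there is $r$ with $\text{Hom}(M,\mathcal{T}[k])=0$ for $k\geq r$, hence $M\in{}^\perp\text{susp}(\mathcal{T})[r]$. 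This also shows your later sentence ``it is automatically left bounded, so only right boundedness is the extra content'' is correct in conclusion but not for the reason you give.

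Your reverse direction via d\'evissage (proving $\mathcal{Y}=\text{susp}(\mathcal{C})$ by iterated co-t-structure truncation, using left boundedness to terminate) is a valid alternative to the paper's route. The paper instead shows the co-heart is weakly preenveloping directly---the co-t-structure truncation of $M[-s]$, where $s=s(M,\mathcal{U}_\tau)$, lands in the co-heart and furnishes the required $\text{add}(\mathcal{T})[s]$-preenvelope---and then proves ${}^\perp\mathcal{U}_\tau={}^\perp\text{susp}(\mathcal{T})$, so the two co-t-structures share their aisle and coincide. Both approaches work; yours is perhaps more conceptual, the paper's avoids re-proving a Bondarko-type d\'evissage.
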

\begin{proof}
By Proposition \ref{prop.U-envelope},  $ \tau
(\mathcal{T}):=(
{}^\perp\mathcal{U}[1],\mathcal{U}):= (
{}^\perp\text{susp}_\mathcal{D}(T)[1],\text{susp}_\mathcal{D}(\mathcal{T}))$
is a co-t-structure in $\mathcal{D}$. Moreover, for each $M$ in
$\mathcal{D}$, there exists  $r\in\mathbb{N}$ such that
$\text{Hom}_{\mathcal{D}}(M,-[k])_{| \mathcal{T}}=0$, for  $k\geq
r$. It follows that $M\in  {}^\perp\mathcal{U}[r]$ and $\tau (\mathcal{T})$ is left bounded.

By \cite{Bo},
the co-heart of any co-t-structure $\tau$ is a non-positive class of
objects. In our case it is skeletally small, so  we can chose a
set $\mathcal{T}(\tau)$ of representatives of isomorphism classes of its
objects. We claim that $\mathcal{T}$ and
$\mathcal{T}(\tau (\mathcal{T}))$ are equivalent non-positive sets.
The inclusion $\mathcal{T}\subset\mathcal{C}$,
where $\mathcal{C}$ is the co-heart of $\tau (\mathcal{T})$, clearly holds, so we
need to prove that $\mathcal{C}\subseteq\text{add}(\mathcal{T})$.
For $0\neq C\in\mathcal{C}$ we get $s(C,\mathcal{T})=0$, since 
$\text{Hom}_\mathcal{D}(C,-)$ vanishes on $\mathcal{U}[1]$. Since $\mathcal{T}$ is weakly preenveloping, there is an
$\text{add}(\mathcal{T})$-preenvelope  $f:C\longrightarrow T_C$,
let us consider a triangle
$V_C\stackrel{g}{\longrightarrow}C\stackrel{f}{\longrightarrow}T_C\stackrel{+}{\longrightarrow}$.
As before (see the proof of implication $1')\Longrightarrow 4)$ in Proposition
\ref{prop.U-envelope}), $V_C \in {}^\perp\mathcal{U}$ and, hence, $g=0$. It
follows that $f$ is a section and
$C\in\text{add}(\mathcal{T})$.

Let $\tau=( {}^\perp\mathcal{U}_\tau [1] ,\mathcal{U}_\tau)$ be
any left bounded co-t-structure in $\mathcal{D}$, let $\mathcal{C}:=
{}^\perp\mathcal{U}_\tau [1]\cap\mathcal{U}_\tau$ be its co-heart and
let $\mathcal{T}$ be a set of representatives of its isomorphism classes. The left boundedness of $\tau$ implies that $\text{Hom}_\mathcal{D}(M,-[k])_{| \mathcal{T}}=0$ for any $M\in\mathcal{D}$ for
$k>>0$.  Clearly $s:=s(M,\mathcal{U})\geq s(M,\mathcal{T})$.
We claim that the inverse inequality also holds, provided
$s(M,\mathcal{U})\geq 0$. Let us consider the triangle coming from the co-t-structure $\tau$:
$V\longrightarrow M[-s]\stackrel{f}{\longrightarrow}
U\stackrel{+}{\longrightarrow}$. For an arbitrary
$U'\in\mathcal{U}_\tau$  applying $\text{Hom}_\mathcal{D}(-,U')$ to this triangle gives $\text{Hom}_\mathcal{D}(U,U'[k])=0$, for all $k>0$.
Thus, $U\in {}^\perp\mathcal{U}_\tau
[1]\cap\mathcal{U}_\tau =\mathcal{C}$ and $U\in\text{add}(\mathcal{T})$. Clearly, the map
$f:M[-s]\longrightarrow\mathcal{U}$ is an
$\text{add}(\mathcal{T})$-preenvelope. This in turn implies that
$f[s]:M\longrightarrow U[s]$ is an
$\text{add}(\mathcal{T})[s]$-preenvelope. Note that $f$ is a
nonzero map, since, otherwise  $M\in
{}^\perp\mathcal{U}_\tau$, contradicting the hypothesis. This implies that
$s(M,\mathcal{T})=s$ and that $M$ has an
$\text{add}(\mathcal{T})[s(M,\mathcal{T})]$-preenvelope. Hence,
$\mathcal{T}$ is  weakly preenveloping and the map from the set of left bounded
co-t-structures to weakly preenveloping non-positive sets is
well-defined.

The last paragraph shows that if $M\not\in
{}^\perp\mathcal{U}_\tau$, then there exists a nonzero morphism
$f:M\longrightarrow T[s]$, for some $T\in\mathcal{T}$, where
$s=s(M,\mathcal{U})=s(M,\mathcal{T})$. It follows that $
{}^\perp\mathcal{U}_\tau = {}^\perp (\bigcup_{k\geq
0}\mathcal{T}[k])$ and, hence, that $ {}^\perp\mathcal{U}_\tau=
{}^\perp\text{susp}_\mathcal{D}(\mathcal{T})$. Due to the weak
preenveloping condition on $\mathcal{T}$,  Proposition
\ref{prop.U-envelope} provides a co-t-structure $\tau':=(
{}^\perp\text{susp}_\mathcal{D}(\mathcal{T})[1],\text{susp}_\mathcal{D}(\mathcal{T}))$
 in $\mathcal{D}$. Clearly, $\tau '=\tau$. Since $\tau=\tau
(\mathcal{T}(\tau))$, the assignments
$\mathcal{T}\rightsquigarrow\tau (\mathcal{T})$ and
$\tau\rightsquigarrow\mathcal{T}(\tau )$ define mutually inverse maps.

As for the last statement, note that the dual version of the result above gives the
bijection $\mathcal{T}\rightsquigarrow
(\text{cosusp}_\mathcal{D}(\mathcal{T}),\text{cosusp}_\mathcal{D}(\mathcal{T})^\perp[-1])$
 between the equivalence classes of weakly
precovering non-positive sets and right bounded co-t-structures
in $\mathcal{D}$, the inverse of this map takes any such
co-t-structure $\tau$ to a set of representatives of
isomorphism classes  of objects of the co-heart of $\tau$. If  $\tau$
is a bounded co-t-structure in $\mathcal{D}$ and $\mathcal{T}_\tau$
is a set of representatives of
isomorphism classes  of  objects of its co-heart, then
we deduce from the bijections and from the construction of the triangle with respect to $\tau$ that $\tau
=(\text{cosusp}_\mathcal{D}(\mathcal{T}),\text{susp}_\mathcal{D}(\mathcal{T}))$.
In particular, any object $M\in\mathcal{D}$ fits into a triangle
$V\longrightarrow M\longrightarrow U\stackrel{+}{\longrightarrow}$,
where
$V\in\text{cosusp}_\mathcal{D}(\mathcal{T})[-1]\subset\text{thick}_\mathcal{D}(\mathcal{T})$
and
$U\in\text{susp}_\mathcal{D}(\mathcal{T})\subset\text{thick}_\mathcal{D}(\mathcal{T})$.
It follows that $\mathcal{D}=\text{thick}_\mathcal{D}(\mathcal{T})$,
so that $\mathcal{T}$ is a classical silting set. The fact that if
$\mathcal{T}$ is a classical silting set in $\mathcal{D}$, then
$(\text{cosusp}_\mathcal{D}(\mathcal{T}),\text{susp}_\mathcal{D}(\mathcal{T}))$
is a bounded co-t-structure is well-known (see \cite[Theorem 4.3.2
(II.1)]{Bo}).

Finally, if $\mathcal{D}$ has a classical generator $G$ and
$\mathcal{T}$ is a silting set in $\mathcal{D}$, then 
$G\in\text{thick}_\mathcal{D}(\mathcal{T})$, which implies the
existence of a finite subset $\mathcal{T}_0\subseteq\mathcal{T}$
such that $G\in\text{thick}_\mathcal{D}(\mathcal{T}_0)$, so that  $\mathcal{D}=\text{thick}_\mathcal{D}(\mathcal{T}_0)$, and
hence $\mathcal{T}_0$ is a classical silting set. By \cite[Theorem 2.18]{AI},
we conclude that $\mathcal{T}_0=\mathcal{T}$ and 
$\hat{T}:=\coprod_{T\in\mathcal{T}}T$ is a classical silting object.
\end{proof}

 We point out the following consequence of the proof of last Proposition.

\begin{corollary} \label{cor.silting=weaklyprecovpreenv}
Let $\mathcal{D}$ be a skeletally small triangulated category with split idempotents and let $\mathcal{T}$ be  a classical silting set.  Then it  is weakly precovering and weakly preenveloping in $\mathcal{D}$. 
\end{corollary}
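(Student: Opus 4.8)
The statement to prove is Corollary \ref{cor.silting=weaklyprecovpreenv}: a classical silting set $\mathcal{T}$ in a skeletally small triangulated category $\mathcal{D}$ with split idempotents is both weakly precovering and weakly preenveloping.

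\medskip

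The plan is to extract this directly from the machinery already assembled in Proposition \ref{prop.bijection psilting-cotstructures}. Recall that a classical silting set is by definition non-positive with $\text{thick}_\mathcal{D}(\mathcal{T})=\mathcal{D}$, and that being \emph{weakly preenveloping} means satisfying condition $(1')$ of Proposition \ref{prop.U-envelope}, namely that $\text{Hom}_\mathcal{D}(M,-[k])_{|\mathcal{T}}=0$ for $k\gg 0$ and that $M$ admits an $\text{add}(\mathcal{T})[s(M,\mathcal{T})]$-preenvelope, for every $M\in\mathcal{D}$. The key observation is the well-known fact, cited in the last paragraph of the proof of Proposition \ref{prop.bijection psilting-cotstructures} (via \cite[Theorem 4.3.2 (II.1)]{Bo}), that a classical silting set $\mathcal{T}$ generates a bounded co-t-structure, namely $\tau=(\text{cosusp}_\mathcal{D}(\mathcal{T}),\text{susp}_\mathcal{D}(\mathcal{T}))$, whose co-heart (in the dual sense, $\text{cosusp}_\mathcal{D}(\mathcal{T})\cap {}^\perp\text{susp}_\mathcal{D}(\mathcal{T})[1]$ intersected appropriately) is $\text{add}(\mathcal{T})$.

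\medskip

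First I would invoke the construction in the proof of Proposition \ref{prop.bijection psilting-cotstructures}: starting from the bounded co-t-structure $\tau$ associated to the classical silting set $\mathcal{T}$, the argument there shows that for every $M$ one has $s(M,\text{susp}_\mathcal{D}(\mathcal{T}))=s(M,\mathcal{T})$ whenever this is nonnegative, that $\text{Hom}_\mathcal{D}(M,-[k])_{|\mathcal{T}}$ vanishes for $k\gg 0$ by left boundedness, and that $M$ admits an $\text{add}(\mathcal{T})[s(M,\mathcal{T})]$-preenvelope obtained from the truncation triangle $V\longrightarrow M[-s]\stackrel{f}{\longrightarrow}U\stackrel{+}{\longrightarrow}$ with respect to $\tau$ (shifting back by $[s]$). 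This is exactly condition $(1')$ of Proposition \ref{prop.U-envelope}, so $\mathcal{T}$ is weakly preenveloping. Then I would apply the dual version of the same argument — using right boundedness of $\tau$ and the triangle $V\longrightarrow M\longrightarrow U\stackrel{+}{\longrightarrow}$ read from the other side — to conclude that $\mathcal{T}$ is weakly precovering in $\mathcal{D}$.

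\medskip

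I expect no genuine obstacle here: the entire content has already been proved inside Proposition \ref{prop.bijection psilting-cotstructures}, and the corollary merely records the two halves of that argument in isolation. The only mild care needed is bookkeeping — making sure that the bounded co-t-structure in question is indeed $\tau=(\text{cosusp}_\mathcal{D}(\mathcal{T}),\text{susp}_\mathcal{D}(\mathcal{T}))$ (which holds precisely because $\mathcal{T}$ is classical silting, hence both $\text{cosusp}_\mathcal{D}(\mathcal{T})$ and $\text{susp}_\mathcal{D}(\mathcal{T})$ live inside $\text{thick}_\mathcal{D}(\mathcal{T})=\mathcal{D}$ and the two classes form adjacent torsion pairs), and that $\mathcal{T}$ is, up to $\text{add}$-equivalence, a set of representatives of the isomorphism classes of the co-heart of $\tau$ — and then quoting Proposition \ref{prop.U-envelope} and its dual to phrase the conclusion in the language of weak (pre)enveloping. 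If one prefers a self-contained write-up rather than a pointer, the proof is two symmetric paragraphs, each reproducing the truncation-triangle computation for $\tau$ and its opposite.
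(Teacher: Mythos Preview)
Your proposal is correct and matches the paper's own approach: the paper simply declares the corollary a ``consequence of the proof of the last Proposition,'' and you have accurately identified which parts of that proof do the work --- namely, that the bounded co-t-structure $(\text{cosusp}_\mathcal{D}(\mathcal{T}),\text{susp}_\mathcal{D}(\mathcal{T}))$ associated to a classical silting set (via \cite[Theorem 4.3.2(II.1)]{Bo}) feeds into the left-bounded and right-bounded halves of Proposition~\ref{prop.bijection psilting-cotstructures} to yield weak preenveloping and weak precovering respectively. Your remark that the only care needed is checking that $\mathcal{T}$ is (up to $\text{add}$-equivalence) the co-heart of this co-t-structure is exactly right, and this is also implicit in the proof of the proposition.
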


\begin{proposition} \label{prop.envelope in the aisle}
Let $\mathcal{D}$ be a  triangulated category with coproducts, let
$\mathcal{T}$ be a nonpositive set of compact objects and let
$(\mathcal{U}_\mathcal{T},\mathcal{U}_\mathcal{T}^\perp [1])=(
{}^\perp (\mathcal{T}^{\perp_{\leq 0}}),\mathcal{T}^{\perp_{<0}})$ be
the associated t-structure in $\mathcal{D}$. The following
assertions are equivalent:

\begin{enumerate}
\item  $\mathcal{T}$ is a weakly preenveloping set in $\mathcal{D}^c$
\item $({}^\perp\text{susp}_\mathcal{D}(\mathcal{T})[-1]\cap\mathcal{D}^c,\text{susp}_\mathcal{D}(\mathcal{T}))$ is a co-t-structure in $\mathcal{D}^c$.
\item For each $M\in\mathcal{D}^c$,  there is a triangle $V_M\longrightarrow M\longrightarrow U_M\stackrel{+}{\longrightarrow}$, where $U_M\in\mathcal{U}_\mathcal{T}$ and $V_M\in {}^\perp\mathcal{U}_\mathcal{T}$.
 \end{enumerate}
\end{proposition}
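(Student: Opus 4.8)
The plan is to exploit the fact that, since $\mathcal{T}$ consists of compact objects, the pair $(\mathcal{U}_\mathcal{T},\mathcal{U}_\mathcal{T}^\perp[1]) = ({}^\perp(\mathcal{T}^{\perp_{\leq 0}}),\mathcal{T}^{\perp_{<0}})$ is a genuine t-structure in $\mathcal{D}$ with aisle $\mathcal{U}_\mathcal{T} = \mathrm{Susp}_\mathcal{D}(\mathcal{T})$, and that $\mathrm{susp}_\mathcal{D}(\mathcal{T}) \subseteq \mathcal{U}_\mathcal{T} \cap \mathcal{D}^c$. The key observation for the whole argument is that for $M \in \mathcal{D}^c$ and $T \in \mathcal{T}$ one has $\mathrm{Hom}_\mathcal{D}(M,T[k]) = \mathrm{Hom}_{\mathcal{D}^c}(M,T[k])$ (both computed in $\mathcal{D}$, since $\mathcal{D}^c$ is a full subcategory), so the quantity $s(M,\mathcal{T})$ does not depend on whether we think of $M$ as an object of $\mathcal{D}$ or of $\mathcal{D}^c$. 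The strategy is to run Proposition \ref{prop.U-envelope} inside $\mathcal{D}^c$ (which is skeletally small, triangulated, with split idempotents) to get the equivalences among the ``internal'' conditions, and then to bridge to condition (3), which is phrased in terms of the ambient t-structure of $\mathcal{D}$.

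First I would establish $(1) \Leftrightarrow (2)$. By hypothesis $\mathcal{T}$ is a non-positive set in the skeletally small triangulated category $\mathcal{D}^c$ (with split idempotents), so Proposition \ref{prop.U-envelope} applies verbatim with $\mathcal{D}$ replaced by $\mathcal{D}^c$: assertion (1') of that proposition — which is exactly the definition of ``weakly preenveloping in $\mathcal{D}^c$'', i.e. our condition (1) — is equivalent to assertion (3') of that proposition, namely that $\mathrm{susp}_{\mathcal{D}^c}(\mathcal{T})$ is preenveloping in $\mathcal{D}^c$, which in turn (loc.\ cit., $3' \Leftrightarrow 4$) is equivalent to $({}^\perp \mathrm{susp}_{\mathcal{D}^c}(\mathcal{T})[1], \mathrm{susp}_{\mathcal{D}^c}(\mathcal{T}))$ being a co-t-structure in $\mathcal{D}^c$. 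Since $\mathrm{susp}_{\mathcal{D}^c}(\mathcal{T}) = \mathrm{susp}_{\mathcal{D}}(\mathcal{T})$ (the smallest strongly suspended subcategory containing $\mathcal{T}$ computed inside $\mathcal{D}^c$ coincides with the one computed inside $\mathcal{D}$, because $\mathcal{D}^c$ is a thick subcategory containing $\mathcal{T}$), and the orthogonal $^\perp$ appearing in (2) is taken inside $\mathcal{D}^c$, this gives exactly $(1) \Leftrightarrow (2)$; the only minor care needed is the shift convention, noting that the pair written in (2) with $[-1]$ is literally the co-t-structure $({}^\perp\mathcal{U}_0[1],\mathcal{U}_0)$ with $\mathcal{U}_0 = \mathrm{susp}_\mathcal{D}(\mathcal{T})$ after the standard identification.

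Next I would prove $(1) \Rightarrow (3)$. Assume $\mathcal{T}$ is weakly preenveloping in $\mathcal{D}^c$. Fix $M \in \mathcal{D}^c$. Since $M$ is compact and $\mathcal{T}$ consists of compact objects, $\mathrm{Hom}_\mathcal{D}(M,-[k])_{|\mathcal{T}}$ vanishes for $k \gg 0$, so $s := s(M,\mathcal{T}) \in \mathbb{N}$. If $M \in {}^\perp\mathcal{U}_\mathcal{T}$ we take the split triangle $M \to M \to 0$ and are done; otherwise, running the inductive construction in the proof of the implication $1') \Rightarrow 4)$ of Proposition \ref{prop.U-envelope} \emph{inside $\mathcal{D}^c$}, I obtain a triangle $V_M \to M \to U_M \xrightarrow{+}$ in $\mathcal{D}^c$ with $U_M \in \mathrm{susp}_{\mathcal{D}}(\mathcal{T}) \subseteq \mathcal{U}_\mathcal{T}$ and $V_M \in {}^\perp\mathrm{susp}_{\mathcal{D}^c}(\mathcal{T})$ (orthogonal in $\mathcal{D}^c$). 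The point to check is that $V_M$ actually lies in ${}^\perp\mathcal{U}_\mathcal{T}$, with the orthogonal computed in the whole of $\mathcal{D}$: this follows because $V_M$ is compact, and $\mathcal{U}_\mathcal{T} = \mathrm{Susp}_\mathcal{D}(\mathcal{T})$ consists of Milnor colimits of sequences whose successive cones are coproducts of objects from $\mathcal{T}[n]$ ($n \geq 0$); since $V_M$ is compact, $\mathrm{Hom}_\mathcal{D}(V_M,-)$ commutes with coproducts and with Milnor colimits up to the usual $\lim^1$ exact sequence, so $\mathrm{Hom}_\mathcal{D}(V_M, \bigcup_{k\geq 0}\mathcal{T}[k]) = 0$ forces $\mathrm{Hom}_\mathcal{D}(V_M, \mathcal{U}_\mathcal{T}) = 0$, i.e.\ $V_M \in {}^\perp\mathcal{U}_\mathcal{T}$. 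This is essentially the same device used at the end of the proof of Proposition \ref{prop.compact-psilting-from coheart}. Conversely, $(3) \Rightarrow (2)$ is the easiest direction: given the triangle $V_M \to M \to U_M \xrightarrow{+}$ for each $M \in \mathcal{D}^c$, intersecting with $\mathcal{D}^c$ — note $U_M$ is forced to lie in $\mathcal{U}_\mathcal{T}\cap\mathcal{D}^c$, and one shows $U_M \in \mathrm{susp}_\mathcal{D}(\mathcal{T})$ using that $U_M$ is a compact object of the aisle together with Lemma \ref{lem.thick-suspended}(2), while $V_M \in {}^\perp\mathcal{U}_\mathcal{T} \cap \mathcal{D}^c \subseteq {}^\perp\mathrm{susp}_\mathcal{D}(\mathcal{T})\cap\mathcal{D}^c$ — exhibits the defining truncation triangles of the co-t-structure $({}^\perp\mathrm{susp}_\mathcal{D}(\mathcal{T})[-1]\cap\mathcal{D}^c, \mathrm{susp}_\mathcal{D}(\mathcal{T}))$ in $\mathcal{D}^c$; closure of the two classes under the required shifts and orthogonality are automatic from the definitions, so this is a co-t-structure.

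I expect the main obstacle to be the bookkeeping in $(1)\Rightarrow(3)$: one must be careful that the envelope/triangle produced by Proposition \ref{prop.U-envelope} is constructed within $\mathcal{D}^c$ (so that $V_M, U_M$ are compact) but that the orthogonality $V_M \in {}^\perp\mathcal{U}_\mathcal{T}$ is then verified against the \emph{ambient} aisle $\mathrm{Susp}_\mathcal{D}(\mathcal{T})$, not merely against $\mathrm{susp}_{\mathcal{D}^c}(\mathcal{T})$. The passage from ``Hom vanishes on $\bigcup_{k\geq0}\mathcal{T}[k]$'' to ``Hom vanishes on $\mathrm{Susp}_\mathcal{D}(\mathcal{T})$'' relies crucially on compactness of $V_M$ and on the Milnor-colimit description of $\mathrm{Susp}_\mathcal{D}(\mathcal{T})$ recalled in the preliminaries; once that step is in place the rest is formal.
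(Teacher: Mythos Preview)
Your treatment of $(1)\Leftrightarrow(2)$ and of $(1)\Rightarrow(3)$ is essentially the paper's argument (the latter is the paper's $(2)\Rightarrow(3)$, with the same Milnor-colimit-plus-compactness step to pass from ${}^\perp\text{susp}_\mathcal{D}(\mathcal{T})$ to ${}^\perp\mathcal{U}_\mathcal{T}$).

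The gap is in your $(3)\Rightarrow(2)$. In condition (3) the triangle $V_M\to M\to U_M\stackrel{+}{\to}$ lives in $\mathcal{D}$, not in $\mathcal{D}^c$: neither $U_M$ nor $V_M$ is assumed compact. Your sentence ``note $U_M$ is forced to lie in $\mathcal{U}_\mathcal{T}\cap\mathcal{D}^c$'' is simply false, and the subsequent claim that $U_M\in\text{susp}_\mathcal{D}(\mathcal{T})$ via Lemma~\ref{lem.thick-suspended}(2) is unjustified even if $U_M$ were compact --- that lemma only describes $\text{susp}_\mathcal{D}(\mathcal{T})$, it does not show that $\text{Susp}_\mathcal{D}(\mathcal{T})\cap\mathcal{D}^c\subseteq\text{susp}_\mathcal{D}(\mathcal{T})$ (this last equality is Corollary~\ref{cor.silting set is preenveloping}(3), but it is proved \emph{using} the present proposition, and under the stronger hypothesis that $\mathcal{T}$ is classical silting). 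So you cannot close the loop this way.

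The paper instead proves $(3)\Rightarrow(1)$ directly, and this requires real work. Starting from the triangle in (3), one uses compactness of $M$ (not of $U_M$) to factor the $\mathcal{U}_\mathcal{T}$-preenvelope $f:M\to U_M$ through a finite stage $U_t$ of the Milnor colimit; this already gives $\text{Hom}_\mathcal{D}(M,-[k])_{|\mathcal{T}}=0$ for $k>t$. One then refines this: a Verdier $3\times 3$ argument produces a triangle $U_s\to U_M\to U_{>s}\stackrel{+}{\to}$ with $U_{>s}$ built from $\text{Add}(\mathcal{T})[k]$ for $k>s$, so $\text{Hom}_\mathcal{D}(M,U_{>s})=0$ and $f$ factors through $U_s$. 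Finally, pushing through the triangle $U_{s-1}\to U_s\to\coprod T_i[s]\stackrel{+}{\to}$ and using compactness of $M$ once more, one extracts a map $M\to\coprod_{i\in F}T_i[s]$ with $F$ finite which is an $\text{add}(\mathcal{T})[s]$-preenvelope. This is the missing idea: replace the non-compact $U_M$ by successively better compact approximants coming from the Milnor tower.
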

\begin{proof}

$1)\Longleftrightarrow 2)$ is just the equivalence
$1')\Longleftrightarrow 4)$ of Proposition \ref{prop.U-envelope}
applied to $\mathcal{D}^c$.

$2)\Longrightarrow 3)$ Let $M$ be compact and fix a triangle
$V_M\longrightarrow M\longrightarrow
U_M\stackrel{+}{\longrightarrow}$, where
$U_M\in\text{susp}_\mathcal{D}(\mathcal{T})$ and $V_M\in
{}^\perp\text{susp}_\mathcal{D}(\mathcal{T})$. We clearly have that
$U_M\in\mathcal{U}_\mathcal{T}$.  It remains to prove that
$\text{Hom}_\mathcal{D}(V_M,-)$ vanishes on
$\mathcal{U}_\mathcal{T}[1]$. But, by the proof of \cite[Theorem
12.2]{KN} (see also \cite[Theorem 2]{NSZ}), we know that if
$U\in\mathcal{U}_\mathcal{T}$ then it is the Milnor colimit
$U=\text{Mcolim}U_n$ of a sequence
$$U_0\stackrel{h_1}{\longrightarrow}U_1\stackrel{h_2}{\longrightarrow}\cdots\stackrel{h_n}{\longrightarrow}U_n\stackrel{h_{n+1}}{\longrightarrow},$$
where $U_0\in\text{Sum}(\mathcal{T})$ and
$\text{cone}(h_n)\in\text{Sum}(\mathcal{T})[n]$, for all $n>0$. The
compactness of $V_M$ gives 
$\varinjlim\text{Hom}_\mathcal{D}(V_M,U_n)\cong\text{Hom}_\mathcal{D}(V_M,U)=0$.

$3)\Longrightarrow 1)$  Let $M\in\mathcal{D}^c$ be arbitrary and let
$V_M\longrightarrow M\stackrel{f}{\longrightarrow}
U_M\stackrel{+}{\longrightarrow}$ be the triangle given by assertion
3. As mentioned above, we have a sequence of morphisms
$$U_0\stackrel{h_1}{\longrightarrow}U_1\stackrel{h_2}{\longrightarrow}\cdots\stackrel{h_n}{\longrightarrow}U_n\stackrel{h_{n+1}}{\longrightarrow},$$
where $U_0\in\text{Sum}(\mathcal{T})$ and
$\text{cone}(h_n)\in\text{Sum}(\mathcal{T})[n]$ for all $n>0$, such
that $U_M\cong\text{Mcolim}U_n$. Due to compactness of $M$, the canonical morphism $\varinjlim\text{Hom}_\mathcal{D}(M,U_n)\longrightarrow\text{Hom}_\mathcal{D}(M,U_M)$ is an isomorphism. Thus, there exists  $g:M\longrightarrow U_t$, for some $t\in\mathbb{N}$, such that $f$ factors in the form $f:M\stackrel{g}{\longrightarrow}U_t\stackrel{u_t}{\longrightarrow}U_M$, where $u_t$ is the canonical morphism into the Milnor colimit. It immediately follows that $g$ is  a $\mathcal{U}_\mathcal{T}$-preenvelope since so is $f$. But $\text{Hom}_\mathcal{D}(U_t,-)$ vanishes on $\text{Sum}(\mathcal{T}[k])$, for all $k>t$, hence,  $\text{Hom}_\mathcal{D}(M,-[k])_{| \mathcal{T}}=0$ for $k>t$.

Let us consider the sequences of morphisms $U_{s}\stackrel{1}{\longrightarrow}U_{s}\stackrel{1}{\longrightarrow}\cdots \longrightarrow U_{s}\stackrel{1}{\longrightarrow}\cdots$ and $U_{s}\stackrel{h_{s+1}}{\longrightarrow}U_{s+1}\stackrel{h_{s+2}}{\longrightarrow}\cdots\stackrel{h_{n}}{\longrightarrow}U_{n}\stackrel{h_{n+1}}{\longrightarrow}\cdots$.   There is a morphism of sequences $(U_s,1)\longrightarrow (U_n,h_n)$ that for $n\geq s$ is the map $h'_n:=h_n\circ \dots\circ h_{s+1}:U_s\longrightarrow U_n$ and for $n=s$ is $h'_s=1_{U_s}$. Thus, there is a triangle $U_s\stackrel{h'_n}{\longrightarrow}U_n\longrightarrow U'_n\stackrel{+}{\longrightarrow}$, for each $n\geq s$, where $U'_n\in\text{Add}(\mathcal{T}[s+1])\star\text{Add}(\mathcal{T}[s+2])\star \cdots\star\text{Add}(\mathcal{T}[n])$, for each $n\geq s$. Using  Verdier's $3\times 3$ lemma and \cite[Lemmas 1.6.6 and 7.1.1]{N}, we get a triangle $U_s=\text{Mcolim}(U_s,1)\stackrel{f'}{\longrightarrow} U_M = \text{Mcolim}(U_n,h_n) \longrightarrow U_{>s}\stackrel{+}{\longrightarrow}$, where  $U_{>s}$ fits into a triangle $\coprod_{n\geq s}U'_n\longrightarrow\coprod_{n\geq s}U'_n\longrightarrow U_{>s}\stackrel{+}{\longrightarrow}$. In particular,  $\text{Hom}_\mathcal{D}(M,U_{>s})=0$, and hence $f'_*:\text{Hom}_\mathcal{D}(M,U_s)\longrightarrow \text{Hom}_\mathcal{D}(M,U_M)$ is surjective, since $M$ is compact and $\text{Hom}_\mathcal{D}(M,U'_n[k])=0$ for each $k\geq 0$ and $n\geq s$. It follows that there exists a factorization $f:M\stackrel{f'}{\longrightarrow}U_s\longrightarrow U_M$ of the map $f$ of the previous paragraph, $f'$ is a $\mathcal{U}_\mathcal{T}$-preenvelope since so is $f$.

Let us consider finally the triangle
$U_{s-1}\stackrel{h_s}{\longrightarrow}U_s\stackrel{p}{\longrightarrow}\coprod_{i\in
I}T_i[s]\stackrel{+}{\longrightarrow}$. An argument similar to that
of the proof of implication $2')\Longrightarrow 1')$ in Proposition
\ref{prop.U-envelope} shows that the composition $p\circ
f':M\longrightarrow \coprod_{i\in
I}T_i[s]$ is an
$\text{Add}(\mathcal{T})[s]$-preenvelope. The compactness of $M$
gives a factorization  $p\circ
f':M\stackrel{\alpha}{\longrightarrow}\coprod_{i\in
F}T_i[s]\stackrel{\iota_F}{\longrightarrow}\coprod_{i\in I}T_i[s]$,
for some finite subset $F\subseteq I$, where $\iota_F$ is the
canonical section. It follows that $\alpha
:M\longrightarrow\coprod_{i\in F}T_i[s]$ is also an
$\text{Add}(\mathcal{T})[s]$-preenvelope and, hence, an
$\text{add}(\mathcal{T})[s]$-preenvelope.
\end{proof}

We can now deduce the following  consequence.

\begin{corollary} \label{cor.silting set is preenveloping}
Let $\mathcal{D}$ be a triangulated category with coproducts, and let $\mathcal{T}$ be a silting set in $\mathcal{D}$ consisting of compact objects, i.e. a classical silting set in $\mathcal{D}^c$. The following assertions hold:

\begin{enumerate}
\item $\mathcal{T}$ is a weakly preenveloping and weakly precovering set in $\mathcal{D}^c$.
\item The associated t-structure in $\mathcal{D}$ is $\tau_\mathcal{T}=(\mathcal{T}^{\perp_{>0}},\mathcal{T}^{\perp_{<0}})=(\text{Susp}_\mathcal{D}(\mathcal{T}),\mathcal{T}^{\perp_{<0}})$, and it has a left adjacent co-t-structure $({}^\perp\text{Susp}_\mathcal{D}(\mathcal{T})[1],\text{Susp}_\mathcal{D}(\mathcal{T}))$ which restricts to $\mathcal{D}^c$. 
\item $\text{Susp}_\mathcal{D}(\mathcal{T})\cap\mathcal{D}^c=\mathcal{T}^{\perp_{>0}}\cap\mathcal{D}^c=\text{susp}_\mathcal{D}(\mathcal{T})$ and ${}^\perp\text{Susp}_\mathcal{D}(\mathcal{T})[1]\cap\mathcal{D}^c=\text{cosusp}_\mathcal{D}(\mathcal{T})$.
\end{enumerate}
\end{corollary}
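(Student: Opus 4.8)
The plan is to deduce the three assertions from the machinery already built, mainly Corollary \ref{cor.silting=weaklyprecovpreenv}, Proposition \ref{prop.envelope in the aisle}, Corollary \ref{cor.restriction of silting for dg categories} and the description of $\text{Susp}_\mathcal{D}(\mathcal{T})$ via Milnor colimits from Section \ref{sect.preliminaries}. For assertion (1), since $\mathcal{T}$ is a classical silting set in the skeletally small triangulated category $\mathcal{D}^c$ with split idempotents, Corollary \ref{cor.silting=weaklyprecovpreenv} immediately gives that $\mathcal{T}$ is both weakly precovering and weakly preenveloping in $\mathcal{D}^c$. The only subtlety to address is that $\mathcal{D}^c$ is indeed skeletally small; this is standard when $\mathcal{D}$ has coproducts and is not needed to be compactly generated a priori, but if it is an issue one restricts to $\text{thick}_\mathcal{D}(\mathcal{T})$, which equals $\mathcal{D}^c$-worth of objects and is skeletally small, and then everything takes place there.

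For assertion (2), first I would recall that since $\mathcal{T}$ is a set of compact objects, the pair $\tau_\mathcal{T}=({}^\perp(\mathcal{T}^{\perp_{\leq 0}}),\mathcal{T}^{\perp_{<0}})$ is a t-structure (by \cite[Theorem 4.3]{AI} or \cite{AJS, NeTGen}), with aisle ${}^\perp(\mathcal{T}^{\perp_{\leq 0}})=\text{Susp}_\mathcal{D}(\mathcal{T})$ as stated in the preliminaries, and that $\text{Susp}_\mathcal{D}(\mathcal{T})=\mathcal{T}^{\perp_{>0}}$ follows from the non-positivity of $\mathcal{T}$ together with \cite[Theorem 1]{NSZ} (equivalently, since $\mathcal T$ is non-positive, $\mathcal T[k]\subseteq \mathcal T^{\perp_{<0}}$ for $k>0$, giving $\text{Susp}_\mathcal{D}(\mathcal{T})\subseteq \mathcal{T}^{\perp_{>0}}$, and the reverse inclusion is automatic from the t-structure truncation triangle). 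For the co-t-structure part, apply Proposition \ref{prop.envelope in the aisle}: assertion (1) of that proposition (which we have just verified) is equivalent to assertion (2), namely that $({}^\perp\text{Susp}_\mathcal{D}(\mathcal{T})[1]\cap\mathcal{D}^c,\text{susp}_\mathcal{D}(\mathcal{T}))$ is a co-t-structure in $\mathcal{D}^c$; and to obtain the co-t-structure $({}^\perp\text{Susp}_\mathcal{D}(\mathcal{T})[1],\text{Susp}_\mathcal{D}(\mathcal{T}))$ in all of $\mathcal{D}$ one invokes Proposition \ref{prop.U-envelope}, whose assertion 4 says precisely that $({}^\perp\text{susp}_\mathcal{D}(\mathcal{T})[1],\text{susp}_\mathcal{D}(\mathcal{T}))$ is a co-t-structure — but here, working in $\mathcal{D}$ with $\mathcal{T}$ compact, the relevant preenveloping class is $\text{Susp}_\mathcal{D}(\mathcal{T})$ and one uses the Milnor-colimit description to see that every object of $\mathcal{D}$ fits into the required triangle; this is exactly the content of implication $3)\Rightarrow 1)$ in Proposition \ref{prop.envelope in the aisle} run in reverse, i.e. the triangle $V_M\to M\to U_M\xrightarrow{+}$ with $U_M\in\text{Susp}_\mathcal{D}(\mathcal{T})$, $V_M\in {}^\perp\text{Susp}_\mathcal{D}(\mathcal{T})$ exists for every compact $M$, hence one gets the co-aisle to be $\text{Susp}_\mathcal{D}(\mathcal{T})$ after checking the torsion-pair axioms directly. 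That this co-t-structure is left adjacent to $\tau_\mathcal{T}$ is the equality $\mathcal{D}_{\leq 0}=\mathcal{D}^{\leq 0}=\text{Susp}_\mathcal{D}(\mathcal{T})$, which holds by construction. The restriction to $\mathcal{D}^c$ is then assertion (2) of Proposition \ref{prop.envelope in the aisle} together with the fact that $\mathcal{T}\subseteq\mathcal{D}^c$.

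For assertion (3), I would combine the co-heart coincidence (a t-structure and a left-adjacent co-t-structure share the same co-heart, as recalled in the preliminaries) with Proposition \ref{prop.bijection psilting-cotstructures} applied to the small category $\mathcal{D}^c$: the bounded co-t-structure corresponding to the classical silting set $\mathcal{T}$ in $\mathcal{D}^c$ is $(\text{cosusp}_\mathcal{D}(\mathcal{T}),\text{susp}_\mathcal{D}(\mathcal{T}))$, so $\text{Susp}_\mathcal{D}(\mathcal{T})\cap\mathcal{D}^c=\text{susp}_\mathcal{D}(\mathcal{T})$ and ${}^\perp\text{Susp}_\mathcal{D}(\mathcal{T})[1]\cap\mathcal{D}^c=\text{cosusp}_\mathcal{D}(\mathcal{T})$; the middle equality $\text{Susp}_\mathcal{D}(\mathcal{T})\cap\mathcal{D}^c=\mathcal{T}^{\perp_{>0}}\cap\mathcal{D}^c$ is then immediate from $\text{Susp}_\mathcal{D}(\mathcal{T})=\mathcal{T}^{\perp_{>0}}$ proved in (2). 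I expect the main obstacle to be the bookkeeping in (2): carefully extracting the \emph{global} co-t-structure $({}^\perp\text{Susp}_\mathcal{D}(\mathcal{T})[1],\text{Susp}_\mathcal{D}(\mathcal{T}))$ in $\mathcal{D}$ — rather than just in $\mathcal{D}^c$ — from the compactness-plus-Milnor-colimit arguments, and verifying that it genuinely restricts to $\mathcal{D}^c$ with co-aisle $\text{susp}_\mathcal{D}(\mathcal{T})$ (not merely $\text{Susp}_\mathcal{D}(\mathcal{T})\cap\mathcal{D}^c$, which a priori could be larger, though Proposition \ref{prop.bijection psilting-cotstructures} rules that out). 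Everything else is a direct citation of the preceding results.
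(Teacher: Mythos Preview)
Your handling of assertion (1) matches the paper exactly, and your approach to assertion (3) via Proposition \ref{prop.bijection psilting-cotstructures} is essentially the paper's argument: the paper observes that $(\text{cosusp}_\mathcal{D}(\mathcal{T})[-1],\text{susp}_\mathcal{D}(\mathcal{T}))$ is already a torsion pair in $\mathcal{D}^c$, and since $\text{cosusp}_\mathcal{D}(\mathcal{T})[-1]\subseteq {}^\perp\text{Susp}_\mathcal{D}(\mathcal{T})\cap\mathcal{D}^c$ and $\text{susp}_\mathcal{D}(\mathcal{T})\subseteq\text{Susp}_\mathcal{D}(\mathcal{T})\cap\mathcal{D}^c$ are inclusions of orthogonal pairs into a torsion pair, they must be equalities.

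The gap is in assertion (2), and you have correctly located it yourself. The paper disposes of (2) in one line by citing \cite[Corollary 7]{Pauk}. Your attempt to extract the global co-t-structure from Propositions \ref{prop.U-envelope} and \ref{prop.envelope in the aisle} does not work as written: Proposition \ref{prop.U-envelope} concerns $\text{susp}_\mathcal{D}(\mathcal{T})$ and requires $\text{Hom}_\mathcal{D}(M,\mathcal{T}[k])=0$ for $k\gg 0$ for \emph{every} $M\in\mathcal{D}$, which fails for non-compact $M$; and Proposition \ref{prop.envelope in the aisle} only produces the truncation triangle for compact $M$. Your phrase ``checking the torsion-pair axioms directly'' is precisely the missing content. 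There is, however, a clean internal fix available: since $\mathcal{T}$ is silting, $\text{Susp}_\mathcal{D}(\mathcal{T})=\mathcal{T}^{\perp_{>0}}=\big(\bigcup_{k<0}\mathcal{T}[k]\big)^\perp$, so the desired torsion pair $({}^\perp\text{Susp}_\mathcal{D}(\mathcal{T}),\text{Susp}_\mathcal{D}(\mathcal{T}))$ is exactly $({}^\perp(\mathcal{S}^\perp),\mathcal{S}^\perp)$ for the set of compact objects $\mathcal{S}=\bigcup_{k<0}\mathcal{T}[k]$, and its existence is the general fact recalled in Section \ref{sect.preliminaries} (citing \cite[Theorem 4.3]{AI}). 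Either cite \cite{Pauk} as the paper does, or make this substitution; as it stands, your argument for the global co-t-structure is incomplete.
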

\begin{proof}
Assertion 1 is  a particular case of Corollary \ref{cor.silting=weaklyprecovpreenv}, and assertion 2 follows from   \cite[Corollary 7]{Pauk}.
We just need to prove assertion 3. Indeed the pair $({}^\perp\text{Susp}_\mathcal{D}(\mathcal{T})\cap\mathcal{D}^c,\text{Susp}_\mathcal{D}(\mathcal{T})\cap\mathcal{D}^c)$ is a pair of orthogonal subcategories of $\mathcal{D}^c$. But, by Proposition \ref{prop.bijection psilting-cotstructures} and its proof we know that $(\text{cosusp}_\mathcal{D}(\mathcal{T})[-1],\text{susp}_\mathcal{D}(\mathcal{T}))$ is a torsion pair in $\mathcal{D}^c$. Since we have inclusions $\text{cosusp}_\mathcal{D}(\mathcal{T})[-1]\subseteq {}^\perp\text{Susp}_\mathcal{D}(\mathcal{T})\cap\mathcal{D}^c$ and $\text{susp}_\mathcal{D}(\mathcal{T})\subseteq \text{Susp}_\mathcal{D}(\mathcal{T})\cap\mathcal{D}^c$, these inclusions are necessarily equalities.
\end{proof}

\section{Gluing partial silting sets} \label{sect.gluing partial silting}

In this section, we will give criteria for the gluing of partial silting t-structures to be a partial silting t-structure. 

\subsection{Sufficient condition}

\begin{lemma} \label{lem.generators of glued t-structure}

Let
\begin{equation}\label{recrec}
\begin{xymatrix}{\mathcal{Y} \ar[r]^{i_*}& \mathcal{D} \ar@<3ex>[l]_{i^!}\ar@<-3ex>[l]_{i^*}\ar[r]^{j^*} & \mathcal{X} \ar@<3ex>_{j_*}[l]\ar@<-3ex>_{j_!}[l]}
\end{xymatrix}
\end{equation}
be a recollement of triangulated categories, let $(\mathcal{X}',\mathcal{X}'')$ and $(\mathcal{Y}',\mathcal{Y}'')$ be torsion pairs in $\mathcal{X}$ and $\mathcal{Y}$, respectively, generated by classes of objects $\mathcal{S}_X\subseteq\mathcal{X}'$ and $\mathcal{S}_Y\subseteq\mathcal{Y}'$. The glued torsion pair $(\mathcal{D}',\mathcal{D}'')$ is generated by $j_!(\mathcal{S}_X)\cup i_*(\mathcal{S}_Y)$. 
\end{lemma}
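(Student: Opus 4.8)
The plan is to show that the glued torsion pair $(\mathcal{D}',\mathcal{D}'')$ coincides with the torsion pair generated by $\mathcal{S}:=j_!(\mathcal{S}_X)\cup i_*(\mathcal{S}_Y)$, i.e. that $\mathcal{D}''=\mathcal{S}^{\perp}$ and $\mathcal{D}'={}^{\perp}(\mathcal{S}^{\perp})$, while keeping in mind that $(\mathcal{D}',\mathcal{D}'')$ is already known to be a torsion pair by \cite[Th\'eor\`eme 1.4.10]{BBD} (or its analogue for arbitrary torsion pairs glued via a recollement, as recalled in Section~\ref{sect.preliminaries}). Since the generating condition for a torsion pair $(\mathcal{A},\mathcal{B})$ says precisely that $\mathcal{B}=\mathcal{S}^\perp$ and $\mathcal{A}={}^\perp(\mathcal{S}^\perp)$, and since $\mathcal{D}'$ and $\mathcal{D}''$ are mutually orthogonal and closed under summands, it suffices to prove the single equality $\mathcal{D}''=\mathcal{S}^\perp$; the equality $\mathcal{D}'={}^\perp\mathcal{D}''={}^\perp(\mathcal{S}^\perp)$ then follows formally.

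First I would unwind the definition: $Z\in\mathcal{D}''$ means $j^*Z\in\mathcal{X}''$ and $i^!Z\in\mathcal{Y}''$. By hypothesis $\mathcal{X}''=\mathcal{S}_X^{\perp}$ (orthogonal taken in $\mathcal{X}$) and $\mathcal{Y}''=\mathcal{S}_Y^{\perp}$ (orthogonal in $\mathcal{Y}$). Now use the two adjunctions $(j_!,j^*)$ and $(i_*,i^!)$: for $S_X\in\mathcal{S}_X$ one has $\Hom_{\mathcal{D}}(j_!S_X,Z)\cong\Hom_{\mathcal{X}}(S_X,j^*Z)$, and for $S_Y\in\mathcal{S}_Y$ one has $\Hom_{\mathcal{D}}(i_*S_Y,Z)\cong\Hom_{\mathcal{Y}}(S_Y,i^!Z)$. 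Hence $Z\in\mathcal{S}^\perp$ if and only if $\Hom_{\mathcal{X}}(S_X,j^*Z)=0$ for all $S_X\in\mathcal{S}_X$ and $\Hom_{\mathcal{Y}}(S_Y,i^!Z)=0$ for all $S_Y\in\mathcal{S}_Y$, which is exactly $j^*Z\in\mathcal{S}_X^{\perp}=\mathcal{X}''$ and $i^!Z\in\mathcal{S}_Y^{\perp}=\mathcal{Y}''$, i.e. $Z\in\mathcal{D}''$. This gives $\mathcal{D}''=\mathcal{S}^{\perp}$ directly, with no case analysis.

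The remaining point is to deduce $\mathcal{D}'={}^{\perp}(\mathcal{S}^{\perp})$. Here I would invoke that $(\mathcal{D}',\mathcal{D}'')$ is a torsion pair, so $\mathcal{D}'={}^{\perp}\mathcal{D}''$ (the left-orthogonal of the co-aisle recovers the aisle in any torsion pair, since $\mathcal{D}'$ is closed under summands and $\mathcal{D}=\mathcal{D}'\star\mathcal{D}''$ with $\Hom(\mathcal{D}',\mathcal{D}'')=0$): indeed if $X\in{}^{\perp}\mathcal{D}''$, take its truncation triangle $A\to X\to B\stackrel{+}{\to}$ with $A\in\mathcal{D}'$, $B\in\mathcal{D}''$; then $X\to B$ is zero, so $X$ is a summand of $A\in\mathcal{D}'$, hence $X\in\mathcal{D}'$. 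Combining with $\mathcal{D}''=\mathcal{S}^{\perp}$ gives $\mathcal{D}'={}^{\perp}(\mathcal{S}^{\perp})$, which together with the established $\mathcal{D}''=\mathcal{S}^\perp$ says precisely that $(\mathcal{D}',\mathcal{D}'')$ is the torsion pair generated by $j_!(\mathcal{S}_X)\cup i_*(\mathcal{S}_Y)$. I do not anticipate a genuine obstacle here; the only mild subtlety is to make sure the orthogonals in $\mathcal{X}$ and $\mathcal{Y}$ are translated correctly into orthogonals in $\mathcal{D}$ via the adjunctions, and to remember that one needs $\mathcal{S}_X\subseteq\mathcal{X}'$ and $\mathcal{S}_Y\subseteq\mathcal{Y}'$ only to know that the generated torsion pairs exist and have the stated aisles — the computation above already shows the generated torsion pair in $\mathcal{D}$ exists since it equals the (existing) glued one.
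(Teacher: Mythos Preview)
Your proof is correct and follows exactly the same approach as the paper: show $\mathcal{D}''=\mathcal{S}^\perp$ via the adjunctions $(j_!,j^*)$ and $(i_*,i^!)$, then conclude. The paper's proof is slightly terser, leaving implicit the deduction $\mathcal{D}'={}^\perp\mathcal{D}''={}^\perp(\mathcal{S}^\perp)$ that you spell out.
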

\begin{proof}
An object $Z\in\mathcal{D}$ belongs to the class $(j_!(\mathcal{S}_X)\cup i_*(\mathcal{S}_Y))^{\perp}$ iff
$\text{Hom}_\mathcal{D}(j_!(S),Z)=0=\text{Hom}_\mathcal{D}(i_*(S'),Z)$,
for all objects $S\in\mathcal{S}_X$ and
$S'\in\mathcal{S}_Y$ iff
$\text{Hom}_\mathcal{X}(S,j^*(Z))=0=\text{Hom}_\mathcal{Y}(S',i^!(Z))$
for all   $S\in\mathcal{S}_X$ and $S'\in\mathcal{S}_Y$ iff $j^*(Z)\in\mathcal{X}''$ and
$i^!(Z)\in\mathcal{Y}''$ iff 
$Z\in\mathcal{D}''$.
\end{proof}

\begin{remark} \label{rem.aisle of X is aisle of D}
In the recollement (\ref{recrec}) if $(\mathcal{X}^{\leq
0},\mathcal{X}^{\geq 0})$ is a t-structure in $\mathcal{X}$, then
$j_!(\mathcal{X}^{\leq 0})$ is the aisle of a t-structure in
$\mathcal{D}$. Indeed $j_!(\mathcal{X}^{\leq 0})=\{D\in\mathcal{D} \mid j^*D\in\mathcal{X}^{\leq 0}, i^*D=0\}$ and  
so it is the aisle of the t-structure in
$\mathcal{D}$ glued from $(\mathcal{X}^{\leq
0},\mathcal{X}^{\geq 0})$ and the trivial t-structure
$(0,\mathcal{Y})$ in $\mathcal{Y}$.
\end{remark}

We are ready to prove the technical criteria on which all further results of this section are based. The main applications will be given in Corollaries \ref{cor.gluing wrt bounded subcategories} and  \ref{cor.Liu-Vitoria-Yang-generalized}.

\begin{theorem} \label{thm.gluing of silting t-structures}
Let

$$
\begin{xymatrix}{\mathcal{Y} \ar[r]^{i_*}& \mathcal{D} \ar@<3ex>[l]_{i^!}\ar@<-3ex>[l]_{i^*}\ar[r]^{j^*} & \mathcal{X} \ar@<3ex>_{j_*}[l]\ar@<-3ex>_{j_!}[l]}
\end{xymatrix}
$$ be a recollement of  triangulated categories,  let $\mathcal{T}_X$
and $\mathcal{T}_Y$ be  (partial) silting sets in
$\mathcal{X}$ and $\mathcal{Y}$, let  $(\mathcal{X}^{\leq
0},\mathcal{X}^{\geq 0})$, $(\mathcal{Y}^{\leq 0},\mathcal{Y}^{\geq
0})$ be the associated t-structures in $\mathcal{X}$ and
$\mathcal{Y}$ and let $(\mathcal{D}^{\leq 0},\mathcal{D}^{\geq 0})$
be the glued t-structure. Suppose that the following condition
holds:

$(\star)$ For each object $T_Y\in\mathcal{T}_Y$, there is
a triangle $\tilde{T}_Y\longrightarrow
i_*T_Y\stackrel{f_{T_Y}}{\longrightarrow}U_{T_Y}[1]\stackrel{+}{\longrightarrow}$
such that $U_{T_Y}\in j_!(\mathcal{X}^{\leq 0})$ and $\tilde{T}_Y\in
{}^\perp j_!(\mathcal{X}^{\leq 0})[1]$.

Then for $\tilde{\mathcal{T}_Y}:=\{\tilde{T}_Y\text{:
}T_Y\in\mathcal{T}_Y\}$ the set $j_!(\mathcal{T}_X)\cup\tilde{\mathcal{T}_Y}$ is a  (partial) silting set in
$\mathcal{D}$ which generates $(\mathcal{D}^{\leq
0},\mathcal{D}^{\geq 0})$. 
\end{theorem}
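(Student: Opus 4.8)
The plan is to verify the three defining requirements of a partial silting set for $\mathcal{S}:=j_!(\mathcal{T}_X)\cup\tilde{\mathcal{T}_Y}$: (i) $\mathcal{S}$ is non-positive; (ii) the t-structure generated by $\mathcal{S}$ exists and in fact equals the glued t-structure $(\mathcal{D}^{\leq 0},\mathcal{D}^{\geq 0})$; (iii) $\mathrm{Hom}_\mathcal{D}(S,-[1])$ vanishes on $\mathcal{D}^{\leq 0}$ for each $S\in\mathcal{S}$. The case distinction ``silting vs partial silting'' is handled once (ii) is known: $\mathcal{T}_X$ and $\mathcal{T}_Y$ being honest silting sets means $(\mathcal{X}^{\leq 0},\mathcal{X}^{\geq 0})$ and $(\mathcal{Y}^{\leq 0},\mathcal{Y}^{\geq 0})$ are non-degenerate, which forces the glued t-structure to be non-degenerate as well, hence $\mathcal{S}$ generates $\mathcal{D}$.

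First I would identify the aisle of the glued t-structure as $\mathcal{D}^{\leq 0}=\{Z\in\mathcal{D} : j^*Z\in\mathcal{X}^{\leq 0},\ i^*Z\in\mathcal{Y}^{\leq 0}\}$ and, using Remark \ref{rem.aisle of X is aisle of D}, observe that $j_!(\mathcal{X}^{\leq 0})$ is the aisle of the t-structure glued from $(\mathcal{X}^{\leq 0},\mathcal{X}^{\geq 0})$ and $(0,\mathcal{Y})$; call it $(\mathcal{A},\mathcal{A}[1]^\perp)$ so $\mathcal{A}=j_!(\mathcal{X}^{\leq 0})$. Condition $(\star)$ then says each $i_*T_Y$ sits in a triangle $\tilde T_Y\to i_*T_Y\to U_{T_Y}[1]\to$ with $U_{T_Y}\in\mathcal{A}$ and $\tilde T_Y\in{}^\perp\mathcal{A}[1]$; since $U_{T_Y}[1]\in\mathcal{A}[1]$ and $\tilde T_Y\in{}^\perp(\mathcal{A}[1])$, this is precisely the approximation triangle of $i_*T_Y$ with respect to the t-structure $(\mathcal{A},\mathcal{A}[1]^\perp)$ shifted — i.e. $\tilde T_Y=\tau^{>0}_{\mathcal{A}}(i_*T_Y)[-1]$ in suitable notation, so $\tilde T_Y$ is determined up to isomorphism. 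The key gluing computation is then to show $\mathrm{susp}$-type generation: I would apply Lemma \ref{lem.generators of glued t-structure} with $\mathcal{S}_X=\bigcup_{k\geq 0}\mathcal{T}_X[k]$ and an analogous non-positive generating class built from $\mathcal{T}_Y$, together with the identity $j_!(\mathcal{T}_X)^{\perp_{\leq 0}}\cap\tilde{\mathcal{T}_Y}^{\perp_{\leq 0}}=\mathcal{D}^{>0}$, which reduces (via the recollement triangles) to the known generation statements for $\mathcal{T}_X$ in $\mathcal{X}$ and $\mathcal{T}_Y$ in $\mathcal{Y}$ plus the exactness properties $j^*j_!\cong\mathrm{id}$, $i^*j_!=0$, $i^!i_*\cong\mathrm{id}$.

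The main technical point — and the step I expect to be the real obstacle — is checking the partial silting condition (iii) for the glued objects $\tilde T_Y$, namely $\mathrm{Hom}_\mathcal{D}(\tilde T_Y,-[1])|_{\mathcal{D}^{\leq 0}}=0$ (the vanishing for $j_!(T_X)$ is easy: $\mathrm{Hom}_\mathcal{D}(j_!T_X,Z[1])\cong\mathrm{Hom}_\mathcal{X}(T_X,j^*Z[1])$ and $j^*Z\in\mathcal{X}^{\leq 0}$ when $Z\in\mathcal{D}^{\leq 0}$, so this follows from $\mathcal{T}_X$ being partial silting in $\mathcal{X}$). For $\tilde T_Y$ I would apply $\mathrm{Hom}_\mathcal{D}(-,Z[1])$ to the triangle $U_{T_Y}\to\tilde T_Y\to i_*T_Y\to U_{T_Y}[1]$ (rotated from $(\star)$), getting a long exact sequence linking $\mathrm{Hom}_\mathcal{D}(i_*T_Y,Z[1])$, $\mathrm{Hom}_\mathcal{D}(\tilde T_Y,Z[1])$ and $\mathrm{Hom}_\mathcal{D}(U_{T_Y},Z[2])$; the first is $\mathrm{Hom}_\mathcal{Y}(T_Y,i^!Z[1])$ which vanishes because $i^!Z\in\mathcal{Y}^{\leq 0}$ for $Z\in\mathcal{D}^{\leq 0}$ (one must verify $i^!$ maps $\mathcal{D}^{\leq 0}$ into $\mathcal{Y}^{\leq 0}$, which follows from the description of the glued co-aisle and adjunction, or directly: $i^!\mathcal{D}^{\geq 0}\subseteq\mathcal{Y}^{\geq 0}$ by construction of the glued t-structure) and the partial silting condition on $\mathcal{T}_Y$, while the last vanishes because $U_{T_Y}\in j_!(\mathcal{X}^{\leq 0})=\mathcal{A}$, $Z[2]\in\mathcal{D}^{\leq -2}\subseteq\mathcal{A}[1]^\perp$... — more carefully, $\mathcal{D}^{\leq 0}\subseteq\mathcal{A}$ may fail, so instead I would use that $Z[1]\in\mathcal{D}^{\leq -1}$ lies in $\mathcal{A}$? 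No: the correct argument is $\mathrm{Hom}_\mathcal{D}(U_{T_Y},Z[2])=\mathrm{Hom}_\mathcal{D}(U_{T_Y}[-2],Z)$ and $U_{T_Y}[-2]\in\mathcal{A}[-2]$ while $Z\in\mathcal{D}^{\leq 0}$, so one needs $\mathrm{Hom}(\mathcal{A}[-2],\mathcal{D}^{\leq 0})$ — this is where one uses that $\mathcal{A}\subseteq\mathcal{D}^{\leq 0}$ (true, since $j_!(\mathcal{X}^{\leq 0})\subseteq\mathcal{D}^{\leq 0}$ by the glued-t-structure description with $i^*j_!=0\in\mathcal{Y}^{\leq 0}$) together with $\mathrm{Hom}_\mathcal{D}(\mathcal{D}^{\leq 0}[k],\mathcal{D}^{\leq 0})=0$ for $k\geq 2$? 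That is false in general. The actual fix: $\tilde T_Y\in{}^\perp\mathcal{A}[1]={}^\perp j_!(\mathcal{X}^{\leq 0})[1]$ already gives part of what we want, and combining with the long exact sequence one extracts exactly $\mathrm{Hom}_\mathcal{D}(\tilde T_Y,Z[1])=0$ for $Z\in\mathcal{D}^{\leq 0}$ by showing $Z[1]$ is an iterated extension of a shift of something in $\mathcal{A}$ and something in $i_*(\mathcal{Y}^{\leq 0})$ — i.e. using $\mathcal{D}^{\leq 0}=\mathcal{A}\star i_*(\mathcal{Y}^{\leq 0})$, which is the decomposition underlying the glued t-structure. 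With that decomposition in hand, vanishing of $\mathrm{Hom}_\mathcal{D}(\tilde T_Y,-[1])$ on $\mathcal{A}[1]$ and on $i_*(\mathcal{Y}^{\leq 0})[1]$ separately suffices, and I have both: the former by $\tilde T_Y\in{}^\perp\mathcal{A}[1]$ (note ${}^\perp\mathcal{A}[1]$ is closed under suspension since $\mathcal{A}[1]$ is closed under desuspension), the latter by the long exact sequence reduction to $\mathrm{Hom}_\mathcal{D}(i_*T_Y,-[1])$ and $\mathrm{Hom}_\mathcal{D}(U_{T_Y},-[2])$ on $i_*(\mathcal{Y}^{\leq 0})[1]$, both of which vanish by adjunction ($i^!i_*\cong\mathrm{id}$, and $U_{T_Y}$ being in $\mathrm{Im}(j_!)$ is left-orthogonal to $\mathrm{Im}(i_*)$ in all degrees). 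Assembling these pieces via the glued-aisle factorization $\mathcal{D}^{\leq 0}=j_!(\mathcal{X}^{\leq 0})\star i_*(\mathcal{Y}^{\leq 0})$ is the heart of the argument; the rest is bookkeeping with the recollement adjunctions.
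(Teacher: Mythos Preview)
Your proposal, after the self-corrections, is correct and lands on essentially the paper's argument. A few remarks on the route you took versus the paper's.

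Your initial attempt to control $\mathrm{Hom}_\mathcal{D}(i_*T_Y,Z[1])$ via the adjunction $(i_*,i^!)$ fails for the reason you suspected: for a glued t-structure $i^!$ is only \emph{left} t-exact (it takes $\mathcal{D}^{\geq 0}$ into $\mathcal{Y}^{\geq 0}$), so $i^!\mathcal{D}^{\leq 0}\subseteq\mathcal{Y}^{\leq 0}$ is simply false in general. You correctly abandon this and instead use the factorization $\mathcal{D}^{\leq 0}=j_!(\mathcal{X}^{\leq 0})\star i_*(\mathcal{Y}^{\leq 0})$, which is precisely the recollement triangle $j_!j^*D\to D\to i_*i^*D\stackrel{+}{\to}$ for $D\in\mathcal{D}^{\leq 0}$; this is also what the paper does.

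For the piece $\mathrm{Hom}_\mathcal{D}(\tilde T_Y,i_*(\mathcal{Y}^{\leq 0})[1])$, the paper takes a slightly cleaner shortcut than your long exact sequence: applying $i^*$ to the triangle in $(\star)$ and using $i^*j_!=0$ gives $i^*\tilde T_Y\cong i^*i_*T_Y\cong T_Y$, so by the $(i^*,i_*)$ adjunction $\mathrm{Hom}_\mathcal{D}(\tilde T_Y,i_*Y[1])\cong\mathrm{Hom}_\mathcal{Y}(T_Y,Y[1])=0$ for $Y\in\mathcal{Y}^{\leq 0}$, directly from the partial silting condition on $\mathcal{T}_Y$. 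Your version (vanishing of $\mathrm{Hom}(i_*T_Y,-)$ via $i_*$ fully faithful, and of $\mathrm{Hom}(U_{T_Y},-)$ via $\mathrm{Im}(j_!)\perp\mathrm{Im}(i_*)$) is equivalent but one step longer.

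Finally, your handling of the silting (as opposed to partial silting) case via right non-degeneracy of the glued t-structure is correct and amounts to the same computation the paper does explicitly: $Z\in\mathcal{T}^{\perp_{k\in\mathbb{Z}}}$ forces $j^*Z=0$ (from $\mathcal{T}_X$ generating $\mathcal{X}$) and then $i^!Z=0$ (from $\mathcal{T}_Y$ generating $\mathcal{Y}$), whence $Z=0$ by the triangle $i_*i^!Z\to Z\to j_*j^*Z\stackrel{+}{\to}$. Note you only need \emph{right} non-degeneracy ($\bigcap_n\mathcal{D}^{\geq n}=0$), not full non-degeneracy.
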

\begin{proof}
Let us prove that
$\mathcal{T}:=j_!(\mathcal{T}_X)\cup\tilde{T}_Y$ generates
$(\mathcal{D}^{\leq 0},\mathcal{D}^{\geq 0})$. By Lemma
\ref{lem.generators of glued t-structure}, the
class  $j_!\mathcal{X}^{{\leq 0}}\cup i_*(\mathcal{T}_Y)$ generates
$(\mathcal{D}^{\leq 0},\mathcal{D}^{\geq 0})$, so 
$\mathcal{D}^{>0}=(j_!\mathcal{X}^{{\leq 0}}\cup
i_*(\mathcal{T}_Y))^{\perp_{\leq 0}}$. Condition $(\star)$ implies that $(j_!\mathcal{X}^{\leq 0}\cup
i_*(\mathcal{T}_Y))^{\perp_{\leq 0}}=(j_!\mathcal{X}^{\leq
0}\cup\tilde{\mathcal{T}}_Y)^{\perp_{\leq 0}}$. Note that  $(j_!\mathcal{X}^{\leq 0})^\perp\subseteq
(j_!(\mathcal{T}_X))^{\perp_{\leq 0}}$. Conversely, if $Z\in
(j_!(\mathcal{T}_X))^{\perp_{\leq 0}}$ then
$\text{Hom}_\mathcal{X}(T_X[k],j^*Z)\cong\text{Hom}_\mathcal{D}(j_!T_X[k],Z)=0$,
for all $T_X\in\mathcal{T}_X$ and all integers $k\geq 0$. Then $j^*Z\in\mathcal{X}^{>0}$, and so
$\text{Hom}_\mathcal{D}(j_!X,Z)\cong\text{Hom}_\mathcal{X}(X,j^*Z)=0$,
for all $X\in\mathcal{X}^{{\leq 0}}$ since $\mathcal{T}_X$
generates $(\mathcal{X}^{\leq 0},\mathcal{X}^{\geq 0})$. Thus, there is an equality $(j_!\mathcal{X}^{\leq 0})^\perp
=(j_!(\mathcal{T}_X))^{\perp_{\leq 0}}$. It follows that
$\mathcal{D}^{>0}=(j_!\mathcal{X}^{\leq
0}\cup\tilde{\mathcal{T}}_Y)^{\perp_{\leq 0}}=(j_!\mathcal{X}^{\leq
0})^{\perp_{\leq 0}}\cap\tilde{\mathcal{T}}_Y^{\perp_{\leq
0}}=(j_!\mathcal{X}^{\leq
0})^\perp\cap\tilde{\mathcal{T}}_Y^{\perp_{\leq 0}}=(j_!(\mathcal{T}_X))^{\perp_{\leq
0}}\cap\tilde{\mathcal{T}}_Y^{\perp_{\leq
0}}=(j_!(\mathcal{T}_X)\cup\tilde{\mathcal{T}}_Y)^{\perp_{\leq 0}}$.
Therefore $\mathcal{T}:=j_!(\mathcal{T}_X)\cup\tilde{\mathcal{T}}_Y$
generates $(\mathcal{D}^{\leq 0},\mathcal{D}^{\geq 0})$.

In order to prove that $\mathcal{T}$ is partial silting, we
need to check that the functors $\text{Hom}_\mathcal{D}(j_!T_X,-)$
and $\text{Hom}_\mathcal{D}(\tilde{T}_Y,-)$ vanish on
$\mathcal{D}^{<0}:=\mathcal{D}^{\leq 0}[1]$, for all
$T_X\in\mathcal{T}_X$ and $T_Y\in\mathcal{T}_Y$. Consider
$D\in\mathcal{D}^{\leq 0}$. Due to adjunction, 
$\text{Hom}_\mathcal{D}(j_!T_X,D[1])\cong\text{Hom}_\mathcal{X}(T_X,j^*D[1])$,
for all $T_X\in\mathcal{T}_X$. But 
$j^*D\in\mathcal{X}^{\leq 0}$, so the partial silting
condition on $\mathcal{T}_X$ gives 
$\text{Hom}_\mathcal{X}(T_X,j^*D[1])=0$, for all $T_X\in\mathcal{T}_X$.
On the other hand, $j^*D\in\mathcal{X}^{\leq 0}$ and,
by definition of $\tilde{T}_Y$, we have $\tilde{T}_Y\in
{}^\perp (j_!\mathcal{X}^{\leq 0}[1])$, for each
$T_Y\in\mathcal{T}_Y$.  Then
$\text{Hom}_\mathcal{D}(\tilde{T}_Y,D[1])\cong\text{Hom}_\mathcal{D}(\tilde{T}_Y,i_*i^*D[1])\cong\text{Hom}_\mathcal{Y}(i^*\tilde{T}_Y,i^*D[1])$.
The equality  $i^*\circ j_!=0$ implies that  $i^*$ vanishes on
$j_!\mathcal{X}^{\leq 0}[k]$,  for all $k\in\mathbb{Z}$, and hence $i^*\tilde{T}_Y\cong i^*i_*T_Y\cong T_Y$. Thus,
$\text{Hom}_\mathcal{D}(\tilde{T}_Y,D[1])\cong\text{Hom}_\mathcal{Y}(T_Y,i^*D[1])$,
for each $T_Y\in\mathcal{T}_Y$. Since, by definition of
$\mathcal{D}^{\leq 0}$, we know that $i^*D\in\mathcal{Y}^{\leq 0}$,
the partial silting condition on $\mathcal{T}_Y$ implies that
$\text{Hom}_\mathcal{D}(\tilde{T}_Y,D[1])\cong\text{Hom}_\mathcal{Y}(T_Y,i^*D[1])=0$,
for all $T_Y\in\mathcal{T}_Y$. Therefore $\mathcal{T}:=j_!(\mathcal{T}_X)\cup\tilde{\mathcal{T}}_Y$ is 
partial silting in $\mathcal{D}$.

Assume now that $\mathcal{T}_X$ and $\mathcal{T}_Y$ generate $\mathcal{X}$ and $\mathcal{Y}$, respectively. Let us prove that
$\mathcal{T}=j_!(\mathcal{T}_X)\cup\tilde{T}_Y$ generates
$\mathcal{D}$. Let $Z\in\mathcal{D}$ be an object such that
$\text{Hom}_\mathcal{D}(T[k],Z)=0$, for all $T\in\mathcal{T}$, $k
\in \mathbb{Z}$. Then 
$\text{Hom}_\mathcal{X}(T_X[k],j^*Z)\cong\text{Hom}_\mathcal{D}(j_!T_X[k],Z)=0$,
for all $k\in\mathbb{Z}$ and  $T_X\in\mathcal{T}_X$. Since $\mathcal{T}_X$ generates $\mathcal{X}$, we get $j^*Z=0$. In
particular, $\text{Hom}_\mathcal{D}(j_!X[k],Z)=0$, for
all $X\in\mathcal{X}^{\leq 0}$. Condition $(\star)$ implies
$\text{Hom}_\mathcal{Y}(T_Y[k],i^!Z)\cong\text{Hom}_\mathcal{D}(i_*T_Y[k],Z)=0$,
for all $k\in\mathbb{Z}$ and all $T_Y\in\mathcal{T}_Y$. The
generating condition on $\mathcal{T}_Y$ gives $i^!Z=0$.
From the canonical triangle $i_*i^!Z\longrightarrow
Z\longrightarrow j_*j^*Z\stackrel{+}{\longrightarrow}$, we conclude
that $Z=0$. Therefore $j_!(\mathcal{T}_X)\cup\tilde{T}_Y$ generates $\mathcal{D}$, and hence is a  silting
set in $\mathcal{D}$.
\end{proof}

It is natural to ask when condition $(\star)$ from Theorem \ref{thm.gluing of silting t-structures} holds. The following result is useful.

\begin{lemma}\label{lem.adjacent gluing}
Let $\mathcal{Y}, \mathcal{D}, \mathcal{X}$ be triangulated
categories and let $\mathcal{L}$ be a ladder of recollements of
height two

\begin{equation}\label{ladder2}
\begin{xymatrix}{
\mathcal{Y} \ar[r]^{i_*} \ar@<6ex>[r]^{i_{\sharp}}& \mathcal{D} \ar@<3ex>[l]_{i^!} \ar@<-3ex>[l]_{i^*} \ar@<6ex>[r]^{j^{\sharp}} \ar[r]^{j^*} & \mathcal{X} \ar@<3ex>[l]_{j_*} \ar@<-3ex>[l]_{j_!}\\
}
\end{xymatrix}
\end{equation}
Let $(\mathcal{X}_1,\mathcal{X}_1')$ and $(\mathcal{X}_2,\mathcal{X}_2')$ be adjacent torsion pairs in $\mathcal{X}$ and let $(\mathcal{Y}_1,\mathcal{Y}_1')$ and $(\mathcal{Y}_2,\mathcal{Y}_2')$ be adjacent torsion pairs in $\mathcal{Y}$. Consider the torsion pairs $(\mathcal{D}_i,\mathcal{D}'_i)$ ($i=1,2$) defined as follows:

\begin{enumerate}
\item   $(\mathcal{D}_1,\mathcal{D}_1')$ is the torsion pair in $\mathcal{D}$  glued from $(\mathcal{X}_1,\mathcal{X}_1')$ and $(\mathcal{Y}_1,\mathcal{Y}_1')$
 with respect to the upper recollement of the ladder.
 \item  $(\mathcal{D}_2,\mathcal{D}_2')$ is the torsion pair in $\mathcal{D}$  glued from  $(\mathcal{X}_2,\mathcal{X}_2')$ and $(\mathcal{Y}_2,\mathcal{Y}_2')$
 with respect to the lower recollement of the ladder.
\end{enumerate}

Then $(\mathcal{D}_1,\mathcal{D}_1')$ and $(\mathcal{D}_2,\mathcal{D}_2')$ are adjacent torsion pairs in $\mathcal{D}$. 
\end{lemma}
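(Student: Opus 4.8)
The plan is to recall the explicit description of the glued torsion pair and then to verify, using the exactness and orthogonality properties built into a ladder of height two, that the co-aisle of $(\mathcal{D}_1,\mathcal{D}_1')$ coincides with the aisle of $(\mathcal{D}_2,\mathcal{D}_2')$. Since $(\mathcal{X}_1,\mathcal{X}_1')$ and $(\mathcal{X}_2,\mathcal{X}_2')$ are adjacent in $\mathcal{X}$, we have $\mathcal{X}_1'=\mathcal{X}_2$; similarly $\mathcal{Y}_1'=\mathcal{Y}_2$. By the definition of a glued torsion pair with respect to the \emph{upper} recollement of the ladder, $\mathcal{D}_1'=\{Z\in\mathcal{D}\mid j^*Z\in\mathcal{X}_1',\ i^!Z\in\mathcal{Y}_1'\}$, while by the definition with respect to the \emph{lower} recollement, whose functors are $(i^!,i_*,i_\sharp,j_!,j^\sharp,j_*)$ arranged so that the role played by $i^*$ (resp. $j^*$) in the upper recollement is now played by $i^!$ (resp. $j^\sharp$), we get $\mathcal{D}_2=\{Z\in\mathcal{D}\mid j^\sharp Z\in\mathcal{X}_2,\ i^!Z\in\mathcal{Y}_2\}$. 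The first step is therefore to write these two descriptions down carefully, paying attention to which functors go into which slot of the gluing recipe for the lower recollement.

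Next I would reconcile the two conditions $j^*Z\in\mathcal{X}_1'=\mathcal{X}_2$ and $j^\sharp Z\in\mathcal{X}_2$. The key point is that in the ladder the pair $(j^\sharp, j_*)$ is an adjoint pair and $(j^*,j_*)$ is an adjoint pair as well, which forces $j^\sharp$ and $j^*$ to be \emph{naturally isomorphic} (two left adjoints of the same functor $j_*$ coincide up to natural isomorphism). Hence the condition on the $\mathcal{X}$-component is literally the same for $\mathcal{D}_1'$ and for $\mathcal{D}_2$. The same argument applies on the $\mathcal{Y}$-side: in the lower recollement the functor landing in $\mathcal{Y}$ that must send $Z$ into $\mathcal{Y}_2$ is again $i^!$ (since $i_*$ has both $i^*$ and $i^!$ as adjoints, but for the lower recollement it is $i^!$ that plays the role of "$i^*$"), and $\mathcal{Y}_1'=\mathcal{Y}_2$. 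So the condition on the $\mathcal{Y}$-component also coincides. Comparing, $\mathcal{D}_1'$ and $\mathcal{D}_2$ are cut out by the same pair of membership conditions, whence $\mathcal{D}_1'=\mathcal{D}_2$, which is exactly the statement that $(\mathcal{D}_1,\mathcal{D}_1')$ and $(\mathcal{D}_2,\mathcal{D}_2')$ are adjacent torsion pairs. It is worth noting that we already know each of $(\mathcal{D}_1,\mathcal{D}_1')$ and $(\mathcal{D}_2,\mathcal{D}_2')$ is a genuine torsion pair by the gluing statement recalled in the Preliminaries (Th\'eor\`eme 1.4.10 of \cite{BBD} and Theorem 8.2.3 of \cite{Bo}), so the only content here is the equality of the middle terms.

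The step I expect to be the main obstacle is the careful bookkeeping of which six functors constitute the lower recollement of the ladder and how the gluing recipe instantiates for it — in particular making sure that the functor whose image-condition defines $\mathcal{D}_2$ on the $\mathcal{Y}$-side is indeed $i^!$ (not $i^*$) and that on the $\mathcal{X}$-side it is $j^\sharp$, together with the verification via uniqueness of adjoints that $j^\sharp\cong j^*$ so that the two $\mathcal{X}$-conditions genuinely agree. Once the indexing is pinned down, the identification $\mathcal{D}_1'=\mathcal{D}_2$ is immediate from $\mathcal{X}_1'=\mathcal{X}_2$ and $\mathcal{Y}_1'=\mathcal{Y}_2$, and no further computation is needed.
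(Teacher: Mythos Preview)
Your argument contains a genuine error at the ``uniqueness of adjoints'' step. You assert that $(j^\sharp,j_*)$ and $(j^*,j_*)$ are both adjoint pairs and conclude $j^\sharp\cong j^*$. But in the ladder (\ref{ladder2}) the functor $j^\sharp$ is the \emph{left} adjoint of $j_!$ (equivalently, one step further along the adjoint string $j^\sharp\dashv j_!\dashv j^*\dashv j_*$), not a second left adjoint of $j_*$. Thus $j^\sharp$ and $j^*$ sit on opposite sides of $j_!$ and there is no uniqueness principle forcing them to be isomorphic; indeed, if $j^\sharp\cong j^*$ held in general the ladder would carry no information beyond the original recollement. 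The analogous remark applies on the $\mathcal{Y}$-side: $i_\sharp$ is left adjoint to $i^*$, so $i_\sharp\dashv i^*\dashv i_*\dashv i^!$.

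This mis-reading of the adjoint string also leads you to the wrong list of functors for the two recollements. With the correct reading, the \emph{lower} recollement is the standard one $(i^*,i_*,i^!,j_!,j^*,j_*)$, while the \emph{upper} recollement is $(i_\sharp,i^*,i_*,j^\sharp,j_!,j^*)$, in which $\mathcal{X}$ is embedded via $j_!$ (with left adjoint $j^\sharp$ and right adjoint $j^*$) and $\mathcal{Y}$ is the quotient via $i^*$. Applying the gluing recipe to the upper recollement therefore gives
\[
\mathcal{D}_1'=\{Z\in\mathcal{D}\mid i^*Z\in\mathcal{Y}_1',\ j^*Z\in\mathcal{X}_1'\},
\]
using precisely the functors $i^*$ and $j^*$ shared with the lower recollement. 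Since the aisle of the lower gluing is $\mathcal{D}_2=\{Z\mid j^*Z\in\mathcal{X}_2,\ i^*Z\in\mathcal{Y}_2\}$ and $\mathcal{X}_1'=\mathcal{X}_2$, $\mathcal{Y}_1'=\mathcal{Y}_2$, the equality $\mathcal{D}_1'=\mathcal{D}_2$ is immediate --- no functor comparison is needed. This is exactly the one-line argument in the paper.
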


\begin{proof}
By the gluing procedure we get: $\mathcal{D}_2=\{Z \in
\mathcal{D} \mid j^*Z \in\mathcal{X}_2, i^*Z \in
\mathcal{Y}_2 \}=\{Z \in \mathcal{D}
\mid i^*Z \in \mathcal{Y}'_1, j^*Z \in \mathcal{X}'_1
\}=\mathcal{D}'_1$.
\end{proof}

\begin{corollary} \label{cor.upper layer of the recollement}
Let $\mathcal{L}$ be a ladder of recollements as in Lemma \ref{lem.adjacent gluing}, and let $\mathcal{T}_X$ and $\mathcal{T}_Y$ be partial silting sets which generate t-structures $\tau_X=(\mathcal{X}^{\leq 0},\mathcal{X}^{\geq 0})$ and $\tau_Y=(\mathcal{Y}^{\leq 0},\mathcal{Y}^{\geq 0})$ in $\mathcal{X}$ and $\mathcal{Y}$, respectively. If $\tau_X$ has a left adjacent co-t-structure in $\mathcal{X}$, then condition $(\star)$ of Theorem \ref{thm.gluing of silting t-structures} holds, so  $j_!(\mathcal{T}_X)\cup\tilde{\mathcal{T}}_Y$ is a partial silting set which generates the  t-structure $(\mathcal{D}^{\leq 0},\mathcal{D}^{\geq 0})$ in $\mathcal{D}$ glued with respect to the lower recollement of the ladder. 
\end{corollary}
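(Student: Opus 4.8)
The plan is to verify that the existence of a left adjacent co-t-structure to $\tau_X$ in $\mathcal{X}$ is exactly what is needed to produce, for every $T_Y \in \mathcal{T}_Y$, the triangle required in condition $(\star)$ of Theorem \ref{thm.gluing of silting t-structures}. First I would observe that, since $\mathcal{L}$ is a ladder of recollements of height two, the upper recollement (the one involving $i_\sharp$ and $j^\sharp$) can be used to glue torsion pairs. By Remark \ref{rem.aisle of X is aisle of D}, the class $j_!(\mathcal{X}^{\leq 0})$ is the aisle of a t-structure in $\mathcal{D}$ with respect to the \emph{lower} recollement; but the key point is to recognize it, via the upper recollement, as the co-aisle of a \emph{co-t-structure} in $\mathcal{D}$ glued from the left adjacent co-t-structure of $\tau_X$ on the $\mathcal{X}$-side and a trivial co-t-structure on the $\mathcal{Y}$-side.

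More precisely: let $(\mathcal{X}_{\geq 0}, \mathcal{X}_{\leq 0})$ be the co-t-structure in $\mathcal{X}$ left adjacent to $\tau_X = (\mathcal{X}^{\leq 0},\mathcal{X}^{\geq 0})$, so that $\mathcal{X}_{\leq 0} = \mathcal{X}^{\leq 0}$. The t-structure $\tau_X$ and the trivial t-structure $(0,\mathcal{Y})$ on $\mathcal{Y}$ glue, with respect to the lower recollement, to a t-structure on $\mathcal{D}$ whose aisle is $j_!(\mathcal{X}^{\leq 0})$. The co-t-structure $(\mathcal{X}_{\geq 0},\mathcal{X}_{\leq 0})$ on $\mathcal{X}$ and the trivial co-t-structure $(\mathcal{Y},0)$ on $\mathcal{Y}$ glue, with respect to the \emph{upper} recollement, to a co-t-structure $(\mathcal{D}_{\geq 0},\mathcal{D}_{\leq 0})$ on $\mathcal{D}$ (this uses \cite[Theorem 8.2.3]{Bo}, quoted in Section \ref{sect.preliminaries}, which guarantees that gluing co-t-structures produces co-t-structures). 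By Lemma \ref{lem.adjacent gluing}, since the t-structure $\tau_X$ (viewed as a torsion pair) and the co-t-structure $(\mathcal{X}_{\geq 0},\mathcal{X}_{\leq 0})$ are adjacent torsion pairs in $\mathcal{X}$, and the trivial pairs $(0,\mathcal{Y})$ and $(\mathcal{Y},0)$ are adjacent in $\mathcal{Y}$, the glued pairs $(j_!(\mathcal{X}^{\leq 0}), \ast)$ and $(\mathcal{D}_{\geq 0},\mathcal{D}_{\leq 0})$ are adjacent torsion pairs in $\mathcal{D}$. In particular $\mathcal{D}_{\leq 0} = j_!(\mathcal{X}^{\leq 0})$, and so $(\mathcal{D}_{\geq 0}, j_!(\mathcal{X}^{\leq 0}))$ is a co-t-structure in $\mathcal{D}$. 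Moreover $\mathcal{D}_{\geq 0} = {}^\perp(j_!(\mathcal{X}^{\leq 0}))[1] = {}^\perp(j_!(\mathcal{X}^{\leq 0})[1])$, i.e.\ the torsion part of this co-t-structure is precisely the class appearing in $(\star)$ up to the shift.

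Granting this, condition $(\star)$ is immediate: for any $T_Y \in \mathcal{T}_Y$, apply to the object $i_*T_Y[-1]$ the truncation triangle with respect to the co-t-structure $(\mathcal{D}_{\geq 0}, j_!(\mathcal{X}^{\leq 0}))$, then shift by $[1]$. This yields a triangle $\tilde T_Y \longrightarrow i_*T_Y \stackrel{f_{T_Y}}{\longrightarrow} U_{T_Y}[1] \stackrel{+}{\longrightarrow}$ with $U_{T_Y} \in j_!(\mathcal{X}^{\leq 0})$ (the co-aisle part) and $\tilde T_Y \in \mathcal{D}_{\geq 0} = {}^\perp(j_!(\mathcal{X}^{\leq 0})[1])$ (the aisle part). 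Then Theorem \ref{thm.gluing of silting t-structures} applies verbatim and gives that $j_!(\mathcal{T}_X) \cup \tilde{\mathcal{T}}_Y$ is a partial silting set generating the glued t-structure $(\mathcal{D}^{\leq 0},\mathcal{D}^{\geq 0})$ with respect to the lower recollement.

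The main obstacle, and the step requiring the most care, is the identification $\mathcal{D}_{\leq 0} = j_!(\mathcal{X}^{\leq 0})$, i.e.\ checking that the co-aisle of the co-t-structure glued along the \emph{upper} recollement coincides with the aisle of the t-structure glued along the \emph{lower} recollement. This is where the ladder structure is genuinely used: the functors $i^*, j^*$ of the lower recollement are related to the functors $i^!, j^\sharp$ (or their analogues) of the upper one, so that the defining conditions "$j^*Z \in \mathcal{X}^{\leq 0}$ and $i^*Z = 0$" for $j_!(\mathcal{X}^{\leq 0})$ (from Remark \ref{rem.aisle of X is aisle of D}) match the defining conditions for the co-aisle of the upper-glued co-t-structure "$j^\sharp Z \in \mathcal{X}_{\leq 0} = \mathcal{X}^{\leq 0}$ and $i^! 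Z \in 0$" after unwinding which functor plays which role in the two layers. Lemma \ref{lem.adjacent gluing} is precisely the tool that packages this matching, so the argument reduces to checking that the inputs to that lemma are the adjacent torsion pairs described above, which is routine.
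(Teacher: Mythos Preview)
Your argument is correct and follows exactly the paper's approach: glue the left adjacent co-t-structure $({}^\perp(\mathcal{X}^{\leq 0})[-1],\mathcal{X}^{\leq 0})$ together with $(\mathcal{Y},0)$ along the upper recollement to obtain a co-t-structure in $\mathcal{D}$ with co-aisle $j_!(\mathcal{X}^{\leq 0})$, and then read off condition~$(\star)$ from its truncation triangles. The paper compresses this into two sentences, whereas you unpack the identification $\mathcal{D}_{\leq 0}=j_!(\mathcal{X}^{\leq 0})$ via Lemma~\ref{lem.adjacent gluing} (applied to the adjacent pairs $(\mathcal{X}_{\geq 0}[-1],\mathcal{X}^{\leq 0})$, $(\mathcal{X}^{\leq 0},\mathcal{X}^{>0})$ in $\mathcal{X}$ and $(\mathcal{Y},0)$, $(0,\mathcal{Y})$ in $\mathcal{Y}$) and Remark~\ref{rem.aisle of X is aisle of D}; this is precisely the justification the paper leaves implicit.
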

\begin{proof}
Gluing the co-t-structures $({}^\perp(\mathcal{X}^{\leq 0})[-1],\mathcal{X}^{\leq 0})$ and $(\mathcal{Y},0)$ with respect to the upper recollement of the ladder, we obtain a co-t-structure in $\mathcal{D}$ whose right component is $j_!(\mathcal{X}^{\leq 0})$. Then condition $(\star)$ of Theorem \ref{thm.gluing of silting t-structures} holds.
\end{proof}

\begin{example} \label{ex.}
In the situation of the last corollary, let $\mathcal{X}$ have coproducts,  let $\mathcal{T}_X$ be a silting  set in  $\mathcal{X}$ with the associated t-structure $\tau_X:=(\mathcal{X}^{\leq 0},\mathcal{X}^{\geq 0})$. If either of the following  conditions holds, then $\tau_X$ has a left adjacent co-t-structure:

\begin{enumerate}
\item $\mathcal{X}$ is the stable category of an
efficient Frobenius exact category with coproducts in the terminology of \cite{SS};
\item $\mathcal{T}_X$ consists of compact objects. 
\end{enumerate}
(See \cite[Theorem 4.3.1]{Bo2} for a more general condition than 2 where the argument below also  works).
\end{example}
\begin{proof}
By \cite[Theorem 1]{NSZ} (see also \cite{PV}), we have $(\mathcal{X}^{\leq 0},\mathcal{X}^{\geq 0})=(\mathcal{T}_X^{\perp_{>0}},\mathcal{T}_X^{\perp_{<0}})$. Then the proof reduces to check that  $({}^\perp (\mathcal{T}_X^{\perp_{> 0}}),\mathcal{T}_X^{\perp_{>0}})$  is a torsion pair in $\mathcal{X}$. Under condition 1, this follows from  \cite[Corollary 3.5]{SS}. Under condition 2 it follows from \cite[Theorem 4.3]{AI} or from Corollary \ref{cor.silting set is preenveloping}.
\end{proof}

\begin{remark} \label{rem.results dualize}
The following diagram is a recollement of triangulated categories \\
$
\begin{xymatrix}{\mathcal{Y} \ar[r]^{i_*}& \mathcal{D} \ar@<3ex>[l]_{i^!}\ar@<-3ex>[l]_{i^*}\ar[r]^{j^*} & \mathcal{X} \ar@<3ex>_{j_*}[l]\ar@<-3ex>_{j_!}[l]}
\end{xymatrix}
$ if and only if   so is $
\begin{xymatrix}{\mathcal{Y}^{op} \ar[r]^{i_*}& \mathcal{D}^{op} \ar@<3ex>[l]_{i^*}\ar@<-3ex>[l]_{i^!}\ar[r]^{j^*} & \mathcal{X}^{op}. \ar@<3ex>_{j_!}[l]\ar@<-3ex>_{j_*}[l]}
\end{xymatrix}
 $ As a consequence, after defining  \emph{(partial) cosilting  set}  as the dual of (partial) silting  set, 
 many results in this section admit a dualization. We leave their statement to the reader. 
\end{remark}

\subsection{Gluing partial silting sets of compact objects}

When some of the functors in a recollement preserve compact objects, we can approach condition $(\star)$ of Theorem \ref{thm.gluing of silting t-structures} on the compact level.

\begin{setup} \label{setup}
 In this subsection we consider:

\begin{enumerate}
 \item A recollement \begin{equation}\label{rec}
\begin{xymatrix}{\mathcal{Y} \ar[r]^{i_*}& \mathcal{D} \ar@<3ex>[l]_{i^!}\ar@<-3ex>[l]_{i^*}\ar[r]^{j^*} & \mathcal{X} \ar@<3ex>_{j_*}[l]\ar@<-3ex>_{j_!}[l]},
\end{xymatrix}
\end{equation}   where $\mathcal{Y}$, $\mathcal{D}$ and $\mathcal{X}$ are thick subcategories of triangulated categories with coproducts $\hat{\mathcal{Y}}$, $\hat{\mathcal{D}}$ and $\hat{\mathcal{X}}$ which contain the corresponding subcategories of compact objects.
\item  Partial silting sets $\mathcal{T}_X$ and $\mathcal{T}_Y$ in $\mathcal{X}$ and $\mathcal{Y}$, respectively, consisting of compact objects, and the t-structures  $(\mathcal{X}^{\leq 0},\mathcal{X}^{\geq 0})$ and $(\mathcal{Y}^{\leq 0},\mathcal{Y}^{\geq 0})$ in $\mathcal{X}$ and $\mathcal{Y}$, generated by $\mathcal{T}_X$ and $\mathcal{T}_Y$.
\item $j_!(\mathcal{T}_X)\cup i_*(\mathcal{T}_Y)\subset\hat{\mathcal{D}}^c$ and $j_!(\mathcal{T}_X)$ weakly preenveloping in $\hat{\mathcal{D}}^c$.
\end{enumerate}

\end{setup}

The following are examples of weakly preenveloping sets of compact objects.

\begin{example} \label{ex.finite-is-weaklypreenveloping}
Let $\hat{\mathcal{D}}$ be a compactly generated triangulated category. Under either of the
following conditions, the set $\mathcal{T}$ is weakly preenveloping in $\hat{\mathcal{D}}^c$:
\begin{enumerate}
\item $\hat{\mathcal{D}}$ is homologically  locally bounded,  $\text{Hom}_{\hat{\mathcal{D}}}(M,N)$ is a finitely generated
$K$-module, for all $M,N\in\hat{\mathcal{D}}^c$, and  $\mathcal{T}$ is a finite non-positive set in $\hat{\mathcal{D}^{c}. }$
\item $\mathcal{T}$ is classical silting in $\hat{\mathcal{D}}^c$. 
\end{enumerate}
\end{example}
\begin{proof}
Example 2 follows from Corollary \ref{cor.silting=weaklyprecovpreenv}. As for example 1, using the
  finiteness of $\mathcal{T}$, we can assume that $\mathcal{T}=\{T\}$. The homological local boundedness of $\hat{\mathcal{D}}$ implies that, for each $M\in\hat{\mathcal{D}}^c$, one 
has $\text{Hom}_{\hat{\mathcal{D}}}(M,T[k])=0$, for $k\gg 0$. Moreover, if $s=s(M,T)=Sup\{k\in \mathbb{N}\text{: }\text{Hom}_{\hat{\mathcal{D}}}(M,T[k])\neq 0\}$, then $M$ has an 
$\text{add}(T)[s]$-preenvelope because $\text{Hom}_{\hat{\mathcal{D}}}(M,T[s])$ is finitely generated as a $K$-module.\end{proof}

In Setup \ref{setup}, there exists a triangle $(\dagger):$ 
${\tilde{T}_Y\longrightarrow i_*T_Y\longrightarrow U_{T_Y}[1]\stackrel{+}{\longrightarrow}}$  in $\hat{\mathcal{D}}^c$, where 
$U_{T_Y}\in\text{susp}_{\hat{\mathcal{D}}}(j_!(\mathcal{T}_X))$ and $\tilde{T}_Y\in {}^\perp\text{Susp}_{\hat{\mathcal{D}}}(j_!(\mathcal{T}_X))[1]$,
 for each $T_Y\in\mathcal{T}_Y$ (see Proposition \ref{prop.envelope in the aisle}). The natural question is: 
does condition $(\star)$ of Theorem \ref{thm.gluing of silting t-structures} hold? Our next result gives a partial answer. 

\begin{theorem} \label{thm.gluing at compact level}
In  Setup \ref{setup}, if  $j_!(\mathcal{X}^{\leq 0})\subseteq\text{Susp}_{\hat{\mathcal{D}}}(j_!(\mathcal{T}_X))$, then condition $(\star)$ of  Theorem \ref{thm.gluing of silting t-structures} holds and, for $\tilde{\mathcal{T}}_Y=\{ \tilde{T}_Y\text{: }T_Y\in\mathcal{T}_Y\}$,  the set  $\mathcal{T}:=j_!(\mathcal{T}_X)\cup\tilde{\mathcal{T}}_Y \subseteq \hat{\mathcal{D}}^c$ is partial silting in $\mathcal{D}$ and generates the glued t-structure $(\mathcal{D}^{\leq 0},\mathcal{D}^{\geq 0})$. 

The inclusion $j_!(\mathcal{X}^{\leq 0})\subseteq\text{Susp}_{\hat{\mathcal{D}}}(j_!(\mathcal{T}_X))$ holds if, in addition to Setup \ref{setup}, either one of the following conditions hold:

\begin{enumerate}
\item Recollement (\ref{rec}) is the restriction of a recollement 
$$
\begin{xymatrix}{\hat{\mathcal{Y}} \ar[r]^{\hat{i}_*}& \hat{\mathcal{D}} \ar@<3ex>[l]_{\hat{i}^!}\ar@<-3ex>[l]_{\hat{i}^*}\ar[r]^{\hat{j}^*} & \hat{\mathcal{X}} \ar@<3ex>_{\hat{j}_*}[l]\ar@<-3ex>_{\hat{j}_!}[l]}
\end{xymatrix},$$ and $(\mathcal{X}^{\leq 0},\mathcal{X}^{\geq 0})$ is the restriction of the t-structure in $\hat{\mathcal{X}}$  generated by $\mathcal{T}_X$. 
\item The triangulated categories $\hat{\mathcal{Y}}$, $\hat{\mathcal{D}}$ and $\hat{\mathcal{X}}$ are compactly generated,  
$\mathcal{T}_X$ is a classical silting set in $\hat{\mathcal{X}}^c$,  the functors $j_!$, $j^*$, $i_*$ and $i^*$ preserve compact objects 
and either $\text{Im}(i_*)$ cogenerates $\text{Loc}_{\hat{\mathcal{D}}}(i_*(\hat{\mathcal{Y}}^c))$ or $\mathcal{D}$ cogenerates 
$\hat{\mathcal{D}}$.
\end{enumerate}
\end{theorem}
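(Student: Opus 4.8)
The plan is to reduce the theorem to the key criterion Theorem \ref{thm.gluing of silting t-structures} by verifying condition $(\star)$, and then to verify the inclusion $j_!(\mathcal{X}^{\leq 0})\subseteq\text{Susp}_{\hat{\mathcal{D}}}(j_!(\mathcal{T}_X))$ under each of the two extra hypotheses. First I would handle the implication ``inclusion $\Rightarrow$ $(\star)$''. Given the inclusion, for each $T_Y\in\mathcal{T}_Y$ I use Proposition \ref{prop.envelope in the aisle}: since $j_!(\mathcal{T}_X)$ is a non-positive set of compact objects which is weakly preenveloping in $\hat{\mathcal{D}}^c$ (item (3) of Setup \ref{setup}), the compact object $i_*T_Y$ fits into a triangle $\tilde{T}_Y\longrightarrow i_*T_Y\longrightarrow U_{T_Y}[1]\stackrel{+}{\longrightarrow}$ with $U_{T_Y}\in\text{susp}_{\hat{\mathcal{D}}}(j_!(\mathcal{T}_X))$ and $\tilde{T}_Y\in{}^\perp\text{Susp}_{\hat{\mathcal{D}}}(j_!(\mathcal{T}_X))[1]$. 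Now I observe that $\text{susp}_{\hat{\mathcal{D}}}(j_!(\mathcal{T}_X))\subseteq j_!(\mathcal{X}^{\leq 0})$: indeed by Remark \ref{rem.aisle of X is aisle of D} the class $j_!(\mathcal{X}^{\leq 0})$ is the aisle of a t-structure in $\hat{\mathcal{D}}$, hence strongly suspended, and it contains $j_!(\mathcal{T}_X)$ because $\mathcal{T}_X\subseteq\mathcal{X}^{\leq 0}$. Combining this with the hypothesised inclusion $j_!(\mathcal{X}^{\leq 0})\subseteq\text{Susp}_{\hat{\mathcal{D}}}(j_!(\mathcal{T}_X))$, and using that $\text{Susp}_{\hat{\mathcal{D}}}(j_!(\mathcal{T}_X))^\perp\subseteq j_!(\mathcal{X}^{\leq 0})^\perp=j_!(\mathcal{X}^{\leq 0})^{\perp}$ (and the same after shifting by $[1]$), I obtain $U_{T_Y}\in j_!(\mathcal{X}^{\leq 0})$ and $\tilde{T}_Y\in{}^\perp j_!(\mathcal{X}^{\leq 0})[1]$, which is exactly $(\star)$. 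Then Theorem \ref{thm.gluing of silting t-structures} gives that $\mathcal{T}=j_!(\mathcal{T}_X)\cup\tilde{\mathcal{T}}_Y$ is partial silting in $\mathcal{D}$ and generates the glued t-structure, and $\mathcal{T}\subseteq\hat{\mathcal{D}}^c$ because $j_!$ and $i_*$ produce compact objects on the relevant sets and cones of maps between compacts are compact.

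Next I would prove that condition (1) forces the inclusion. Here $\mathcal{T}_X$ is a non-positive set of compact objects of $\hat{\mathcal{X}}$, so by \cite[Theorem 12.1]{KN} (quoted in the preliminaries) the aisle of the t-structure in $\hat{\mathcal{X}}$ generated by $\mathcal{T}_X$ is exactly $\text{Susp}_{\hat{\mathcal{X}}}(\mathcal{T}_X)$. Since $\hat{j}_!$ is triangulated and commutes with coproducts (it has a right adjoint $\hat{j}^*$), it sends Milnor colimits of the descriptive sequences to Milnor colimits, whence $\hat{j}_!(\text{Susp}_{\hat{\mathcal{X}}}(\mathcal{T}_X))\subseteq\text{Susp}_{\hat{\mathcal{D}}}(\hat{j}_!(\mathcal{T}_X))=\text{Susp}_{\hat{\mathcal{D}}}(j_!(\mathcal{T}_X))$. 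Because the t-structure $(\mathcal{X}^{\leq 0},\mathcal{X}^{\geq 0})$ is by hypothesis the restriction to $\mathcal{X}$ of the one generated by $\mathcal{T}_X$ in $\hat{\mathcal{X}}$, every object of $\mathcal{X}^{\leq 0}$ lies in $\text{Susp}_{\hat{\mathcal{X}}}(\mathcal{T}_X)$, and applying $j_!=\hat{j}_!|_{\mathcal{X}}$ gives $j_!(\mathcal{X}^{\leq 0})\subseteq\text{Susp}_{\hat{\mathcal{D}}}(j_!(\mathcal{T}_X))$, as required.

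Finally I would treat condition (2). Here the hypotheses are precisely those of Theorem \ref{thm.lifting of recollements} together with one of its extra cogeneration assumptions, so that recollement lifts to a recollement $(\hat{\mathcal{Y}}\equiv\hat{\mathcal{D}}\equiv\hat{\mathcal{X}})$ which is the upper part of a ladder of recollements of height two (assertion 1 of that theorem, obtained via the equivalence $3)\Rightarrow 2)$ and the proof there); in particular the lifted functor $\hat{j}_!$ restricts to $j_!$. Since $\mathcal{T}_X$ is classical silting in $\hat{\mathcal{X}}^c$, by Corollary \ref{cor.silting set is preenveloping}(2) its associated t-structure in $\hat{\mathcal{X}}$ is $(\text{Susp}_{\hat{\mathcal{X}}}(\mathcal{T}_X),\mathcal{T}_X^{\perp_{<0}})$ and it restricts to $\hat{\mathcal{X}}^c$; moreover, by Corollary \ref{cor.restriction of silting for dg categories} / Proposition \ref{prop.restricted partial silting} it also restricts to the thick subcategory $\mathcal{X}$, and the restricted t-structure is generated by $\mathcal{T}_X$ in $\mathcal{X}$ — this is the t-structure $(\mathcal{X}^{\leq 0},\mathcal{X}^{\geq 0})$ of Setup \ref{setup}. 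Thus we are back in the situation of condition (1), with the ambient recollement being the lifted one, and the previous paragraph yields $j_!(\mathcal{X}^{\leq 0})\subseteq\text{Susp}_{\hat{\mathcal{D}}}(j_!(\mathcal{T}_X))$. The main obstacle I anticipate is bookkeeping in condition (2): one must be careful that the t-structure on $\mathcal{X}$ coming from Setup \ref{setup} genuinely agrees with the restriction of the $\hat{\mathcal{X}}$-level silting t-structure (rather than some a priori different t-structure generated by $\mathcal{T}_X$ inside $\mathcal{X}$), which is why the uniqueness/compatibility statements in Proposition \ref{prop.restricted partial silting} and Corollary \ref{cor.restriction of silting for dg categories} are needed; everything else is a routine application of the cited commutation and generation facts.
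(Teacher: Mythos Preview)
Your treatment of the first assertion and of condition (1) is correct and matches the paper's proof. The minor slip in invoking Remark \ref{rem.aisle of X is aisle of D} (it gives an aisle in $\mathcal{D}$, not in $\hat{\mathcal{D}}$) is harmless: $j_!(\mathcal{X}^{\leq 0})\subseteq\mathcal{D}$ is strongly suspended in $\mathcal{D}$, hence in $\hat{\mathcal{D}}$, and that is all you need for $\text{susp}_{\hat{\mathcal{D}}}(j_!(\mathcal{T}_X))\subseteq j_!(\mathcal{X}^{\leq 0})$.

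There is, however, a genuine gap in your handling of condition (2). You write that Theorem \ref{thm.lifting of recollements} yields a recollement $(\hat{\mathcal{Y}}\equiv\hat{\mathcal{D}}\equiv\hat{\mathcal{X}})$ lifting (\ref{rec}), citing ``assertion 1 of that theorem, obtained via the equivalence $3)\Rightarrow 2)$''. But Theorem \ref{thm.lifting of recollements} only proves $1)\Rightarrow 2)\Rightarrow 3)$ and, under the extra cogeneration hypotheses, $3)\Rightarrow 2)$; it does \emph{not} prove $2)\Rightarrow 1)$. The paper in fact explicitly flags the passage from a lifted TTF triple to a lifted recollement with outer terms $\hat{\mathcal{Y}}$ and $\hat{\mathcal{X}}$ as an open question. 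So you cannot conclude that such a recollement exists, and your reduction to condition (1) breaks down.

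The paper circumvents this by never leaving $\hat{\mathcal{D}}$. The TTF triple $(\mathcal{U},\mathcal{V},\mathcal{W})$ in $\hat{\mathcal{D}}$ provided by Theorem \ref{thm.lifting of recollements}(2) yields a recollement $\mathcal{V}\equiv\hat{\mathcal{D}}\equiv\mathcal{U}$ in which $\hat{j}_!$ is the inclusion $\mathcal{U}\hookrightarrow\hat{\mathcal{D}}$. This restricts (up to equivalence) to $\text{Im}(i_*)\equiv\mathcal{D}\equiv\text{Im}(j_!)$, which is equivalent to the original recollement via $j_!:\mathcal{X}\stackrel{\cong}{\to}\text{Im}(j_!)$ and $i_*:\mathcal{Y}\stackrel{\cong}{\to}\text{Im}(i_*)$. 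Transporting $\mathcal{T}_X$ to $j_!(\mathcal{T}_X)\subset\mathcal{U}$, one checks from $\mathcal{U}^c=j_!(\hat{\mathcal{X}}^c)$ and the classical silting hypothesis that $j_!(\mathcal{T}_X)$ is classical silting in $\mathcal{U}^c$, and Proposition \ref{prop.restricted partial silting} shows the resulting silting t-structure on $\mathcal{U}$ restricts to $(j_!(\mathcal{X}^{\leq 0}),j_!(\mathcal{X}^{\geq 0}))$ on $\mathcal{U}\cap\mathcal{D}$. Now condition (1) applies to this auxiliary recollement; since its $\hat{j}_!$ is the inclusion, the conclusion reads precisely $j_!(\mathcal{X}^{\leq 0})\subseteq\text{Susp}_{\hat{\mathcal{D}}}(j_!(\mathcal{T}_X))$.
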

\begin{proof}
Let us consider the triangle $(\dagger):$ $\tilde{T}_Y\longrightarrow i_*T_Y\longrightarrow U_{T_Y}[1]\stackrel{+}{\longrightarrow}$. 
 The inclusion  $j_!(\mathcal{X}^{\leq 0})\subseteq\text{Susp}_{\hat{\mathcal{D}}}(j_!(\mathcal{T}_X))$ implies  that 
$\tilde{T}_Y\in {}^\perp j_!(\mathcal{X}^{\leq 0})[1]$, and hence condition $(\star)$ of Theorem \ref{thm.gluing of silting t-structures} 
holds.

Let us check that $j_!(\mathcal{X}^{\leq 0})\subseteq\text{Susp}_{\hat{\mathcal{D}}}(j_!(\mathcal{T}_X))$ under conditions $1$ or $2$.

1) Since $\hat{j}_!:\hat{\mathcal{X}}\longrightarrow\hat{\mathcal{D}}$ has a right adjoint, it preserves Milnor colimits. Moreover, since $(\mathcal{X}^{\leq 0},\mathcal{X}^{\geq 0})=(\hat{\mathcal{X}}^{\leq 0}\cap\mathcal{X},\hat{\mathcal{X}}^{\geq 0}\cap\mathcal{X})$, where $(\hat{\mathcal{X}}^{\leq 0},\hat{\mathcal{X}}^{\geq 0})=({}^\perp (\mathcal{T}_X^{\perp_{\leq 0}}),\mathcal{T}_X^{\perp_{<0}})$,  each object $X$ of $\mathcal{X}^{\leq 0}$ 
is the Milnor colimit of a sequence 
$$X_0\stackrel{f_1}{\longrightarrow}X_1\stackrel{f_2}{\longrightarrow}\cdots\stackrel{f_n}{\longrightarrow}X_n\stackrel{f_{n+1}}{\longrightarrow}\cdots,$$ 
where $X_0\in\text{Add}(\mathcal{T}_X)$ and $\text{cone}(f_n)\in\text{Add}(\mathcal{T}_X)[n]$, for each $n>0$ (see \cite[Theorem 2]{NSZ}). 
Thus, $j_!(X)\in\text{Susp}_{\hat{\mathcal{D}}}(j_!(\mathcal{T}_X))$, for each $X\in\mathcal{X}^{\leq 0}$. 

2) By Theorem \ref{thm.lifting of recollements},  $(\text{Loc}_{\hat{\mathcal{D}}}(j_!(\mathcal{X}^c)),\text{Loc}_{\hat{\mathcal{D}}}(i_*(\mathcal{Y})^c),\text{Loc}_{\hat{\mathcal{D}}}(i_*(\mathcal{Y}^c))^\perp ) =:(\mathcal{U},\mathcal{V},\mathcal{W})$ is a TTF triple in $\hat{\mathcal{D}}$ such that $(\mathcal{U}\cap\mathcal{D},\mathcal{V}\cap\mathcal{D},\mathcal{W}\cap\mathcal{D})=(\text{Im}(j_!),\text{Im}(i_*),\text{Im}(j_!))$ and it satisfies conditions 2.a and 2.b of that theorem. We then get an associated recollement $$
\begin{xymatrix}{\mathcal{V} \ar[r]^{\hat{i}_*}& \hat{\mathcal{D}} \ar@<3ex>[l]_{\hat{i}^!}\ar@<-3ex>[l]_{\hat{i}^*}\ar[r]^{\hat{j}^*} & \mathcal{U} \ar@<3ex>_{\hat{j}_*}[l]\ar@<-3ex>_{\hat{j}_!}[l]}
\end{xymatrix},$$ which restricts up to equivalence to the recollement 
\begin{equation}\label{Recd}
\begin{xymatrix}{\text{Im}(i_*) \ar[r]^-{i_*}& \mathcal{D} \ar@<3ex>[l]_-{i^!}\ar@<-3ex>[l]_-{i^*}\ar[r]^-{j^*} & \text{Im}(j_!) \ar@<3ex>_-{j_*}[l]\ar@<-3ex>_-{j_!}[l]}
\end{xymatrix}.
\end{equation}
 Recollement (\ref{Recd}) is equivalent to the one in Setup \ref{setup}
 via the equivalences of triangulated categories $i_*:\mathcal{Y}\stackrel{\cong}{\longrightarrow}\text{Im}(i_*)$ and $j_!:\mathcal{X}\stackrel{\cong}{\longrightarrow}\text{Im}(j_!)$. 

{By Proposition \ref{prop.restricted partial silting}, we know that the t-structure $(\mathcal{T}_X^{\perp_{>0}},\mathcal{T}_X^{\perp_{<0}})$ 
in $\hat{\mathcal{X}}$ restricts to $\mathcal{X}$, and so 
$(\mathcal{X}^{\leq 0},\mathcal{X}^{\geq 0})=(\mathcal{T}_X^{\perp_{>0}}\cap\mathcal{X},\mathcal{T}_X^{\perp_{<0}}\cap\mathcal{X}):=
(\mathcal{T}_X^{\perp_{>0} (\mathcal{X})},\mathcal{T}_X^{\perp_{<0} (\mathcal{X})})$. Using the 
equivalence $j_!:\mathcal{X}\stackrel{\cong}{\longrightarrow}\text{Im}(j_!)=\mathcal{U}\cap\mathcal{D}$, we get that 
$(j_!(\mathcal{X}^{\leq 0}),j_!(\mathcal{X}^{\geq 0}))=
(j_!(\mathcal{T}_X)^{\perp_{>0} (\mathcal{U}\cap\mathcal{D})},j_!(\mathcal{T}_X)^{\perp_{<0} (\mathcal{U}\cap\mathcal{D})})$, which is the 
restriction to $\mathcal{U}\cap\mathcal{D}$ of the pair 
$(j_!(\mathcal{T}_X)^{\perp_{>0} (\mathcal{U})},j_!(\mathcal{T}_X)^{\perp_{<0} (\mathcal{U})})$. The equality 
$\hat{\mathcal{X}}^c=\text{thick}_\mathcal{D}(\mathcal{T}_X)$ gives the equality 
$\mathcal{U}^c=\mathcal{U}\cap\hat{\mathcal{D}}^c=j_!(\hat{\mathcal{X}}^c)=\text{thick}_{\mathcal{U}\cap\mathcal{D}}(j_!(\mathcal{T}_X))=
\text{thick}_\mathcal{U}(j_!(\mathcal{T}_X))$, so  $j_!(\mathcal{T}_X)$ is a silting set of compact objects in $\mathcal{U}$ and  
$(j_!(\mathcal{T}_X)^{\perp_{>0} },j_!(\mathcal{T}_X)^{\perp_{<0}})$ is a t-structure in $\mathcal{U}$ which restricts to 
$(j_!(\mathcal{X}^{\leq 0}),j_!(\mathcal{X}^{\geq 0}))$.}

We have checked that condition (1) holds for the recollement (\ref{Recd}), and hence the inclusion $j_!(\mathcal{X}^{\leq 0})\subseteq\text{Susp}_{\hat{\mathcal{D}}}(j_!(\mathcal{T}_X))$ holds by the first part of the proof, since the corresponding functor in (\ref{Recd}) is the inclusion.
\end{proof}

Several consequences of the last theorem (condition 2) and the results in Section \ref{section.some points} can be obtained. For the sake of brevity, we provide two of them.

\begin{corollary} \label{cor.gluing wrt bounded subcategories}
Let $\hat{\mathcal{Y}}$, $\hat{\mathcal{D}}$ and $\hat{\mathcal{X}}$ be compactly generated  triangulated categories, where 
$\hat{\mathcal{D}}$ is homologically locally bounded and such that $\text{Hom}_{\hat{D}}(M,N)$ is finitely generated (resp. of finite length)
 as a $K$-module, for all $M,N\in\hat{\mathcal{D}}^c$. Let $$
\begin{xymatrix}{\hat{\mathcal{Y}}^{\star}_\dagger \ar[r]^{{i}_*}& \hat{\mathcal{D}}^{\star}_\dagger \ar@<3ex>[l]_{{i}^!}\ar@<-3ex>[l]_{{i}^*}\ar[r]^{{j}^*} & \hat{\mathcal{X}}^{\star}_\dagger \ar@<3ex>_{{j}_*}[l]\ar@<-3ex>_{{j}_!}[l]}
\end{xymatrix}$$ be a recollement, such that the subcategories involved contain the respective subcategories of compact objects and the 
functors $j_!$, $j^*$, $i^*$ and $i_*$ preserve compact objects. Let $\mathcal{T}_X$ and $\mathcal{T}_Y$ be partial silting sets in 
$\hat{\mathcal{X}}^{\star}_\dagger$ and $\hat{\mathcal{Y}}^{\star}_\dagger$ consisting of compact objects, with $\mathcal{T}_X$  finite and  classical silting in 
$\hat{\mathcal{X}}^c$,  and let $(\mathcal{X}^{\leq 0},\mathcal{X}^{\geq 0})$ and $(\mathcal{Y}^{\leq 0},\mathcal{Y}^{\geq 0})$ be the 
associated t-structures in $\hat{\mathcal{X}}^{\star}_\dagger$ and $\hat{\mathcal{Y}}^{\star}_\dagger$, respectively. Then condition $(\star)$ of Theorem 
\ref{thm.gluing of silting t-structures} holds for   $\star\in\{ \emptyset,+,-,b\}$  
and $\dagger =\emptyset$ (resp. $\dagger =fl$). Therefore $j_!(\mathcal{T}_X)\cup\tilde{\mathcal{T}}_Y$ is a partial silting set in $\hat{\mathcal{D}}^{\star}_\dagger$, consisting of compact objects,  which generates the glued t-structure.
\end{corollary}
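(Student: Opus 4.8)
The strategy is to verify the two conditions that allow us to invoke Theorem \ref{thm.gluing at compact level} with its hypothesis (2), thereby reducing the statement to that theorem. The whole point is that under the stated finiteness hypotheses on $\hat{\mathcal{D}}$, the set $j_!(\mathcal{T}_X)$ is automatically weakly preenveloping in $\hat{\mathcal{D}}^c$, and the cogeneration hypothesis required by Theorem \ref{thm.gluing at compact level}(2) is furnished by Corollary \ref{cor.Case D homologically loc. bounded}. So the proof has essentially three moves.

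First I would set up the ambient framework. We regard the recollement $(\hat{\mathcal{Y}}^\star_\dagger \equiv \hat{\mathcal{D}}^\star_\dagger \equiv \hat{\mathcal{X}}^\star_\dagger)$ as a recollement of thick subcategories $\mathcal{X} = \hat{\mathcal{X}}^\star_\dagger$, $\mathcal{D} = \hat{\mathcal{D}}^\star_\dagger$, $\mathcal{Y} = \hat{\mathcal{Y}}^\star_\dagger$ of the compactly generated triangulated categories $\hat{\mathcal{X}}$, $\hat{\mathcal{D}}$, $\hat{\mathcal{Y}}$, all of which contain the respective subcategories of compact objects. Since $\hat{\mathcal{D}}$ is homologically locally bounded, Corollary \ref{cor.Case D homologically loc. bounded} tells us that $\hat{\mathcal{D}}^\star$ (resp.\ $\mathcal{D}^\star_{fl}$) cogenerates $\hat{\mathcal{D}}$, which is precisely the alternative ``$\mathcal{D}$ cogenerates $\hat{\mathcal{D}}$'' needed. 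Together with the hypothesis that $j_!$, $j^*$, $i^*$, $i_*$ preserve compact objects, assertion 2 of Theorem \ref{thm.lifting of recollements} holds; in particular $\mathcal{T}_X$, $\mathcal{T}_Y$ being partial silting sets of compact objects in $\mathcal{X}$, $\mathcal{Y}$, and $j_!(\mathcal{T}_X)\cup i_*(\mathcal{T}_Y)$ consisting of compact objects in $\hat{\mathcal{D}}$ (because $j_!$ and $i_*$ preserve compactness), we are in a position to verify Setup \ref{setup}.

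Second I would establish the weakly preenveloping condition in Setup \ref{setup}(3). Here we use Example \ref{ex.finite-is-weaklypreenveloping}(1): $\hat{\mathcal{D}}$ is homologically locally bounded, $\text{Hom}_{\hat{\mathcal{D}}}(M,N)$ is a finitely generated $K$-module for all $M, N \in \hat{\mathcal{D}}^c$, and $j_!(\mathcal{T}_X)$ is a finite non-positive subset of $\hat{\mathcal{D}}^c$ (it is finite because $\mathcal{T}_X$ is finite and $j_!$ preserves compactness and is additive). Hence $j_!(\mathcal{T}_X)$ is weakly preenveloping in $\hat{\mathcal{D}}^c$, so Setup \ref{setup} is fully in force. (In the finite-length case one replaces ``finitely generated'' by ``of finite length'' throughout, and the same Example applies since finite length implies finite generation.)

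Third and finally I would invoke Theorem \ref{thm.gluing at compact level} with hypothesis (2): $\hat{\mathcal{Y}}$, $\hat{\mathcal{D}}$, $\hat{\mathcal{X}}$ are compactly generated, $\mathcal{T}_X$ is classical silting in $\hat{\mathcal{X}}^c$, the four functors $j_!$, $j^*$, $i_*$, $i^*$ preserve compact objects, and $\mathcal{D}$ cogenerates $\hat{\mathcal{D}}$ by the first step. Therefore $j_!(\mathcal{X}^{\leq 0}) \subseteq \text{Susp}_{\hat{\mathcal{D}}}(j_!(\mathcal{T}_X))$, so condition $(\star)$ of Theorem \ref{thm.gluing of silting t-structures} holds, and the conclusion follows: $\mathcal{T} := j_!(\mathcal{T}_X) \cup \tilde{\mathcal{T}}_Y \subseteq \hat{\mathcal{D}}^c$ is a partial silting set in $\mathcal{D} = \hat{\mathcal{D}}^\star_\dagger$ generating the glued t-structure, and it consists of compact objects. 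The one place that needs a little care — and which I expect to be the main obstacle — is checking that the finiteness hypotheses on $\hat{\mathcal{D}}$ propagate correctly to the $fl$ case and that Example \ref{ex.finite-is-weaklypreenveloping}(1) genuinely applies with $\mathcal{T} = j_!(\mathcal{T}_X)$ rather than to an individual silting object; but since the Example is stated for arbitrary finite non-positive sets, this is a matter of bookkeeping rather than a real difficulty. The rest is assembling already-proven results in the right order.
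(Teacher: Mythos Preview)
Your proposal is correct and follows essentially the same route as the paper's proof: verify Setup \ref{setup} by checking that $j_!(\mathcal{T}_X)$ is weakly preenveloping in $\hat{\mathcal{D}}^c$ via Example \ref{ex.finite-is-weaklypreenveloping}(1), then apply Theorem \ref{thm.gluing at compact level} under condition (2), with the cogeneration hypothesis supplied by Corollary \ref{cor.Case D homologically loc. bounded}. The paper's proof is terser but the logical structure is identical.
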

\begin{proof} Let us check that the conditions from Setup \ref{setup} hold. For this we only need to check that $j_!(\mathcal{T}_X)$ is weakly 
preenveloping in 
$\hat{\mathcal{D}}^c$. This follows from Example \ref{ex.finite-is-weaklypreenveloping}. 
Thus, the result is a direct consequence of Theorem \ref{thm.gluing at compact level} (condition 2) and 
Corollary \ref{cor.Case D homologically loc. bounded}.
\end{proof}

\begin{corollary} \label{cor.Liu-Vitoria-Yang-generalized}
Let $\hat{\mathcal{Y}}$, $\hat{\mathcal{D}}$ and $\hat{\mathcal{X}}$ be compactly generated algebraic triangulated categories and let 
\begin{equation}\label{Rece}
\begin{xymatrix}{\hat{\mathcal{Y}}^b_{fl} \ar[r]^{{i}_*}& \hat{\mathcal{D}}^b_{fl} \ar@<3ex>[l]_{{i}^!}\ar@<-3ex>[l]_{{i}^*}\ar[r]^{{j}^*} & \hat{\mathcal{X}}^b_{fl} \ar@<3ex>_{{j}_*}[l]\ar@<-3ex>_{{j}_!}[l]}
\end{xymatrix}
\end{equation}
 be a recollement, such that the subcategories involved contain the respective subcategories of compact 
objects and  $\hat{\mathcal{D}}$ is compact-detectable in finite length.
 Let $T_X\in\hat{\mathcal{X}}^c$ and $T_Y\in\hat{\mathcal{Y}}^c$ be  classical silting in $\hat{\mathcal{X}}^c$ and $\hat{\mathcal{Y}}^c$, respectively. Let $(\mathcal{X}^{\leq 0},\mathcal{X}^{\geq 0})$ and
 $(\mathcal{Y}^{\leq 0},\mathcal{Y}^{\geq 0})$ be the corresponding t-structures in $\hat{\mathcal{X}}^b_{fl}$ and $\hat{\mathcal{Y}}^b_{fl}$. There is a triangle $\tilde{T}_Y\longrightarrow i_*(T_Y)\longrightarrow U_{T_Y}[1]\stackrel{+}{\longrightarrow}$ in 
$\hat{\mathcal{D}}^c$ such that $U_{T_Y}\in\text{susp}_{\hat{\mathcal{D}}}(j_!(T_X))$ and $\tilde{T}_Y\in {}^\perp j_!(\mathcal{X}^{\leq 0})[1]$. 
In particular $T:=j_!(T_X)\oplus\tilde{T}_Y$ is a silting  object in $\hat{\mathcal{D}}^c$, uniquely determined up to add-equivalence, which generates the glued t-structure in $\hat{\mathcal{D}}^b_{fl}$. 
\end{corollary}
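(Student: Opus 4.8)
The plan is to derive Corollary \ref{cor.Liu-Vitoria-Yang-generalized} as an instance of Theorem \ref{thm.gluing at compact level}, checking that condition 2 of that theorem applies. First I would verify the data of Setup \ref{setup}: the recollement (\ref{Rece}) at the $(-)^b_{fl}$-level together with the classical silting objects $T_X\in\hat{\mathcal{X}}^c$ and $T_Y\in\hat{\mathcal{Y}}^c$. Since $\hat{\mathcal{X}}$, $\hat{\mathcal{D}}$, $\hat{\mathcal{Y}}$ are compact-detectable in finite length (in particular homologically locally finite length), Proposition \ref{prop.homologically non-positive locally fd} (or the argument in its proof) shows that the four functors $j_!$, $j^*$, $i^*$, $i_*$ preserve compact objects. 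Next I would observe that $T_X$, being classical silting in $\hat{\mathcal{X}}^c$, generates a t-structure $(\mathcal{T}_X^{\perp_{>0}},\mathcal{T}_X^{\perp_{<0}})$ in $\hat{\mathcal{X}}$ which, by Corollary \ref{cor.restriction of silting for dg categories} (using that $\hat{\mathcal{X}}^c\subseteq\hat{\mathcal{X}}^b_{fl}$ since $\hat{\mathcal{X}}$ is homologically locally finite length), restricts to $\hat{\mathcal{X}}^b_{fl}$ giving precisely $(\mathcal{X}^{\leq 0},\mathcal{X}^{\geq 0})$; similarly for $T_Y$. So $\mathcal{T}_X=\{T_X\}$ and $\mathcal{T}_Y=\{T_Y\}$ are the partial silting sets of condition (2) of Setup \ref{setup}.

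Then I would check condition (3) of Setup \ref{setup}: that $j_!(\mathcal{T}_X)\cup i_*(\mathcal{T}_Y)\subseteq\hat{\mathcal{D}}^c$ (immediate from compactness preservation) and that $j_!(T_X)$ is weakly preenveloping in $\hat{\mathcal{D}}^c$. The latter is exactly where compact-detectability in finite length enters through Example \ref{ex.finite-is-weaklypreenveloping}: a compact-detectable-in-finite-length category is homologically locally finite length and, by Example \ref{exs.intrinsic description of compacts} and the discussion there, $\hat{\mathcal{D}}^b_{fl}$ is Hom-finite over $K$, so the single compact object $j_!(T_X)$ satisfies condition (1) of that example (homological local boundedness plus finite length Hom-modules between compacts). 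Hence $j_!(T_X)$ is weakly preenveloping in $\hat{\mathcal{D}}^c$, and all of Setup \ref{setup} is in place.

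Next comes the verification of condition 2 of Theorem \ref{thm.gluing at compact level}. The functors $j_!$, $j^*$, $i^*$, $i_*$ preserve compact objects; $\mathcal{T}_X=\{T_X\}$ is classical silting in $\hat{\mathcal{X}}^c$; it remains to guarantee that $\text{Im}(i_*)$ cogenerates $\text{Loc}_{\hat{\mathcal{D}}}(i_*(\hat{\mathcal{Y}}^c))$ or that $\hat{\mathcal{D}}^b_{fl}$ cogenerates $\hat{\mathcal{D}}$. The second alternative follows from Corollary \ref{cor.Case D homologically loc. bounded}: $\hat{\mathcal{D}}$ is homologically locally finite length, so $\hat{\mathcal{D}}^b_{fl}$ cogenerates $\hat{\mathcal{D}}$. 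With all hypotheses satisfied, Theorem \ref{thm.gluing at compact level} delivers the triangle $\tilde{T}_Y\longrightarrow i_*(T_Y)\longrightarrow U_{T_Y}[1]\stackrel{+}{\longrightarrow}$ in $\hat{\mathcal{D}}^c$ with $U_{T_Y}\in\text{susp}_{\hat{\mathcal{D}}}(j_!(T_X))$ and $\tilde{T}_Y\in{}^\perp j_!(\mathcal{X}^{\leq 0})[1]$, and tells us that $\mathcal{T}:=\{j_!(T_X),\tilde{T}_Y\}$ is partial silting in $\hat{\mathcal{D}}^b_{fl}$ generating the glued t-structure. Since both $\mathcal{T}_X$ and $\mathcal{T}_Y$ are silting sets (they generate $\hat{\mathcal{X}}^c$ and $\hat{\mathcal{Y}}^c$ as thick subcategories, hence generate $\hat{\mathcal{X}}^b_{fl}$ and $\hat{\mathcal{Y}}^b_{fl}$), the last part of Theorem \ref{thm.gluing of silting t-structures} upgrades $\mathcal{T}$ to a genuine silting set, i.e. $T:=j_!(T_X)\oplus\tilde{T}_Y$ is a silting object. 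Uniqueness up to add-equivalence follows from Proposition \ref{prop.compact-psilting-from coheart}, applied to $\hat{\mathcal{D}}^b_{fl}$ inside $\hat{\mathcal{D}}$ (noting $\hat{\mathcal{D}}^c\subseteq\hat{\mathcal{D}}^b_{fl}$ and $\hat{\mathcal{D}}^c$ skeletally small), since $\mathcal{T}$ is a partial silting set of compact objects generating the glued t-structure.

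The main obstacle, I expect, is the bookkeeping around \emph{which} t-structure on $\hat{\mathcal{X}}^b_{fl}$ is meant and showing it coincides with the restriction of the silting t-structure generated by $T_X$ in the big category $\hat{\mathcal{X}}$ — i.e. that $(\mathcal{X}^{\leq 0},\mathcal{X}^{\geq 0})$ is genuinely the restriction and not merely a t-structure with $T_X$ in its co-heart. This is handled by Corollary \ref{cor.restriction of silting for dg categories} plus Proposition \ref{prop.restricted partial silting}, but one must be careful that the hypothesis $\hat{\mathcal{X}}^c\subseteq\hat{\mathcal{X}}^b_{fl}$ actually holds, which is where compact-detectability in finite length (hence homological local finite length) of $\hat{\mathcal{X}}$ is used. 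Everything else is a direct citation of the machinery already set up in Sections \ref{section.some points}, \ref{SecPreenv} and \ref{sect.gluing partial silting}.
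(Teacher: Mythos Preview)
Your overall architecture is right---this is indeed an application of Theorem \ref{thm.gluing at compact level} under its condition 2, and your checks of Setup \ref{setup} and of the cogeneration hypothesis via Corollary \ref{cor.Case D homologically loc. bounded} are fine. But there is a genuine gap in the first step. You assert that $\hat{\mathcal{X}}$, $\hat{\mathcal{D}}$, $\hat{\mathcal{Y}}$ are all compact-detectable in finite length, and then invoke Proposition \ref{prop.homologically non-positive locally fd} to conclude that $j_!,j^*,i^*,i_*$ preserve compacts. However, the hypothesis of the corollary only says that $\hat{\mathcal{D}}$ is compact-detectable in finite length; for $\hat{\mathcal{X}}$ and $\hat{\mathcal{Y}}$ you only know $\hat{\mathcal{X}}^c\subseteq\hat{\mathcal{X}}^b_{fl}$ and $\hat{\mathcal{Y}}^c\subseteq\hat{\mathcal{Y}}^b_{fl}$, i.e.\ homological local finite length, which is strictly weaker. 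The argument in the proof of Proposition \ref{prop.homologically non-positive locally fd} shows that a left adjoint $F:\mathcal{D}^b_{fl}\to\mathcal{E}^b_{fl}$ preserves compacts provided the \emph{target} $\mathcal{E}$ is compact-detectable; so you do get that $j_!$ and $i_*$ (whose target is $\hat{\mathcal{D}}$) preserve compacts, but for $j^*$ and $i^*$ the targets are $\hat{\mathcal{X}}$ and $\hat{\mathcal{Y}}$, and nothing in the hypotheses tells you those are compact-detectable. Condition 2 of Theorem \ref{thm.gluing at compact level} explicitly requires all four functors to preserve compacts, so this is a real obstruction.

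This is precisely what the paper's proof handles by invoking Keller's theorem. Since $T_X$ is a classical silting object in $\hat{\mathcal{X}}^c$, it is a compact generator of the algebraic category $\hat{\mathcal{X}}$, so there is a dg algebra $C$ and an equivalence $\mathcal{D}(C)\stackrel{\cong}{\to}\hat{\mathcal{X}}$ sending $C\mapsto T_X$. The silting condition forces $H^kC\cong\text{Hom}(T_X,T_X[k])=0$ for $k>0$, and $T_X\in\hat{\mathcal{X}}^b_{fl}$ forces $H^*C$ to be of finite length; hence $C$ is homologically non-positive and homologically finite length, and Example \ref{exs.intrinsic description of compacts}(2) then says $\mathcal{D}(C)$ \emph{is} compact-detectable in finite length. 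The same works for $\hat{\mathcal{Y}}$ via $T_Y$. After this replacement the recollement has outer categories $\mathcal{D}^b_{fl}(B)$ and $\mathcal{D}^b_{fl}(C)$, and now Proposition \ref{prop.homologically non-positive locally fd} applies in full, giving that all four functors preserve compacts. From there your argument goes through as written. So the fix is not a different strategy but an additional reduction step that your proposal omits; without it the claim that $j^*$ and $i^*$ preserve compact objects is unsupported.
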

\begin{proof} 
The object $T_X$ generates $\hat{\mathcal{X}}$. By \cite[Theorem 4.3]{K}, there is a dg algebra $C$ and a triangulated equivalence $F:\mathcal{D}(C)\stackrel{\cong}{\longrightarrow}\hat{\mathcal{X}}$ which takes $C$ to $T_X$. In particular, there is an isomorphism $H^kC\cong\text{Hom}_{\mathcal{D}(C)}(C,C[k])\stackrel{\cong}{\longrightarrow}\text{Hom}_{\hat{\mathcal{X}}}(T_X,T_X[k])$, for each $k\in\mathbb{Z}$. Hence, $C$ is homologically non-positive and homologically finite length. In addition, $F$ restricts to an equivalence $\mathcal{D}^b_{fl}(C)\stackrel{\cong}{\longrightarrow}\hat{\mathcal{X}}^b_{fl}$, since $\mathcal{D}^b_{fl}(C)=\mathcal{D}(C)^b_{fl}$. Similarly, there is a homologically non-positive homologically finite length dg algebra $B$ and a triangulated equivalence $G:\mathcal{D}(B)\stackrel{\cong}{\longrightarrow}\hat{\mathcal{Y}}$ which takes $B$ to $T_Y$ and restricts to a triangulated equivalence $\mathcal{D}^b_{fl}(B)\stackrel{\cong}{\longrightarrow}\hat{\mathcal{Y}}^b_{fl}$.

Thus, we can assume, that the recollement (\ref{Rece}) is of the form $$
\begin{xymatrix}{\mathcal{D}^b_{fl}(B) \ar[r]^-{{i}_*}& \hat{\mathcal{D}}^b_{ fl} \ar@<3ex>[l]_-{{i}^!}\ar@<-3ex>[l]_-{{i}^*}\ar[r]^-{{j}^*} & \mathcal{D}^b_{fl}(C) \ar@<3ex>_-{{j}_*}[l]\ar@<-3ex>_-{{j}_!}[l]}
\end{xymatrix},$$ where $B$ and $C$ are homologically non-positive homologically finite length dg algebras. Moreover, by the 
proof of Corollary \ref{cor.gluing wrt bounded subcategories}, we know that we are in the situation of Setup \ref{setup}. Then, 
by Proposition \ref{prop.homologically non-positive locally fd} (see Example \ref{exs.intrinsic description of compacts}(2)) and Theorem \ref{thm.gluing at compact level} (condition 2) with 
$\mathcal{T}_X=\{T_X\}$ and $\mathcal{T}_Y=\{T_Y\}$, the result follows,  except for the uniqueness of $T$. This uniqueness is a 
consequence of  Propositions \ref{prop.restricted partial silting} and \ref{prop.compact-psilting-from coheart}.\end{proof}

\subsection{Gluing with respect to t-structures versus gluing with respect to co-t-structures over finite length algebras}
 If $A$ is a finite length $K$-algebra there is a triangulated equivalence 
$\mathcal{D}^b_{fl}(A)\cong\mathcal{D}^b(\text{mod-}A)$ and $\mathcal{D}^c(A)$ may be identified with $\mathcal{K}^b(\text{proj-}A)$, the 
homotopy category of finitely generated projective $A$-modules. The 
following result  is  a direct consequence of Proposition \ref{prop.homologically non-positive locally fd} (see Remark \ref{rem.Artin})  and 
Corollary \ref{cor.Liu-Vitoria-Yang-generalized}, except for its last sentence which follows from  Proposition \ref{prop.U-envelope}.

\begin{corollary} \label{cor.gluing Artin case}

Let $A$, $B$ and $C$ be finite length algebras, let 

\begin{equation}\label{recD}
\begin{xymatrix}{\Db{B}\ar[r]^{i_*}& \Db{A}\ar@<3ex>[l]_{i^!}\ar@<-3ex>[l]_{i^*}\ar[r]^{j^*} & \Db{C}\ar@<3ex>_{j_*}[l]\ar@<-3ex>_{j_!}[l]}
\end{xymatrix}
\end{equation} 
be a recollement, and let $T_C$ and $T_B$ be silting complexes in $\mathcal{K}^b(\text{proj-}C)$ and $\mathcal{K}^b(\text{proj-}B)$ 
which generate t-structures $(\mathcal{X}^{\leq 0},\mathcal{X}^{\geq 0})$ in $\Db{C}$ and $(\mathcal{Y}^{\leq 0},\mathcal{Y}^{\geq 0})$ in 
$\Db{B}$. There is a triangle $\tilde{T}_B\longrightarrow i_*(T_B) \stackrel{f}{\longrightarrow}  U_{T_B}[1] \stackrel{+}{\longrightarrow}$  in 
$\mathcal{K}^b(\text{proj-}A)$ such that $U_{T_B}\in\text{susp}_{\mathcal{D}(A)}(j_!(T_C))$ and 
$\tilde{T}_B\in {}^\perp j_!(\mathcal{X}^{\leq 0})[1]$. The object $T:=j_!(T_C)\oplus\tilde{T}_B$ is a silting complex in 
$\mathcal{K}^b(\text{proj-}A)$, uniquely determined up to $add$-equivalence, which generates the glued t-structure 
$(\mathcal{D}^{\leq 0},\mathcal{D}^{\geq 0})$ in $\Db{A}$. 

Moreover, the map $f$ can be taken to be a $\text{susp} (j_!T_C)[1]$-envelope, which can be calculated inductively. 
\end{corollary}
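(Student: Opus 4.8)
\textbf{Proof proposal for Corollary \ref{cor.gluing Artin case}.}

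The plan is to reduce this statement to results already obtained and then add the inductive construction of the envelope at the very end. First, observe that since $A$, $B$ and $C$ are finite length $K$-algebras, the categories $\mathcal{D}^b(\text{mod-}A)$, $\mathcal{D}^b(\text{mod-}B)$, $\mathcal{D}^b(\text{mod-}C)$ may be identified with $\mathcal{D}^b_{fl}(A)$, $\mathcal{D}^b_{fl}(B)$, $\mathcal{D}^b_{fl}(C)$ (viewing $A$, $B$, $C$ as homologically non-positive homologically finite length dg algebras concentrated in degree zero), and that under this identification the subcategories of compact objects become $\mathcal{K}^b(\text{proj-}A)$, $\mathcal{K}^b(\text{proj-}B)$, $\mathcal{K}^b(\text{proj-}C)$, which in turn coincide with $\mathcal{D}^c(A)$, $\mathcal{D}^c(B)$, $\mathcal{D}^c(C)$. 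By Proposition \ref{prop.homologically non-positive locally fd} (together with Remark \ref{rem.Artin}), the functors $j_!$, $j^*$, $i^*$, $i_*$ preserve compact objects and the associated TTF triple lifts, so the recollement (\ref{recD}) is (up to equivalence) the restriction of a recollement of the unbounded derived categories $\mathcal{D}(B)\equiv\mathcal{D}(A)\equiv\mathcal{D}(C)$ which is the upper part of a ladder of recollements of height two; moreover these categories are algebraic and compact-detectable in finite length by Example \ref{exs.intrinsic description of compacts}(2). Hence all the hypotheses of Corollary \ref{cor.Liu-Vitoria-Yang-generalized} are met with $\mathcal{T}_X=\{T_C\}$ and $\mathcal{T}_Y=\{T_B\}$; it supplies the triangle $\tilde{T}_B\longrightarrow i_*(T_B)\longrightarrow U_{T_B}[1]\stackrel{+}{\longrightarrow}$ in $\mathcal{K}^b(\text{proj-}A)$ with $U_{T_B}\in\text{susp}_{\mathcal{D}(A)}(j_!(T_C))$ and $\tilde{T}_B\in{}^\perp j_!(\mathcal{X}^{\leq 0})[1]$, and it gives that $T:=j_!(T_C)\oplus\tilde{T}_B$ is a silting object in $\mathcal{K}^b(\text{proj-}A)$, unique up to $\text{add}$-equivalence, generating the glued t-structure $(\mathcal{D}^{\leq 0},\mathcal{D}^{\geq 0})$ in $\mathcal{D}^b(\text{mod-}A)$.

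It remains to prove the last sentence: that the map $f:i_*(T_B)\longrightarrow U_{T_B}[1]$ may be chosen so that $f[-1]:i_*(T_B)[-1]\longrightarrow U_{T_B}$ is a $\text{susp}_{\mathcal{D}(A)}(j_!(T_C))$-envelope, constructed inductively. Here I would invoke the machinery of Section \ref{SecPreenv}. By Corollary \ref{cor.silting set is preenveloping} applied to $\hat{\mathcal{X}}=\mathcal{D}(C)$ and the classical silting set $\{T_C\}$ in $\mathcal{D}^c(C)=\mathcal{K}^b(\text{proj-}C)$, we know $\{j_!(T_C)\}$ behaves well; more precisely, using that $\mathcal{D}(A)$ is compactly generated (even algebraic) and $j_!(T_C)$ is a compact object, the non-positive set $\{j_!(T_C)\}$ is weakly preenveloping in $\mathcal{D}^c(A)=\mathcal{K}^b(\text{proj-}A)$ by Example \ref{ex.finite-is-weaklypreenveloping}(1) (noting $\mathcal{D}(A)$ is homologically bounded, being the derived category of an ordinary finite length algebra, and $\text{Hom}$-spaces between compact objects are finitely generated — indeed finite length — $K$-modules). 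Therefore Proposition \ref{prop.U-envelope} applies: its assertion 1 holds for the non-positive set $\{j_!(T_C)\}$ in $\mathcal{K}^b(\text{proj-}A)$, so $\text{susp}_{\mathcal{D}(A)}(j_!(T_C))$ is an enveloping class and, crucially, by the final clause of Proposition \ref{prop.U-envelope}, every object — in particular $i_*(T_B)[-1]$ — has a $\text{susp}_{\mathcal{D}(A)}(j_!(T_C))$-envelope that can be constructed inductively (via the inductive scheme in the proof of the implication $1)\Longrightarrow 2)$ there, which builds the envelope degree by degree through the triangles $\text{add}(\mathcal{T})\star\cdots\star\text{add}(\mathcal{T})[s]$ using Lemma \ref{lem.inductive step construction envelopes}).

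Finally I would reconcile this envelope with the triangle coming from Corollary \ref{cor.Liu-Vitoria-Yang-generalized}. Let $g:i_*(T_B)[-1]\longrightarrow U$ be the inductively constructed $\text{susp}_{\mathcal{D}(A)}(j_!(T_C))$-envelope, and complete it to a triangle $V\longrightarrow i_*(T_B)[-1]\stackrel{g}{\longrightarrow}U\stackrel{+}{\longrightarrow}$. By Lemma \ref{lem.envelope versus co-t-structures} ($1)\Rightarrow 2)$) and closure of $\text{susp}_{\mathcal{D}(A)}(j_!(T_C))$ under extensions, $V\in{}^\perp\text{susp}_{\mathcal{D}(A)}(j_!(T_C))$; since $j_!(\mathcal{X}^{\leq 0})\subseteq\text{Susp}_{\mathcal{D}(A)}(j_!(T_C))$ (this was established in the proof of Theorem \ref{thm.gluing at compact level}, condition 2, under exactly the present hypotheses), we get $V[1]\in{}^\perp j_!(\mathcal{X}^{\leq 0})[1]$. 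Rotating, we obtain a triangle $V[1]\longrightarrow i_*(T_B)\stackrel{f}{\longrightarrow}U[1]\stackrel{+}{\longrightarrow}$ with $U[1]\in\text{susp}_{\mathcal{D}(A)}(j_!(T_C))[1]\subseteq j_!(\mathcal{X}^{\leq 0})$-related data and $V[1]\in{}^\perp j_!(\mathcal{X}^{\leq 0})[1]$, i.e. a triangle satisfying condition $(\star)$ of Theorem \ref{thm.gluing of silting t-structures}. By the uniqueness of such a triangle (both outer terms are determined up to isomorphism by the torsion pair $({}^\perp j_!(\mathcal{X}^{\leq 0})[1], j_!(\mathcal{X}^{\leq 0})[1])$-type decomposition, which is part of how the glued silting object is obtained), we may take $\tilde{T}_B=V[1]$, $U_{T_B}=U$ and this $f$. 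The main obstacle I anticipate is precisely this last reconciliation: making sure that the $\text{susp}$-envelope one constructs inductively genuinely lands in the same torsion-pair decomposition used to build $T$, i.e. that the truncation triangle with respect to the co-t-structure $({}^\perp\text{susp}_{\mathcal{D}(A)}(j_!(T_C))[1], \text{susp}_{\mathcal{D}(A)}(j_!(T_C)))$ restricted to $\mathcal{K}^b(\text{proj-}A)$ coincides with the one produced by Corollary \ref{cor.Liu-Vitoria-Yang-generalized}; but this follows from the uniqueness statements already in place (Propositions \ref{prop.restricted partial silting}, \ref{prop.compact-psilting-from coheart}, \ref{prop.bijection psilting-cotstructures}), so the argument is essentially bookkeeping once the inductive construction of the envelope is in hand.
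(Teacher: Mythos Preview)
Your proposal is essentially correct and follows the paper's route: the main assertion via Proposition~\ref{prop.homologically non-positive locally fd} (Remark~\ref{rem.Artin}) and Corollary~\ref{cor.Liu-Vitoria-Yang-generalized}, the final sentence via Proposition~\ref{prop.U-envelope}. The only gap is the jump from ``weakly preenveloping'' (assertion~$1'$) to ``assertion~$1$ holds'': you need to invoke that $\mathcal{K}^b(\text{proj-}A)$ is Krull--Schmidt (finite length $K$-algebras are Artin), after which all assertions of Proposition~\ref{prop.U-envelope} become equivalent and your third-paragraph reconciliation is unnecessary---the triangle used in Corollary~\ref{cor.Liu-Vitoria-Yang-generalized} is, by construction (see the paragraph preceding Theorem~\ref{thm.gluing at compact level} and Proposition~\ref{prop.envelope in the aisle}), already the co-t-structure truncation and may simply be taken to arise from the inductively built envelope.
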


In this section we compare the gluing of silting objects via the co-t-structures \cite{LVY} and the one given by the last corollary. Conceptually,  t-structures and co-t-structures corresponding to silting objects are adjacent, hence by Lemma \ref{lem.adjacent gluing} gluing with respect to two recollements in a ladder of recollements should give the same result. We provide the details below.

\begin{proposition} \label{prop.gluing-via-(co-)t-structures}
Let $A,B,C$ be finite length algebras  and suppose that there are recollements \begin{equation}\label{recD}
\begin{xymatrix}{\Db{B}\ar[r]^{i_*}& \Db{A}\ar@<3ex>[l]_{i^!}\ar@<-3ex>[l]_{i^*}\ar[r]^{j^*} & \Db{C}\ar@<3ex>_{j_*}[l]\ar@<-3ex>_{j_!}[l]}
\end{xymatrix}
\end{equation}  and \begin{equation}\label{recK}
\begin{xymatrix}{\mathcal{K}^b(\text{proj-}{C})\ar[r]^{j_!}& \mathcal{K}^b(\text{proj-}{A})\ar@<3ex>[l]_{j^*}\ar@<-3ex>[l]_{j^\sharp}\ar[r]^{i^*} & \mathcal{K}^b(\text{proj-}{B})\ar@<3ex>_{i_*}[l]\ar@<-3ex>_{i^\sharp}[l]}
\end{xymatrix}
\end{equation} where the functors $j_!$, $j^*$, $i^*$ and $i_*$ of the second recollement are the restrictions of the corresponding functors in the first one.   Let $T_C$ and $T_B$ be silting objects in $\mathcal{K}^b(\text{proj-}C)$ and $\mathcal{K}^b(\text{proj-}B)$,  let $(\mathcal{X}^{\leq 0},\mathcal{X}^{\geq 0})$ and  $(\mathcal{Y}^{\leq 0},\mathcal{Y}^{\geq 0})$ be the associated t-structures in $\mathcal{D}^b(\text{mod-}C)$ and $\mathcal{D}^b(\text{mod-}B)$ and  $(\mathcal{X}_{\geq 0},\mathcal{X}_{\leq 0})$ and  $(\mathcal{Y}_{\geq 0},\mathcal{Y}_{\leq 0})$ the associated co-t-structures in $\mathcal{K}^b(\text{proj-}C)$ and $\mathcal{K}^b(\text{proj-}B)$, respectively. 

If $(\mathcal{D}^{\leq 0},\mathcal{D}^{\geq 0})$ is the glued t-structure in $\mathcal{D}^b(\text{mod-}A)$ with respect to the recollement (\ref{recD}) and $(\mathcal{D}_{\geq 0},\mathcal{D}_{\leq 0})$ is the glued co-t-structure in $\mathcal{K}^b(\text{proj-}A)$ with respect to the recollement (\ref{recK}), then there is a silting complex $T\in\mathcal{K}^b(\text{proj-}A)$, uniquely determined up to $add$-equivalence, such that $(\mathcal{D}^{\leq 0},\mathcal{D}^{\geq 0})$ is the t-structure in $\mathcal{D}^b(\text{mod-}A)$ associated to $T$ and $(\mathcal{D}_{\geq 0},\mathcal{D}_{\leq 0})$ is the co-t-structure in $\mathcal{K}^b(\text{proj-}A)$ associated to $T$. 
\end{proposition}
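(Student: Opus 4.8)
The plan is to exhibit a single silting complex $T\in\mathcal{K}^b(\text{proj-}A)$ that simultaneously generates both glued structures, using the ladder picture to identify the two gluing procedures. First I would observe that the two recollements \eqref{recD} and \eqref{recK} together form (the upper and lower parts of) a ladder of recollements of height two: this is exactly the content of Proposition \ref{prop.homologically non-positive locally fd}/Remark \ref{rem.Artin}, which tells us that the recollement \eqref{recD} lifts, up to equivalence, to a recollement of the unbounded derived categories which is the upper part of a ladder of height two, and the restriction of that ladder to the compact level gives \eqref{recK}. Concretely, the recollement at the $\mathcal{K}^b(\text{proj-})$ level is obtained from the one at the $\mathcal{D}^b(\text{mod-})$ level by passing to compact objects, and being the upper part of a ladder of height two means precisely that the functors $j_!,j^*,i^*,i_*$ preserve compactness (their right adjoints preserving coproducts). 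So the hypothesis that \eqref{recK} is the restriction of \eqref{recD} is automatic once we are in the finite length setting, and moreover the orientation of \eqref{recK} is ``shifted'' by one with respect to \eqref{recD}, which is the standard phenomenon when a recollement extends to a ladder.

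Next I would invoke Corollary \ref{cor.gluing Artin case} applied to the recollement \eqref{recD} with the silting objects $T_C$, $T_B$: this produces a triangle $\tilde{T}_B\longrightarrow i_*(T_B)\stackrel{f}{\longrightarrow}U_{T_B}[1]\stackrel{+}{\longrightarrow}$ in $\mathcal{K}^b(\text{proj-}A)$ with $U_{T_B}\in\text{susp}_{\mathcal{D}(A)}(j_!(T_C))$ and $\tilde{T}_B\in{}^\perp j_!(\mathcal{X}^{\leq 0})[1]$, and the complex $T:=j_!(T_C)\oplus\tilde{T}_B$ is a silting complex in $\mathcal{K}^b(\text{proj-}A)$, unique up to $\text{add}$-equivalence, which generates the glued t-structure $(\mathcal{D}^{\leq 0},\mathcal{D}^{\geq 0})$ in $\mathcal{D}^b(\text{mod-}A)$. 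This takes care of the t-structure half of the claim, including uniqueness. It remains to show that the co-t-structure in $\mathcal{K}^b(\text{proj-}A)$ associated to this same $T$ (which exists by Proposition \ref{prop.bijection psilting-cotstructures} or Corollary \ref{cor.silting set is preenveloping}, since $\{T\}$ is classical silting in $\mathcal{K}^b(\text{proj-}A)$) coincides with the co-t-structure $(\mathcal{D}_{\geq 0},\mathcal{D}_{\leq 0})$ glued from $(\mathcal{X}_{\geq 0},\mathcal{X}_{\leq 0})$ and $(\mathcal{Y}_{\geq 0},\mathcal{Y}_{\leq 0})$ via \eqref{recK}.

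For this identification I would proceed in two steps. Step one: by the bijection between classical silting sets and bounded co-t-structures (Proposition \ref{prop.bijection psilting-cotstructures}), the co-t-structure associated to $T$ is $(\text{cosusp}_{\mathcal{K}^b(\text{proj-}A)}(T),\text{susp}_{\mathcal{K}^b(\text{proj-}A)}(T))$, and since $T=j_!(T_C)\oplus\tilde T_B$, Lemma \ref{lem.thick-suspended} together with the behaviour of $j_!$ and $i_*$ expresses this in terms of the building blocks $j_!(T_C)$ and $\tilde T_B=i_*(T_B)$ modulo $j_!(\mathcal{X}^{\leq 0})$-type pieces; in particular $\text{susp}(T)$ is built from $\text{susp}(j_!T_C)$ and $\text{susp}(\tilde T_B)$, and $i^*\tilde T_B\cong T_B$, $j^*\tilde T_B=j^*i_*(T_B)=0$, exactly as in the proof of Theorem \ref{thm.gluing of silting t-structures}. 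Step two: apply Lemma \ref{lem.adjacent gluing} to the ladder: the t-structure $(\mathcal{X}^{\leq 0},\mathcal{X}^{\geq 0})$ and the co-t-structure $(\mathcal{X}_{\geq 0},\mathcal{X}_{\leq 0})$ coming from $T_C$ are adjacent torsion pairs in the appropriate category, and likewise for $T_B$; since gluing of adjacent torsion pairs with respect to consecutive recollements of a ladder yields adjacent torsion pairs, the glued t-structure and the glued co-t-structure in the $A$-categories are adjacent, and their common co-heart therefore determines both. Matching this common co-heart with $\text{add}(T)$ — using that the co-heart of the co-t-structure associated to $T$ is $\text{add}(T)$, and that the co-heart of the glued co-t-structure, being $\mathcal{D}_{\geq 0}\cap\mathcal{D}_{\leq 0}$, contains $T$ and is contained in $\text{add}(T)$ by the same minimality argument as in Proposition \ref{prop.compact-psilting-from coheart} — forces $(\mathcal{D}_{\geq 0},\mathcal{D}_{\leq 0})=(\text{cosusp}(T),\text{susp}(T))$.

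The main obstacle I expect is the bookkeeping in Step two: one must be careful that the two recollements \eqref{recD} and \eqref{recK} are really consecutive rows of one ladder with the correct shift of orientation, so that Lemma \ref{lem.adjacent gluing} applies with $(\mathcal{X}_1,\mathcal{X}_1')=(\mathcal{X}^{\leq 0},\mathcal{X}^{>0})$ and $(\mathcal{X}_2,\mathcal{X}_2')=(\mathcal{X}_{\geq 0},\mathcal{X}_{\leq 0})$ — i.e.\ that the ``middle'' co-aisle/aisle genuinely coincide, which is the statement that the t-structure associated to a silting object and its co-t-structure share a co-aisle after a shift (the content of Corollary \ref{cor.silting set is preenveloping}(2), the left-adjacent co-t-structure). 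Once that alignment is set up correctly, the rest is an application of the cited lemmas and the uniqueness statement in Corollary \ref{cor.gluing Artin case}; the uniqueness of $T$ up to $\text{add}$-equivalence is then inherited directly from that corollary, since a silting complex is determined up to $\text{add}$-equivalence by the t-structure it generates.
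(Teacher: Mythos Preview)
Your first step, invoking Corollary~\ref{cor.gluing Artin case} to produce the triangle $\tilde{T}_B\to i_*(T_B)\to U_{T_B}[1]\stackrel{+}{\to}$ and the silting complex $T=j_!(T_C)\oplus\tilde{T}_B$ generating the glued t-structure, matches the paper exactly and is correct.

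The problem is your Step~two. Lemma~\ref{lem.adjacent gluing} requires a ladder of height two \emph{in a single ambient triangulated category} $\mathcal{D}$, with two adjacent torsion pairs in the same $\mathcal{X}$ and the same $\mathcal{Y}$. But the recollements \eqref{recD} and \eqref{recK} live in different categories: one in $\mathcal{D}^b(\text{mod-}A)$, the other in $\mathcal{K}^b(\text{proj-}A)$. The t-structure associated to $T_C$ lives in $\mathcal{D}^b(\text{mod-}C)$ while the co-t-structure lives in $\mathcal{K}^b(\text{proj-}C)$; these are not adjacent torsion pairs in any one category, so the lemma does not apply as stated. Your proposed fix---lifting to the unbounded level via Proposition~\ref{prop.homologically non-positive locally fd}---is the right instinct, but it requires two further ingredients you do not supply: first, a verification that both \eqref{recD} and \eqref{recK} are compatible with the \emph{same} unbounded ladder (the lift is only ``up to equivalence''); second, a ``glue-then-restrict equals restrict-then-glue'' lemma showing that the gluing performed at the $\mathcal{D}(-)$ level restricts to the gluings performed at the $\mathcal{D}^b(\text{mod}-)$ and $\mathcal{K}^b(\text{proj}-)$ levels. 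Neither is hard, but neither is in the paper, and your co-heart matching at the end presupposes an adjacency that has not been established.

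The paper's argument bypasses all of this and is much shorter. After producing $T$ as above, it observes that $(\mathcal{X}_{\geq 0},\mathcal{X}_{\leq 0})$ and $(\mathcal{Y}_{\geq 0},\mathcal{Y}_{\leq 0})$ are the co-t-structures \emph{cogenerated} by $T_C$ and $T_B$; hence, by the dual of Lemma~\ref{lem.generators of glued t-structure} applied to the recollement \eqref{recK}, the glued co-t-structure $(\mathcal{D}_{\geq 0},\mathcal{D}_{\leq 0})$ is the one cogenerated by $S:=j_!T_C\oplus i_*T_B$, i.e.\ $\mathcal{D}_{\geq 0}={}^{\perp_{>0}}S\cap\mathcal{K}^b(\text{proj-}A)$. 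One then checks directly from the triangle $\tilde{T}_B\to i_*T_B\to U_{T_B}[1]\stackrel{+}{\to}$, using only that $U_{T_B}\in\text{susp}(j_!T_C)$, that ${}^{\perp_{>0}}S={}^{\perp_{>0}}T$. This gives $(\mathcal{D}_{\geq 0},\mathcal{D}_{\leq 0})=(\text{cosusp}(T),\text{susp}(T))$ immediately, with no ladder machinery needed. Uniqueness then follows from Corollary~\ref{cor.gluing Artin case} (for the t-structure) or Proposition~\ref{prop.bijection psilting-cotstructures} (for the co-t-structure).
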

\begin{proof}
By Corollary \ref{cor.gluing Artin case}, there is a triangle  $\tilde{T}_B\longrightarrow i_*(T_B) \stackrel{f}{\longrightarrow}  U_{T_B}[1] \stackrel{+}{\longrightarrow}$ ($\star$) in 
$\mathcal{K}^b(\text{proj-}A)$ such that $U_{T_B}\in\text{susp}_{\mathcal{D}(A)}(j_!(T_C))$ and 
$\tilde{T}_B\in {}^\perp j_!(\mathcal{X}^{\leq 0})[1]$ and  $(\mathcal{D}^{\leq 0},\mathcal{D}^{\geq 0})$ is the t-structure in $\mathcal{D}^b(\text{mod}-A)$ associated to $T=j_!T_C\oplus\tilde{T}_B$.  Note that, by construction, this is a triangle associated to the co-t-structure $({}^\perp\text{susp}_{\mathcal{D}(A)}(j_!T_C))[1]\cap\mathcal{K}^b(\text{proj-}A),\text{susp}_{\mathcal{D}(A)}(j_!T_C))[1])$ in $\mathcal{K}^b(\text{proj-}A)$ (see the paragraph immediately before Theorem \ref{thm.gluing at compact level}).

On the other hand,  $(\mathcal{X}_{\geq 0},\mathcal{X}_{\leq 0})=(\text{cosusp}_{\mathcal{D}(C)}(T_C),\text{susp}_{\mathcal{D}(C)}(T_C))$ and  $(\mathcal{Y}_{\geq 0},\mathcal{Y}_{\leq 0})=(\text{cosusp}_{\mathcal{D}(B)}(T_B),\text{susp}_{\mathcal{D}(B)}(T_B))$ are the co-t-structures in $\mathcal{K}^b(\text{proj-}C)$ and  $\mathcal{K}^b(\text{proj-}B)$ cogenerated by $T_C$ and $T_B$, respectively. By the dual of Lemma \ref{lem.generators of glued t-structure} (see Remark  \ref{rem.results dualize}), the glued co-t-structure  $(\mathcal{D}_{\geq 0},\mathcal{D}_{\leq 0})$ with respect to recollement (\ref{recK}) is precisely the one cogenerated by $S=j_!T_C\oplus i_*T_B$. That is,   $(\mathcal{D}_{\geq 0},\mathcal{D}_{\leq 0})=({}^{\perp_{>0}}S\cap\mathcal{K}^b(\text{proj-}A), ({}^{\perp_{\geq 0}}S\cap\mathcal{K}^b(\text{proj-}A))^\perp\cap\mathcal{K}^b(\text{proj-}A))$. By the triangle ($\star$) above, we clearly have that ${}^{\perp_{>0}}S\subseteq {}^{\perp_{>0}}\tilde{T}_B\cap {}^{\perp_{>0}}j_!T={}^{\perp_{>0}}(\tilde{T}_B\oplus j_!T_C)={}^{\perp_{>0}}T$. The same triangle also shows that any object   $M\in {}^{\perp_{>0}}T$ is also in ${}^{\perp_{>0}}i_*T_B$, which in turn implies that $M\in  {}^{\perp_{>0}}(j_!T_C\oplus i_*T_B)= {}^{\perp_{>0}}S$. Therefore  $(\mathcal{D}_{\geq 0},\mathcal{D}_{\leq 0})$ is the co-t-structure in $\mathcal{K}^b(\text{proj-}A)$ cogenerated by $T$, i.e. $(\mathcal{D}_{\geq 0},\mathcal{D}_{\leq 0})=(\text{cosusp}_{\mathcal{D}(A)}(T),\text{susp}_{\mathcal{D}(A)}(T))$.

The uniqueness of $T$ up to $add$-equivalence follows from 
 Corollary \ref{cor.gluing Artin case} for   $(\mathcal{D}^{\leq 0},\mathcal{D}^{\geq 0})$ and  from Proposition \ref{prop.bijection psilting-cotstructures} for  $(\mathcal{D}_{\geq 0},\mathcal{D}_{\leq 0})$. Alternatively, it follows from the Koenig-Yang bijection for the case of a finite-dimensional algebra over a field (see \cite{KY}).
\end{proof}

A natural subsequent question is when the 'overlap' of recollements like in the last corollary exists. Our next result gives two situations for which it is the case:

\begin{proposition} \label{prop.recollement apt for bygluing}
Let $A,B,C$ be finite length $K$-algebras and let \begin{equation}\label{recDD}
\begin{xymatrix}{\Db{B}\ar[r]^{i_*}& \Db{A}\ar@<3ex>[l]_{i^!}\ar@<-3ex>[l]_{i^*}\ar[r]^{j^*} & \Db{C}\ar@<3ex>_{j_*}[l]\ar@<-3ex>_{j_!}[l]}
\end{xymatrix}
\end{equation}
be a recollement. The following assertions are equivalent:

\begin{enumerate}
\item There is a recollement \begin{equation}\label{recKK}
\begin{xymatrix}{\mathcal{K}^b(\text{proj-}{C})\ar[r]^{j_!}& \mathcal{K}^b(\text{proj-}{A})\ar@<3ex>[l]_{j^*}\ar@<-3ex>[l]_{j^\sharp}\ar[r]^{i^*} & \mathcal{K}^b(\text{proj-}{B})\ar@<3ex>_{i_*}[l]\ar@<-3ex>_{i^\sharp}[l]}
\end{xymatrix}
\end{equation} where the functors $j_!$, $j^*$, $i^*$ and $i_*$  are the restrictions of the corresponding functors in the recollement (\ref{recDD}) (see Proposition \ref{prop.homologically non-positive locally fd}).
\item The induced functor $j_!:\mathcal{K}^b(\text{proj-}C)\longrightarrow\mathcal{K}^b(\text{proj-}A)$ has a left adjoint.
\end{enumerate}

Under either one of the following two conditions, the assertions above hold:

\begin{enumerate}
\item[(a)]  $K$ is a field (and hence $A$, $B$ and $C$ are finite dimensional algebras);
\item[(b)] The algebra $C$ has finite global dimension.
\end{enumerate}
\end{proposition}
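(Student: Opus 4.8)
The plan is to prove the equivalence $1)\Longleftrightarrow 2)$ first and then verify that conditions (a) and (b) each force assertion 2 to hold.

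\textbf{Proof of $1)\Longrightarrow 2)$.} Suppose the recollement (\ref{recKK}) exists with the functors $j_!$, $j^*$, $i^*$, $i_*$ being the restrictions of those in (\ref{recDD}). By the axioms of a recollement, $(j_!, j^*)$ is an adjoint pair in (\ref{recKK}), so in particular $j^*:\mathcal{K}^b(\text{proj-}A)\longrightarrow\mathcal{K}^b(\text{proj-}C)$ is right adjoint to $j_!:\mathcal{K}^b(\text{proj-}C)\longrightarrow\mathcal{K}^b(\text{proj-}A)$. But the functor denoted $j^\sharp$ in (\ref{recKK}) is by definition a \emph{left} adjoint to $j_!:\mathcal{K}^b(\text{proj-}C)\longrightarrow\mathcal{K}^b(\text{proj-}A)$ (it is the analogue of $j_!$ one step up in a ladder). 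Hence $j_!$ restricted to the homotopy categories of finitely generated projectives has a left adjoint, which is exactly assertion 2.

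\textbf{Proof of $2)\Longrightarrow 1)$.} Assume the induced functor $j_!:\mathcal{K}^b(\text{proj-}C)\longrightarrow\mathcal{K}^b(\text{proj-}A)$ has a left adjoint, call it $j^\sharp$. First I would note that since $A$, $B$, $C$ are finite length algebras, $\mathcal{D}(A)$, $\mathcal{D}(B)$, $\mathcal{D}(C)$ are compactly generated with $\mathcal{D}^c(A)\simeq\mathcal{K}^b(\text{proj-}A)$, etc., and $\mathcal{D}^b(\text{mod-}A)\simeq\mathcal{D}^b_{fl}(A)$. By Proposition \ref{prop.homologically non-positive locally fd} and Remark \ref{rem.Artin}, the recollement (\ref{recDD}) lifts, up to equivalence, to a recollement $(\mathcal{D}(B)\equiv\mathcal{D}(A)\equiv\mathcal{D}(C))$ which is the upper part of a ladder of recollements of height two; call the upward-extending functors $j^\sharp_{\mathcal{D}}:\mathcal{D}(C)\longrightarrow\mathcal{D}(A)$ (left adjoint to $\hat{j}_!$) and $i^\sharp_{\mathcal{D}}$. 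All the functors $\hat{i}_*,\hat{i}^*,\hat{j}_!,\hat{j}^*$ and also $j^\sharp_{\mathcal{D}}$, $i^\sharp_{\mathcal{D}}$ preserve compact objects: indeed in a ladder of recollements of height two every functor appearing has both a left and a right adjoint, and a triangulated functor between compactly generated triangulated categories which has a right adjoint commuting with coproducts preserves compact objects. By the uniqueness of adjoints, the restriction of $j^\sharp_{\mathcal{D}}$ to $\mathcal{D}^c(C)=\mathcal{K}^b(\text{proj-}C)$ agrees with the given $j^\sharp$ up to natural isomorphism. Thus, restricting the functors of the ladder to the subcategories of compact objects yields precisely a recollement as in (\ref{recKK}) — the axioms (adjunctions, full embeddings, vanishing conditions, and the two canonical triangles) all restrict because each relevant functor preserves compact objects and the categories of compact objects are thick. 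This gives assertion 1.

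\textbf{Verifying (a) and (b).} Under condition (a), $K$ is a field; here one may invoke the result of \cite{AKLY}: for finite dimensional algebras any recollement $(\mathcal{D}^b(\text{mod-}B)\equiv\mathcal{D}^b(\text{mod-}A)\equiv\mathcal{D}^b(\text{mod-}C))$ is the middle part of a ladder of recollements of height three (in particular extends one step both upward and downward), and the downward extension restricts to $\mathcal{K}^b(\text{proj-})$. Concretely, this means $j_!:\mathcal{K}^b(\text{proj-}C)\longrightarrow\mathcal{K}^b(\text{proj-}A)$ has a left adjoint, giving assertion 2 — alternatively this follows directly from the ladder-of-height-two conclusion of Proposition \ref{prop.homologically non-positive locally fd} together with the compact-preservation argument in the previous paragraph, which over a field needs no extra hypothesis beyond finite-dimensionality. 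Under condition (b), $\text{gldim}\, C<\infty$, so $\mathcal{D}^b(\text{mod-}C)=\mathcal{K}^b(\text{proj-}C)$, i.e. the category of compact objects coincides with the whole bounded derived category; then the functor $j_!:\mathcal{K}^b(\text{proj-}C)\longrightarrow\mathcal{K}^b(\text{proj-}A)$ is literally the restriction of $\hat{j}_!:\mathcal{D}(C)\longrightarrow\mathcal{D}(A)$ to $\mathcal{D}^b(\text{mod-}C)$, and the left adjoint $j^\sharp_{\mathcal{D}}$ from the ladder of height two restricts to a left adjoint of $j_!$ because $j^\sharp_{\mathcal{D}}$ preserves compact objects and $j^\sharp_{\mathcal{D}}(\mathcal{D}^c(A))\subseteq\mathcal{D}^c(C)=\mathcal{D}^b(\text{mod-}C)$; this again yields assertion 2.

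\textbf{Main obstacle.} The delicate point is not the formal equivalence $1)\Longleftrightarrow 2)$ but rather making sure, in the $2)\Longrightarrow 1)$ direction, that the left adjoint $j^\sharp$ assumed at the compact level is genuinely compatible with the functors of the lifted unbounded recollement — i.e. that it is the restriction of $j^\sharp_{\mathcal{D}}$ and not merely an abstractly existing adjoint. This is handled by the uniqueness of adjoints, but one must first know that the lifted recollement \emph{does} extend downward one step, which is exactly the content of the ``ladder of recollements of height two'' clause in Proposition \ref{prop.homologically non-positive locally fd}; the verification that all six (and the two extra) functors preserve compactness, so that the ladder restricts cleanly to $\mathcal{K}^b(\text{proj-})$, is the technical heart of the argument.
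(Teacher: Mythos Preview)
Your argument has a genuine gap in the implication $(2)\Longrightarrow (1)$ and in the verification of condition (b), both stemming from a misreading of what ``upper part of a ladder of recollements of height two'' in Proposition \ref{prop.homologically non-positive locally fd} actually provides.

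Being the \emph{upper} part of a ladder of height two means the lifted recollement $(\mathcal{D}(B)\equiv\mathcal{D}(A)\equiv\mathcal{D}(C))$ sits on top, with an additional recollement below it. Concretely, this furnishes \emph{right} adjoints to $\hat{i}^!$ and $\hat{j}_*$ (obtained because $\hat{i}_*$ preserves compacts, so $\hat{i}^!$ preserves coproducts, and Brown representability applies). It does \emph{not} give a left adjoint $j^\sharp_{\mathcal{D}}$ to $\hat{j}_!$ --- that would be an \emph{upward} extension of the ladder, which is a genuinely different condition and is precisely what is at stake in the proposition. So the functor $j^\sharp_{\mathcal{D}}$ you invoke does not exist from Proposition \ref{prop.homologically non-positive locally fd} alone, and your restriction argument collapses. (There is also a typo: a left adjoint to $\hat{j}_!$ would go $\mathcal{D}(A)\to\mathcal{D}(C)$, not the other way.)

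The paper's proof of $(2)\Longrightarrow (1)$ avoids the unbounded level entirely. It works directly in $\mathcal{K}^b(\text{proj-}A)$: from Proposition \ref{prop.homologically non-positive locally fd} one already knows $(j_!(\mathcal{K}^b(\text{proj-}C)),i_*(\mathcal{K}^b(\text{proj-}B)))$ is a semi-orthogonal decomposition; the assumed left adjoint $j^\sharp$ at the compact level is then used to show that $({}^\perp j_!(\mathcal{K}^b(\text{proj-}C)),j_!(\mathcal{K}^b(\text{proj-}C)))$ is also a semi-orthogonal decomposition (via the unit of the adjunction $(j^\sharp,j_!)$). This yields a TTF triple in $\mathcal{K}^b(\text{proj-}A)$, from which the recollement (\ref{recKK}) --- including the missing functor $i^\sharp$ --- is built by standard methods.

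For condition (a) your primary argument via \cite{AKLY} is essentially what the paper does (specifically \cite[Proposition 3.2(b)]{AKLY} gives the left adjoint to $\hat{j}_!$ over a field), though your ``alternative'' inherits the error above. For condition (b) your argument fails for the same reason, and the paper's route is entirely different: since $C$ has finite global dimension, $\mathcal{K}^b(\text{proj-}C)^{op}$ is Ext-finite and strongly finitely generated, so the locally finite cohomological functor $\text{Hom}_{\mathcal{K}^b(\text{proj-}A)}(M,j_!(-))$ is representable by the Bondal--Van den Bergh/Rouquier theorem \cite{BVdB,Rou}. This produces $j^\sharp$ directly at the compact level, with no detour through the unbounded category.
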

\begin{proof}
$(1)\Longrightarrow (2)$ is clear. 

$(2)\Longrightarrow (1)$ For each $D\in\mathcal{K}^b(\text{proj-}A)$, the triangle $j_!j^*D\longrightarrow D\longrightarrow i_*i^*D\stackrel{+}{\longrightarrow}$ given by the recollement (\ref{recDD}) belongs to $\mathcal{K}^b(\text{proj-}A)$ by Proposition \ref{prop.homologically non-positive locally fd}, implying that $(j_!(\mathcal{K}^b(\text{proj-}C)),i_*(\mathcal{K}^b(\text{proj-}B)))$ is a semi-orthogonal decomposition in $\mathcal{K}^b(\text{proj-}A)$. Let $j^\sharp:\mathcal{K}^b(\text{proj-}A)\longrightarrow\mathcal{K}^b(\text{proj-}C)$ be the left adjoint to $j_!$. Since $j_!$ is fully faithful,  the counit $\delta: j^\sharp\circ j_!\longrightarrow 1_{\mathcal{K}^b(\text{proj-}C)}$ is an isomorphism. Let us consider the triangle $K_D\longrightarrow D\stackrel{\eta_D}{\longrightarrow}j_!j^\sharp D\stackrel{+}{\longrightarrow}$, for any $D\in\mathcal{K}^b(\text{proj-}A)$, where $\eta_D$ is the unit of the adjunction. For each $X\in\mathcal{K}^b(\text{proj-}C)$, there is a composition of morphisms  $\text{Hom}_{\mathcal{K}^b(\text{proj-}C)}(j^\sharp D,X)\stackrel{j_!}{\longrightarrow}\text{Hom}_{\mathcal{K}^b(\text{proj-}A)}(j_!j^\sharp D,j_!X)\longrightarrow\text{Hom}_{\mathcal{K}^b(\text{proj-}A)}(D,j_!X)$, which is easily identified with the adjunction isomorphism. The first arrow of this composition is an isomorphism, since $j_!$ is fully faithful. Hence, $\eta_D^*=\text{Hom}_{\mathcal{K}^b(\text{proj-}A)}(\eta_D,j_!X)$ is an isomorphism, for all $X\in\mathcal{K}^b(\text{proj}-C)$, yielding $K_D\in {}^\perp j_!(\mathcal{K}^b(\text{proj-}C))$. So $({}^\perp j_!(\mathcal{K}^b(\text{proj-}C)),j_!(\mathcal{K}^b(\text{proj-}C)))$ is a semi-orthogonal decomposition of $\mathcal{K}^b(\text{proj-}A)$, and  $(\mathcal{S}_0,\mathcal{U}_0,\mathcal{V}_0):=({}^\perp j_!(\mathcal{K}^b(\text{proj-}C)),j_!(\mathcal{K}^b(\text{proj-}C)),i_*(\mathcal{K}^b(\text{proj-}B)))$ is a TTF triple in $\mathcal{K}^b(\text{proj-}A)$. One then obtains the recollement of assertion (1) by standard methods. In particular, the functor $i^\sharp :\mathcal{K}^b(\text{proj-}B)\longrightarrow\mathcal{K}^b(\text{proj-}A)$  is the composition $\mathcal{K}^b(\text{proj-}B)\stackrel{i_*}{\longrightarrow}\mathcal{V}_0\stackrel{\cong}{\longrightarrow}\mathcal{S}_0\stackrel{incl}{\hookrightarrow}\mathcal{K}^b(\text{proj-}A)$, where the central arrow is the canonical equivalence induced by the TTF triple.

We next prove that, under either one of conditions (a) or (b), assertion (1) holds.  By Theorem \ref{thm.Case Y homologically loc.bounded} and Proposition \ref{prop.restriction}, there is a recollement \begin{equation}\label{Recg}
\begin{xymatrix}{\mathcal{D}(B) \ar[r]^{\hat{i}_*}& \mathcal{D}(A) \ar@<3ex>[l]_{\hat{i}^!}\ar@<-3ex>[l]_{\hat{i}^*}\ar[r]^{\hat{j}^*} & \mathcal{D}(C) \ar@<3ex>_{\hat{j}_*}[l]\ar@<-3ex>_{\hat{j}_!}[l]}.
\end{xymatrix}
\end{equation} that restricts  to the $\mathcal{D}^b(\text{mod})$-level and whose restriction is equivalent to the recollement \ref{recDD}.  When $K$ is a field, using \cite[Proposition 3.2(b)]{AKLY}, we get that $\hat{j}_!$ has a left adjoint $\hat{j}^\sharp$. The functor $\hat{j}^\sharp$ preserves compact objects since its right adjoint preserves coproducts. The induced functors $(\hat{j}^\sharp:\mathcal{K}^b(\text{proj-}A)\longrightarrow\mathcal{K}^b(\text{proj-}C),\hat{j}_!:\mathcal{K}^b(\text{proj-}C)\longrightarrow\mathcal{K}^b(\text{proj-}A))$ form an adjoint pair. Bearing in mind that  $j_!(\mathcal{K}^b(\text{proj-}C))=\mathcal{K}^b(\text{proj-}A)\cap\text{Im}(j_!)=\mathcal{K}^b(\text{proj-}A)\cap\text{Im}(\hat{j}_!)=\hat{j}_!(\mathcal{K}^b(\text{proj-}C))$, we get a triangulated autoequivalence $\varphi :\mathcal{K}^b(\text{proj-}C)\stackrel{\cong}{\longrightarrow}\mathcal{K}^b(\text{proj-}C)$ such that $(\hat{j}_!)|_{ \mathcal{K}^b(\text{proj-}C)}\circ\varphi \cong (j_!)|_{ \mathcal{K}^b(\text{proj-}C)}$. Therefore assertion 2 holds.

When $C$ has finite global dimension, we consider, for any $M\in\mathcal{K}^b(\text{proj-}A)$,  the homological functor $H:=\text{Hom}_{\mathcal{K}^b(\text{proj-}A)}(M,j_!(-))$. Note that, when $R$ is a finite length $K$-algebra,  all $\text{Hom}$ spaces in $\mathcal{K}^b(\text{proj-}R)$ are modules over $K/I_R$, where $I_R=\{\lambda\in K\text{: }\lambda R=0\}$ is the annihilator of $R$ in $K$. Since $K/I_R$ is a $K$-submodule of $R$,  it is of finite length. Putting $I:=I_A\cap I_B\cap I_C$ in our case, one gets a commutative ring $K/I$ that has finite length as a $K$-module, and hence is an Artinian (whence Noetherian) ring. Moreover it is clear that all the functors in the recollement are $K/I$-linear.  Therefore, replacing $K$ by $K/I$  if necessary,we can assume that $K$ is an Artinian (whence Noetherian) commutative ring. Then the dual of \cite[Corollary 4.18]{Rou}, obtained first in \cite[Theorem 1.3]{BVdB} when $K$ is a field,  applies to our case. Indeed, in the terminology of \cite{Rou},  $\mathcal{K}^b(\text{proj-}C)^{op}$ is $Ext$-finite and strongly finitely generated (see \cite[Proposition 7.25]{Rou}) and the cohomological  functor $H:(\mathcal{K}^b(\text{proj-}C)^{op})^{op}\longrightarrow\text{Mod}-K$ is locally finite. Then $H$ is representable, so that we get an object $X_M\in\mathcal{K}^b(\text{proj-}C)^{op}$ such that $H\cong\text{Hom}_{\mathcal{K}^b(\text{proj-}C)^{op}}(-,X_M)\cong\text{Hom}_{\mathcal{K}^b(\text{proj-}C)}(X_M,-)$. It is routine to check that the assignment $M\rightsquigarrow X_M$ is the definition on objects of a functors $j^\sharp :\mathcal{K}^b(\text{proj-}A)\longrightarrow\mathcal{K}^b(\text{proj-}C)$ which is left adjoint to  $j_! :\mathcal{K}^b(\text{proj-}C)\longrightarrow\mathcal{K}^b(\text{proj-}A)$. Therefore assertion 2 holds.
\end{proof}

We do not know of any recollement of derived categories of finite length algebras where the equivalent assertions 1 and 2 of last proposition do not hold.

We would like to finish this section with examples of gluing computed applying the algorithm from Corollary \ref{cor.gluing Artin case} and Proposition \ref{prop.U-envelope}. Let $A$ be a finite dimensional algebra over a field. Recall the following example of a recollement of the unbounded derived category $\mathcal{D}(A)$. Let $e\in A$ be an idempotent element  such that $eA(1-e)=0$.  
It induces a homological ring epimorphism $i:A\longrightarrow A/AeA$ (see \cite[Example 4.1]{NS1}). Moreover, since $eA=eAe$, the left $eAe$-module $eA$ is projective; $A/AeA\cong A(1-e)$ is  projective as a left $A$-module as well. 
We then get a recollement \begin{equation}\label{lastRec1}
\begin{xymatrix}{\D{(A/AeA)} \ar[r]^-{i_*}& \D{(A)} \ar@<3ex>[l]_-{i^!}\ar@<-3ex>[l]_-{i^*}\ar[r]^-{j^*} & \D{(eAe)} \ar@<3ex>_-{j_*}[l]\ar@<-3ex>_-{j_!}[l]},
\end{xymatrix}
\end{equation}  
 where  $i_*=-\otimes_{A/AeA}^\mathbb{L}(A/AeA)=-\otimes_{A/AeA}(A/AeA)$, $i^*=-\otimes_A^\mathbb{L}(A/AeA)=-\otimes_A (A/AeA)$, $i^!=\mathbb{R}\text{Hom}_A(A/AeA,-))$, $j^*=-\otimes_{A}^\mathbb{L}(Ae)=-\otimes_AAe$, $j_!=-\otimes_{eAe}^\mathbb{L}eA=-\otimes_{eAe}eA$  and $j_*=\mathbb{R}\text{Hom}_{eAe}(Ae,-)$. It is clear that the functors $i^*, i_*,j_!,j^*$ restrict to the $\mathcal{D}^b(\text{mod})$-level. So by \cite[Theorem 4.6]{AKLY} the recollement restricts to the $\mathcal{D}^b(\text{mod})$-level if and only if $i_*(A/AeA)\in {K}^b(\text{proj-}A)$. That is, if and only if $A/AeA$ (equivalently $AeA$)  has finite projective dimension as a right $A$-module.  So if the idempotent $e$ satisfies the following two conditions
 \begin{enumerate}
\item[(a)] $eA(1-e)=0$;
\item[(b)] the projective dimension of $AeA$ as a right $A$-module is finite;
\end{enumerate}
we get an induced recollement
 \begin{equation}\label{lastRec2}
\begin{xymatrix}{\Db{A/AeA} \ar[r]^-{i_*}& \Db{A} \ar@<3ex>[l]_-{i^!}\ar@<-3ex>[l]_-{i^*}\ar[r]^-{j^*} & \Db{eAe} \ar@<3ex>_-{j_*}[l]\ar@<-3ex>_-{j_!}[l]}.
\end{xymatrix}
\end{equation}

\begin{example} \label{exs.inductive example}
 Let $A=KA_3=K(1\xrightarrow{a}2\xrightarrow{b}3)$ be the path algebra of the linear orientation of $A_3$. We will denote by $P_i,I_i,S_i$ the indecomposable projective, injective and simple modules corresponding to the vertex $i$. Let $e=e_3$ be the idempotent corresponding to the vertex $3$. It is clear that condition (a) and (b) are satisfied.
Then $A/AeA\simeq K(1\xrightarrow{a}2)$ and $eAe\simeq K$. Let us consider the following silting complexes for these algebras.  For $A/AeA$ we consider  $T_Y:=\bar{P}_1[1]\oplus\bar{P}_2$, where $\bar{P}_i$ is the indecomposable projective $A/AeA$-module corresponding the the vertex $i$. For $eAe$ we take $T_X:=eAe$. Then in Recollement \ref{lastRec2} we have: $j_!eAe=eAe\otimes_{eAe}eA\simeq e_3A=P_3$, $i_*T_Y=i_*(\bar{P}_1[1]\oplus\bar{P}_2)=(\bar{P}_1[1]\oplus\bar{P}_2)\otimes_{A/AeA}(A/AeA)\simeq I_2[1]\oplus S_2$. Note that $I_2\simeq (P_3\xrightarrow{ab}P_1)$ and  $S_2\simeq (P_3\xrightarrow{b}P_2)$ in $\Db{A}$. We need to construct $susp(P_3)[1]$-envelope of $I_2[1]\oplus S_2$. The maximal $s$ such that  $Hom(I_2[1]\oplus S_2, P_3[1][s])\neq 0$ is $1$, the $add(P_3[2])$-envelope of $I_2[1]\oplus S_2$ is given by the chain map $h$ depicted by the following diagram:
$$
\begin{xymatrix}{P_3 \ar[d]^{id} \ar[r]^-{(0,ab)^t} & P_3\oplus P_1 \ar[r]^-{(b,0)}& P_2\\
P_3&&}.
\end{xymatrix}
$$
The cocone of this  map is the complex $P_3\oplus P_1\xrightarrow{(b,0)}P_2$, which is isomorphic to  $P_1[1]\oplus S_2$ in $\Db{A}$. We denote by $u$ the  map  representing the morphism $P_1[1]\oplus S_2\longrightarrow I_2[1]\oplus S_2$ from the cocone.  The $add(P_3[1])$-envelope of $P_1[1]\oplus S_2$ is the chain map $g$ given by the following diagram: 
$$
\begin{xymatrix}{P_3\oplus P_1 \ar[d]^{(id,0)} \ar[r]^-{(b,0)} & P_2\\
P_3&},
\end{xymatrix}
$$
the cocone of which is the complex $P_1\stackrel{0}{\longrightarrow}P_2$ isomorphic to   $P_1[1]\oplus P_2$. Denote by $v$ the  map  representing the morphism $P_1[1]\oplus P_2\longrightarrow P_1[1]\oplus S_2$ from the cocone. 

By Lemma \ref{lem.inductive step construction envelopes} the $susp(P_3)[1]$-envelope $f$ of $I_2[1]\oplus S_2$ is the map from $I_2[1]\oplus S_2$ to the cone of $u\circ v$. It easily follows that the desired triangle  
$\tilde{T}_Y\longrightarrow i_*T_Y\stackrel{f}{\longrightarrow} U[1]\stackrel{+}{\longrightarrow}$ is, up to isomorphism in $\Db{A}$,  of the form $P_1[1]\oplus P_2\longrightarrow I_2[1]\oplus S_2 \longrightarrow P_3[2]\oplus P_3[1]\stackrel{+}{\longrightarrow}$. The glued silting complex is then  $j_!T_X\oplus\tilde{T}_Y\cong  P_1[1]\oplus P_2\oplus P_3$.
\end{example}

\begin{example}
Let $A$ be any finite dimensional algebra and $e\in A$ be  an idempotent satisfying conditions (a) and (b) above. Let $(\mathcal{X}^{\leq 0},\mathcal{X}^{\geq 0})$ be the canonical t-structure in $\Db{eAe}$, induced by the tilting object $eAe$, and let  $(\mathcal{Y}^{\leq 0},\mathcal{Y}^{\geq 0})$ be a partial silting t-structure in $\Db{A/AeA}$ induced by a compact partial silting object $T_Y$ in $\Db{A/AeA}$ such that $T_Y\in\mathcal{D}^{\leq 0}(A/AeA)$. Fix a quasi-isomorphism $s:P^\bullet\longrightarrow i_*(T_Y)$, where $P^\bullet\in\mathcal{K}^{\leq 0}(\text{proj-}A)$ is assumed to be minimal, i.e. such that the image of the  differential $d^k:P^k\longrightarrow P^{k+1}$ is contained in $P^{k+1}\text{rad}(A)$, for each $i\in\mathbb{Z}$. The exact sequence of complexes $0\rightarrow P^\bullet (1-e)A\longrightarrow P^\bullet\longrightarrow P^\bullet/P^\bullet (1-e)A\rightarrow 0$ splits in each degree and induces  a  triangle $\tilde{T}_Y\longrightarrow i_*T_Y\stackrel{h}{\longrightarrow} U[1]\stackrel{+}{\longrightarrow}$ in $\Db{A}$, where $h$ is a $j_!(\mathcal{X}^{\leq 0})[1]$-envelope. In particular $T=eA\oplus P^\bullet (1-e)\cong j_!(eAe)\oplus\tilde{T}_Y$ is a partial silting object of $\mathcal{D}^b(\text{mod}-A)$ which generates the glued t-structure by Corollary \ref{cor.gluing wrt bounded subcategories}.

\end{example}
\begin{proof}
Each projective right $A$-module $P$ decomposes as $_eP\oplus _{1-e}P$, with $_eP\in\text{add}(eA)$ and $_{1-e}P\in\text{add}((1-e)A)$. Since $eA(1-e)=0$, $_eP(1-e)A=0$ and $P(1-e)A= _{1-e}P$. It follows that $ _{1-e}P^\bullet\cong P^\bullet (1-e)A$ is a subcomplex of $P^\bullet$ and the three terms in the exact sequence $0\rightarrow P^\bullet (1-e)A\longrightarrow P^\bullet\longrightarrow P^\bullet/P^\bullet (1-e)A\rightarrow 0$ are complexes of projectives.
The component $P^0$ of the complex $P^\bullet$ is a projective cover of $H^0(i_*T_Y)$. Since $H^0(i_*T_Y)$ is a quotient of a module in $\text{add}(A/AeA)$, we get $P^0\in\text{add}((1-e)A)$. Thus, $P^\bullet/P^\bullet (1-e)A\cong {}_eP^\bullet$ belongs to $\mathcal{K}^{<0}(\text{add}(eA))$.

Note that $ _ePe\otimes_{eAe}eA \cong {}_eP$. Moreover, $j_!(_eP^{\bullet}e) \cong {}_eP^{\bullet} e\otimes_{eAe}^\mathbb{L} eA \cong {}_eP^\bullet e\otimes_{eAe} eA \cong {}_e P^\bullet\in j_! (D^{< 0}(\text{mod-}eAe)\cap D^b(\text{mod-}eAe))=j_!(\mathcal{X}^{\leq 0})[1]$. On the other hand,  $\text{Im}(j_!)=\mathcal{K}^{-}(\text{add}(eA))\cap D^b(\text{mod-}A)$, when we view $\mathcal{K}^{-}(\text{add}(eA))$ as a full subcategory of $D(A)$. Hence $\text{Hom}_{D(A)}(P^\bullet (1-e)A,-)$ vanishes on $\text{Im}(j_!)$, and  so $\pi:P^\bullet\longrightarrow P^\bullet/P^\bullet (1-e)A$ is a $j_!(\mathcal{X}^{\leq 0})[1]$-preenvelope. It is easy to see that $\pi$ is a left minimal morphism in $D^b(\text{mod-}A)$,  and therefore a $j_!(\mathcal{X}^{\leq 0})[1]$-envelope of $P^\bullet\cong i_*T_Y$. 
\end{proof}

\bibliographystyle{plain}

\end{document}